\numberwithin{equation}{section}
\ifpdf \usepackage[pdftex,pdfstartview=FitH,pdfpagemode=none,colorlinks,bookmarks,linkcolor=blue]{hyperref} \else  \usepackage[hypertex]{hyperref} \fi
\newtheorem{theorem}{Theorem}[section]
\newtheorem{lemma}[theorem]{Lemma}
\newtheorem{corollary}[theorem]{Corollary}
\newtheorem{definition}[theorem]{Definition}
\newtheorem{conjecture}[theorem]{Conjecture}
\newtheorem{proposition}[theorem]{Proposition}
\newtheorem{remark}[theorem]{Remark}
\theoremstyle{definition}\newtheorem*{acknowledgments}{Acknowledgments}
\newcommand{\cC}{\mathcal{C}}
\newcommand{\cF}{\mathcal{F}}
\newcommand{\cM}{\mathcal{M}}
\newcommand{\cS}{\mathcal{S}}
\newcommand{\cW}{\mathcal{W}}
\newcommand{\bC}{\mathbb{C}}
\newcommand{\bR}{\mathbb{R}}
\newcommand{\bZ}{\mathbb{Z}}
\newcommand{\bQ}{\mathbb{Q}}
\newcommand{\bN}{\mathbb{N}}
\newcommand{\bT}{\mathbb{T}}
\newcommand{\gog}{\mathfrak{g}}
\newcommand{\goh}{\mathfrak{h}}
\newcommand{\gok}{\mathfrak{k}}
\newcommand{\gov}{\mathfrak{v}}
\newcommand{\goz}{\mathfrak{z}}
\newcommand{\bfe}{\mathbf{e}}
\newcommand{\bfm}{\mathbf{m}}
\newcommand{\bfn}{\mathbf{n}}
\newcommand{\bfr}{\mathbf{r}}
\newcommand{\bfw}{\mathbf{w}}
\newcommand{\bfz}{\mathbf{z}}
\newcommand{\Gr}{\operatorname{Gr}}
\newcommand{\GL}{\operatorname{GL}}
\newcommand{\graph}{\operatorname{graph}}
\newcommand{\rank}{\operatorname{rank}}
\newcommand{\dist}{\operatorname{dist}}
\newcommand{\End}{\operatorname{End}}
\newcommand{\Diff}{\operatorname{Diff}}
\newcommand{\di}{\mathrm{d}}
\newcommand{\Vol}{\mathrm{Vol}}
\newcommand{\CHolder}{C_{\cW_\alpha^s}^{\infty,\text{\rm H\"older}}}
\newcommand{\onto}{\xymatrix{\ar@{>>}[r]&}}
\newcommand{\da}[4]{\xymatrix{#1 \ar@<.5ex>[r]^{#2} \ar@<-.5ex>[r]_{#3} & #4}}
\newcounter{subconst}[subsection]
\newcounter{const}
\newcounter{CONST}
\begin{document}

\title[Global rigidity of higher rank Anosov algebraic actions]{Global rigidity of higher rank abelian Anosov algebraic actions}
\author[F. Rodriguez Hertz]{Federico Rodriguez Hertz}
\address{Pennsylvania State University, State College, PA 16802, USA}
\author[Z. Wang]{Zhiren Wang}
\address{Yale University, New Haven, CT 06520, USA}
\setcounter{page}{1}
\begin{abstract} We show that all $C^\infty$ Anosov $\bZ^r$-actions on tori and nilmanifolds without rank-one factor actions are, up to $C^\infty$ conjugacy, actions by automorphisms.\end{abstract}
\maketitle
{\small\tableofcontents}

\section{Introduction}

\subsection{Main result} Consider a $\bZ^r$-action $\alpha$ on a torus, a nilmanifold  or an infranilmanold $M$ by $C^\infty$ diffeomorphisms. The action by $\bfn\in\bZ^r$ is written as $\alpha^\bfn$. Following results of Franks \cite{F69} and Manning \cite{M74} and using the commutativity of the action, one can check easily that $\alpha$ is topologically conjugate to an action $\rho:\bZ^r\curvearrowright M$ by affine automorphisms. The action $\rho$ is called the {\bf linearization} of $\alpha$.

Recall a compact nilmanifold is the quotient of a simply connected nilpotent Lie group $G$ by a cocompact discrete subgroup $\Gamma$ (see \cite{M51}), and a compact infranilmanifold is a manifold that is finitely covered by a compact nilmanifold. A {\bf linear automorphism} of $G/\Gamma$ is a homeomorphism that is the projection of some $\Gamma$-preserving automorphism of $G$. An {\bf affine automorphism} of $G/\Gamma$ is a homeomorphism $f:G/\Gamma\mapsto G/\Gamma$ such that $f(gx)=f_0(g)f(x)$ for all $g\in G$, $x\in M$, where $f_0$ is an automorphism of $G$. Equivalently, an affine automorphism of $G/\Gamma$ is the composition of a linear automorphism of $G/\Gamma$ and a left translation. An affine automorphism of a compact infranilmanifold is a homeomorphism that lifts to an affine nilmanifold automorphism on a finite cover. 
We prove in this paper:

\begin{theorem}\label{Main} Suppose $\alpha:\bZ^r\curvearrowright M$ is a $C^\infty$ action on a compact infranilmanifold $M$ where $r\geq 2$, and let $\rho$ be its linearization. Assume that there exists $\bfn_0$ such that $\alpha^{\bfn_0}$ is an Anosov diffeomorphism, and that $\rho$ has no rank-one factor. Then $\alpha$ is conjugate to $\rho$ by a $C^\infty$ diffeomorphism. \end{theorem}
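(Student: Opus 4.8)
The plan is to reduce first to the case of a nilmanifold. Since every affine automorphism of an infranilmanifold lifts to one of a finite nilmanifold cover, and since the hypotheses (existence of an Anosov element, no rank-one factor of the linearization) are inherited by a finite $\alpha$-invariant cover, I would pass to such a cover and assume $M=G/\Gamma$. Next I would induct on the nilpotency class of $G$: the action descends to the toral factor $T=G/[G,G]\Gamma$ and restricts to the fibers of the central tower, and a rank-one factor of either the quotient action or the fiber action would produce a rank-one factor of $\rho$; so the problem splits into the toral base case and an inductive step that glues the leafwise-smooth conjugacy along the central fibration. One must check carefully how the no-rank-one-factor hypothesis passes to these sub- and quotient actions, using the description of rank-one factors in terms of the weights of $\rho$.

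\textbf{Step 2: topological conjugacy and coarse Lyapunov structure.} By Franks--Manning \cite{F69},\cite{M74} applied to $\alpha^{\bfn_0}$, together with commutativity of the action as recalled in the introduction, there is a bi-H\"older homeomorphism $h$ with $h\circ\alpha^\bfn=\rho(\bfn)\circ h$ for all $\bfn$. Decomposing $\mathfrak g$ into the real weight spaces of $\rho$ and grouping weights that are positive multiples of one another produces the smooth linear coarse Lyapunov foliations $\cW^\chi_\rho$ of $M$. On the $\alpha$ side, Anosovness of $\alpha^{\bfn_0}$ yields a Weyl-chamber picture for the whole action; intersecting stable manifolds of suitably chosen elements gives H\"older, $\alpha$-invariant coarse Lyapunov foliations $\cW^\chi_\alpha$, and a standard argument shows $h(\cW^\chi_\alpha(x))=\cW^\chi_\rho(h(x))$ for every $\chi$. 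The no-rank-one-factor assumption enters already here, ensuring there are enough Anosov elements: in particular the coarse classes are not all mutually proportional, and each $\cW^\chi$ sits inside the stable distribution of some Anosov element and inside the unstable distribution of another.

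\textbf{Step 3: smoothness along coarse Lyapunov leaves.} This is the heart of the matter, and the step I expect to be the main obstacle. Fix a coarse class $\chi$ and pick $a\in\bZ^r$ with $\chi(a)<0$. Every Lyapunov functional in the class of $\chi$ is a positive multiple of $\chi$, so the exponents of $\rho(a)$ along $\cW^\chi$ --- hence, by the correspondence from Step 2, the contraction rates of $\alpha^a$ along $\cW^\chi_\alpha$ --- form a band of negative numbers of bounded ratio; that is, $\alpha^a$ contracts the leaves of $\cW^\chi_\alpha$ with ``narrow band''. The theory of non-stationary normal forms (Guysinsky--Katok, Katok--Spatzier) then equips each leaf of $\cW^\chi_\alpha$ with a canonical smooth structure in which $\alpha^a$ becomes a polynomial sub-resonance map; the leaves of $\cW^\chi_\rho$ carry such a structure automatically, being linear. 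In these coordinates the leafwise restriction of $h$ conjugates one sub-resonance polynomial action to another, and the remaining task is to rigidify the normal form against its group of polynomial symmetries and thereby conclude that $h$ is smooth along each leaf. Here one brings in the other elements of the action and, essentially, the hypothesis that $\rho$ has no rank-one factor, which prevents the sub-action carried by $\cW^\chi$ together with $\cW^{-\chi}$ and the brackets they generate from collapsing to a single Anosov diffeomorphism, for which the conjugacy genuinely need not be smooth. I expect this rigidification --- and in particular the case in which $-\chi$ is again a coarse class, which was excluded in earlier ``totally non-symplectic'' treatments --- to be where the real difficulty lies.

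\textbf{Step 4: globalization.} Since $\alpha^{\bfn_0}$ is Anosov, $TM=E^s\oplus E^u$ and each of $E^s,E^u$ is a sum of coarse Lyapunov distributions; hence these finitely many H\"older distributions span $TM$. Having shown in Step 3 that $h$ is $C^\infty$ along each of the corresponding foliations, an iterated application of Journ\'e's lemma upgrades $h$ to a $C^\infty$ diffeomorphism. Combined with the inductive reduction of Step 1, this proves that $\alpha$ is $C^\infty$-conjugate to $\rho$, which is Theorem~\ref{Main}.
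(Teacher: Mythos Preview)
Your proposal has a genuine gap at Step~2, and it is precisely the gap that the paper is written to fill. You assert that ``Anosovness of $\alpha^{\bfn_0}$ yields a Weyl-chamber picture for the whole action'' and that the no-rank-one-factor assumption ``ensur[es] there are enough Anosov elements'' so that intersections of stable manifolds of suitably chosen elements define the coarse Lyapunov foliations $\cW^\chi_\alpha$. But a priori only the single element $\alpha^{\bfn_0}$ is known to be Anosov; as the paper notes (citing Gogolev \cite{G10}), there exist non-Anosov diffeomorphisms H\"older conjugate to Anosov automorphisms, so H\"older conjugacy to $\rho$ does not by itself force other $\alpha^\bfn$ to be Anosov. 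Without Anosov elements on both sides of each Weyl chamber wall you cannot obtain $\cW^\chi_\alpha$ as a continuous foliation with smooth leaves by intersecting stable manifolds, and the normal-form machinery of your Step~3 has nothing to act on.

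The paper's entire argument is devoted to propagating Anosovness from $\cC_0$ to an adjacent chamber $\cC$ across a wall $\ker\chi$, and its method is quite different from what you sketch. It does \emph{not} induct on nilpotency class (the introduction explicitly contrasts this with earlier work); instead it projects the conjugacy into the coarse Lyapunov subgroup $V=V^{[\chi]}$ via the $G^{ss}VG^u$ decomposition, solves a nonlinear cohomological equation in $\gov$ (linearized step by step along the lower central series of $\gov$, using exponential mixing from \cite{GS} and a Rauch--Taylor regularity theorem) to show $h_V$ is smooth along $\cW_\alpha^s$, and then uses Ledrappier--Young's Lipschitz strong stable lamination inside stable leaves together with a Jacobian argument to show the differential of $h_V$ is nowhere degenerate. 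This yields a smooth strong stable foliation $\cW_\alpha^{ss}$, and Ma\~n\'e's quasi-Anosov criterion then gives an Anosov element in $\cC$. Once every Weyl chamber contains an Anosov element, the paper simply invokes \cite{FKS13}, whose proof is essentially your Steps~3--4 (normal forms plus Journ\'e). So your outline is, roughly, a sketch of \cite{FKS13} with its key hypothesis assumed rather than proved.
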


A rank-one factor is the projection of $\rho$ to a quotient infranilmanifold, which is, up to finite index, generated by a single $\rho^\bfn$. For details, see \S\ref{secRank}.

Even in the case of tori, the theorem is new.

A priori, it may happen that $M$ is equipped with an exotic smooth structure $\omega$ instead of the standard structure $\omega_0$ inherited from the covering Lie group $G$. $\alpha$ is smooth with respect to $\omega$, and $\rho$ is smooth with respect to $\omega_0$. In this case, the smooth conjugacy in the theorem should be understood as a diffeomorphism between $(M,\omega)$ and $(M,\omega_0)$. This in particular shows $(M,\omega)$ is actually a standard infranilmanifold and yields a contradiction. Therefore, the theorem implies:

\begin{corollary}\label{Exotic}Exotic infranilmanifolds admit no Anosov $\bZ^r$-action without rank-one factor.\end{corollary}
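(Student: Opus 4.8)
The plan is to obtain Corollary~\ref{Exotic} as an immediate consequence of Theorem~\ref{Main}, arguing by contradiction. Suppose, then, that $(M,\omega)$ is a smooth manifold homeomorphic to a compact infranilmanifold but not diffeomorphic to one (i.e.\ $\omega$ is a genuinely exotic smooth structure on the topological infranilmanifold $M$), and that $\alpha:\bZ^r\curvearrowright M$ is a $C^\infty$ action with respect to $\omega$, with $r\geq 2$, such that some $\alpha^{\bfn_0}$ is Anosov and the linearization $\rho$ has no rank-one factor. Since $M$ is a topological infranilmanifold, the Franks--Manning argument recalled in the introduction still produces the linearization $\rho:\bZ^r\curvearrowright(M,\omega_0)$ onto the standard smooth infranilmanifold $(M,\omega_0)$: that construction uses only the action induced by $\alpha$ on $\pi_1(M)$ together with expansiveness and shadowing for the Anosov element $\alpha^{\bfn_0}$, and is therefore insensitive to which smooth structure on $M$ the maps $\alpha^\bfn$ happen to be smooth for.

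Next I would simply apply Theorem~\ref{Main} to $\alpha$. Its hypotheses hold verbatim: $r\geq 2$; $\alpha^{\bfn_0}$ is an Anosov diffeomorphism of $(M,\omega)$; and $\rho$ has no rank-one factor by assumption. The theorem then furnishes a $C^\infty$ conjugacy $h$ between $\alpha$ and $\rho$, which, as explained in the paragraph preceding Corollary~\ref{Exotic}, must be read as a diffeomorphism $h:(M,\omega)\to(M,\omega_0)$. Hence $\omega$ and $\omega_0$ are diffeomorphic smooth structures on $M$; that is, $(M,\omega)$ is a standard compact infranilmanifold, contradicting our choice. This contradiction establishes the corollary.

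The one delicate point — and where essentially all of the content sits — is the tacit assertion that Theorem~\ref{Main} remains meaningful and correct when its underlying manifold carries an a priori exotic smooth structure. The main obstacle I would anticipate is verifying that throughout the proof of Theorem~\ref{Main} every smoothness claim about objects attached to $\alpha$ — the stable and unstable foliations of the Anosov elements, their holonomies, and the candidate conjugacy assembled from them — is a statement relative to $\omega$, never covertly relative to $\omega_0$. Once that is checked, the resulting conjugacy $h$ is automatically $\omega$-to-$\omega_0$ smooth, and the corollary follows with no further argument.
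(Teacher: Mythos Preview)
Your deduction of the corollary from Theorem~\ref{Main} is correct and is exactly the argument sketched in the paragraph preceding Corollary~\ref{Exotic}: once the theorem is known to apply to actions on a possibly exotic $(M,\omega)$, the resulting smooth conjugacy $(M,\omega)\to(M,\omega_0)$ forces $\omega$ to be standard.

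Where your discussion diverges from the paper is in your final paragraph, on \emph{how} Theorem~\ref{Main} is established in the exotic case. You anticipate that the argument is simply to re-run the proof of Sections~\ref{Prelim}--\ref{secAno} while keeping track that all smoothness assertions about $\alpha$-objects are relative to $\omega$. The paper does not do this; in fact it explicitly assumes the standard structure throughout Sections~\ref{Prelim}--\ref{secAno} (see the italicized hypotheses at the end of \S\ref{secRank}). The exotic case is handled separately in \S\ref{secConc} by a reduction to the standard case: when $\dim M\geq 5$ one invokes a theorem of Davis (appendix to \cite{FKS13}) that $(M,\omega)$ is finitely covered by a nilmanifold with \emph{standard} structure, lifts the action to a further finite cover via Lemma~\ref{NilLift}, and applies the already-proven standard case there; when $\dim M=4$ one passes to the diagonal action $\alpha\times\alpha$ on the $8$-dimensional $M\times M$ and applies the $\dim\geq 5$ case to conclude $H\times H$, hence $H$, is a diffeomorphism; and dimensions $\leq 3$ admit no exotic structures. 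So the nontrivial external input you would need---and which your sketch does not mention---is Davis's covering theorem, together with the product trick in dimension $4$.
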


\subsection{Background of the problem} For a $\bZ^r$-action by diffeomorphisms on a compact manifold, there is a sharp contrast between the $r\geq 2$ case and the $r=1$ case. The simplest example for a $\bZ$-action by a single Anosov diffeomorphism is a hyperbolic toral automorphism $A$. In this case, one can modify $A$ near a fixed point to get a new diffeomorphism $f$ of the torus which is close to $A$ in the $C^\infty$-topology, but cannot be conjugate to $A$ by a diffeomorphism because the eigenvalues of its derivatives differ from those of $A$.

On the other hand, higher rank abelian Anosov actions enjoy more rigidity in this aspect. An action is said to be $C^\infty$-locally rigid if all $C^1$-perturbations are $C^\infty$-conjugate to the original action. Local rigidity was first proved for Cartan actions on tori by Katok and Lewis \cite{KL91}, and later extended to some quite general classes of actions in the works of Katok and Spatzier \cite{KS94, KS97} and Einsiedler and Fisher \cite{EF07}. Such phenomena motivated the following global rigidity conjecture by Katok and Spatzier (see \cite{G07}):
\begin{conjecture}\label{StrongConjecture} When $r\geq 2$, all ``irreducible" Anosov genuine $\bZ^r$-actions on compact manifolds are $C^\infty$-conjugate to actions on tori, nilmanifolds or infranilmanifolds by affine automorphisms.\end{conjecture}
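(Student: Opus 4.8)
The plan is to promote the Franks--Manning topological conjugacy between $\alpha$ and $\rho$ to a $C^\infty$ diffeomorphism by a regularity bootstrap along the invariant foliations of the action. First I would pass to a finite cover $\widehat{M}=G/\Gamma$ on which $\alpha$ lifts to a $C^\infty$ $\bZ^r$-action whose linearization is by honest nilmanifold automorphisms; since the Franks--Manning conjugacy is canonical up to the affine centralizer of the linearization, it is equivariant for the deck group and descends, so one may assume $M=G/\Gamma$ is a nilmanifold. By the results of Franks and Manning quoted in the introduction, together with the commutativity of $\alpha$, there is then a bi-H\"older homeomorphism $h\colon M\to M$, homotopic to the identity, with $h\circ\alpha^{\bfn}=\rho^{\bfn}\circ h$ for every $\bfn\in\bZ^r$; passing to the suspension when convenient gives a genuine $\bR^r$-action in which Lyapunov exponents are defined. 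Since $\rho$ is affine, the whole theorem reduces to showing that $h$ is a $C^\infty$ diffeomorphism.

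Next I would study $\rho$ through the real weight-space decomposition of its automorphism part acting on $\mathfrak{g}$, graded along the lower central series. This produces finitely many Lyapunov functionals $\chi\colon\bZ^r\to\bR$ and coarse Lyapunov subspaces; on the $\rho$-side these integrate to smooth invariant coarse Lyapunov foliations, while on the $\alpha$-side the corresponding intersections of stable foliations of the various $\alpha^{\bfn}$ give H\"older $\alpha$-invariant foliations $\cW_\chi$ which $h$ maps onto those of $\rho$. The hypothesis that $\rho$ has no rank-one factor is precisely what prevents any coarse exponent $\chi$ from having an ``isolated'' kernel; equivalently, each $\cW_\chi$ is (part of) the stable foliation of a rank-two subsemigroup of elements contracting it at incommensurable rates, and the sub-action relevant to each $\chi$ is again genuinely higher rank. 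This is where $r\geq 2$ is used essentially, via the combinatorics of Weyl chambers made precise in the section on rank-one factors referenced above.

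The crux is to show $h$ is $C^\infty$ along each $\cW_\chi$. For fixed $\chi$ I would apply non-stationary normal form theory (Katok--Spatzier, Feres, Kalinin--Sadovskaya) to $\alpha$ restricted to the leaves of $\cW_\chi$: the narrow-band/sub-resonance spectrum inherited from the coarse Lyapunov splitting yields, along each leaf, $C^\infty$ coordinates in which the action is polynomial of bounded degree, depending H\"older-continuously on the leaf. The higher-rank structure from the previous step --- several commuting elements contracting $\cW_\chi$ at incommensurable rates --- forces this normal form to be \emph{linear} and to coincide with the one read off from $\rho$, so that $h$ intertwines the two and is $C^\infty$ along each $\cW_\chi$-leaf, with derivatives uniformly bounded over $M$. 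On a genuine nilmanifold this must be run as an induction on the lower central series: one first treats the coarse directions visible on the maximal toral quotient $G/[G,G]$, then the successive graded bundles, at each stage using that the induced action on the graded piece is of the required type and that the regularity already obtained controls the lower-order terms of the transition maps; resonances among the $\chi$ are absorbed into the polynomial normal forms rather than obstructing the argument.

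Finally, for two generic elements $\alpha^{\bfn_1},\alpha^{\bfn_2}$ the stable foliation of each is tangent to a sum of the $\cW_\chi$, so by the previous step and the Journ\'e regularity lemma $h$ is $C^\infty$ along these stable foliations, and likewise along the corresponding unstable foliations; since the stable and unstable foliations of a single Anosov element are transverse and span $TM$, one more application of Journ\'e's lemma upgrades $h$ to a global $C^\infty$ diffeomorphism, whence $\alpha=h^{-1}\rho h$ is $C^\infty$-conjugate to the affine action $\rho$, and the statements on exotic structures and infranilmanifolds follow as noted after the theorem. The main obstacle I anticipate is the crux step: proving the non-stationary normal forms along the $\cW_\chi$ are genuinely linear and algebraically match $\rho$ rather than merely polynomial --- exactly where the no-rank-one-factor hypothesis must be cashed in, and where possibly low-dimensional coarse Lyapunov foliations (the source of the improvement over earlier work) force arguing with sub-resonance families rather than a single exponent --- together with propagating this through the lower central series while keeping the estimates uniform enough for the concluding Journ\'e argument; the Franks--Manning input, the finite-cover reduction, and the final gluing are comparatively routine.
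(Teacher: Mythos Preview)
The statement you are attempting to prove is labeled \emph{Conjecture} in the paper, and the paper explicitly says it ``is widely open.'' There is no proof of this statement in the paper to compare against; the paper proves only Theorem~\ref{Main}, which is the special case where $M$ is already assumed to be an infranilmanifold (this is essentially Conjecture~\ref{WeakConjecture}). Your proposal silently makes this same assumption: you invoke the Franks--Manning conjugacy in your first paragraph, but Franks--Manning requires the manifold to be a torus or nilmanifold to begin with. For a general compact manifold there is no linearization $\rho$, no conjugacy $h$, and no algebraic Lyapunov decomposition of $\mathfrak g$ to work with. So as a proof of the strong conjecture your argument never gets started.

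Even read as an attempt at Theorem~\ref{Main}, there is a genuine gap. You write that ``the corresponding intersections of stable foliations of the various $\alpha^{\bfn}$ give H\"older $\alpha$-invariant foliations $\cW_\chi$,'' and your normal-form step needs ``several commuting elements contracting $\cW_\chi$ at incommensurable rates.'' But the hypothesis gives you only \emph{one} Anosov element $\alpha^{\bfn_0}$; a priori no other $\alpha^{\bfn}$ is Anosov, so you have only one pair of stable/unstable foliations for $\alpha$ and no dynamical way to split them into coarse Lyapunov pieces. Producing those extra Anosov elements is exactly the main content of the paper: Sections~\ref{secSmooth}--\ref{secAno} show, via a cohomological equation for the $V$-component of $h$, Pesin theory, and Ma\~n\'e's quasi-Anosov criterion, that if one Weyl chamber contains an Anosov element then so does every adjacent one, hence all of them. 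Only then does the paper invoke the Fisher--Kalinin--Spatzier machinery (which is roughly what your normal-form/Journ\'e outline is rediscovering). Your plan treats the existence of many Anosov elements as given, when in fact it is the theorem.
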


Some progresses have been made towards the conjecture in the works of Kalinin-Spatzier \cite{KS07} and Kalinin-Sadovskaya \cites{KSad06, KSad07}. However, these results depend on strong conformality assumptions on the restriction of the action to coarse Lyapunov foliations, as well as other algebraic conditions which force the manifold to be a torus. And the conjecture is widely open.

Attention has been focused on the following natural weaker form of Conjecture \ref{StrongConjecture}:
\begin{conjecture}\label{WeakConjecture}When $r\geq 2$, all ``irreducible"\footnote{As \cite{FKS13} and Theorem \ref{Main} shows, it turns out that irreducibility is not a crucial issue here.} Anosov genuine $\bZ^r$-actions on compact infranilmanifolds are $C^\infty$-conjugate to actions by affine automorphisms.\end{conjecture}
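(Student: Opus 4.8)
\emph{Proof plan for Theorem~\ref{Main}.} By Franks--Manning together with commutativity of the action there is a topological conjugacy $h\colon M\to M$ with $h\circ\alpha^\bfn=\rho^\bfn\circ h$ for every $\bfn\in\bZ^r$; the goal is to show $h\in C^\infty$, and the strategy is to prove $h$ smooth along each coarse Lyapunov foliation of $\alpha$ and then glue. A sequence of reductions comes first: passing to a finite cover of $M$ and to a finite-index subgroup of $\bZ^r$ reduces the infranilmanifold case to the nilmanifold case $M=G/\Gamma$, and then the lower central series of $G$ yields a tower of $\rho$-equivariant fibrations of $G/\Gamma$ over quotient nilmanifolds, terminating at the maximal torus factor $G/[G,G]\Gamma$. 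Checking that the no-rank-one-factor hypothesis descends to the relevant factor and fiber actions, an induction on the nilpotency degree --- together with a cohomological step propagating smoothness of $h$ from base to total space of each fibration --- reduces the theorem to the case $M=\bT^d$ with $\rho$ a linear $\bZ^r$-action.

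On the torus, Anosov-ness of $\alpha^{\bfn_0}$ (and structural stability) endows $\alpha$ with H\"older stable and unstable foliations, with uniformly smooth leaves, for every $\alpha^\bfn$; these refine into the coarse Lyapunov foliations $\cW^\chi_\alpha$ indexed by the coarse Lyapunov exponents $\chi$ of $\rho$ --- the positive-proportionality classes of the Lyapunov functionals of the linear action --- and $h$ carries $\cW^\chi_\alpha$ onto the affine foliation $\cW^\chi_\rho$. The heart of the matter is the assertion that \emph{$h$ is $C^\infty$ along every leaf of every $\cW^\chi_\alpha$}. To prove this I would invoke the theory of non-stationary normal forms (Katok--Spatzier, Guysinsky--Katok, Feres) for the restriction of $\alpha$ to $\cW^\chi_\alpha$: there is a H\"older family of smooth leafwise coordinates in which each $\alpha^\bfn$, as well as each ``neutral'' element $\bfa$ with $\psi(\bfa)=0$ for all $\psi$ proportional to $\chi$, acts by a polynomial map, and $h$ intertwines this normal form with the polynomial coordinates supplied by $\rho$ on $\cW^\chi_\rho$. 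The no-rank-one-factor hypothesis is exactly what guarantees, for every $\chi$, that the subgroup of $\bZ^r$ preserving the coarse class of $\chi$ acts on $\cW^\chi_\rho$ through maps rich enough --- sufficiently many elements of subexponential behaviour, together with genuinely hyperbolic elements transverse to $\ker\chi$ --- to force the two polynomial structures that $h$ intertwines to agree up to an affine change of variables; hence $h|_{\cW^\chi_\alpha}$ is smooth with bounds uniform over leaves. (A rank-one factor produces a $\chi$ at which this rigidity mechanism degenerates, which is why the hypothesis is indispensable.)

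Granting this, the conclusion follows by gluing: the foliations $\cW^\chi_\alpha$ jointly span $TM$, and their iterated ``sums'' are again $\alpha$-invariant H\"older foliations with smooth leaves --- intersections of stable and unstable foliations of suitable $\alpha^\bfn$ --- along which $h$ is smooth, so repeated application of the Journ\'e regularity lemma (bootstrapping H\"older-along-two-transverse-foliations up to $C^\infty$) yields $h\in C^\infty$; the same argument applied to $h^{-1}$ shows $h$ is a $C^\infty$ diffeomorphism. I expect the genuine obstacle to be the leafwise-smoothness claim when $\rho|_{\cW^\chi_\rho}$ is \emph{not} conformal: one cannot then linearize an entire leaf, and must instead work with the finite filtration of $\cW^\chi$ by the true Lyapunov subspaces (exponents $\chi,2\chi,\dots$ and their Galois companions), carefully control the admissible resonances between successive quotients, and import rigidity of the periodic data of $\alpha$ --- which higher rank forces to coincide with that of $\rho$ --- to annihilate the non-algebraic part of the normal form. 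Arranging these normal-form coordinates to depend H\"older-continuously on the leaf while keeping enough algebraic rigidity to pin down $h$ is the technical core of the proof.
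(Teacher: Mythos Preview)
Your plan has a genuine gap at its core, and it is precisely the difficulty the paper is built to overcome. You write that ``Anosov-ness of $\alpha^{\bfn_0}$ (and structural stability) endows $\alpha$ with H\"older stable and unstable foliations, with uniformly smooth leaves, for every $\alpha^\bfn$; these refine into the coarse Lyapunov foliations $\cW^\chi_\alpha$.'' But this is exactly what is \emph{not} known a priori: the hypothesis gives only \emph{one} Anosov element $\alpha^{\bfn_0}$, hence only one pair of invariant foliations $\cW_\alpha^s$, $\cW_\alpha^u$. To obtain coarse Lyapunov foliations for $\alpha$ one must intersect stable foliations of elements drawn from \emph{different} Weyl chambers, and nothing in your reductions produces such elements. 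Gogolev \cite{G10} constructs non-Anosov diffeomorphisms H\"older conjugate to Anosov toral automorphisms, so one cannot simply transport Anosov-ness through $h$; thus the existence of Anosov elements in every Weyl chamber is a nontrivial theorem, not a consequence of structural stability. Your normal-form machinery and Journ\'e gluing are essentially the content of \cite{FKS13}, but that paper takes as a \emph{hypothesis} that every Weyl chamber contains an Anosov element --- the very thing you are tacitly assuming.

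The paper's approach is therefore quite different from yours. Rather than reducing to the torus via the lower central series (an approach it explicitly declines), it works directly on the nilmanifold and proves the missing ingredient: starting from the single Anosov element $\alpha^{\bfn_0}$ in a chamber $\cC_0$, it shows that any adjacent chamber $\cC$ also contains Anosov elements, and hence by induction every chamber does. The mechanism is to isolate the coarse Lyapunov subgroup $V$ corresponding to the wall $\ker\chi$ between $\cC_0$ and $\cC$, project the Franks--Manning conjugacy into $V$ in a $G^{ss}VG^u$ decomposition to get a coordinate function $h_V$, and prove $h_V$ is $C^\infty$ along $\cW_\alpha^s$ by solving a nonlinear cohomological equation in $\gov$ (using exponential mixing \`a la Gorodnik--Spatzier and a Rauch--Taylor-type regularity theorem). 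One then needs that the leafwise differential of $h_V$ is nondegenerate everywhere --- this is established via Pesin theory and the Ledrappier--Young strong-stable lamination inside $\cW_\alpha^s$, comparing volumes to rule out degeneracy. The resulting smooth strong-stable foliation $\cW_\alpha^{ss}$ is then fed into Ma\~n\'e's quasi-Anosov criterion to conclude that elements of $\cC$ are Anosov. Only at the very end does the paper invoke \cite{FKS13} as a black box to finish. In short: your outline recapitulates \cite{FKS13} but skips the one step --- propagating Anosov-ness across Weyl chamber walls --- that is the actual content of the present paper.
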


Such global rigidity results have been obtained in two special cases, with different extra assumptions that are fairly disjoint to each other. 

In \cite{RH07}, the first author proved that Conjecture \ref{WeakConjecture} holds for a class of actions on tori, which in particular includes Cartan actions, i.e. $\bZ^r$-actions on $\bT^{r+1}$. The restrictions posed in \cite{RH07} require the rank $r$ grows linearly in terms of the dimension of the torus. 

In \cites{FKS11,FKS13}, Fisher, Kalinin and Spatzier established Conjecture \ref{WeakConjecture} under the additional  assumption that all Weyl chambers of $\rho$ contains at least one Anosov element of $\alpha$.  Another technical hypothesis, total non-symplecticity, was made in \cite{FKS11} and was later dropped in \cite{FKS13}.  They also raised the question of whether there are Anosov genuine $\bZ^r$-actions on exotic tori for $r\geq 2$, based on examples constructed by Farrel and Jones \cite{FJ78} in the $r=1$ case, and observed that the existence of such actions would contradict Conjecture \ref{WeakConjecture}.

Theorem \ref{Main} confirms Conjecture \ref{WeakConjecture}, without assuming irreducibility. This leads to a negative answer, Corollary \ref{Exotic}, to the question about actions on exotic tori,

It was remarked in \cite{FKS13} that, since there exist non-Anosov diffeomorphisms that are H\"older conjugate to Anosov toral automorphisms (constructed by Gogolev in \cite{G10}), a priori, generic elements in $\alpha$ might not be Anosov.  We will rule out this possibility, and hence verify that the hypothesis of \cite{FKS13} always holds. The proof of the fact that generic elements are Anosov will borrow ideas from both  \cite{RH07} and \cite{FKS13}, while following a new scheme.

\subsection{Organization of paper} Our strategy is to show that if a Weyl chamber of the linearized action $\rho$ contains an Anosov element, then any adjacent Weyl chamber also contains Anosov elements. This will show that there are Anosov elements in all the Weyl chambers, allowing to apply Fisher, Kalinin and Spatzier's theorem and establish Conjecture \ref{WeakConjecture}. 

In contrast to methods adopted in previous literatures (for example, \cite{MQ01}, \cite{FKS13}) when establishing smooth rigidity on nilmanifolds, our approach directly deals with individual coarse Lyapunov foliations on the nilmanifold, instead of going through an induction on the step of nilpotency which starts with the underlying torus factor of the nilmanifold.

In Section \ref{Prelim}, we state preliminary facts regarding actions on nilmanifolds and the coarse Lyapunov foliation for the linearized action $\rho$. The meaning of the assumption that $\rho$ has no rank-one factor will be explained.

When moving from an Anosov element $\alpha^{\bfn_0}$ from a Weyl chamber to a new element $\alpha^\bfn$ in a neighboring chamber, one crosses a Weyl chamber wall that corresponds to a coarse Lyapunov subgroup $V$, which in turn gives rise to a subfoliation that is contracted by the old element but expanded by the new one under the linear action. With respect to the automorphism $\rho^{\bfn_0}$, the group $G$ can be decomposed into three parts: the strong stable subgroup $G^{ss}$, $V$ and the unstable subgroup $G^u$. In Section \ref{secSmooth}, we locally parametrize the Franks-Manning conjugacy $H$, and project it into $V$ in the $G^{ss}VG^u$ decomposition  to get a coordinate function. Then we verify that this coordinate function is smooth along the stable foliation $\cW_\alpha^s$ of $\alpha^{\bfn_0}$ by solving a non-linear cohomological equation inside the Lie algebra of $V$.

In Section \ref{secAno}, it is shown that level sets of the coordinate function within $\alpha^{\bfn_0}$-stable leaves forms a subfoliation which is contracted by $\alpha^\bfn$. In order to show that this new subfoliation has smooth leaves, we check that the differential of this function along $\cW_\alpha^s$ does not degenerate. This is achieved by applying Pesin theory to an $\alpha$-invariant measure $\nu$  that is supported on the points where this condition is not satisfied. More precisely, we will rely on facts from \cite{LY85} on the almost everywhere Lipschitz continuity of the strongly stable foliation inside the stable leaves, to show that the degeneracy of the differential would cause the restriction of $H$ to the stable leaves to have a zero Jacobian, which will be proved to be impossible. Eventually, the existence of the new subfoliation will allow us to employ the quasi-Anosov criterion of Ma\~n\'e \cite{M77} to show that $\alpha^\bfn$ is also Anosov. The final lines of the proof are included in Section \ref{secConc}.

The proof of the smoothness of $h_V$ in Section \ref{secSmooth} relies on a result stating that if all partial derivatives of a H\"older function along a H\"older foliation with smooth leaves belong to the distribution space over H\"older functions, then the function is smooth along the foliation. This is a strengthening of a theorem of Rauch and Taylor \cite{RT05} and the proof is included in Appendix \ref{secSobolev}.

\begin{acknowledgments}Federico Rodriguez Hertz was supported by NSF grant DMS-1201326. Zhiren Wang was supported by NSF grant DMS-1201453 and an AMS-Simons travel grant.
\end{acknowledgments}
\section{Preliminaries}\label{Prelim}

\subsection{General settings}\label{secSetting} In this paper $\alpha$ will be a $C^\infty$-action by $\bZ^r$ on a compact nilmanifold (or more generally, a compact infranilmanifold) $M$, that is, a group morphism $\alpha:\bfn\mapsto\alpha^\bfn$ from $\bZ^r$ to the group $\Diff^\infty(M)$ of $C^\infty$ diffeomorphisms of $M$. 

We will always assume that there exists at least one $\bfn_0$ for which $\alpha^{\bfn_0}$ is Anosov, and that the linearization action $\rho:\bZ^r\curvearrowright M$, which will be defined below in \S\ref{secFM}, has no non-trivial rank-1 factor (see Definition \ref{Factor}).

Throughout the paper except in \S\ref{Exotic}, it will be assumed that the differential structure of $M$ is the one coming from the standard structure of the covering nilpotent Lie group $G$. 
Let $N$ denote the dimension  $\dim M=\dim G$.

Recall that a diffeomorphism $f$ on a Riemannian manifold $M$ is Anosov if there is a continuous splitting of the tangent bundle $TM=E_f^s\oplus E_f^u$ such that $Df$ preserves the splitting and for some $C,\lambda>0$ and every $k\geq 0$, $$\|Df^k(v)\|\leq Ce^{-\lambda k}\|v\|,\;\; \forall v\in E  _f^s;$$ $$\|Df^{-k}(v)\|\leq Ce^{-\lambda k}\|v\|,\;\; \forall v\in E_f^u.$$ In this case $M$ is everywhere foliated by the {\bf unstable foliation} $\cW_f^u$, as well as by the {\bf stable foliation} $\cW_f^s$, which are respectively tangent to $E_f^u$ and $E_f^s$. The leaves of $\cW_f^u$ and $\cW_f^s$ are given by \begin{equation}\label{AnoLeaves}\begin{split}\cW_f^u(x)=&\{y\in Y|\lim_{n\rightarrow\infty}\dist\big(f^{-n}(x),f^{-n}(y)\big)=0\};\\\cW_f^s(x)=&\{y\in Y|\lim_{n\rightarrow\infty}\dist\big(f^n(x),f^n(y)\big)=0\},\end{split}\end{equation}
and are locally immersed $C^\infty$ submanifolds. But in general these foliations are not smooth in the sense that the distribution $E_f^u$ (resp. $E_f^s$) does not necessarily vary in a $C^\infty$ way along the transversal direction to $\cW_f^u$ (resp. to $\cW_f^s$).

We will write in the rest of the paper $E_\alpha^u$, $E_\alpha^s$, $\cW_\alpha^u$ and $\cW_\alpha^s$ respectively for $E_{\alpha^{\bfn_0}}^u$, $E_{\alpha^{\bfn_0}}^s$, $\cW_{\alpha^{\bfn_0}}^u$ and $\cW_{\alpha^{\bfn_0}}^s$. 

As $\alpha$ is a commutative action, it is easy to check from \eqref{AnoLeaves} that the foliations $\cW_\alpha^s$ and $\cW_\alpha^u$ are invariant under $\alpha^\bfn$ for all $\bfn\in\bZ^r$.

\subsection{Franks-Manning conjugacy}\label{secFM}

As $\alpha^{\bfn_0}$ is an Anosov diffeomorphism of a compact nilmanifold $M=G/\Gamma$, by the classical works of Franks \cite{F69} and Manning \cite{M74}, $\alpha^{\bfn_0}$ is conjugate to an Anosov affine automorphism by a bi-H\"older homeomorphism $H:M\mapsto M$ which is homotopic to identity. By \cite{W70} any diffeomorphism of $M$ commuting with an affine automorphism is an affine automorphism, and thus $H$ conjugates the entire action $\alpha$ to an action $\rho:\bZ^r\curvearrowright M$ by affine automorphisms. To be precise, \begin{equation}\label{FMconj}\alpha^\bfn=H^{-1}\circ\rho^\bfn\circ H, \forall\bfn\in\bZ^r.\end{equation}  

By \cite[Prop. 2.4]{FKS11}, the image $\mu:=H^{-1}_*(\text{Leb})$ of the Lebesgue measure on $\bT^N$ is the unique $\alpha$-invariant absolutely continuous measure on $\bT^N$. Moreover, its density function is positive and $C^\infty$.

In fact, for each $\bfn\in\bZ^r$, $\alpha^\bfn$ induces an action on the fundamental group $\Gamma$. Every automorphism of $\Gamma$ naturally extends to a $\Gamma$-preserving automorphism of $G$, and hence induces an linear automorphism of the nilmanifold $M=G/\Gamma$. Hence $\alpha$ induces an action on $M$ by linear automorphisms. which is exactly the linear part of $\rho^\bfn$. 

By abusing notation, we will denote by $\rho$ as well the linear part of the lift of the action $\rho$ to $G$. That is, for all $g\in G$, $x\in M$, $\rho^\bfn(gx)=(\rho^\bfn g)(\rho^\bfn x)$

In addition, the differential of $\rho^\bfn$ at the identity gives a $\bZ^r$-action on the Lie algebra $\gog$ of $G$. Abusing notation again, we use $\rho$ to call this action as well. In particular, we have the relation $\exp(\rho^\bfn v)=\rho^\bfn\exp v, \forall v\in\gog$.

As $G$ is simply connected and nilpotent, the exponential map $\exp$ is a diffeomorphism between $\gog$ and $G$, with inverse $\log: G\mapsto\gog$. 

\subsection{Coarse Lyapunov decomposition} We now introduce the Lyapunov decomposition of the Lie algebra.

\begin{definition}\label{LyaSubspace} Given a $\bZ^r$-action $\rho$ on $G$ by automorphisms and a linear functional $\chi\in(\bR^r)^*$. The {\bf Lyapunov subspace}  corresponding to $\chi$ is $$\gov^\chi=\{v\in\gog\backslash\{0\}: \lim_{|\bfn|\rightarrow\infty}\frac{\log\big|\rho^\bfn v\big|-\chi(\bfn)}{|\bfn|}=0, \forall\bfn\in\bZ^r\}\cup\{0\}.$$ $\chi$ is called the {\bf Lyapunov exponent} of $\gov^\chi$.\end{definition}

\begin{lemma}\label{LyaDecomp}$\gov^\chi$ is indeed a vector subspace of $\gog$. There are only finitely many non-trivial $\gov^\chi$'s and \begin{equation}\label{LyaDecompEq}\gog=\bigoplus_\chi\gov^\chi\end{equation}\end{lemma}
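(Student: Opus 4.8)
The plan is to reduce the statement to the standard theory of the Jordan/weight decomposition of a commuting family of linear maps, adapted to a $\bZ^r$-action. First I would complexify: set $\gog_\bC=\gog\otimes_\bR\bC$ and extend the $\bZ^r$-action $\rho$ to $\gog_\bC$. Since $\bZ^r$ is abelian, the commuting family $\{\rho^\bfn\}_{\bfn\in\bZ^r}$ of invertible linear operators on $\gog_\bC$ can be simultaneously put into block-triangular form: there is a decomposition $\gog_\bC=\bigoplus_j W_j$ into generalized joint eigenspaces, where on $W_j$ each $\rho^\bfn$ acts as $\psi_j(\bfn)\cdot(\text{unipotent})$ for a multiplicative character $\psi_j:\bZ^r\to\bC^*$. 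Concretely, fixing generators $\bfe_1,\dots,\bfe_r$ of $\bZ^r$, one simultaneously decomposes $\gog_\bC$ along generalized eigenspaces of the commuting operators $\rho^{\bfe_1},\dots,\rho^{\bfe_r}$; the joint generalized eigenvalue on $W_j$ is a tuple $(\lambda_{j,1},\dots,\lambda_{j,r})\in(\bC^*)^r$, and $\psi_j(\bfn)=\prod_k \lambda_{j,k}^{n_k}$. Writing $\chi_j(\bfn)=\log|\psi_j(\bfn)|=\sum_k n_k\log|\lambda_{j,k}|$, which is visibly a linear functional in $\bfn$, the key computation is that for $0\neq v\in W_j$ one has $\log|\rho^\bfn v|=\chi_j(\bfn)+O(\log|\bfn|)$, the error term coming from the polynomial growth of the unipotent part; hence $\frac{\log|\rho^\bfn v|-\chi_j(\bfn)}{|\bfn|}\to 0$. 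Thus $W_j\subseteq\gov^{\chi_j}_\bC$ (the complexified Lyapunov subspace), and conversely grouping together all $W_j$ with a common $\chi_j=\chi$ gives exactly $\gov^\chi_\bC=\bigoplus_{j:\chi_j=\chi}W_j$; in particular $\gov^\chi_\bC$ is a subspace, there are finitely many nonzero ones (at most as many as the $W_j$), and $\gog_\bC=\bigoplus_\chi\gov^\chi_\bC$.

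Next I would descend from $\bC$ back to $\bR$. Since $\rho$ is defined over $\bR$, complex conjugation on $\gog_\bC$ permutes the $\gov^\chi_\bC$: it sends $\gov^\chi_\bC$ to $\gov^{\bar\chi}_\bC$, but $\chi$ is already real-valued (it is $\log|\cdot|$ of a character), so each $\gov^\chi_\bC$ is conjugation-invariant, hence defined over $\bR$, i.e. $\gov^\chi_\bC=\gov^\chi\otimes_\bR\bC$ with $\gov^\chi=\gov^\chi_\bC\cap\gog$. Taking conjugation-fixed points in the decomposition $\gog_\bC=\bigoplus_\chi\gov^\chi_\bC$ yields $\gog=\bigoplus_\chi\gov^\chi$, and the definition of $\gov^\chi$ over $\bR$ matches $\gov^\chi_\bC\cap\gog$ since the limit condition in Definition \ref{LyaSubspace} is insensitive to the choice of norm and to real-vs-complex scalars.

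The one point that needs genuine (if routine) care is the growth estimate for the unipotent part, i.e. justifying $\log|\rho^\bfn v|=\chi_j(\bfn)+O(\log|\bfn|)$ uniformly on $W_j$. Writing $\rho^{\bfe_k}|_{W_j}=\lambda_{j,k}(I+N_k)$ with $N_k$ nilpotent and the $N_k$ commuting, one has $\rho^\bfn|_{W_j}=\psi_j(\bfn)\prod_k(I+N_k)^{n_k}$, and $\|\prod_k(I+N_k)^{n_k}\|$ and its inverse are bounded by a polynomial of degree $\le\dim\gog$ in $|\bfn|$ (binomial expansion, nilpotency), so taking logs gives the claimed error $O(\log(1+|\bfn|))$, which is $o(|\bfn|)$. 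This is the main, though entirely standard, obstacle; everything else is bookkeeping with the simultaneous generalized eigenspace decomposition of commuting operators and a descent from $\bC$ to $\bR$.
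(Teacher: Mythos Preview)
Your proposal is correct and follows essentially the same approach as the paper: complexify, take the simultaneous generalized eigenspace decomposition $\gog_\bC=\bigoplus_j W_j$ for the commuting operators $\rho^{\bfe_1},\dots,\rho^{\bfe_r}$, observe that on each $W_j$ the unipotent part has polynomial growth so the Lyapunov exponent is $\chi_j(\bfn)=\sum_k n_k\log|\lambda_{j,k}|$, and group the $W_j$'s by $\chi_j$. You are in fact a bit more careful than the paper about the descent from $\bC$ to $\bR$ (the paper writes $\gog=\bigoplus_j(\gog\cap W_j)$ without comment, whereas you correctly note that it is the $\chi$-grouped sums, not the individual $W_j$, that are conjugation-invariant and hence defined over $\bR$).
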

\begin{proof} Fix a basis $\bfe_1,\cdots,\bfe_r$ of $\bZ^r$.  Then $\gog\otimes_\bR\bC$ is the direct sum of generalized eigenspaces $\ker (\rho^{\bfe_1}-\lambda I)^{\dim\gog}$. Since $\rho$ is a commutative action, each of these generalized eigenspaces are invariant under every $\rho^{\bfe_i}$. Hence each generalized eigenspaces splits into the direct sum of smaller generalized eigenspaces of $\rho^{\bfe_2}$, which are again invariant under all the $\rho^{\bfe_i}$'s. Keeping doing this, one can eventually decompose the entire vector space $\gog\otimes_\bR\bC$ as the direct sum of finitely many subspaces $\bigoplus_j W_j$, each of which is a common generalized eigenspace of  $\rho^{\bfe_1}, \cdots, \rho^{\bfe_r}$. On every $W_j$, the commuting matrices $\rho^{\bfe_1}, \cdots, \rho^{\bfe_r}$ can be upper triangularized together, and each $\rho^{\bfe_i}$ has only one eigenvalue $\zeta_{ij}$. Actually $\rho^{\bfe_i}|_{W_j}$ can be written as $\zeta_{ij}A_{ij}$ where $A_{ij}$ is an upper triangular matrix whose diagonal entries are equal to $1$.
It follows that for all $\bfn=\sum_{i=1}^rn_i\bfe_i$ in $\bZ^r$, $W_j$ is also a generalized eigenspace of $\rho^\bfn$ with a unique eigenvalue $\zeta_j^\bfn=\prod_{i=1}^r\zeta_{ij}^{n_i}$. And $\rho^n|_{W_j}=\zeta_j^\bfn A_j^\bfn$ where $A_j^\bfn=\prod_{i=1}^rA_{ij}^{n_i}$. One can easily check $\bfn\mapsto\zeta_j^\bfn$ and $\bfn\mapsto A_j^\bfn$ are group morphisms from $\bZ^r$, respectively to $\bC^\times$ and to $\End(W_j)$.

Furthermore, entries of $A_j^\bfn$ are polynomial in $\bfn$. Hence, for any non-zero $v\in W_j$, $\frac{\rho^\bfn v}{\zeta_j^\bfn}$ has polynomial entries as $\bfn$ varies. Therefore a non-zero vector $v\in\gog$ (or $\gog\otimes_\bR\bC$) satisfies the asymptotic formula in Definition \ref{LyaSubspace} if and only if it lies in the direct sum of the $\gog\cap W_j$'s such that the linear map $\bfn\mapsto\log|\zeta_j^\bfn|$ is equal to $\chi$. This direct sum is exact $\gov^\chi$. Moreover, $\gog=\bigoplus_j(\gog\cap W_j)=\bigoplus_\chi\gov^\chi$, and thus the number of non-trivial $\gov^\chi$'s is finite.\end{proof}

\begin{lemma}\label{LyaBracket} For all Lyapunov subspaces $\gov^{\chi_1}$ and $\gov^{\chi_2}$, the relation $[\gov^{\chi_1},\gov^{\chi_2}]\subset \gov^{\chi_1+\chi_2}$ holds.
\end{lemma}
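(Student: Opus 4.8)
The plan is to compare the exponential growth rate of $\rho^\bfn[v_1,v_2]$ with the product of the growth rates of $\rho^\bfn v_1$ and $\rho^\bfn v_2$, and then read off from the Lyapunov decomposition \eqref{LyaDecompEq} which subspaces $[v_1,v_2]$ is allowed to meet. It suffices to show that $[v_1,v_2]\in\gov^{\chi_1+\chi_2}$ for arbitrary $v_1\in\gov^{\chi_1}$, $v_2\in\gov^{\chi_2}$, because $\gov^{\chi_1+\chi_2}$ is a subspace and $[\gov^{\chi_1},\gov^{\chi_2}]$ is spanned by such brackets.

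First I would use that $\rho^\bfn$, being the differential at the identity of an automorphism of $G$, is a Lie algebra automorphism of $\gog$, so $\rho^\bfn[v_1,v_2]=[\rho^\bfn v_1,\rho^\bfn v_2]$. Fixing a norm on $\gog$, the bracket is bounded bilinear, so $|[u_1,u_2]|\le C|u_1||u_2|$ for some $C>0$ and all $u_1,u_2$; hence $|\rho^\bfn[v_1,v_2]|\le C|\rho^\bfn v_1||\rho^\bfn v_2|$, and taking logarithms and invoking the defining asymptotics of $\gov^{\chi_1}$ and $\gov^{\chi_2}$ gives
\begin{equation*}
\log\big|\rho^\bfn[v_1,v_2]\big|\ \le\ (\chi_1+\chi_2)(\bfn)+o(|\bfn|)\qquad\text{as }|\bfn|\to\infty .
\end{equation*}

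Next I would decompose $w:=[v_1,v_2]=\sum_\chi w_\chi$ according to \eqref{LyaDecompEq}, and let $\pi_{\chi'}\colon\gog\to\gov^{\chi'}$ be the projection along the remaining summands. Each $\gov^\chi$ is $\rho$-invariant (immediate from Definition \ref{LyaSubspace}, or from the description of $\gov^\chi$ in the proof of Lemma \ref{LyaDecomp}), so $\pi_{\chi'}$ commutes with every $\rho^\bfn$, whence $|\rho^\bfn w_{\chi'}|=|\pi_{\chi'}(\rho^\bfn w)|\le\|\pi_{\chi'}\|\,|\rho^\bfn w|$. Combining this with the displayed bound, if $w_{\chi'}\neq 0$ then its own defining asymptotics force $\chi'(\bfn)\le(\chi_1+\chi_2)(\bfn)+o(|\bfn|)$ for all $\bfn$; applying the same to $-\bfn$ gives $\chi'(\bfn)=(\chi_1+\chi_2)(\bfn)+o(|\bfn|)$, and since both sides are linear in $\bfn$ this forces $\chi'=\chi_1+\chi_2$. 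Hence $w_{\chi'}=0$ whenever $\chi'\neq\chi_1+\chi_2$, i.e.\ $w\in\gov^{\chi_1+\chi_2}$ — which is $\{0\}$ by convention if $\chi_1+\chi_2$ is not a Lyapunov exponent, in which case the conclusion reads $[v_1,v_2]=0$.

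The point to be careful about is the lower bound on $|\rho^\bfn[v_1,v_2]|$: the upper bound alone would not suffice, since a priori a component of the bracket could grow strictly more slowly along some directions, and it is exactly the $\rho$-equivariance of the projections $\pi_{\chi'}$ — which rests on \eqref{LyaDecompEq} being a $\rho$-invariant direct sum — that rules this out. An alternative, more algebraic route avoids the asymptotic bookkeeping: reusing the common generalized eigenspace decomposition $\gog\otimes_\bR\bC=\bigoplus_j W_j$ with $\rho^\bfn|_{W_j}=\zeta_j^\bfn A_j^\bfn$ from the proof of Lemma \ref{LyaDecomp}, equivariance of the bracket together with the linear independence of functions of the form ``character times polynomial'' on $\bZ^r$ shows $[W_{j_1},W_{j_2}]\subseteq\bigoplus_{j:\,\zeta_j=\zeta_{j_1}\zeta_{j_2}}W_j$; intersecting with $\gog$ and using $\gov^\chi\otimes_\bR\bC=\bigoplus_{j:\,\log|\zeta_j^\bullet|=\chi}W_j$ then yields $[\gov^{\chi_1},\gov^{\chi_2}]\subseteq\gov^{\chi_1+\chi_2}$. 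I would most likely present the first argument, as it is shorter and self-contained.
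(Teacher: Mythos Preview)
Your first argument is correct and takes a somewhat different route from the paper. The paper's proof is precisely your ``alternative, more algebraic route'': it reduces to $v\in W_1$, $w\in W_2$ in the common generalized eigenspace decomposition from the proof of Lemma~\ref{LyaDecomp}, observes that $\rho^\bfn[v,w]/(\zeta_1^\bfn\zeta_2^\bfn)=[\rho^\bfn v/\zeta_1^\bfn,\rho^\bfn w/\zeta_2^\bfn]$ has polynomial entries in $\bfn$, and concludes directly that $[v,w]$ satisfies the defining asymptotic with exponent $\chi_1+\chi_2$.

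Your main argument is more elementary in that it avoids reopening the generalized eigenspace machinery: you use only the bilinear bound $|[u_1,u_2]|\le C|u_1||u_2|$ to get a one-sided growth estimate on $\rho^\bfn[v_1,v_2]$, and then exploit $\rho$-equivariance of the Lyapunov projections to force any nonzero component $w_{\chi'}$ to satisfy $\chi'\le\chi_1+\chi_2$ pointwise, hence (by the $\bfn\mapsto-\bfn$ trick) $\chi'=\chi_1+\chi_2$. What this buys is a self-contained argument that treats Lemma~\ref{LyaDecomp} as a black box; what the paper's approach buys is a slightly sharper two-sided statement (polynomial-times-exponential growth of the bracket) obtained in one line by reusing existing structure. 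Either is fine here; your awareness of the trade-off and your identification of the key point---that the upper bound alone suffices only because the projections are $\rho$-equivariant---shows you understand the argument well.
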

In particular, if the Lyapunov exponent $\chi_1+\chi_2$ is absent in the Lyapunov decomposition, then $\gov^{\chi_1}$ and $\gov^{\chi_2}$ commute.

\begin{proof} Suppose $v\in \gov^{\chi_1}$, $w\in \gov^{\chi_2}$, we wish to show that $[v,w]\in \gov^{\chi_1+\chi_2}$. By decomposing into components if necessary , we can assume $v_1$ and $v_2$ are respectively in common generalized eigenspaces $W_1$ and $W_2$ constructed as in the proof of Lemma \ref{LyaDecomp}, with respective eigenvalues $\zeta_1^\bfn$ and $\zeta_2^\bfn$.  Then 
\begin{equation}
\frac{\rho^\bfn[v,w]}{\zeta_1^\bfn\zeta_2^\bfn}=\frac{\big[\rho^\bfn v,\rho^\bfn w\big]}{\zeta_1^\bfn\zeta_2^\bfn}
=\left[\frac{\rho^\bfn v}{\zeta_1^\bfn},\frac{\rho^\bfn w}{\zeta_2^\bfn}\right]
\end{equation}
has polynomial entries in term of $\bfn$. But $|\zeta_j^\bfn|=e^{\chi_j(\bfn)}$ for $j=1,2$. Thus the vector $[v,w]$, if it doesn't vanish,  satisfies $\lim_{|\bfn|\rightarrow\infty}\frac{\log\big|\rho^\bfn [v,w]\big|-\chi(\bfn)}{|\bfn|}=0$ with $\chi=\chi_1+\chi_2$. This completes the proof.
\end{proof}

\begin{definition}\label{CoarseDef}For a Lyapunov exponent $\chi$ that is present in the Lyapunov decomposition, the corresponding {\bf coarse Lyapunov subspace} is
$$\gov^{[\chi]}=\bigoplus_{\chi'=c\chi, c>0}\gov^{\chi'}.$$\end{definition}

So a coarse Lyapunov subspace is a direct sum of all Lyapunov subspaces whose exponents are positively proportional in $(\bR^r)^*$. When two vectors belong to different coarse Lyapunov subspaces, one can find an $\bfn$ such that $\rho^{i\bfn}$ expands one of the vectors but contracts the other as $i\rightarrow\infty$. However vectors from the same coarse Lyapunov subspace cannot be distinguished in this way.

This gives the coarse Lyapunov decomposition  \begin{equation}\label{CoarseDecomp}\gog=\bigoplus \gov^{[\chi]}\end{equation}

Both the Lyapunov decomposition \eqref{LyaDecomp} and the coarse Lyapunov decomposition \eqref{CoarseDecomp} are $\rho$-invariant.

One consequence Lemma \ref{LyaBracket} is that, as the Lyapunov exponents in each coarse Lyapunov subspace $\gov^{[\chi]}$ are positively proportional to each other and hence closed under addition, $\gov^{[\chi]}$ is a Lie subalgebra. Therefore we have:

\begin{definition} To each coarse Lyapunov subspace $\gov^{[\chi]}$ is associated a closed connected subgroup $V^{[\chi]}=\exp\gov^{[\chi]}\subset G$, which will be called a {\bf coarse Lyapunov subgroup}.\end{definition}

Fixing $\bfn\in\bZ^r$, the direct sum of the $\gov^{\chi}$'s with $\chi(\bfn)<0$ (resp. $>0$) is the stable (resp. neutral, unstable) Lie subalgebra of $\rho^\bfn$ and is denoted by $\gog_{\rho^\bfn}^s$ (resp. $\gog_{\rho^\bfn}^u)$. The corresponding Lie subgroups are denoted respectively by $G_{\rho^\bfn}^s$ and $G_{\rho^\bfn}^u$.

Each coarse Lyapunov subspace $\gov^{[\chi]}$ gives rise to the {\bf Weyl chamber wall} $\ker \chi\subset \bR^r$, which is a hyperplane that divides $\bR^r$ into the {\rm positive (resp. negative) Lyapunov half space} \begin{equation}L^{[\chi],+}=\{\bfw\in\bR^r|\chi(\bfw)>0\}\  (\text{resp. } L^{[\chi],-}=\{\bfw\in\bR^r|\chi(\bfw)<0\}).\end{equation}

\begin{remark}\label{Symplec} The definitions of $\ker\chi$, $L^{[\chi],+}$ and $L^{[\chi],-}$ clearly do not depend on the choice of $\chi$ representing $[\chi]$, and two different coarse Lyapunov subspaces $\gov^{[\chi]}$ and $\gov^{[\chi']}$ give rise to the same Weyl chamber wall if and only if the exponents $\chi$ and $\chi'$ are negatively proportional, in which case $L^{[\chi],+}=L^{[\chi'],-}$ and $L^{[\chi],-}=L^{[\chi'],+}$.\end{remark}

\begin{definition}\label{WeylChamber}The connected components of $\bR^r\backslash\bigcup_{[\chi]}\ker\chi$ are called {\bf Weyl chambers} of $\rho$.\end{definition}

By construction, Weyl chambers are the minimal non-trivial intersections of different Lyapunov half spaces.

\subsection{Correspondence between foliations} As earlier, $\alpha$ is a $\bZ^r$-action on $M$, with Anosov element  $\alpha^\bfn_0$ and linearization $\rho$.  For simplicity, let $\gog^s$, $\gog^u$, $G^s$ and $G^u$ denote respectively  $\gog_{\rho^{\bfn_0}}^s$, $\gog_{\rho^{\bfn_0}}^u$, $G_{\rho^{\bfn_0}}^s$ and $G_{\rho^{\bfn_0}}^u$. It is easy to check that for $x\in M$,  $G^sx$,  $G^ux$ are just the  stable and unstable foliations $\cW_\rho^s(x)$ and $\cW_\rho^u(x)$ for the affine automorphism $\rho^{\bfn_0}$. Since $\alpha$ and $\rho$ are conjugate by $H$, we have:

\begin{equation}\label{CorreUS}H(\cW_\alpha^s(x))=G^sH(x),\;\;\;\; H(\cW_\alpha^u(x))=G^uH(x).\end{equation}

In particular, for $\square\in\{s,u\}$, $G^\square x$ is a manifold with dimension $\dim\cW_\alpha^\square$ and thus $\dim\gog^\square=\dim\cW_\alpha^\square=\dim E_\alpha^\square$. It follows that $\dim\gog^s+\dim\gog^u=N=\dim\gog$. Since  $\gog^s$ and $\gog^u$ have trivial intersection by construction, $\gog=\gog^s\oplus\gog^u$. In other words, $\rho^{\bfn_0}$ is a hyperbolic automorphism, or equivalently, for all Lyapunov exponent $\chi$ for $\rho$, $\chi(\bfn_0)\neq 0$.

This implies there is no trivial exponent in the Lyapunov decomposition \eqref{LyaDecompEq} and justifies Definition \ref{WeylChamber}.

\subsection{Rank assumption and ergodicity of generic elements}\label{secRank}

We will assume $\rho:\bZ^r\curvearrowright M$ has no rank-one factor in the sense below following previous literatures, e.g. \cites{KK99,KKS02}.

Let $M=G/\Gamma$ be a nilmanifold. Suppose there is a surjective morphism $\pi: G\mapsto\breve G$ to a nontrivial nilpotent Lie group $\breve G$ of lower dimension, such that $\Gamma$ is projected to another cocompact discrete subgroup $\breve\Gamma\subset\breve G$. In this case $\pi$ defines a projection, which is still denoted by $\pi$, from $M$ to $\breve M=\breve G/\breve \Gamma$. We say the new nilmanifold $\breve M$ is an {\bf algebraic factor} of $M$. 

Furthermore, suppose $\rho$ is a $\bZ^r$-action on $M$ by affine automorphisms. If the linear part of $\rho$, which is an action by group automorphisms of $G$ and is denoted also by $\rho$, preserves the kernel of the projection $\pi$, then $\rho$ descends to an action on $\breve M$ by automorphisms, which is a factor action of $\rho$. 

\begin{definition}\label{Factor} A $\bZ^r$-action $\rho$ on a nilmanifold $M$ by automorphisms is said to have a {\bf rank-one factor} if $\rho$ admits a factor action $\breve\rho$ on a nontrivial algebraic factor $\breve M$ of $M$, and for some finite-index subgroup  $\Lambda\subset\bZ^r$,  the linearization $\{(\breve\rho)^\bfn:\bfn\in\Lambda\}$ consists of a cyclic group of affine automorphisms. \end{definition}

A $k$-step nilmanifold $M=G/\Gamma$ arises as principal bundles. The center $G'= [G,G]$ is a $(k-1)$-step nilpotent normal subgroup of $G$, and both $M'=G'/(G'\cap\Gamma)$ and $M_0=(G/G')/\big(\Gamma/(G'\cap\Gamma)\big)$ are compact. $M$ is a bundle over $M_0$ with $M'$ fibres. $M'$ is a $(k-1)$-step nilmanifold. $M_0$ is a torus as $G/G'$ is abelian, and is naturally an algebraic factor of $M$, called the maximal torus factor of $M$. Since $G'$ is proper in $G$, $\dim M_0>0$ as long as $\dim M>0$.

Since automorphisms of $G$ preserve the center subgroup $G'$, any action $\rho$ by automorphisms projects to $M_0$.

By \cite{S99}, having no non-trivial factor is actually equivalent to the assumption in \cite{FKS13} that there is a $\bZ^2$-subaction whose all non-trivial elements act ergodically. In fact, we have:

\begin{lemma}\label{rank2}Let $\rho:\bZ^r\curvearrowright M$ be an action by nilmanifold automorphisms. Then the following are equivalent:
\begin{enumerate}
\item $\rho$ has no rank-one factor;
\item There is a subgroup $\Sigma\subset\bZ^r$ such that $\Sigma\cong\bZ^2$ and $\rho^\bfn$ is an ergodic toral automorphism for all $\bfn\in\Sigma\backslash\{\bf0\}$.
\item There are finitely many subgroups $A_1,\cdots,A_k\subset \bZ^r$, all of rank at most $r-2$, whose union contains every $\bfn\in\bZ^r$ that fails to act ergodically by $\rho$;
\end{enumerate}
\end{lemma}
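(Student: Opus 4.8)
The plan is to reduce the statement to the case of tori and then to a purely lattice‑theoretic fact about the \emph{non‑ergodic locus} $\cZ:=\{\bfn\in\bZ^r:\rho^\bfn\text{ does not act ergodically}\}$. First I would pass to the maximal torus factor $M_0$ of $M$: an automorphism of a compact nilmanifold acts ergodically for Haar measure if and only if the automorphism it induces on the maximal torus factor does (a classical fact; see \cite{S99}), so $\cZ$ is unchanged when $M$ is replaced by $M_0$, and hence conditions (2) and (3) are literally the same for $\rho$ and for $\rho|_{M_0}$. Likewise $\rho$ has a rank‑one factor on $M$ if and only if $\rho|_{M_0}$ has one on $M_0$: every torus algebraic factor of $M$ is a quotient of $M_0$, and conversely the maximal torus factor of a rank‑one factor $\breve M$ of $M$ is a nontrivial torus factor of $M$ (hence of $M_0$) on which $\rho$ acts through a further quotient of the cyclic‑up‑to‑finite‑index action on $\breve M$, so it is a rank‑one factor of $M_0$. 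Thus I may assume $M=\bT^d=\bR^d/\bZ^d$ with $d\geq1$ and $\rho:\bZ^r\to\GL(d,\bZ)$.

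I would next describe $\cZ$ explicitly. With the common generalized eigenspace decomposition $\gog\otimes_\bR\bC=\bigoplus_jW_j$ and characters $\zeta_j:\bZ^r\to\bC^\times$ as in the proof of Lemma \ref{LyaDecomp} (so $\rho^\bfn|_{W_j}=\zeta_j(\bfn)A_j^\bfn$ with $A_j^\bfn$ unipotent), the eigenvalues of $\rho^\bfn$ are exactly the $\zeta_j(\bfn)$, and by the classical criterion $\rho^\bfn$ acts ergodically iff none of them is a root of unity. Hence $\cZ=\bigcup_jZ_j$, where $Z_j:=\{\bfn\in\bZ^r:\zeta_j(\bfn)\text{ is a root of unity}\}$ is a subgroup of $\bZ^r$ (the preimage under $\zeta_j$ of the torsion subgroup of $\bC^\times$). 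I claim that all three conditions are equivalent to: \emph{every $Z_j$ has rank $\leq r-2$}. Condition (3) is merely a restatement of this, using that a finitely generated abelian group is never a finite union of subgroups of strictly smaller rank. It implies (2): by a dimension count a generic rational $2$‑plane in $\bR^r$ meets each span $\overline{Z_j}$ only at the origin, and then $\Sigma:=\bZ^r\cap(\text{such a plane})\cong\bZ^2$ has all nonzero elements ergodic. Conversely, if some $Z_{j_0}$ has rank $\geq r-1$ then $\overline{Z_{j_0}}$ meets every rational $2$‑plane, so every $\Sigma\cong\bZ^2$ contains — after rescaling back into $Z_{j_0}$ — a nonzero non‑ergodic element, so (2) fails. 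Finally (3) implies (1): a rank‑one factor $\breve M$ would provide a finite‑index $\Lambda\subset\bZ^r$ with $\breve\rho(\Lambda)$ cyclic, so $K:=\ker(\breve\rho|_\Lambda)$ has rank $\geq r-1$; each $\bfn\in K$ fixes the nontrivial quotient torus $\breve M$ pointwise, hence $\rho^\bfn$ has eigenvalue $1$ and lies in $\cZ=\bigcup_jZ_j$, forcing some $Z_j$ to have rank $\geq r-1$, a contradiction.

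To close the equivalences it remains to prove (1) $\Rightarrow$ (3): if some $Z_{j_0}$ has rank $\geq r-1$ then $\rho$ has a rank‑one factor. I would let $\cO$ be the $\Gal(\overline{\bQ}/\bQ)$‑orbit of the index $j_0$ (Galois permutes the $W_j$ because the generators $\rho^{\bfe_i}$ are integer matrices) and let $L\subseteq\bR^d$ be the nonzero rational $\rho$‑invariant subspace with $L\otimes_\bR\bC=\bigoplus_{j\in\cO}W_j$. Since Galois conjugation preserves roots of unity, $Z_j=Z_{j_0}$ for all $j\in\cO$, so for a suitable integer $m$ (bounded in terms of $d$) the subgroup $\Sigma:=mZ_{j_0}$ still has rank $\geq r-1$ while $\rho^\bfn|_L$ is unipotent for every $\bfn\in\Sigma$. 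The commuting family $\{\rho^\bfn|_{L^*}:\bfn\in\Sigma\}$ of unipotent operators on the dual space $L^*$ then has a nonzero common fixed subspace $P\subseteq L^*$, which is rational and — because $\rho(\bZ^r)$ commutes with $\rho(\Sigma)$ — is $\rho(\bZ^r)$‑invariant. Hence $P^\perp\subsetneq L$ is rational and $\rho$‑invariant, the quotient $\rho$‑module $L/P^\perp$ is nonzero with $\Sigma$ acting trivially, and through the rational isomorphism $\bR^d\big/\bigoplus_{j\notin\cO}W_j\cong L$ it is realized as a nonzero $\bQ$‑defined quotient $\rho$‑module of $\bR^d$. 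This corresponds to a nontrivial quotient torus $\breve M$ of $\bT^d$, i.e. a nontrivial algebraic factor of $M$, on which $\rho$ acts through a group of rank $\leq1$; after passing to a finite‑index subgroup of $\bZ^r$ this action is cyclic, so $\breve M$ is a rank‑one factor, contradicting (1).

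I expect this last step to be the main obstacle. The essential difficulty is the possible failure of semisimplicity: the unipotent factors $A_j^\bfn$ are exactly what forbids using $L$ itself, or its $\rho$‑invariant complement, as the sought factor, and the remedy — passing to the common fixed subspace inside the \emph{dual} space $L^*$, together with the Galois descent that keeps every subspace defined over $\bQ$ so that it integrates to an honest subtorus — is the technical heart of the argument. The inputs I would quote rather than reprove are the ergodicity criterion for a single toral automorphism (no eigenvalue a root of unity) and its propagation to nilmanifold automorphisms via the maximal torus factor.
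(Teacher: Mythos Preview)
Your argument is correct.  The overall architecture---reduce to the maximal torus factor, encode non-ergodicity by the subgroups $Z_j=\{\bfn:\zeta_j(\bfn)\text{ a root of unity}\}$, and show that all three conditions are equivalent to ``every $Z_j$ has rank $\leq r-2$''---matches the paper exactly for the equivalence $(2)\Leftrightarrow(3)$ and for the nilmanifold-to-torus reduction (though the ergodicity criterion via the maximal torus factor is Parry \cite{P69}, not \cite{S99}).

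The genuine divergence is in how you handle $(1)$.  The paper simply quotes Starkov \cite{S99} for $(1)\Leftrightarrow(2)$ in the torus case and moves on.  You instead give a direct construction for $(1)\Rightarrow(3)$: starting from a $Z_{j_0}$ of rank $\geq r-1$, you take the Galois orbit to obtain a rational $\rho$-invariant subspace $L$, pass to a finite-index subgroup on which the action on $L$ is unipotent, and then use the common fixed subspace in the dual $L^*$ to manufacture a rational $\rho$-invariant \emph{quotient} on which a rank-$(r-1)$ subgroup acts trivially.  This is more work than citing Starkov, but it makes the lemma self-contained and isolates exactly where the difficulty lies (the failure of semisimplicity forces the passage to $L^*$, and rationality of the resulting factor requires the Galois descent).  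Your $(3)\Rightarrow(1)$ via the kernel of a rank-one factor is also a clean direct argument that the paper does not spell out separately.  Both approaches are valid; yours trades brevity for transparency.
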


\begin{proof}{\bf Case of tori.} We first prove the lemma assuming $M$ is a torus $\bT^N$. 

The equivalence between (1) and (2) was proved by Starkov in \cite{S99}. In that paper, this equivalence was stated for actions by linear automorphisms of $\bT^N$. But it is not hard to check that if $\rho$ is an action by affine toral automorphisms, then, after relabeling a new point as the origin of $\bT^N$ if necessary, the restriction of $\rho$ to a finite-index subgroup $\Lambda$ of $\bZ^r$ consists of linear automorphisms. This fact allows to easily pass from the linear case to the case of general affine actions.

(2)$\Rightarrow$(3): As all the $\rho^\bfn$'s, regarded as elements of $\GL(N,\bZ)$, are commuting matrices, there is a basis in $\bC^N$ that upper-triangularizes them simultaneously. Denote by $\zeta_i^\bfn$ the $i$-th diagonal entry of $\rho^\bfn$ in this basis. Then $\zeta_i^\bfn:\bZ^r\mapsto\bC^\times$ is a group morphism.

It is well known that a toral automorphism is not ergodic if and only if at least one eigenvalue is a root of unity. Now fix $1\leq i\leq N$ and look at $A_i:=\{\bfn\in\bZ^r:\zeta_i^\bfn\text{ is a root of unity}\}$, then $A_i$ is a subgroup of $\bZ^r$ and it suffices to show that $\rank(A_i)\leq r-2$. Suppose otherwise, then any rank-2 subgroup of $\bZ^r$ has a non-trivial intersection with $A_i$, hence contains non-zero elements that acts non-ergodically under $\rho$, which contradicts (2).

(3)$\Rightarrow$(2): let $U_i\subset\bR^r$ be the subspace spanned by $A_i$, then since $\dim U_i=\rank A_i\leq r-2$, $Y_i:=\{P\in\Gr(2,r):\dim (P\cap U_i)>0\}$ is a proper subvariety in the Grassmannian $\Gr(2,r)$ of two dimensional planes in $\bR^r$. As rational subspaces form a dense subset of $\Gr(2,r)$, a generically positioned rational plane $P$ does not belong to any of the $Y_i$'s. Note $\Sigma:=P\cap\bZ^r$ is isomorphic to $\bZ^2$ by rationality, and $\Sigma\cap A_i\subset P\cap U_i=\{\bf0\}$. Hence $\Sigma$ satisfies the requirements in (2).

\noindent{\bf General nilmanifolds.} As we already treated the case of tori, it suffices to show each of the assertions (1)-(3) for a nilmanifold $M$ is equivalent to the corresponding claim for the induced action on the maximal torus factor $M_0$ of $M$. 

For (2) and (3), this directly follows from Parry's theorem \cite{P69} that an automorphism on $M$ is ergodic if and only if it induces an ergodic automorphism of $M_0$. So we only need to show the action $\rho:\bZ^r\curvearrowright M$ has a rank-one factor if and only if the induced action on $M_0$ does. The ``if" part is obvious since factors of $M_0$, together with the induced actions on them, descends from $M$ through $M_0$. Assume now $\rho$ has a rank-one factor, or equivalently, for some surjective morphism $G\mapsto G/L$ where $L$ is a closed normal proper subgroup of $G$ and is invariant under the lifting of the action $\rho$ to $G$, $\Gamma/(L\cap\Gamma)$ is discrete in $G/L$ and the induced action on the nilmanifold $Y=(G/L)/\big(\Gamma/(L\cap\Gamma)\big)$ is cyclic up to finite index. Consider the maximal torus factor $Y_0$ of $Y$, which is of positive dimension. It is the quotient of the Lie group \begin{equation}\begin{split}(G/L)/[G/L,G/L]=&(G/L)/\big(G'/(G'\cap L)\big)=G/G'L\\
=&(G/G')/\big(L/(G'\cap L)\big)\end{split}\end{equation} by the natural projection of $\Gamma$ in it,  where $G'=[G,G]$. Thus $Y_0$ is an algebraic factor of $M_0=(G/G')/\big(\Gamma/(G'\cap\Gamma)\big)$. Since the lifted action of $\rho$ on $G$ preserves $G'$, $L$,  and $\Gamma$, $\rho$ factors onto an action on $Y_0$ through $M_0$. This action, which also descends from that on $Y$, must be cyclic up to finite index. In other words, $\rho:\bZ^r\curvearrowright M_0$ has a rank-one factor. \end{proof}

The proof of the implication (3)$\Rightarrow$(2) may be restated as:

\begin{corollary}\label{GenePlane} If $\rho$ has no rank-one factor, then in the Grassmannian $\Gr(2,r)$ of $2$-dimensional subspaces in $\bR^r$, there is an open dense subset $\Omega$ such that for any rational subspace $P\in \Omega$ and any non-zero element $\bfn\in P\cap\bZ^r$, $\rho^\bfn$ is ergodic.
\end{corollary}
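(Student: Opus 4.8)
The plan is to re-read the proof of the implication $(3)\Rightarrow(2)$ in Lemma~\ref{rank2} and observe that it already produces, along the way, an open dense family of $2$-planes with the required property; the corollary is then just a matter of recording this.

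First I would invoke Lemma~\ref{rank2}: since $\rho$ has no rank-one factor, its part $(3)$ supplies finitely many subgroups $A_1,\dots,A_k\subset\bZ^r$, each of rank at most $r-2$, whose union contains every $\bfn$ for which $\rho^\bfn$ is non-ergodic. Writing $U_i\subset\bR^r$ for the $\bR$-span of $A_i$, we have $\dim U_i=\rank A_i\le r-2$. I would then set $Y_i=\{P\in\Gr(2,r):\dim(P\cap U_i)\ge 1\}$ and $\Omega=\Gr(2,r)\setminus\bigcup_{i=1}^k Y_i$. The key geometric point is that each $Y_i$ is a \emph{proper} Zariski-closed subset of the irreducible variety $\Gr(2,r)$: closedness is because $Y_i$ is a Schubert-type incidence locus, and properness follows from a dimension count, since $\dim U_i\le r-2$ forces a generic $2$-plane to meet $U_i$ only at the origin. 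Consequently $\Omega$ is open and dense.

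It remains to check that every rational $P\in\Omega$ works. For such $P$, rationality makes $\Sigma:=P\cap\bZ^r$ a rank-$2$ lattice, so $\Sigma\cong\bZ^2$; and since $P\notin Y_i$ we get $P\cap U_i=\{0\}$, hence $\Sigma\cap A_i\subset P\cap U_i=\{0\}$ for all $i$. Therefore no nonzero element of $P\cap\bZ^r$ lies in $\bigcup_{i=1}^k A_i$, so every nonzero $\bfn\in P\cap\bZ^r$ acts ergodically under $\rho$, as claimed. That $\Omega$ actually contains rational planes is automatic, since rational planes are dense in $\Gr(2,r)$.

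I do not expect any real obstacle: the argument is essentially a transcription of the already-established implication $(3)\Rightarrow(2)$. The only ingredient that is not completely formal is the classical fact that the locus of $2$-planes meeting a fixed subspace of codimension $\ge 2$ nontrivially is a proper subvariety of the Grassmannian, which one proves by the usual dimension estimate on the incidence variety $\{(P,v):0\neq v\in P\cap U_i\}$ or by an explicit Pl\"ucker/Schubert description.
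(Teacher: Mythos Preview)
Your proposal is correct and follows essentially the same approach as the paper: the paper explicitly says that the corollary is a restatement of the proof of the implication $(3)\Rightarrow(2)$ in Lemma~\ref{rank2}, and your write-up reproduces that argument (defining $U_i$, $Y_i$, and $\Omega=\Gr(2,r)\setminus\bigcup Y_i$, then checking that rational $P\in\Omega$ give the desired $\Sigma$) with the same justifications.
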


{\quote

From now on we will adopt notations developed in Section \ref{Prelim} and work under the following assumptions unless otherwise remarked:\it 
\begin{itemize}
\item  $M$ is a compact nilmanifold, with standard differential structure;\advance\rightskip 1cm
\item $\alpha^{\bfn_0}$ is an Anosov diffeomorphism;
\item The linearization $\rho:\bZ^r\curvearrowright M$ of $\alpha$ has no rank-one factor. \advance\rightskip 1cm
\end{itemize}
}

\section{Smooth conjugacy in certain coarse Lyapunov subspaces}\label{secSmooth}

Since the Franks-Manning conjugacy $H$ is H\"older continuous homotopic to identity, there is a H\"older continuous function $h:M\mapsto G$ such that $H(x)=h(x)x$ for all $x\in M$. The conjugacy map $H$ is $C^\infty$ if the map $h:M\mapsto G$ is $C^\infty$. While it is hard to show $h$ is $C^\infty$ at once we will make a group decomposition of $G$ into coarse Lyapunov subgroups $V$'s, and study the $V$-component $h_V$ of $h$ in the decomposition. It will be shown that for certain Lyapunov subgroup $V$, the restrictions of $h_V$ to stable manifolds of $\alpha^{\bfn_0}$ are $C^\infty$.

\subsection{The strong stable -- weak stable -- unstable decomposition}\label{secSVU}

We have already remarked that $\chi(\bfn_0)$ does not vanish for any Lyapunov exponent $\chi$, that is, $\bfn_0$ lies in non of the Weyl chamber walls, and thus it is in the interior of one Weyl chamber $\cC_0$. 

From now on, we fix a Weyl chamber $\cC$ that is adjacent to $\cC_0$, and let $\chi$ be a Lyapunov exponent from the Lyapunov decomposition such that $\ker\chi$ be the Weyl chamber wall between $\cC$ and $\cC_0$. Denote by $\gov$ the coarse Lyapunov subspace $\gov^{[\chi]}$, and let $V=V^{[\chi]}=\exp\gov$. 

Note that $-\bfn_0$ is in the opposite Weyl chamber $-\cC_0$ and $\alpha^{-\bfn_0}$ is also Anosov. Therefore, without loss of generality, we may assume $V\subset G^s$ or equivalently $\cC_0\subset L^{[\chi],-}$. This is because otherwise $V\subset G^u$ is stable under $-\bfn_0$, and we can always study $-\bfn_0$ and $-\cC_0$ instead.

We define the strong stable subspace of $\rho^{\bfn_0}$ by $\gog^{ss}=\bigoplus_{\substack{\gov^{[\chi']}\neq \gov\\ \gov^{[\chi']}\subset\gog^s}}\gov^{[\chi']}$, which is the complement of $\gov$ in $\gog^s$ in the coarse Lyapunov decomposition.

\begin{lemma}\label{SUsubalg}\begin{enumerate}
\item $\gog^{ss}$ is a Lie subalgebra;
\item $\gov\oplus\gog^u$ is a Lie subalgebra;
\item $[\gov,\gog^{ss}]\subset\gog^{ss}$;

\end{enumerate}\end{lemma}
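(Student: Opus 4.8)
The plan is to reduce all three assertions to the bracket relation $[\gov^{\chi_1},\gov^{\chi_2}]\subset\gov^{\chi_1+\chi_2}$ of Lemma~\ref{LyaBracket} together with elementary sign bookkeeping for the exponents, viewed as linear functionals on $\bR^r$. Throughout I would use the standing facts: $\rho^{\bfn_0}$ is hyperbolic, so every Lyapunov exponent $\psi$ has $\psi(\bfn_0)\neq 0$; the normalization $\cC_0\subset L^{[\chi],-}$ gives $\chi(\bfn_0)<0$ and $\gov\subset\gog^s$; and $\gog^{ss}=\bigoplus\gov^{[\psi]}$, the sum over coarse exponents $[\psi]$ with $[\psi]\neq[\chi]$ and $\psi(\bfn_0)<0$.

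The one piece of genuine input I would isolate first is geometric. Fix a point $\bfn_1$ in the interior of the adjacent chamber $\cC$. I claim: (i) $\chi(\bfn_1)>0$; and (ii) for every Lyapunov exponent $\psi$ with $[\psi]\neq[\chi]$, the numbers $\psi(\bfn_0)$ and $\psi(\bfn_1)$ have the same sign. Statement (i) is just $\cC\subset L^{[\chi],+}$. For (ii): $\psi$ has a constant nonzero sign on each of the connected sets $\cC_0$ and $\cC$ (both lie in $\bR^r\setminus\ker\psi$). Since $\cC_0$ and $\cC$ lie on opposite sides of $\ker\chi$ and in each other's closure along a codimension-one face, the common boundary $\overline{\cC_0}\cap\overline{\cC}$ is an $(r-1)$-dimensional subset of $\ker\chi$; it is therefore not covered by the finitely many subspaces $\ker\chi\cap\ker\psi'$ (each of dimension at most $r-2$) taken over coarse exponents $[\psi']\neq[\chi]$, so it contains a point $\bfw$ with $\psi(\bfw)\neq 0$. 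Approaching $\bfw$ from within $\cC_0$ and from within $\cC$ shows that $\psi(\bfw)$ realizes both the sign of $\psi|_{\cC_0}$ and the sign of $\psi|_{\cC}$, so those agree. This is the only mildly delicate step; everything else is arithmetic.

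For part (1), take $v\in\gov^{\chi_1}$, $w\in\gov^{\chi_2}$ with $\gov^{\chi_i}\subset\gog^{ss}$, so $\chi_i(\bfn_0)<0$ and $[\chi_i]\neq[\chi]$. By Lemma~\ref{LyaBracket}, $[v,w]\in\gov^{\chi_1+\chi_2}$, which we may assume nonzero, hence $\chi_1+\chi_2$ is an exponent. Then $(\chi_1+\chi_2)(\bfn_0)<0$, and by (ii) each $\chi_i(\bfn_1)<0$, so $(\chi_1+\chi_2)(\bfn_1)<0$; by (i) this forbids $[\chi_1+\chi_2]=[\chi]$, whence $\gov^{\chi_1+\chi_2}\subset\gog^{ss}$. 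For part (3), take $v\in\gov^{\chi_1}$ with $\chi_1=c\chi$, $c>0$, and $w\in\gov^{\chi_2}\subset\gog^{ss}$; then $[v,w]\in\gov^{\chi_1+\chi_2}$, and assuming it is nonzero, $(\chi_1+\chi_2)(\bfn_0)<0$. If $[\chi_1+\chi_2]=[\chi]$, say $\chi_1+\chi_2=c'\chi$ with $c'>0$, then $\chi_2=(c'-c)\chi$: the case $c'>c$ forces $[\chi_2]=[\chi]$, the case $c'<c$ forces $\chi_2(\bfn_0)=-(c-c')\chi(\bfn_0)>0$, and $c'=c$ forces $\chi_2=0$, each contradicting $\gov^{\chi_2}\subset\gog^{ss}$. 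So $[\chi_1+\chi_2]\neq[\chi]$ and $\gov^{\chi_1+\chi_2}\subset\gog^{ss}$.

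For part (2), decompose arbitrary elements of $\gov\oplus\gog^u$ into Lyapunov components; it suffices to treat $v\in\gov^{\chi_1}$, $w\in\gov^{\chi_2}$ where each $\chi_i$ is either a positive multiple of $\chi$ or satisfies $\chi_i(\bfn_0)>0$. In both situations $\chi_i(\bfn_1)>0$: if $\chi_i=c_i\chi$ this is (i), while if $\chi_i(\bfn_0)>0$ then $[\chi_i]\neq[\chi]$ (a positive multiple of $\chi$ is negative on $\bfn_0$), so (ii) applies. Hence, assuming $[v,w]\in\gov^{\chi_1+\chi_2}$ is nonzero, $(\chi_1+\chi_2)(\bfn_1)>0$. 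If moreover $(\chi_1+\chi_2)(\bfn_0)>0$, then $\gov^{\chi_1+\chi_2}\subset\gog^u$; if instead $(\chi_1+\chi_2)(\bfn_0)<0$, then $\chi_1+\chi_2$ changes sign between $\bfn_0\in\cC_0$ and $\bfn_1\in\cC$, which by (ii) forces $[\chi_1+\chi_2]=[\chi]$ and so $\gov^{\chi_1+\chi_2}\subset\gov$. Either way $[v,w]\in\gov\oplus\gog^u$. The main obstacle is verifying observation (ii) — that moving to the adjacent chamber flips the sign of $\chi$ and nothing else; once that is in hand, all three parts follow immediately from Lemma~\ref{LyaBracket}.
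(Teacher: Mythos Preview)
Your claim (ii) is false as stated, and this creates a genuine gap in your treatment of part~(2). The issue is the coarse exponent $[-\chi]$: if some Lyapunov exponent $\psi$ is a \emph{negative} multiple of $\chi$, then $[\psi]\neq[\chi]$ but $\ker\psi=\ker\chi$, so $\ker\chi\cap\ker\psi$ is $(r-1)$-dimensional, not at most $(r-2)$-dimensional as you assert. And indeed for such $\psi$ the signs at $\bfn_0$ and $\bfn_1$ are opposite, not the same. Concretely: in part~(2) you claim that if $\chi_i(\bfn_0)>0$ then $\chi_i(\bfn_1)>0$, invoking (ii). But $\gov^{[-\chi]}\subset\gog^u$ (since $(-\chi)(\bfn_0)>0$), and for $\chi_i\in[-\chi]$ one has $\chi_i(\bfn_1)<0$. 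So the displayed inequality $(\chi_1+\chi_2)(\bfn_1)>0$ can fail---take for instance $\chi_1=\chi$ and $\chi_2=-2\chi$. (Your uses of (ii) in part~(1) happen to be safe, because $\chi_i(\bfn_0)<0$ already rules out $[\chi_i]=[-\chi]$; but the contrapositive use at the end of part~(2), ``by (ii) forces $[\chi_1+\chi_2]=[\chi]$'', should likewise read ``forces $[\chi_1+\chi_2]\in\{[\chi],[-\chi]\}$''.)

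The gap is repairable by adding a separate case analysis for the $[-\chi]$ components, but the paper's device is cleaner and worth knowing: instead of $\bfn_1\in\cC$, choose a point $\bfz$ in the relative interior of the common wall $\overline{\cC_0}\cap\overline{\cC}\subset\ker\chi$. Then $\chi(\bfz)=0$, and for every exponent $\chi'$ with $\ker\chi'\neq\ker\chi$ the sign of $\chi'(\bfz)$ agrees with that of $\chi'(\bfn_0)$. This yields the clean identifications $\gog^{ss}=\bigoplus_{\chi'(\bfz)<0}\gov^{\chi'}$ and $\gov\oplus\gog^u=\bigoplus_{\chi'(\bfz)\geq 0}\gov^{\chi'}$ (the latter because both $\gov$ and $\gov^{[-\chi]}$ sit at $\chi'(\bfz)=0$), from which all three assertions follow immediately from Lemma~\ref{LyaBracket} with no case splits. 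The moral: evaluating on the wall treats $[\chi]$ and $[-\chi]$ symmetrically, whereas evaluating in the adjacent chamber flips both and forces you to track them separately.
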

\begin{proof} Take an element $\bfz$ from the open cone inside $\ker\chi$ that touches both $\cC_0$ and $\cC$. Then $\chi(\bfz)=0$. Furthermore, for another coarse Lyapunov subspace $\gov^{[\chi']}$ where $\chi'$ is not negatively proportional to $\chi$, $\ker\chi'$ does not coincide with $\ker\chi$ by Remark \ref{Symplec}; so $\bfz$ and $\cC_0$ are on the same side of $\ker\chi'$ and in particular $\chi'(\bfz)$ and $\chi'(\bfn_0)$ has the same sign.  

For every $\gov^{[\chi']}\subset\gog^{ss}$, because $\chi'(\bfn_0)$ and $\chi(\bfn_0)$ are both negative, $\chi'$ is not negatively proportional to $\chi$. Hence $\chi'(\bfz)$ is negative as $\chi'(\bfn_0)$ is.

For $\gov^{[\chi']}\subset\gog^u$, it is possible that $\chi'$ is negatively proportional to $\chi$. As $\chi'(\bfn_0)>0$, it follows $\chi'(\bfz)>0$ unless $\gov^{[\chi']}=\gov^{[-\chi]}$.

Therefore, \begin{equation}\label{SUsubalgEq}\gog^{ss}=\oplus_{\chi'(\bfz)<0}\gov^{\chi'}, \gog^u=\gov^{[-\chi]}\oplus\bigoplus_{\chi'(\bfz)>0}\gov^{\chi'},\end{equation}
where $\gov^{[-\chi]}$ may be trivial. 

Notice that $\gov\oplus\gov^{[-\chi]}=\bigoplus_{\chi'(\bfz)=0}\gov^{\chi'}$. In consequence, $\gov\oplus\gog^u=\bigoplus_{\chi'(\bfz)\geq 0}\gov^{\chi'}$.

Together with Lemma \ref{LyaBracket}, these formulae imply the lemma. \end{proof}

The strong stable subgroup with respect to $\rho^{\bfn_0}$ is $G^{ss}=\exp\gog^{ss}$. Since $\gog^{ss}\oplus\gov\oplus\gog^u=\gog^s\oplus\gog^u=\gog$ and all the components are Lie subalgebras, $G$ uniquely splits as the product $G^{ss}\cdot V\cdot G^u$.

\begin{lemma} Suppose $H$, $K$ are connected close subgroups of a nilpotent Lie group $G$ and $\goh, \gok\subset\gog$ are the corresponding Lie algebras. If $\goh\oplus\gok=\gog$ then the multiplication map $(a,b)\mapsto ab$ is a $C^\infty$ diffeomorphism from $H\times K$ to $G$.\end{lemma}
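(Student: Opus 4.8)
The plan is to show that the multiplication map $m\colon H\times K\to G$, $m(a,b)=ab$, is simultaneously smooth, injective, a local diffeomorphism, and surjective; together these four properties make $m$ a $C^\infty$ diffeomorphism. Smoothness is clear, $m$ being the restriction of the group multiplication. Throughout I would use that $G$, being simply connected and nilpotent (as in our setting), has $\exp\colon\gog\to G$ a diffeomorphism, and that the restriction of $\exp$ to any subalgebra is a diffeomorphism onto the corresponding connected subgroup — the image is a subgroup by the Baker--Campbell--Hausdorff formula (brackets stay inside the subalgebra) and is connected with the correct Lie algebra, hence closed. In particular $H=\exp\goh$, $K=\exp\gok$, and, by injectivity of $\exp$, $H\cap K=\exp(\goh\cap\gok)=\{e\}$; so $m$ is injective, since $ab=a'b'$ gives $a'^{-1}a=b'b^{-1}\in H\cap K=\{e\}$.

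Next I would compute $Dm$. From $m(a\exp sX,\,b\exp tY)=ab\cdot\exp\big(s\operatorname{Ad}(b^{-1})X\big)\exp(tY)$ for $X\in\goh$, $Y\in\gok$, one sees that $Dm_{(a,b)}$, after identifying $T_aH$, $T_bK$, $T_{ab}G$ with $\goh,\gok,\gog$ via left translations, is the map $\goh\oplus\gok\to\gog$, $(X,Y)\mapsto\operatorname{Ad}(b^{-1})X+Y$. As $\dim\goh+\dim\gok=\dim\gog$ it suffices to see this is injective. If $\operatorname{Ad}(b^{-1})X=-Y\in\gok$, then $\operatorname{Ad}(b^{-1})X\in\operatorname{Ad}(b^{-1})\goh\cap\gok$; but $\operatorname{Ad}(b^{-1})\goh$ is the Lie algebra of $b^{-1}Hb$, and $b^{-1}Hb\cap K=\{e\}$ (if $b^{-1}hb\in K$ then $h=b(b^{-1}hb)b^{-1}\in K$ since $b\in K$, so $h\in H\cap K=\{e\}$), so once more by injectivity of $\exp$ we get $\operatorname{Ad}(b^{-1})\goh\cap\gok=\{0\}$, forcing $X=Y=0$. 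Thus $Dm$ is everywhere invertible; being injective as well, $m$ is an open embedding onto the open set $HK\subseteq G$.

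The remaining, and only substantial, point is surjectivity, $HK=G$, which I would prove by induction on $\dim G$. The case $\dim G=0$ is trivial, and if $\goh=\gog$ (so $\gok=\{0\}$) or $\gok=\gog$ there is nothing to show; so assume $\goh\subsetneq\gog$. Here I would invoke the standard fact that the image of a proper subalgebra of a nilpotent Lie algebra in the abelianization $\gog/[\gog,\gog]$ is again proper — equivalently $\goh+[\gog,\gog]\subsetneq\gog$. Hence $\goh+[\gog,\gog]$ lies in a hyperplane $\mathfrak a$ of $\gog$ with $[\gog,\gog]\subseteq\mathfrak a$; such an $\mathfrak a$ is automatically an ideal (as $[\gog,\mathfrak a]\subseteq[\gog,\gog]\subseteq\mathfrak a$), contains $\goh$, and does not contain $\gok$ (else $\gog=\goh+\gok\subseteq\mathfrak a$). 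A dimension count then gives $\mathfrak a=\goh\oplus(\gok\cap\mathfrak a)$ with both summands subalgebras, and $A:=\exp\mathfrak a$ is a simply connected nilpotent group of smaller dimension with connected closed subgroups $H=\exp\goh$ and $K\cap A=\exp(\gok\cap\mathfrak a)$. By the inductive hypothesis $H\,(K\cap A)=A$. On the other hand $\mathfrak a+\gok=\gog$ with $\mathfrak a$ an ideal, so $AK$ is a connected subgroup of full dimension, hence open, hence $AK=G$. Therefore $G=AK=H\,(K\cap A)\,K=HK$, because $(K\cap A)K=K$. This closes the induction, and since a smooth bijection that is a local diffeomorphism is a diffeomorphism, the lemma follows.

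I expect the surjectivity step to be the crux, and specifically the point that one must reduce through a codimension-one ideal $\mathfrak a$ chosen so that the direct-sum-of-subalgebras hypothesis survives in $A=\exp\mathfrak a$. Nilpotency of $\gog$ is used essentially there — via the fact that a proper subalgebra misses part of the abelianization — and the statement genuinely fails without it.
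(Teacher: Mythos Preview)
Your proof is correct. The paper's argument differs mainly in what it outsources and how it handles the differential. For surjectivity the paper simply cites Malyshev \cite{M78} (``Decompositions of nilpotent Lie algebras''), whereas you supply a self-contained induction via a codimension-one ideal containing $\goh+[\gog,\gog]$; your argument is essentially a proof of the cited result, so nothing is lost and your write-up is more informative. For the differential, the paper observes that $m\circ(L_a\times R_b)=L_a\circ R_b\circ m$, so by translation it suffices to check non-degeneracy at $(e,e)$, where $Dm$ is just $(X,Y)\mapsto X+Y$ on $\goh\oplus\gok=\gog$. Your direct computation $Dm_{(a,b)}(X,Y)=\operatorname{Ad}(b^{-1})X+Y$ together with $\operatorname{Ad}(b^{-1})\goh\cap\gok=\{0\}$ reaches the same conclusion with a bit more work but no appeal to equivariance; either route is fine.
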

\begin{proof}The smoothness and injectivity of the map are easy to see. For surjectivity, see \cite{M78}. It remains to confirm that the inverse of the multiplication map is also smooth. This is equivalent to that the differential of the multiplication map is non-degenerate everywhere. Making use of group translation, one only needs to look at the differential at the identity $(e,e)$, which is the isomorphism $(\di a,\di b)\mapsto \di a+\di b$ between $\goh\oplus\gok$ and $\gog$.\end{proof}
 
\begin{corollary}\label{SVU}\begin{enumerate}\item The multiplication map $(a,b,c)\mapsto abc$
from $G^{ss}\times V\times G^u$ to $G$ is a $C^\infty$ diffeomorphism;
\item $(a,b)\mapsto ab$ is a $C^\infty$ diffeomorphism from $G^{ss}\times V$ to $G^s$.\end{enumerate}\end{corollary}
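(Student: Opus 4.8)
The plan is to deduce both statements from the preceding lemma by applying it twice: once inside the nilpotent group $G^s$, and once inside $G$ itself, and then composing.

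First I would establish (2). Note that $G^{ss}=\exp\gog^{ss}$ and $V=\exp\gov$ are connected closed subgroups of $G^s=\exp\gog^s$: indeed $\gog^{ss}$ is a Lie subalgebra by Lemma \ref{SUsubalg}(1) and $\gov$ is one as it is a coarse Lyapunov subspace (the remark preceding the definition of coarse Lyapunov subgroup), and both sit inside $\gog^s$. Moreover $\gog^{ss}\oplus\gov=\gog^s$ by the very definition of $\gog^{ss}$ as the coarse-Lyapunov complement of $\gov$ in $\gog^s$. Since $G^s$, being a connected subgroup of the nilpotent group $G$, is itself a nilpotent Lie group, the preceding lemma applied with $G$ replaced by $G^s$, $H=G^{ss}$, $K=V$, shows that $(a,b)\mapsto ab$ is a $C^\infty$ diffeomorphism from $G^{ss}\times V$ onto $G^s$. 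That is precisely assertion (2).

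For (1), I would apply the same lemma once more, this time with $H=G^s$, $K=G^u$ inside $G$: here $\gog^s$ and $\gog^u$ are Lie subalgebras (the stable and unstable subalgebras of $\rho^{\bfn_0}$) and $\gog^s\oplus\gog^u=\gog$ because $\rho^{\bfn_0}$ is hyperbolic, so $(x,c)\mapsto xc$ is a $C^\infty$ diffeomorphism $G^s\times G^u\to G$. Composing with the diffeomorphism from (2) crossed with $\mathrm{id}_{G^u}$,
\[
G^{ss}\times V\times G^u \xrightarrow{\ \text{mult}\times\mathrm{id}\ } G^s\times G^u \xrightarrow{\ \text{mult}\ } G,
\]
which sends $(a,b,c)\mapsto(ab,c)\mapsto abc$, exhibits the triple multiplication map as a composition of $C^\infty$ diffeomorphisms, hence itself a $C^\infty$ diffeomorphism.

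Since this is a formal corollary, there is no genuine obstacle; the only items needing a line of justification are that $G^{ss},V,G^s,G^u$ are genuinely closed connected subgroups — immediate because each is the exponential of a Lie subalgebra of the simply connected nilpotent group $G$ on which $\exp$ is a diffeomorphism — and the Lie-algebra direct-sum decompositions, which are already recorded in Lemma \ref{SUsubalg} and in the discussion preceding this corollary. If one prefers to avoid the nested application inside $G^s$, one can argue (1) in a single step exactly as in the proof of the lemma: use left translations to reduce to the differential at $(e,e,e)$, which is the isomorphism $(\di a,\di b,\di c)\mapsto\di a+\di b+\di c:\gog^{ss}\oplus\gov\oplus\gog^u\to\gog$, and combine this with the already-known surjectivity of the multiplication map to conclude the inverse is smooth.
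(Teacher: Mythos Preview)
Your proposal is correct and is precisely the argument the paper has in mind when it says ``This follows immediately from the lemma'': apply the lemma once inside $G^s$ with $H=G^{ss}$, $K=V$ to get (2), once in $G$ with $H=G^s$, $K=G^u$, and compose to get (1). The alternative one-step argument you sketch at the end is also fine and amounts to the obvious three-factor extension of the lemma's proof.
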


This follows immediately from the lemma. 

\begin{definition}Given an element $g\in G$, its unique $G^{ss}VG^u$ decomposition will be written as $g_{ss}g_Vg_u$, and $g_s=g_{ss}g_V$ will stand for the $G^s$ part.\end{definition}

\begin{corollary}\label{UStoSU}For any given $b\in G$, the restriction of the map $a\mapsto (ab)_u$ to $G^u$ is a $C^\infty$ diffeomorphism from $G^u$ to itself. \end{corollary}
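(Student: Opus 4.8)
The approach is to write down an explicit smooth inverse. Set $\phi_b\colon a\mapsto(ab)_u$. Smoothness of $\phi_b$ on all of $G$ is immediate from Corollary \ref{SVU}: right translation $a\mapsto ab$ is smooth $G\to G$, and the coordinate map $g\mapsto g_u$ is smooth $G\to G^u$, being one component of the inverse of the $C^\infty$ multiplication diffeomorphism $G^{ss}\times V\times G^u\to G$; so the restriction $\phi_b|_{G^u}$ is a smooth map into $G^u$, and the whole content is that it is a bijection with smooth inverse.

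Conceptually, $\phi_b$ is just right translation by $b$ transported to $G^u$ through the identification $G^u\cong G^s\backslash G$. Indeed, writing $G^s:=G^{ss}V$, which is a subgroup since $\gog^s=\gog^{ss}\oplus\gov$ is a subalgebra, Corollary \ref{SVU} shows every $g\in G$ factors \emph{uniquely} as $g=g_sg_u$ with $g_s\in G^s$ and $g_u\in G^u$ (consistently with $g_s=g_{ss}g_V$), so $G^sg\mapsto g_u$ is a bijection $G^s\backslash G\to G^u$ carrying the right $b$-action $G^sg\mapsto G^sgb$ to $g_u\mapsto(g_ub)_u=\phi_b(g_u)$. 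Since right translation is a bijection of $G$ permuting the left $G^s$-cosets, $\phi_b$ is a bijection of $G^u$, with inverse coming from right translation by $b^{-1}$.

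To make this concrete and to see that the inverse is smooth, I would verify directly that $\phi_{b^{-1}}\colon c\mapsto(cb^{-1})_u$ inverts $\phi_b$. Given $a\in G^u$, put $c=\phi_b(a)=(ab)_u$; then $ab=(ab)_s\,c$, whence $a=(ab)_s\,(cb^{-1})$. Factoring $cb^{-1}=(cb^{-1})_s(cb^{-1})_u$ yields $a=\big[(ab)_s(cb^{-1})_s\big](cb^{-1})_u$, the bracketed term lying in $G^s$ and $(cb^{-1})_u$ in $G^u$; uniqueness of the $G^sG^u$ factorization together with $a\in G^u$ forces $(cb^{-1})_u=a$, i.e. $\phi_{b^{-1}}(c)=a$, so $\phi_{b^{-1}}\circ\phi_b=\mathrm{id}_{G^u}$; exchanging the roles of $b$ and $b^{-1}$ gives the other composition. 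Since $\phi_{b^{-1}}$ is smooth by the same reasoning as $\phi_b$, the map $\phi_b$ is a $C^\infty$ diffeomorphism of $G^u$. I do not anticipate any essential obstacle; the only points demanding care are that the $G^sG^u$ factorization is genuinely unique — exactly the place where $\gog=\gog^s\oplus\gog^u$, i.e. hyperbolicity of $\rho^{\bfn_0}$, is used — and the bookkeeping of the order of factors, the decomposition $G^{ss}\cdot V\cdot G^u$ being read from left to right so that $G^s$ sits to the left of $G^u$.
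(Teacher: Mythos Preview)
Your proof is correct and takes essentially the same approach as the paper: both identify the explicit smooth inverse $c\mapsto(cb^{-1})_u$ and verify it via the unique $G^sG^u$ factorization coming from Corollary~\ref{SVU}. Your additional interpretation of $\phi_b$ as right translation on $G^s\backslash G$ is a pleasant gloss but not needed for the argument; the paper's version simply rearranges $ab=(ab)_s(ab)_u$ to $(ab)_s^{-1}a=(ab)_ub^{-1}$ and reads off $a=\big((ab)_ub^{-1}\big)_u$ directly.
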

\begin{proof}The map $a\mapsto(ab)_u$ is smooth by Corollary \ref{SVU}. Note the equation $ab=(ab)_s(ab)_u$ also writes $(ab)_s^{-1}a=(ab)_ub^{-1}$. Hence if $a\in U$ then $a=\big((ab)_ub^{-1}\big)_u$, which gives a smooth inverse of the original map. This proves the corollary.\end{proof}

\begin{corollary}\label{Vpart}\begin{enumerate}\item$(ab)_V=a_V(a_ub)_V$.
\item \label{Vpart2} If $a\in G^s$, then $(ab)_V=a_Vb_V$.\end{enumerate}\end{corollary}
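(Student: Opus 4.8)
The plan is to deduce both identities directly from the uniqueness of the $G^{ss}VG^u$ decomposition (Corollary \ref{SVU}), using only one structural input: $V$ normalizes $G^{ss}$. I would record this first. By Lemma \ref{SUsubalg}(3) we have $[\gov,\gog^{ss}]\subset\gog^{ss}$, so for $v\in\gov$ the operator $\mathrm{ad}(v)$ preserves $\gog^{ss}$, hence $\mathrm{Ad}(\exp v)=\exp(\mathrm{ad}(v))$ does too; consequently $\exp(v)\,G^{ss}\exp(-v)=\exp\big(\mathrm{Ad}(\exp v)\gog^{ss}\big)=G^{ss}$. (Equivalently, $\gog^{ss}$ is an ideal of the subalgebra $\gog^s=\gog^{ss}\oplus\gov$, so $G^{ss}$ is normal in $G^s$.) In particular $v_1G^{ss}v_1^{-1}=G^{ss}$ for every $v_1\in V$.

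For part (1), write $a=a_{ss}a_Va_u$ and put $c=a_ub$ with its decomposition $c=c_{ss}c_Vc_u$, so $c_V=(a_ub)_V$ and $c_u=(a_ub)_u$. Then
\[
ab=a_{ss}\,a_V\,c_{ss}\,c_V\,c_u .
\]
Using the normalization fact I would move $a_V$ past $c_{ss}$: $a_Vc_{ss}=(a_Vc_{ss}a_V^{-1})\,a_V$ with $a_Vc_{ss}a_V^{-1}\in G^{ss}$. This rewrites
\[
ab=\big(a_{ss}\,(a_Vc_{ss}a_V^{-1})\big)\cdot\big(a_V c_V\big)\cdot c_u,
\]
which exhibits $ab$ as a product of an element of $G^{ss}$ (since $G^{ss}$ is a group), an element of $V$, and an element of $G^u$, i.e.\ precisely in $G^{ss}VG^u$ order. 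By the uniqueness in Corollary \ref{SVU} the middle factor is $(ab)_V$, so $(ab)_V=a_Vc_V=a_V(a_ub)_V$; incidentally the last factor gives $(ab)_u=(a_ub)_u$, consistent with Corollary \ref{UStoSU}. Part (2) is then immediate: if $a\in G^s$ then $a_u=e$, so $a_ub=b$ and $(a_ub)_V=b_V$, whence $(ab)_V=a_Vb_V$ by (1).

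I do not expect a real obstacle here; the only point requiring care is that what we have available is the normality of $G^{ss}$ \emph{inside $G^s$} (equivalently, that $V$ normalizes $G^{ss}$), not normality inside all of $G$, and one must check that the rearranged triple product lands in the factors in the correct $G^{ss}$–$V$–$G^u$ order before invoking the uniqueness statement.
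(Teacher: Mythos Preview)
Your proof is correct and is essentially identical to the paper's: both decompose $a_ub$, commute $a_V$ past the $G^{ss}$-component using that $V$ normalizes $G^{ss}$, and read off the $V$-factor from the uniqueness in Corollary~\ref{SVU}. The only difference is cosmetic: you (correctly) cite Lemma~\ref{SUsubalg}(3) for the normalization, while the paper attributes it to part~(2).
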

\begin{proof} (1) is proved by the following computation:
\begin{align*}
ab=&a_{ss}a_Va_ub=a_{ss}a_V(a_ub)_{ss}(a_ub)_V(a_ub)_u\\
=&a_{ss}a_V(a_ub)_{ss}a_V^{-1}\cdot a_V(a_ub)_V\cdot (a_ub)_u
\end{align*}
Note that $a_V(a_ub)_V\in V$ and $a_{ss}a_V(a_ub)_{ss}a_V^{-1}\in G^{ss}$, using the fact that $V$ normalizes $G^{ss}$, a consequence to Lemma \ref{SUsubalg}.(2).

(2) is an immediate consequence to (1). 
\end{proof}

$h(x)$ splits uniquely as $h_{ss}(x)h_V(x)h_U(x)$ by Corollary \ref{SVU}. Write $h_s=h_{ss}\cdot h_V$. The functions $h_s$, $h_{ss}$, $h_V$ and $h_U$ are also H\"older continuous.

We will show that $h_V$ is smooth along $\cW_\alpha^s$. The first step of doing this is to deduce an equation that characterizes $h_V$. 

\subsection{The cohomological equation} 

Given $\bfn\in\bZ^r$, the choice of which will be fixed later, we have that $\alpha^\bfn$ is homotopic to $\rho^\bfn$. Thus the map $\rho^{-\bfn}\circ\alpha^\bfn$ is homotopic to identity and can be written as $Q_1(x)x$ for some $C^\infty$ function $Q_1: M\mapsto G$.

By the construction of $h$, for any $x\in M=G/\Gamma$, 
\begin{equation}\label{CohoEq1}\begin{split}h(x)x=&H(x)=\rho^{-\bfn}H(\alpha^\bfn x)\\
=&\rho^{-\bfn}\big(h(\alpha^\bfn x)\cdot (\alpha^\bfn x)\big)\\
=&\big(\rho^{-\bfn}h(\alpha^\bfn x)\big)\cdot\big(\rho^{-\bfn}(\alpha^\bfn x)\big)\\
=&\big(\rho^{-\bfn}h(\alpha^\bfn x)\big)Q_1(x)\cdot x.
\end{split}\end{equation}

\begin{lemma}\label{CohoEquation}There exists a $C^\infty$ map $Q:M\mapsto G$, such that 
\begin{equation}\label{CohoEq}h=(\rho^{-\bfn}h\circ\alpha^\bfn)\cdot Q.\end{equation}\end{lemma}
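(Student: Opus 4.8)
The plan is to lift the computation \eqref{CohoEq1} from $M$ to the universal cover $G$, where the group operations are genuine and the factor $x$ dangling on the right becomes a cancellable element of the group $G$, and then to push the resulting identity back down to $M$.

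Concretely, since $H$ is homotopic to the identity it lifts to a continuous $\tilde H\colon G\to G$ of the form $\tilde H(g)=\tilde h(g)g$ with $\tilde h$ $\Gamma$-periodic (i.e.\ $\tilde h(g\gamma)=\tilde h(g)$ for $\gamma\in\Gamma$) and descending to the given $h$ via the covering map $p\colon G\to M$. I fix the canonical affine lift $\tilde\rho^{\bfn}$ of $\rho^{\bfn}$ and choose the lift $\tilde\alpha^{\bfn}$ of $\alpha^{\bfn}$ so that the conjugacy \eqref{FMconj} lifts exactly to $\tilde H=\tilde\rho^{-\bfn}\circ\tilde H\circ\tilde\alpha^{\bfn}$ (two lifts of conjugate maps differ by a deck transformation, so this can be arranged by a constant right $\Gamma$-translation of $\tilde\alpha^{\bfn}$). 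Using that $\tilde\rho^{-\bfn}$ is affine with linear part $\rho^{-\bfn}$, so $\tilde\rho^{-\bfn}(a\cdot g)=(\rho^{-\bfn}a)\cdot\tilde\rho^{-\bfn}(g)$ for $a\in G$, the chain \eqref{CohoEq1} reproduces verbatim on $G$:
\[
\tilde h(g)g=\tilde H(g)=\tilde\rho^{-\bfn}\tilde H(\tilde\alpha^{\bfn}g)=\rho^{-\bfn}\bigl(\tilde h(\tilde\alpha^{\bfn}g)\bigr)\cdot\tilde\rho^{-\bfn}(\tilde\alpha^{\bfn}g).
\]
The map $\tilde\rho^{-\bfn}\circ\tilde\alpha^{\bfn}$ is a lift of the $C^{\infty}$ diffeomorphism $\rho^{-\bfn}\circ\alpha^{\bfn}$ of $M$, which is homotopic to the identity (it equals $\rho^{-\bfn}\circ H^{-1}\circ\rho^{\bfn}\circ H$, homotopic to $\rho^{-\bfn}\rho^{\bfn}=\mathrm{id}$); hence it has the form $g\mapsto\tilde Q(g)g$, and since it induces the identity on $\pi_1(M)\cong\Gamma$ the factor $\tilde Q$ is $\Gamma$-periodic and descends to a map $Q\colon M\to G$, which is $C^{\infty}$ because $\rho^{-\bfn}\circ\alpha^{\bfn}$ is and $p$ admits local $C^{\infty}$ sections.

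Substituting $\tilde\rho^{-\bfn}(\tilde\alpha^{\bfn}g)=\tilde Q(g)g$ into the displayed identity and cancelling $g$ on the right in $G$ yields $\tilde h(g)=\rho^{-\bfn}\bigl(\tilde h(\tilde\alpha^{\bfn}g)\bigr)\cdot\tilde Q(g)$. Finally, $\tilde h=h\circ p$ gives $\tilde h\circ\tilde\alpha^{\bfn}=h\circ\alpha^{\bfn}\circ p$, so both sides of the last identity descend to $M$ and we obtain $h=(\rho^{-\bfn}h\circ\alpha^{\bfn})\cdot Q$, which is \eqref{CohoEq}. I expect the only delicate point to be the lift bookkeeping: arranging the conjugacy to lift without an error term, and verifying that each composite appearing ($\tilde h\circ\tilde\alpha^{\bfn}$, $\tilde\rho^{-\bfn}\circ\tilde\alpha^{\bfn}$, $\tilde Q$) is genuinely $\Gamma$-periodic. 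In every case this reduces to triviality of the induced action on $\pi_1(M)\cong\Gamma$, either because the underlying map of $M$ is homotopic to the identity or because the deck action of $\alpha^{\bfn}$ coincides with that of the $\Gamma$-preserving automorphism $\rho^{\bfn}$.
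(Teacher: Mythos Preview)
Your argument is correct. Both proofs ultimately pass to the universal cover, but they manage the deck-transformation ambiguity in opposite orders. The paper first writes $\rho^{-\bfn}\circ\alpha^{\bfn}(x)=Q_1(x)x$ for \emph{some} smooth $Q_1$, obtains \eqref{CohoEq1} on $M$, and is then left with a residual map $f:=h^{-1}\cdot(\rho^{-\bfn}h\circ\alpha^{\bfn})\cdot Q_1$ satisfying $f(x)x=x$; it proves as a separate claim that any such $f$ is a constant $\gamma_0\in\Gamma\cap\mathrm{Center}(G)$, and then sets $Q=Q_1\gamma_0^{-1}$. You instead do the bookkeeping up front: by adjusting the lift $\tilde\alpha^{\bfn}$ by the appropriate deck transformation you force the conjugacy to hold exactly on $G$, so the right-hand $g$ genuinely cancels and no residual $\gamma_0$ ever appears. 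Your route is slightly more direct and avoids the auxiliary centrality claim; the paper's route has the minor advantage of isolating that claim (which has some independent interest) and of not requiring one to track which lift of $\alpha^{\bfn}$ is in play.
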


\begin{proof} We show the following claim first:

{\it If a continuous map $f: M\mapsto G$ satisfies $f(x)x=x$ for all $x\in M$, then there exists $\gamma_0\in\Gamma\cap\mathrm{Center}(G)$ such that $f(x)=\gamma_0$ for all $x$} 

Lift $f$ from $G/\Gamma$ to $G$ and regard it as a continuous function on $G$ with $f(g)=f(g\gamma)$ for all $\gamma\in\Gamma$. Then for all $g\in G$, $f(g)g\Gamma=g\Gamma$, or equivalently, $g^{-1}f(g)g\in\Gamma$. Since $\Gamma$ is discrete and $g^{-1}f(g)g$ is continuous, the value must be a constant $\gamma_0\in\Gamma$. Hence $f(g)=g\gamma_0g^{-1}$. However, because $f(g)=f(g\gamma)$ for all $g\in G$, $\gamma\in\Gamma$, the element $\gamma_0$ has to commute with all $\gamma\in\Gamma$. As $\Gamma$ is a Zariski-dense subgroup in $G$, this forces $\gamma_0$ to be in the center of $G$. Hence for all $g$, $f(g)=g\gamma_0g^{-1}=\gamma_0$.

By multiplying both sides by $\big(h(x)\big)^{-1}$ in \eqref{CohoEq1}, we see that $$h^{-1}\cdot(\rho^{-\bfn}h\circ\alpha^\bfn)\cdot Q_1=\gamma_0$$ for some such $\gamma_0$. It suffices to take $Q=Q_1\cdot\gamma_0^{-1}$, which is $C^\infty$.
\end{proof}

Using Corollary \ref{Vpart}, the cohomological equation \eqref{CohoEq} can be projected to the subgroup $V$:
\begin{equation}\label{CohoEqV}h_V=(\rho^{-\bfn}h_V\circ\alpha^\bfn)\cdot\Big((\rho^{-\bfn}h_u\circ\alpha^\bfn)\cdot Q\Big)_V.\end{equation}

Note that the smooth function $Q$ depends on $\bfn$. We now make a choice of $\bfn$ that will work better later.

\begin{lemma}\label{ExpaChoice} For all $\eta>0$, there exists $\bfn\in\bZ^r$ such that:
\begin{enumerate}
\item $\bfn$ belongs to a subgroup $\Sigma\cong\bZ^2$ of $\bZ^r$ such that $\rho^\bfm$ is an ergodic nilmanifold automorphism for all $\bfm\in\Sigma\backslash\{0\}$,
\item For all $\gov^{\chi'}\subset \gov$, $\chi'(\bfn)>0$;
\item For all $\gov^{\chi'}\subset \gog^s$, $\chi'(\bfn)\leq\eta|\bfn|$;
\item $\log\|D\alpha^\bfn|_{E_\alpha^s}(x)\|\leq\eta|\bfn|$ for all $x\in M$.
\end{enumerate}
\end{lemma}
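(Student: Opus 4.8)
The plan is to build $\bfn$ as a large integer vector lying very close to a carefully chosen ray inside the Weyl chamber $\cC$. First I would fix the open cone $\kC$ inside $\cC$ consisting of those $\bfw$ with $\chi'(\bfw)\leq 0$ for every Lyapunov exponent $\chi'$ with $\gov^{\chi'}\subset\gog^s$ except those with $\gov^{\chi'}\subset\gov$, and with $\chi'(\bfw)>0$ for every $\gov^{\chi'}\subset\gov$; this cone is nonempty and open because $\cC$ is a Weyl chamber adjacent to $\cC_0$ across the wall $\ker\chi$, so moving from $\cC_0$ to $\cC$ flips exactly the signs of the exponents in $[\chi]$ and $[-\chi]$ while leaving the other stable-exponent signs negative. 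Any $\bfw\in\kC$ automatically satisfies, for all $\gov^{\chi'}\subset\gog^s$, $\chi'(\bfw)\leq 0$, hence a fortiori $\chi'(\bfw)\leq\eta|\bfw|$; rescaling a rational vector in $\kC$ gives integer vectors satisfying (2) and (3) with room to spare (in fact with $\chi'(\bfn)\leq 0$ for the stable exponents, which is stronger than (3)). Conditions (2) and (3) are therefore the easy ones, obtained purely from the geometry of Weyl chambers.

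Next I would arrange (1). By Corollary \ref{GenePlane}, since $\rho$ has no rank-one factor, there is an open dense set $\Omega\subset\Gr(2,r)$ of rational $2$-planes $P$ such that every nonzero $\bfm\in P\cap\bZ^r$ gives an ergodic automorphism $\rho^\bfm$. I choose $P\in\Omega$ whose interior meets the open cone $\kC$ — possible because $\Omega$ is dense and $\kC$ is open of full dimension (one may first intersect with a generic $2$-plane through a point of $\kC$ and perturb it rationally within $\Omega$). Setting $\Sigma=P\cap\bZ^r\cong\bZ^2$, every nonzero element of $\Sigma$ acts ergodically, and I will pick $\bfn\in\Sigma$ inside the cone $P\cap\kC$, so (1), (2), (3) hold simultaneously for $\bfn$ and all sufficiently large integer multiples of it along $\Sigma$.

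Finally, for (4), I would use that the uniform operator-norm growth of $D\alpha^\bfn$ along $E_\alpha^s$ is governed, up to subexponential corrections, by the Lyapunov data of $\rho$. Concretely: $\alpha^\bfn=H^{-1}\circ\rho^\bfn\circ H$ with $H$ bi-Hölder, so $\cW_\alpha^s$ corresponds under $H$ to the $\rho^{\bfn_0}$-stable foliation by cosets of $G^s$; the top Lyapunov exponent of the cocycle $D\alpha^\bfn|_{E_\alpha^s}$ with respect to \emph{any} invariant measure equals $\max\{\chi'(\bfn):\gov^{\chi'}\subset\gog^s\}$ by the standard fact that $\alpha$ and $\rho$ have the same Lyapunov exponents on the relevant subbundles (the $\alpha$-invariant absolutely continuous measure $\mu=H^{-1}_*(\mathrm{Leb})$ realizes these, and the spectral radius formula gives the sup-norm bound). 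Since the integer vectors in $\kC$ satisfy $\chi'(\bfn)\leq 0$ for all stable $\chi'$, for $\bfn$ a large multiple along $\Sigma\cap\kC$ one gets $\limsup_{k}\frac1k\log\|D\alpha^{k\bfn}|_{E_\alpha^s}\|_{\sup}\leq \max\chi'(\bfn)\le 0<\eta|\bfn|$; replacing $\bfn$ by $k\bfn$ for $k$ large and invoking subadditivity of $k\mapsto\log\|D\alpha^{k\bfn}|_{E_\alpha^s}\|_{\sup}$ (Fekete) makes $\frac1k\log\|D\alpha^{k\bfn}|_{E_\alpha^s}\|_{\sup}$ fall below $\eta|\bfn|$, giving (4) for the rescaled $\bfn$, which still lies in $\Sigma$ and $\kC$ and hence retains (1)–(3).

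The main obstacle is establishing (4) with a \emph{uniform} (sup-norm) bound rather than an almost-everywhere or average bound: one must pass from Lyapunov-exponent information to a genuine $C^0$ estimate on $\|D\alpha^\bfn|_{E_\alpha^s}\|$ at every point. The way around this is the Fekete/subadditivity argument applied to the continuous subadditive sequence $k\mapsto\log\sup_{x}\|D\alpha^{k\bfn}|_{E_\alpha^s}(x)\|$, whose limit equals $\sup$ over invariant measures of the top Lyapunov exponent, which in turn is controlled by the linear data $\max\{\chi'(\bfn)\}$ because the coarse Lyapunov foliations of $\alpha$ match those of $\rho$ under $H$; so choosing $\bfn$ in the open cone where all stable exponents are negative and then taking a large multiple drives the uniform exponential rate below the prescribed $\eta|\bfn|$.
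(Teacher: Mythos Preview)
There are two genuine gaps.

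\textbf{Condition (3) is not automatic on your cone.} You define $\kC\subset\cC$ by requiring $\chi'(\bfw)>0$ for $\gov^{\chi'}\subset\gov$ and $\chi'(\bfw)\leq 0$ for $\gov^{\chi'}\subset\gog^{ss}$, and then assert that ``any $\bfw\in\kC$ automatically satisfies, for all $\gov^{\chi'}\subset\gog^s$, $\chi'(\bfw)\leq 0$.'' This contradicts your own definition: since $\gov\subset\gog^s$, the exponents $\chi'\in[\chi]$ are \emph{positive} on $\cC$, not nonpositive. Condition~(3) for these exponents reads $0<\chi'(\bfn)\leq\eta|\bfn|$, which forces $\bfn$ to lie close to the wall $\ker\chi$; it is not satisfied at a generic point of $\cC$, and rescaling cannot help because $\chi'(\bfn)/|\bfn|$ is scale-invariant. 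The paper's proof handles this by explicitly choosing $\bfn\in\cC$ making a small angle with $\ker\chi$, so that $\chi_+(\bfn)/|\bfn|<\eta$ for the largest exponent $\chi_+\in[\chi]$.

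\textbf{Condition (4) does not follow from equality of Lyapunov exponents.} You claim as a ``standard fact'' that the top Lyapunov exponent of $D\alpha^\bfn|_{E_\alpha^s}$ with respect to any invariant measure equals $\max\{\chi'(\bfn):\gov^{\chi'}\subset\gog^s\}$. This is not justified: the Franks--Manning conjugacy $H$ is only bi-H\"older, so it need not preserve Lyapunov exponents (indeed, later in the paper Lemma~\ref{CoarseProportional} only shows the coarse exponents of $\alpha$ and $\rho$ are \emph{positively proportional}, not equal). Your Fekete argument then rests on an unproved identity. The paper's route is different: once (3) gives a bound $\beta=\eta|\bfn|$ on $\log\|D\rho^\bfn|_{\gog^s}\|$, Lemma~\ref{Subadd} transfers this through the $\gamma$-H\"older conjugacy to get $\log\|D\alpha^{k\bfn}|_{E_\alpha^s}\|\leq(\tfrac{\beta}{\gamma}+\epsilon)k$ for some $k$, at the cost of the factor $1/\gamma$. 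One then absorbs this loss by running the construction of (1)--(3) with $\tfrac{\gamma\eta}{2}$ in place of $\eta$ from the start and replacing $\bfn$ by $k\bfn$. The subadditivity step you invoke is present inside the proof of Lemma~\ref{Subadd}, but the essential input there is the H\"older distortion estimate, not an equality of spectra.
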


\begin{proof} We first construct an $\bfn$ that satisfies conditions (1)-(3).

Let $\chi_+$ denote the largest element in norm from $[\chi]$. Since all  Lyapunov exponents in $\gov^{[\chi]}$ are positively proportional, each of them writes $\chi'=a\chi_+$ for some $0< a\leq 1$ that depends on $\chi'$.

If $\bfn$ is chosen from $L^{[\chi],+}$ such that the angle between $\bfn$ and $\ker\chi$ is sufficiently small, then $\frac{\chi_+(\bfn)}{|\bfn|}$ is positive and bounded by $\eta$. Then
\begin{equation}\label{ExpaChoiceEq}0<\chi'(\bfn)\leq\eta|\bfn|, \forall\gov^{\chi'}\subset\gov^{[\chi]}.\end{equation}

Recall $\ker\chi$ is one of the Weyl chamber walls that bounds $\cC_0$ as well as $-\cC_0$. As the collection of rational subspaces is dense in the Grassmannian, we can fix a generic two dimensional subspace $P\subset\bZ^r$, in the sense that $P$ is in the open dense subset $\Omega$ in Corollary \ref{GenePlane}, such that $P$ intersects the interior of $\cC_0$ as well as the interior of the open cone in $\ker\chi$ that touches $\cC_0$. Notice that $P$ also intersects the interior of $-\cC_0$ and that of $\ker\chi\cap\partial(-\cC_0)$. Let $\Sigma=P\cap\bZ^r$, then $\Sigma\cong\bZ^2$ is a lattice in $P$ and $\rho^\bfm$ is ergodic for all $\bfm\in\Sigma\backslash\{\bf0\}$. $\bfn$ will be chosen from $\Sigma$ so that (1) is satisfied.

We start from $\cC_0$, cross the wall $\ker\chi$ and get $\bfn$. More precisely, we pick $\bfn\in\Sigma$ such that it lies in the adjacent Weyl chamber $\cC$ that shares the wall $\ker\chi$ with $\cC_0$, and that the angle between $\bfn$ and $\ker\chi$ is sufficiently small. Then $\bfn\in L^{[\chi],+}$ since $\cC_0\subset L^{[\chi],-}$, and thus condition (2) is verified.

On the other hand, when the angle is small enough, because no Weyl chamber wall other than $\ker\chi$ separates $\bfn_0$ from $\bfn$, by Remark \ref{Symplec}, for any Lyapunov exponent $\chi'$, as long as $\chi'$ is not in $[\chi]$ or $[-\chi]$, $\chi'(\bfn)$ has the same sign as $\chi'(\bfn_0)$. Since $\gog^s=\gov\oplus\gog^{ss}$ and $\gov^{[-\chi]}\subset\gog^u$, to verify condition (3) we only need that $\chi'(\bfn)\leq\eta|\bfn|$ for every $\chi'\in[\chi]$, which follows from \eqref{ExpaChoiceEq}.

Next, we make (4) satisfied by replacing $\bfn$ with a positive multiple of itself if necessary. Notice that doing this will not affect conditions (1)-(3). 

Let us assume both the Franks-Manning conjugacy $H$ and $H^{-1}$ are $\gamma$-H\"older with $0<\gamma<1$. Then thanks to condition (3), by applying the following Lemma \ref{Subadd} to the conjugacy $H$ between the maps $\alpha^\bfn$ and $\rho^\bfn$, along the foliations $\cW_\alpha^s$ and $E_\rho$, we find $k\in\bN$ such that $\log\|D\alpha^{k\bfn}|_{E_\alpha^s}(x)\|<\frac{2\eta}\gamma|k\bfn|$ for all $x$. So if we have worked with $\frac{\gamma\eta}2<\eta$ in place of $\eta$ from the beginning, we would obtain $\bfn$ satisfying conditions (1)-(3) and some $k\in\bN$ such that $\log\|D\alpha^{k\bfn}|_{E_\alpha^s}(x)\|<\eta|k\bfn|$ for all $x\in M$. It suffices to choose $k\bfn$ instead of $\bfn$.\end{proof}

We shall prove now two Lemmas relating the contraction and expansion rates of two H\"older conjugate diffeomorphisms. The first Lemma (that we need in the proof of the previous Lemma \ref{ExpaChoice}) deals with the non-expanding case, the second one needed in the future deals with the contracting case.

\begin{lemma}\label{Subadd}Suppose two diffeomorphisms $f_0,f_1:M\mapsto M$ of a smooth manifold $M$ are conjugated to each other by a $\gamma$-H\"older homeomorphism $\phi$, where $0<\gamma<1$, $\phi\circ f_0=f_1\circ\phi$. Suppose in addition that, $\cF_i$ is an $f_i$-invariant continuous foliation with smooth leaves for $i=0,1$, and $\phi$ sends $\cF_0$ to $\cF_1$. Let $F_i(x)\subset T_xM$ be the distributions consisting of tangent spaces to $\cF_i$.  Assume $\log\|Df_1|_{F_1}\|$ is uniformly bounded by some $\beta\geq 0$ at all points, then for all $\epsilon>0$ there exists $k\in\bN$ such that $\log\|Df_0^k|_{F_0}\|\leq(\frac\beta\gamma+\epsilon)k$.
\end{lemma}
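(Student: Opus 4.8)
\emph{Strategy.} The plan is to reduce, via subadditivity, to controlling one asymptotic exponent, and then to transfer the per-step bound for $f_1$ to $f_0$ across the conjugacy, the loss $\frac1\gamma$ being forced by the H\"older regularity. Set $b_k:=\log\sup_{x\in M}\|Df_0^k|_{F_0}(x)\|$, finite since $f_0$ is a diffeomorphism of the compact $M$. By the chain rule and $f_0$-invariance of $F_0$ (hence of $\cF_0$), the sequence $(b_k)$ is subadditive, so $\lambda:=\lim_k b_k/k=\inf_k b_k/k$ exists in $[-\infty,+\infty)$ and $b_k\ge\lambda k$ for all $k$. If $\lambda\le0$ there is nothing to prove since $\beta\ge0$; so assume $0<\lambda<\infty$, and note that, since $b_k/k\to\lambda$, it suffices to prove $\lambda\le\beta/\gamma$ (then any large $k$ with $b_k/k<\lambda+\epsilon$ works).

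\emph{A H\"older transfer estimate.} Next I would establish: there are $C_0\ge1$, $\delta_0>0$ such that whenever $a,b$ lie on a common leaf of $\cF_0$ and $d_{\cF_0}(f_0^ja,f_0^jb)\le\delta_0$ for all $0\le j\le k$, then
\[
d_{\cF_0}(f_0^ka,f_0^kb)\ \le\ C_0\,e^{\beta\gamma k}\,d_{\cF_0}(a,b)^{\gamma^2}.
\]
This comes from $\phi\circ f_0^k=f_1^k\circ\phi$ by chaining four facts: $\phi$ carries $\cF_0$-plaques to $\cF_1$-plaques; on a compact manifold leafwise and ambient distances are comparable within a plaque (uniform regularity of the leaves); $\phi$ and $\phi^{-1}$ are $\gamma$-H\"older; and $\|Df_1|_{F_1}\|\le e^\beta$ makes $f_1^k$ an $e^{\beta k}$-Lipschitz map of $\cF_1$-leaves. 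The two H\"older passages give the exponent $\gamma^2$ and the factor $e^{\beta\gamma k}$; the orbit hypothesis ensures all intermediate points stay at comparability scale. (Here $\phi^{-1}$, not only $\phi$, is used to be $\gamma$-H\"older, as is the case for the Franks--Manning conjugacy.)

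\emph{From the transfer estimate to an operator-norm bound.} The displayed inequality is vacuous at infinitesimal scale because $\gamma^2<1$, so I would evaluate everything at the tuned scale $\delta_k:=\delta_0\,e^{-(\lambda+2\epsilon)k}$ for a small fixed $\epsilon>0$. Since $b_j\le(\lambda+\epsilon)k$ for all $0\le j\le k$ once $k$ is large (from $b_j/j\to\lambda$, together with $b_j\le jb_1$ for the finitely many small $j$), the forward $f_0$-orbit of a leafwise $\delta_k$-ball stays inside $\delta_0$-balls up to time $k$, so the transfer estimate applies to every pair in such a ball; moreover a telescoping computation shows the distortion $\|Df_0^k(y)Df_0^k(x)^{-1}-I\|$ over a leafwise $\delta_k$-ball is $o(1)$ as $k\to\infty$, the controlling quantity $\delta_k\sum_{j<k}e^{b_j}$ tending to $0$. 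Picking $x_k$ with $\|Df_0^k|_{F_0}(x_k)\|=e^{b_k}$ and a corresponding top direction, small distortion forces $f_0^k$ to send $B_{\cF_0}(x_k,\delta_k)$ onto a set of $\cF_0$-diameter $\gtrsim e^{b_k}\delta_k$ (image of a near-geodesic arc with velocity in a narrow cone), while the transfer estimate bounds that diameter by $C_0e^{\beta\gamma k}(2\delta_k)^{\gamma^2}$. Comparing, taking logarithms, dividing by $k$ and letting $k\to\infty$ gives $\lambda\le\beta\gamma+(\lambda+2\epsilon)(1-\gamma^2)$; letting $\epsilon\to0$ yields $\lambda\gamma^2\le\beta\gamma$, i.e.\ $\lambda\le\beta/\gamma$.

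\emph{Main obstacle.} The difficulty is exactly this last conversion: the conjugacy only compares leafwise distances in a H\"older fashion, so the comparison degrades as distances shrink and cannot be differentiated directly. The remedy is to work on balls of the super-exponentially small radius $\delta_k\approx e^{-\lambda k}$ and to control the distortion of $Df_0^k$ there \emph{uniformly} in $k$; arranging that the displacement and distortion bounds hold uniformly at these shrinking scales — so that no fixed per-step constant such as $\log\|Df_0|_{F_0}\|$ leaks into the final estimate — is what makes the sharp constant $1/\gamma$ emerge, and is the delicate bookkeeping of the argument. The comparability of leafwise and ambient metrics for the foliations $\cF_i$ (automatic for the Anosov foliations in the intended application) is used throughout and should be recorded as a standing hypothesis.
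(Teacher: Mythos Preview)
Your argument hinges on the transfer estimate $d_{\cF_0}(f_0^ka,f_0^kb)\le C_0e^{\beta\gamma k}d_{\cF_0}(a,b)^{\gamma^2}$, and as you yourself note, this requires \emph{both} $\phi$ and $\phi^{-1}$ to be $\gamma$-H\"older. But the lemma assumes only that $\phi$ is $\gamma$-H\"older; nothing is said about $\phi^{-1}$. So as a proof of the lemma as stated, there is a genuine gap. You are right that in the intended application $\phi$ is the Franks--Manning conjugacy and is bi-H\"older, but the lemma is phrased abstractly and the paper's proof does not use this extra hypothesis.

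The paper avoids the second H\"older passage by iterating \emph{backward}: from $\phi$ H\"older alone one gets $\dist(f_0^{-k}x,f_0^{-k}y)\ge C^{-1}\dist(f_1^{-k}\phi(x),f_1^{-k}\phi(y))^{1/\gamma}$, and the bound $\|Df_1|_{F_1}\|\le e^\beta$ forces $f_1^{-k}$ to contract along $\cF_1$ by at most $e^{-\beta k}$. This yields directly that every Lyapunov exponent of $f_0$ along $F_0$ for every ergodic invariant measure is at most $\beta/\gamma+\epsilon/2$. The passage from this to a uniform $k$ is not via the Fekete subadditive limit of the sup-norms as in your proposal, but through a semi-uniform subadditive argument (the cited \cite{RH07}*{Prop.~3.4}), which converts ``negative asymptotic average for every ergodic measure'' into ``negative at some finite time everywhere''. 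Your Fekete approach is cleaner when it applies, but the paper's route is what allows dispensing with any assumption on $\phi^{-1}$.

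A secondary concern: your distortion claim, that $\|Df_0^k(y)Df_0^k(x)^{-1}-I\|=o(1)$ over the $\delta_k$-ball with controlling quantity $\delta_k\sum_{j<k}e^{b_j}$, implicitly assumes at least Lipschitz (or H\"older) dependence of $Df_0|_{F_0}$ on the base point. The hypothesis only gives $F_0$ continuous, so this step would need either an additional regularity assumption on the foliation or a more careful modulus-of-continuity argument.
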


\begin{proof}Suppose $x,y$ are two nearby points in the same $\cF_0$-leaf and $k\in\bN$, then $\phi(x)$ and $\phi(y)$ are in the same $\cF_1$-leaf. Hence \begin{equation}\label{SubaddEq1}\begin{split}
\dist(f_0^{-k}x, f_0^{-k}y)\geq &\frac1C\dist\big(\phi(f_0^{-k}x),\phi(f_0^{-k}y)\big)^{\frac1\gamma}\\
\geq&\frac1C\dist\big(f_1^{-k}\phi(x), f_1^{-k}\phi(y)\big)^{\frac1\gamma}.
\end{split}\end{equation}
Since $\log\|Df_1|_{F_1}\|\leq\beta$, $\dist\big(f_1^{-k}\phi(x), f_1^{-k}\phi(y)\big)$ cannot decay to 0 at a rate faster than $O(e^{-(\beta+\frac{\gamma\epsilon}2)k})$. It follows from \eqref{SubaddEq1} that $\dist(f_0^{-k}x, f_0^{-k}y)$ decays, if at all, at a rate slower than $O(e^{-\frac1\gamma(\beta+\frac{\gamma\epsilon}2)k})$. Therefore with respect to any ergodic $f$-invariant measure probability $\nu$, all Lyapunov exponents of $f$ restricted to the invariant distribution $F$ tangent to $\cF_0$, are bounded from above by $\frac1\gamma(\beta+\frac{\gamma\epsilon}2)=\frac\beta\gamma+\frac\epsilon2$. 

Therefore, for all ergodic $\nu$ and for $\nu$-a.e. $x$, \begin{equation}\label{SubaddEq3}\overline\lim_{k\rightarrow\infty}\frac1k\log\|Df_0^k|_{F_0}(x)\|\leq\frac\beta\gamma+\frac\epsilon2.\end{equation}

Let $a_k(x)=\log\|D f_0^{k}|_{F_0}(x)\|-(\frac\beta\gamma+\epsilon)k$, then $\overline\lim_{k\rightarrow\infty}\frac1k a_k(x)\leq-\frac\epsilon2$ for $\nu$-a.e. $x$. By Fatou's Lemma, $\overline\lim_{k\mapsto\infty}\frac1k\int a_k\di\nu\leq  -\frac\epsilon2<0$.

Moreover, one can easily check that for all $x$ and for all $k,l\in\bN$, $$a_{k+l}(x)\leq a_k( f_1^{l\bfn}x)+a_l(x)$$ and $$a_k(x)\leq a_k( f_1^{l\bfn}x)+a_l(x)+b_l( f_1^{k}.x)$$ where $b_l(x)=\log\|D f_1^{-l\bfn}( f_1^{l\bfn}.x)\|+\epsilon l$. 

By applying \cite[Prop. 3.4]{RH07} to the sequence of continuous functions $a_k(x)$, one concludes that there is a $k$ such that $a_k(x)<0$ for all $x\in M$. That is $\log\|D f_0^{k}|_{F}(x)\|<(\frac\beta\gamma+\epsilon)k$, which is the content of the lemma.
\end{proof}

The proof of the next Lemma is essentially the same as the previous one so we omit its proof. 

\begin{lemma}\label{Subadd2} Suppose two diffeomorphisms $f_0,f_1:M\mapsto M$ of a smooth manifold $M$ are conjugated to each other by a homeomorphism $\phi$,  $\phi\circ f_0=f_1\circ\phi$ such that its inverse $\phi^{-1}$ is $\gamma$-H\"older where $0<\gamma<1$. Suppose in addition that, $\cF_i$ is an $f_i$-invariant continuous foliation with smooth leaves for $i=0,1$, and $\phi$ sends $\cF_0$ to $\cF_1$. Let $F_i(x)\subset T_xM$ be the distributions consisting of tangent spaces to $\cF_i$.  Assume $\log\|Df_1|_{F_1}\|$ is uniformly bounded by some $\beta<0$ at all points, then for all $\epsilon>0$ there exists $k\in\bN$ such that $\log\|Df_0^k|_{F_0}\|\leq(\beta\gamma+\epsilon)k$.
\end{lemma}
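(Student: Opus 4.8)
The plan is to reproduce the proof of Lemma \ref{Subadd} with forward iterates replacing backward iterates and the H\"older estimate applied to $\phi^{-1}$ instead of to $\phi$. First I would fix $x,y$ in a common leaf of $\cF_0$, close to one another, so that $\phi(x),\phi(y)$ lie in a common leaf of $\cF_1$. Since $\log\|Df_1|_{F_1}\|\le\beta<0$ uniformly, for every $\delta>0$ and all $x,y$ close enough one has $\dist\big(f_1^k\phi(x),f_1^k\phi(y)\big)\le e^{(\beta+\delta)k}\dist\big(\phi(x),\phi(y)\big)$ for all $k\ge 0$, the contraction being read off directly from the leafwise derivative bound (no conorm is needed here, in contrast with Lemma \ref{Subadd}). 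Writing $f_0^k x=\phi^{-1}\big(f_1^k\phi(x)\big)$ and using that $\phi^{-1}$ is $\gamma$-H\"older,
\[
\dist\big(f_0^kx,f_0^ky\big)\le C\,\dist\big(f_1^k\phi(x),f_1^k\phi(y)\big)^{\gamma}\le C\,e^{\gamma(\beta+\delta)k}\,\dist\big(\phi(x),\phi(y)\big)^{\gamma},
\]
so distances inside the leaves of $\cF_0$ are contracted by $f_0^k$ at rate at least $\gamma\beta+\gamma\delta$.

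Choosing $\delta=\tfrac{\epsilon}{2\gamma}$, this bound translates, exactly as in Lemma \ref{Subadd}, into a Lyapunov-exponent estimate: for every ergodic $f_0$-invariant probability $\nu$ and $\nu$-a.e. $x$, every Lyapunov exponent of $f_0$ along the invariant distribution $F_0$ tangent to $\cF_0$ is $\le\gamma\beta+\tfrac\epsilon2$, hence
\[
\overline\lim_{k\to\infty}\frac1k\log\|Df_0^k|_{F_0}(x)\|\le\gamma\beta+\frac\epsilon2 .
\]
Setting $a_k(x)=\log\|Df_0^k|_{F_0}(x)\|-(\gamma\beta+\epsilon)k$, one gets $\overline\lim_k\tfrac1k a_k(x)\le-\tfrac\epsilon2$ for $\nu$-a.e. $x$, so by Fatou's lemma $\overline\lim_k\tfrac1k\int a_k\,\di\nu\le-\tfrac\epsilon2<0$ for every ergodic $\nu$; moreover the $a_k$ satisfy the same almost-subadditivity inequalities as in Lemma \ref{Subadd} (obtained from the chain rule, with forward iterates $f_0^{\,l}$). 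Applying \cite[Prop. 3.4]{RH07} to the sequence of continuous functions $a_k$ then produces a single $k\in\bN$ with $a_k(x)<0$ for all $x\in M$, i.e. $\log\|Df_0^k|_{F_0}(x)\|<(\gamma\beta+\epsilon)k$, which is the assertion.

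The one step requiring a modicum of care — essentially the only content beyond bookkeeping — is the passage from leafwise metric contraction to the bound on the fiberwise operator norm $\|Df_0^k|_{F_0}\|$, i.e. controlling the Lyapunov exponents of the differential by the contraction rate of small distances inside the (smooth) leaves of $\cF_0$. This is handled precisely as in Lemma \ref{Subadd}, using that $\cF_0$ has smooth leaves together with the standard Pesin-theoretic comparison between infinitesimal contraction along $F_0$ and contraction of nearby points within a leaf; all remaining ingredients (the H\"older inequality, Fatou's lemma, the almost-subadditivity, and \cite[Prop. 3.4]{RH07}) are formally identical to the non-expanding case, which is why the statement can be asserted with essentially the same proof.
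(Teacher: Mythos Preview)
Your proposal is correct and follows exactly the approach the paper intends: the authors omit the proof entirely, saying ``The proof of the next Lemma is essentially the same as the previous one so we omit its proof,'' and your write-up carries out precisely that adaptation of the argument for Lemma~\ref{Subadd}, replacing backward iterates by forward ones and applying the H\"older estimate to $\phi^{-1}$ rather than $\phi$. Your remark that no conorm is needed (because $\beta<0$ gives direct leafwise contraction of $f_1$) is the only real distinction from the non-expanding case, and you have identified it correctly.
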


\subsection{Smoothness of parameters}\label{sechuSmooth}

Using the cohomological equation \eqref{CohoEqV}, it will be shown that $h_V$ is smooth along the $\cW_\alpha^s$ leaves, with H\"older continuous partial derivatives of all orders. The parameters in the equation involve $h_u$ and the smooth function $Q$.  We verify first $h_u$ has the same smoothness as desired for $h_V$.

Denote by $\partial_{\cW_\alpha^s}^k\phi$ the vector consisting of all partial derivatives up to order $k$ of a function, or a distribution $\phi$ on $M$, along the foliation $\cW_\alpha^s$. This definition makes sense because of the following regularity properties of $\cW_\alpha^s$:

For any point $x\in M$, there is a local chart map $\Gamma: \Omega_1\times \Omega_2\mapsto \Omega$ where $\Omega_1$, $\Omega_2$ and $\Omega$ are respectively open neighborhoods of $0$ in $\bR^{\dim\cW_\alpha^u}$ and $\bR^{\dim\cW_\alpha^s}$ and $x$ in $M$. For any $\omega_1\in\Omega_1$, the image under $\Gamma$ of $\{\omega_1\}\times \Omega_2$ is the connected component of the intersection of $\Omega$ with $\cW_\alpha^s\big(\Gamma(\omega_1,0))$. Moreover, the pushforward of the Euclidean volume on $\Omega_1\times \Omega_2$ is absolutely continuous with respect to the Riemannian volume on $\Omega\subset M$, with H\"older continuous density function $J(\omega_1,\omega_2)$. In addition, $J$ is also $C^\infty$ along the $\cW_\alpha^s$ leaves and partial derivatives of all orders of both $\Gamma$ and $J$ along $\Omega_2$ directions are H\"older continuous. For details and references, see \cite[Theorem 2]{dlL01}.

\begin{definition}Let $\CHolder$ be the space of H\"older continuous functions $\phi$ such that $\partial_{\cW_\alpha^s}^k\phi$ are H\"older continuous functions for all $k$.
 \end{definition}

We first indicate what condition locally characterizes $\cW_\alpha^s$. For any given point $x\in M$, take a sufficiently small open neighborhood $\Omega\subset M$. We may assume that $H(\Omega)$ is a convex metric ball in the nilmanifold $M$. Then by \eqref{CorreUS}, $H^{-1}\Big(B(G^s).H(x)\Big)$ describes the connected component $\Omega^s_x$ of $x$ inside $\cW_\alpha^s(x)\cap\Omega$, where $B(G^s)$ is a small open neighborhood of the identity in $G^s$. 

As $M$ is a compact quotient of $G$, there exists $\delta>0$ and a unique $C^\infty$ function $p:\{(x,y)\in M\times M: \dist(x,y)\leq\delta\}\mapsto G$ such that $y=p(x,y)x$ and $p(x,x)=e$. 

Take $y\in B_\delta(x)$, then $H(x)=h(x)x$ and \begin{equation}\label{LocalEq}H(y)=h(y)y=h(y)p(x,y)x=H_x(y)H(x),\end{equation} with \begin{equation}\label{LocalHEq}H_x(y):=h(y)p(x,y)h(x)^{-1}.\end{equation}

\begin{remark}\label{LocalHHolder}As $H_x$ is defined on the compact subset $\{(x,y)\in M\times M: \dist(x,y)\leq\delta\}$, it follows from the smoothness of $p$ and the H\"older continuity of $h$ that $(x,y)\mapsto H_x(y)$ is H\"older continuous in the pair $(x, y)$.\end{remark}

Note that since both $p(x,\cdot)$ and $h$ are continuous and $\Omega$ is of small size, $H_x(y)$ is close to identity. So we see that if $\Omega$ is sufficiently small, then $y\in\Omega^s_x$ if and only if $H_x(y)\in G^s$.

Recall $g=g_sg_u$ for $g\in G$ and notice that $p(x,y)h(x)^{-1}$ splits as $\big(h(x)p(x,y)^{-1}\big)_u^{-1}\big(h(x)p(x,y)^{-1}\big)_s^{-1}$. Thus 
\begin{equation}\label{LocalUEq}H_x(y)=h_s(y)h_u(y)\big(h(x)p(x,y)^{-1}\big)_u^{-1}\big(h(x)p(x,y)^{-1}\big)_s^{-1}.\end{equation}
It is easy to see that $H_x(y)\in G^s$ if and only if the $G^u$ part in the middle, $h_u(y)\big(h(x)p(x,y)^{-1}\big)_u^{-1}$, is identity. This gives

\begin{lemma}\label{StableLocal}The stable leaf $\cW_\alpha^s(x)$ through a point $x$ is locally characterized by $$h_u(y)=\big(h(x)p(x,y)^{-1}\big)_u.$$\end{lemma}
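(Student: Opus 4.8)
The plan is to reduce everything to the local description of $\cW_\alpha^s$ that has already been set up just before the statement: for $\Omega$ small enough, a point $y\in B_\delta(x)$ lies in the local stable leaf $\Omega_x^s$ (the connected component of $x$ in $\cW_\alpha^s(x)\cap\Omega$) if and only if $H_x(y)\in G^s$. Granting this, it only remains to extract from the factorization \eqref{LocalUEq} exactly when $H_x(y)$ lies in the subgroup $G^s$.

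Concretely, I would start from \eqref{LocalUEq}, which exhibits $H_x(y)$ as a product of three factors: the left factor $h_s(y)$ and the right factor $\big(h(x)p(x,y)^{-1}\big)_s^{-1}$ both lie in $G^s$, while the middle factor $w:=h_u(y)\big(h(x)p(x,y)^{-1}\big)_u^{-1}$ lies in $G^u$. By Corollary \ref{SVU}(2) the set $G^s$ is a connected closed subgroup of $G$, and $G^s\cap G^u=\{e\}$ since $\gog^s\cap\gog^u=0$ and $\exp\colon\gog\to G$ is a diffeomorphism ($G$ being simply connected nilpotent).

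The key step is then the equivalence $H_x(y)\in G^s\iff w=e$. If $H_x(y)\in G^s$, then multiplying on the left by $h_s(y)^{-1}\in G^s$ and on the right by $\big(h(x)p(x,y)^{-1}\big)_s\in G^s$ keeps the product inside the subgroup $G^s$, so $w\in G^s$; together with $w\in G^u$ this forces $w=e$, i.e. $h_u(y)=\big(h(x)p(x,y)^{-1}\big)_u$. Conversely, if this identity holds then $w=e$, so $H_x(y)=h_s(y)\big(h(x)p(x,y)^{-1}\big)_s^{-1}$ is a product of two elements of $G^s$ and hence lies in $G^s$. Combining this with ``$y\in\cW_\alpha^s(x)$ locally $\iff H_x(y)\in G^s$'' yields the lemma.

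I do not expect a real obstacle here: all the substantive input — the correspondence \eqref{CorreUS} of the stable/unstable foliations under $H$, the reduction $y\in\Omega_x^s\iff H_x(y)\in G^s$, and the factorization \eqref{LocalUEq} — is already in place, so the proof reduces to the elementary group-theoretic observation above. The only points deserving a word of care are that $G^s$ is genuinely a subgroup (Corollary \ref{SVU}(2)) and that $G^s\cap G^u=\{e\}$, both immediate from the splitting $\gog=\gog^s\oplus\gog^u$.
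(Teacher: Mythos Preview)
Your proposal is correct and follows essentially the same argument as the paper: the paper also uses the factorization \eqref{LocalUEq} to conclude that $H_x(y)\in G^s$ if and only if the middle $G^u$-factor $h_u(y)\big(h(x)p(x,y)^{-1}\big)_u^{-1}$ is the identity, and then combines this with the already-established local characterization $y\in\Omega_x^s\iff H_x(y)\in G^s$. Your version simply makes explicit the group-theoretic reasons ($G^s$ a subgroup, $G^s\cap G^u=\{e\}$) that the paper leaves as ``easy to see.''
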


\begin{corollary}\label{USmooth}$h_u\in\CHolder$.\end{corollary}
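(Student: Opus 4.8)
\textbf{Proof proposal for Corollary \ref{USmooth}.}

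The plan is to deduce the regularity of $h_u$ directly from the local characterization in Lemma \ref{StableLocal}. The right-hand side $\big(h(x)p(x,y)^{-1}\big)_u$ is, for fixed $x$, a composition of the smooth multiplication and projection maps of Corollary \ref{SVU} with the smooth function $p(x,\cdot)$; the only non-smooth ingredient is the constant (in $y$) factor $h(x)$. Thus, restricting $y$ to the local stable leaf $\Omega^s_x=H^{-1}\big(B(G^s)H(x)\big)$, the identity
\begin{equation}\label{hurel}h_u(y)=\big(h(x)p(x,y)^{-1}\big)_u,\qquad y\in\Omega^s_x,\end{equation}
exhibits $h_u\big|_{\cW_\alpha^s(x)}$ as the composition of a \emph{fixed} $C^\infty$ map (depending on the base point $x$ through the constant $h(x)$, but $C^\infty$ in the variable $y$) with the chart description of the stable leaf. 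So the first step is to set up, around an arbitrary point $x$, the local chart $\Gamma:\Omega_1\times\Omega_2\mapsto\Omega$ of \S\ref{sechuSmooth} in which the leaves $\{\omega_1\}\times\Omega_2$ map to local stable leaves, with $\Gamma$ and its $\Omega_2$-partials H\"older continuous (as recalled there, citing \cite[Theorem 2]{dlL01}), and to observe that in this chart the right-hand side of \eqref{hurel} is smooth in the $\Omega_2$ variable with all $\Omega_2$-partials depending H\"older-continuously on $\omega_1$.

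The second step is to promote this to the statement $h_u\in\CHolder$. Fix $k$. Differentiating \eqref{hurel} $k$ times along $\cW_\alpha^s$ (i.e.\ in the $\Omega_2$ direction) expresses $\partial_{\cW_\alpha^s}^k h_u$ at $y$ as a universal polynomial expression in the $\Omega_2$-partials of $p(x,\cdot)$ up to order $k$, in $h(x)$, and in the partials of the smooth multiplication/projection maps $G^{ss}\times V\times G^u\to G$ of Corollary \ref{SVU}; the chain rule also brings in $\Omega_2$-partials of the chart map $\Gamma$ up to order $k$. Every factor here is either globally $C^\infty$ in the relevant variables or is one of the H\"older-continuous-in-$(\omega_1,\omega_2)$ objects ($\Gamma$, its $\Omega_2$-partials, and $h(x)$ itself, the latter being H\"older by Remark \ref{LocalHHolder} / the H\"older continuity of $h$). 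Hence $\partial_{\cW_\alpha^s}^k h_u$ is a continuous function of H\"older-continuous quantities, so it is H\"older continuous. Since this holds for every $k$, and a standard compactness/covering argument patches the local estimates into a global one (the exponent and constant can be taken uniform because $M$ is compact and all the ingredients vary H\"older-continuously in the base point), we conclude $h_u\in\CHolder$.

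The main obstacle is bookkeeping rather than conceptual: one must be careful that the dependence on the base point $x$ enters \eqref{hurel} only through the \emph{constant} $h(x)$, so that differentiating in $y$ never hits that factor, while simultaneously the $x$-dependence does not spoil uniformity of the H\"older exponents when one glues local charts. This is handled by covering $M$ with finitely many charts as above and using the uniform (in the base point) H\"older bounds for $h$, for $p$, and for the stable-leaf chart data from \cite[Theorem 2]{dlL01}; no new analytic input is needed beyond the smoothness of the $G^{ss}VG^u$ coordinates established in Corollary \ref{SVU} and the H\"older regularity of the stable foliation. (Strictly, to even write ``$\partial_{\cW_\alpha^s}^k h_u$ is H\"older'' one should first know $h_u$ is $C^k$ along the leaves; this is immediate here because the right-hand side of \eqref{hurel}, as a function of $y$, is a composition of $C^\infty$ maps, the only leaf-direction nonsmoothness of the ambient data having been quarantined into the constant $h(x)$.)
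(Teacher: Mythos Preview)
Your argument is correct and follows essentially the same route as the paper: both use Lemma \ref{StableLocal} to write $h_u(y)=\big(h(x)p(x,y)^{-1}\big)_u$ with $x$ a base point on the same stable leaf as $y$, observe that the right-hand side is $C^\infty$ in $y$ along the leaf (since $h(x)$ is constant there and $p$, $(\cdot)_u$ are smooth), and then argue that the leafwise partials are H\"older in the transverse direction because $x$, and hence $h(x)$, depends H\"older-continuously on the leaf. The only cosmetic difference is how the transverse base point is chosen: the paper projects $y$ via stable holonomy to a fixed local unstable leaf $\cW_\alpha^u(x_0)$ and invokes H\"older continuity of that holonomy, whereas you take $x=\Gamma(\omega_1,0)$ in the foliation chart and use H\"older continuity of $\Gamma$; either produces a H\"older assignment $y\mapsto x(y)$, which is all that is needed.
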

\begin{proof}We fix $x\in M$ and look at nearby points $y$.  We know $p(x,y)$ is $C^\infty$, and $h(x)$ is constant in $y$ along $\cW_\alpha^s(x)$. Furthermore, the map $a\mapsto a_u$ is smooth by Lemma \ref{SVU}. So by Lemma \ref{StableLocal}, $h_u$ is $C^\infty$ along $\cW_\alpha^s(x)$.

It remains to check that the partial derivatives along $\cW_\alpha^s$ vary H\"older continuously. To see this, fix a tiny open neighborhood $\Omega$. Actually, we can assume $\Omega$ contains a smaller open set $\Omega_0$ such that $H(\Omega_0)=B_\epsilon(G^s)B_\epsilon(G^u).H(x_0)$ for some point $x_0$, where $B_\epsilon(G^s)$, $B_\epsilon(G^u)$ are $\epsilon$-balls around identity in $G^s$ and $G^u$. This guarantees that for every $y\in\Omega_0$, one can project $y$ along $\cW_\alpha^s$ to a point $x=x(y)$ in the unstable leaf $H^{-1}\big(B_\epsilon(G^u)H(x_0)\big)\subset\cW_\alpha^u(x_0)$. The holonomy map $y\mapsto x$ is H\"older continuous as $\cW_\alpha^s$ is a H\"older foliation, and thus so is $y\mapsto h(x)$. By Lemma \ref{StableLocal}, partial derivatives of $h_u(y)$ along $\cW_\alpha^s$ depend smoothly on the pair $\big(x,h(x)\big)$,  and therefore are H\"older continuous in $y$. \end{proof}

\subsection{Solving the linearized equation}

Let $\tilde h_V=\log h_V$. Then the equation \eqref{CohoEqV} rewrites as \begin{equation}\exp\tilde h_V=\exp(\rho^{-\bfn}\tilde h_V\circ\alpha^\bfn)\exp \Psi,\end{equation}
where $\Psi$ denotes the function $\log\Big((\rho^{-\bfn}h_u\circ\alpha^\bfn)\cdot Q\Big)_V$. Both $\tilde h_V$ and $\Psi$ take values on the coarse Lyapunov subspace $\gov$. Because $Q$ is $C^\infty$ and the smooth action $\alpha^\bfn$ preserves the foliation $\cW_\alpha^s$, $\Psi\in\CHolder$ by Corollary \ref{USmooth}.

The Baker-Campbell-Hausdorff formula asserts
\begin{equation}\label{BCH}\begin{split}\tilde h_V=&\rho^{-\bfn}\tilde h_V\circ\alpha^\bfn+\Psi+\frac12[\rho^{-\bfn}\tilde h_V\circ\alpha^\bfn,\Psi]\\
&\ \ \ +\frac1{12}\Big[\rho^{-\bfn}\tilde h_V\circ\alpha^\bfn,[\rho^{-\bfn}\tilde h_V\circ\alpha^\bfn,\Psi]\Big]\\
&\ \ \ -\frac1{12}\Big[\Psi,[\rho^{-\bfn}\tilde h_V\circ\alpha^\bfn,\Psi]\Big]+\cdots\end{split}\end{equation}
where there are only finitely many terms, because we are working in a nilpotent Lie algebra and the number of brackets in each term is strictly less than the step of nilpotency.

Prior to solving this non-linear equation, we focus on its linearized form. Instead of working with $\gov$ and its expanding isomorphism $\rho^\bfn$, we will take general vector spaces.

\begin{proposition}\label{LinearSmooth}Suppose $L$ is a vector space and $\beta:L\mapsto L$ is a linear isomorphism such that $\|\beta^{-i}\|$ is uniformly bounded for all $i\geq 0$.  Let $\bfn$ be as in Lemma \ref{ExpaChoice}. If two functions $f,\psi:M\mapsto L$,  with $f$ being H\"older continuous and $\psi\in\CHolder$, satisfy \begin{equation}\label{LinearEq}f=\beta^{-1} f\circ\alpha^\bfn+\psi,\end{equation} then the solution $f$ is also in $\CHolder$. \end{proposition}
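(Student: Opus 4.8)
The plan is to iterate the cohomological equation \eqref{LinearEq} to express $f$ as a uniformly convergent series, and then to differentiate the series term by term along $\cW_\alpha^s$, controlling each derivative using the contraction of $\alpha^\bfn$ along $\cW_\alpha^s$ (condition (4) of Lemma~\ref{ExpaChoice}) against the bounded powers $\|\beta^{-i}\|$. Explicitly, iterating \eqref{LinearEq} gives
\begin{equation}\label{SeriesPlan}f=\sum_{i=0}^{\infty}\beta^{-i}\,\psi\circ\alpha^{i\bfn}.\end{equation}
Since $\|\beta^{-i}\|$ is uniformly bounded and $\psi$ is bounded, the partial sums converge uniformly to a continuous function; by the uniqueness of H\"older solutions to \eqref{LinearEq} (which follows since $f-f'$ would be a fixed point of $g\mapsto\beta^{-1}g\circ\alpha^\bfn$, hence forced to vanish by again using boundedness of $\beta^{-i}$ and of $g$), this series equals the given $f$. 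So it suffices to show the right side of \eqref{SeriesPlan} lies in $\CHolder$.

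The key point is a chain-rule estimate. For each $k\ge 1$, the $k$-th order partial derivative $\partial_{\cW_\alpha^s}^k\bigl(\psi\circ\alpha^{i\bfn}\bigr)$ is, by the Fa\`a di Bruno formula applied along the leaves, a sum of terms of the form $\bigl(\partial_{\cW_\alpha^s}^j\psi\bigr)\circ\alpha^{i\bfn}$ contracted with products of derivatives of $\alpha^{i\bfn}$ along $\cW_\alpha^s$ of total order $k$. Using the $\alpha$-invariance of $\cW_\alpha^s$ together with condition (4) of Lemma~\ref{ExpaChoice}, one gets $\|D\alpha^{i\bfn}|_{E_\alpha^s}(x)\|\le e^{\eta i|\bfn|}$ for all $x$ and $i\ge 0$; bounding higher derivatives of $\alpha^{i\bfn}$ along the leaves by standard distortion/bounded-geometry arguments on a compact manifold (cf.\ the regularity of $\cW_\alpha^s$ recalled from \cite{dlL01}), the $k$-th leafwise derivative of $\psi\circ\alpha^{i\bfn}$ is bounded by $C_k\,e^{k\eta i|\bfn|}\,\|\partial_{\cW_\alpha^s}^k\psi\|_\infty$. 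Pairing this with $\|\beta^{-i}\|\le C$, the $i$-th term of the differentiated series \eqref{SeriesPlan} has leafwise $C^k$-norm at most $C\,C_k\,e^{k\eta i|\bfn|}\|\psi\|_{C^k}$, which is \emph{not} summable as stated --- and this is precisely where the choice of $\eta$ must be exploited more carefully, which I expect to be the main obstacle. The fix: $\beta$ is the expanding map $\rho^\bfn$ on $\gov$, so in fact $\|\beta^{-i}\|$ decays like $e^{-c i|\bfn|}$ for some $c>0$ coming from the Lyapunov exponents in $[\chi]$ being \emph{positive} on $\bfn$ (condition (2) of Lemma~\ref{ExpaChoice}, together with the polynomial correction from the unipotent part being negligible); choosing $\eta$ at the outset small enough that $k\eta<c$ for every $k$ up to the step of nilpotency --- equivalently, running the whole argument with $\eta$ replaced by $\eta/(\text{step})$ --- makes $\sum_i e^{(k\eta-c)i|\bfn|}<\infty$. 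Thus the differentiated series converges uniformly for each fixed $k$, so $f$ is $C^k$ along $\cW_\alpha^s$ for all $k$.

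Finally, to upgrade from ``$C^k$ along the leaves'' to ``$\partial_{\cW_\alpha^s}^k f$ is H\"older on $M$'', I would note that each term $\beta^{-i}\,\partial_{\cW_\alpha^s}^k\bigl(\psi\circ\alpha^{i\bfn}\bigr)$ is H\"older continuous on $M$: it is a composition and product of the globally H\"older objects $\partial_{\cW_\alpha^s}^j\psi$ (given, since $\psi\in\CHolder$), the leafwise derivatives of the smooth maps $\alpha^{i\bfn}$, and the leafwise charts, all of which are H\"older in the transverse direction by the regularity statement for $\cW_\alpha^s$ recalled above. A geometric series bound on the H\"older seminorms (the extra exponential factor $e^{C_i}$ from the H\"older constant of $\alpha^{i\bfn}$ is still beaten by the decay $e^{-ci|\bfn|}$ after shrinking $\eta$ once more) shows the series of $k$-th leafwise derivatives converges in the H\"older norm, hence its sum $\partial_{\cW_\alpha^s}^k f$ is H\"older continuous on $M$. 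Therefore $f\in\CHolder$, as claimed.
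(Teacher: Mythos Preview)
Your approach has a genuine gap. First, under the stated hypothesis that $\|\beta^{-i}\|$ is merely \emph{bounded}, the series $\sum_{i\ge 0}\beta^{-i}\psi\circ\alpha^{i\bfn}$ need not converge uniformly (each term has sup norm comparable to $\|\psi\|_\infty$), and your uniqueness argument fails for the same reason: a bounded $g$ satisfying $g=\beta^{-1}g\circ\alpha^\bfn$ gives only $g=\beta^{-j}g\circ\alpha^{j\bfn}$, which yields no decay. More seriously, your ``fix'' --- invoking that in the application $\beta=\rho^\bfn|_\gov$, so that $\|\beta^{-i}\|\lesssim e^{-ci|\bfn|}$ with $c|\bfn|=\min_{\chi'\in[\chi]}\chi'(\bfn)>0$ --- cannot beat the leafwise derivative growth, because $c$ and $\eta$ are coupled. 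Condition (3) of Lemma~\ref{ExpaChoice} (applied to $\gov\subset\gog^s$) forces $\chi'(\bfn)\le\eta|\bfn|$ for every $\chi'\in[\chi]$, hence $c\le\eta$. Your summability requirement $k\eta<c$ then gives $k<1$, which is vacuous; shrinking $\eta$ does not help, since moving $\bfn$ closer to the wall $\ker\chi$ shrinks $c$ in lockstep. (The remark about ``$k$ up to the step of nilpotency'' is also off: membership in $\CHolder$ requires control of leafwise derivatives of \emph{all} orders.)

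The paper circumvents this by importing an \emph{independent} source of decay: exponential mixing of $\alpha$ via the Gorodnik--Spatzier estimate (Theorem~\ref{MatCoef}), whose rate $\tau>0$ depends only on $\alpha$, the subgroup $\Sigma$, and the H\"older exponent --- not on $\bfn$ or $\eta$. After reducing to $\int f\,\di\mu=0$, the series representation is established only in the sense of distributions; one then bounds $\langle\partial_{\cW_\alpha^s}^k(\beta^{-i}\psi\circ\alpha^{i\bfn}),\phi\rangle$ by splitting the test function as $\phi=\phi_\epsilon+(\phi-\phi_\epsilon)$, using mixing on the mollified piece (gaining $e^{-\tau\theta i|\bfn|}$) and the leafwise $C^k$ bound on the remainder (losing at most $e^{2\eta k i|\bfn|}$). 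Since $\tau$ is fixed, $\eta$ can now be chosen small enough to win, yielding $\partial_{\cW_\alpha^s}^k f\in(C^\theta)^*$ for all $k$ and $\theta$. A Rauch--Taylor type regularity theorem (Theorem~\ref{RauchTaylor}) then upgrades this to genuine H\"older continuity of the leafwise derivatives.
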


As in \cite{FKS13}*{\S 7}, we rely on the following matrix coefficient decay estimate by Gorodnik-Spatzier:

\begin{theorem}\label{MatCoef}\cite{GS} Suppose $\Sigma\subset\bZ^r$ is isomorphic to $\bZ^2$ and $\rho^\bfm$ is ergodic for every $\bfm\in\Sigma\backslash\{\bf0\}$. Then for all $\theta>0$ there are constants $\tau$ and $C$ depending on $\alpha$, $\Sigma$ and $\theta$ such that $\forall\bfm\in\Sigma\backslash\{\bf0\}$ and $\theta$-H\"{o}lder functions $f,g\in C^\theta(M)$,
$$\left|\langle f\circ\alpha^\bfm,g\rangle_{L^2(\mu)}-\int f\di \mu\int g\di\mu\right|\leq C\|f\|_{C^\theta}\|g\|_{C^\theta}e^{-\tau\theta|\bfm|}.$$\end{theorem}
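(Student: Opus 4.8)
\emph{Strategy and reductions.}
The plan is to prove the estimate by Fourier analysis on a torus, the exponential rate coming from a Diophantine expansion property of the $\bZ^2$-action that is forced by the absence of rank-one factors. First, conjugating by the Franks--Manning homeomorphism $H$ (which is bi-H\"older, satisfies $H\circ\alpha^\bfm=\rho^\bfm\circ H$, and pushes $\mu$ forward to Haar measure $\mathrm{Leb}$ on $M$), it suffices to prove the statement for the linearization $\rho$ acting on $M=G/\Gamma$ equipped with $\mathrm{Leb}$: if $f,g$ are $\theta$-H\"older for $\alpha$ then $f\circ H^{-1},g\circ H^{-1}$ are $\gamma\theta$-H\"older for $\rho$ with norms controlled by $\|f\|_{C^\theta},\|g\|_{C^\theta}$, where $\gamma$ is a H\"older exponent of $H^{-1}$, and since $\tau,C$ may depend on $\theta$, replacing $\theta$ by $\gamma\theta$ is harmless. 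Second, decompose $L^2(M,\mathrm{Leb})$ under the automorphism action using the central-character decomposition of the nilmanifold (equivalently, an induction on the nilpotency step): $L^2(M)$ splits orthogonally into the pull-back of $L^2$ of the maximal torus factor $M_0=\bT^d$ and subspaces carrying non-trivial central characters, on which $\rho$ permutes characters and acts by unitaries in the fibre directions, so a standard inductive argument reduces the bound on these pieces to a lower-step nilmanifold. This last reduction does not use the rank hypothesis, and it reduces us to $M=\bT^d$ with $\rho$ acting by automorphisms in $\mathrm{GL}(d,\bZ)$.

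\emph{Fourier analysis and the reduction to a lattice statement.}
Writing $f=\sum_k\widehat f(k)e_k$, $g=\sum_k\widehat g(k)e_k$ with $e_k(x)=e^{2\pi i\langle k,x\rangle}$, one has
\[
\langle f\circ\rho^\bfm,g\rangle_{L^2}-\int f\,\int g=\sum_{k\in\bZ^d\setminus\{0\}}\widehat f(k)\,\overline{\widehat g\big((\rho^\bfm)^{\mathsf T}k\big)}.
\]
To accommodate an arbitrarily small H\"older exponent, use the Littlewood--Paley characterization of H\"older classes, $\|\Delta_j h\|_\infty\lesssim 2^{-j\theta}\|h\|_{C^\theta}$, which gives $\sum_{\|k\|>\Lambda}|\widehat h(k)|^2\lesssim_{\theta,d}\Lambda^{-2\theta}\|h\|_{C^\theta}^2$ for every $\Lambda\ge1$. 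Fix a small $\epsilon>0$ and split the sum at the scale $\Lambda=e^{\epsilon|\bfm|}$: by Cauchy--Schwarz the terms with $\|k\|>\Lambda$, and (after the substitution $l=(\rho^\bfm)^{\mathsf T}k$) those with $\|(\rho^\bfm)^{\mathsf T}k\|>\Lambda$, contribute at most $\lesssim\Lambda^{-\theta}\|f\|_{C^\theta}\|g\|_{C^\theta}=e^{-\epsilon\theta|\bfm|}\|f\|_{C^\theta}\|g\|_{C^\theta}$. It remains to kill the ``low--low'' part, i.e.\ to show that for $\epsilon$ small and $|\bfm|$ large there is \emph{no} $k\in\bZ^d\setminus\{0\}$ with $\|k\|\le e^{\epsilon|\bfm|}$ and $\|(\rho^\bfm)^{\mathsf T}k\|\le e^{\epsilon|\bfm|}$; for the finitely many remaining $\bfm$ the whole matrix coefficient is $\le2\|f\|_{C^\theta}\|g\|_{C^\theta}$ and is absorbed into $C$.

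\emph{The Diophantine expansion estimate.}
This vanishing is the heart of the proof, and it is where the hypothesis enters. Work with the coarse Lyapunov decomposition $\bR^d=\bigoplus_{[\chi]}\gov^{[\chi]}$ of the $\bZ^2$-action, a \emph{fixed} splitting whose summands sit in fixed general position, on which $(\rho^\bfm)^{\mathsf T}$ expands or contracts $\gov^{[\chi]}$ at rate $\asymp e^{\chi(\bfm)}$ up to polynomial-in-$|\bfm|$ factors coming from the nilpotent parts. Two structural consequences of ``every element of $\Sigma$ is ergodic'' are used. (i) A faithful $\bZ^2$-action by toral automorphisms all of whose nontrivial elements are ergodic cannot have all Lyapunov exponents proportional (its logarithmic-embedding image in the unit group is $2$-dimensional, whereas proportionality would confine it to a line); hence there are at least two distinct Weyl chamber walls, and for $\bfm$ in the interior of a fixed Weyl chamber, at bounded angular distance from every wall, one has $\chi(\bfm)\gtrsim|\bfm|$ for every positive exponent. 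Moreover $E^s_{\rho^\bfm}\oplus E^0_{\rho^\bfm}$ is an algebraic subspace containing no nonzero rational vector: a rational invariant subspace inside it would have determinant $\pm1$, forcing its Lyapunov exponents to sum to $0$ while all being $\le0$, hence all vanishing at $\bfm$, so by Kronecker's theorem its eigenvalues would be roots of unity and $\rho^\bfm$ would not be ergodic. By a Liouville-type lower bound for the distance from an integer point to an algebraic subspace, the unstable component of an integer $k$ with $\|k\|\le T$ then has norm $\gtrsim T^{-\kappa}$, whence $\|(\rho^\bfm)^{\mathsf T}k\|\gtrsim e^{c|\bfm|}\|k\|^{-\kappa}>e^{\epsilon|\bfm|}$ as soon as $\|k\|\le e^{\epsilon|\bfm|}$ and $\epsilon<c/(1+\kappa)$; this disposes of $\bfm$ away from the walls. (ii) For $\bfm$ close to a wall $\ker\chi_w$, $\rho^\bfm$ is only weakly hyperbolic along $\gov^{[\chi_w]}$, and the potentially bad frequencies $k$ are those concentrated in $\gov^{[\chi_w]}\oplus E^s_{\rho^\bfm}$. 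Here the no-rank-one-factor assumption is used precisely to ensure that $\gov^{[\chi_w]}\oplus\gov^{[-\chi_w]}$ — the maximal invariant subspace on which all exponents are proportional — contains no nonzero rational vector (via the equivalence with ergodicity of every element of $\Sigma$, Lemma~\ref{rank2}, together with Kronecker's theorem); combined with the Diophantine bound for $E^s_{\rho^\bfm}\oplus E^0_{\rho^\bfm}$ and with $|\chi_w(\bfm)|\gtrsim|\bfm|^{-\kappa'}$ coming from Baker's theorem on linear forms in logarithms (applied when $\ker\chi_w$ is irrational; when it is rational, $|\chi_w(\bfm)|$ is bounded below by a constant whenever nonzero), this forces $\|(\rho^\bfm)^{\mathsf T}k\|>e^{\epsilon|\bfm|}$ as well, and the proof is complete.

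\emph{Main obstacle.}
The crux is exactly case (ii): reconciling uniform exponential decay in $|\bfm|$ with the fact that $\rho^\bfm$ becomes arbitrarily weakly hyperbolic as $\bfm$ approaches a Weyl chamber wall. This is only possible because the no-rank-one-factor hypothesis forces the near-neutral subspace $\gov^{[\chi_w]}\oplus\gov^{[-\chi_w]}$ to be an irrational, rational-point-free algebraic subspace, so that the slow frequencies are kept away from the integer lattice up to the scale $e^{\epsilon|\bfm|}$; making this quantitative and uniform in $\bfm$, while controlling the bounded but nonzero contributions of the nilpotent step structure, is the technical core of the argument.
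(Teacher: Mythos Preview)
The paper does not prove this theorem; it is quoted from Gorodnik--Spatzier \cite{GS} and used as a black box in Lemmas~\ref{distribution} and~\ref{HolderDist}. There is no proof in the paper to compare your proposal against.

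That said, your outline is broadly in the spirit of how such exponential-mixing results are established (reduction to the linearization via the bi-H\"older conjugacy, then harmonic analysis plus a Diophantine lattice-point estimate). Two comments are in order.

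First, the passage from the nilmanifold to the maximal torus factor is the genuinely hard part of \cite{GS}, and your one-sentence ``standard inductive argument'' hides essentially all of it. The $L^2$-decomposition of a nilmanifold by central characters does not reduce matters to a lower-step nilmanifold in any direct way: the pieces with nontrivial central character are infinite-dimensional irreducible representations described by Kirillov theory, and controlling matrix coefficients there uniformly requires substantial work (quantitative equidistribution of orbits on the base, van der Corput--type estimates in the fibre, etc.). This is not a routine reduction.

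Second, your case~(ii) is correct in outcome but the argument is tangled. The decisive fact is simply that for any $\bfm_0\in\Sigma$ lying \emph{on} the wall $\ker\chi_w$, the non-expanding subspace $E^s_{\rho^{\bfm_0}}\oplus E^0_{\rho^{\bfm_0}}$ contains no nonzero integer vector --- exactly the Kronecker argument you already gave in case~(i), applied now to the ergodic wall element $\bfm_0$. Since this subspace equals $E^s_{\rho^\bfm}\oplus\gov^{[\chi_w]}$ for $\bfm$ in the chamber on the $\chi_w>0$ side, the Liouville bound gives the strongly-expanding component of $k$ size $\gtrsim\|k\|^{-\kappa}$ uniformly, and you are done. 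Neither the separate statement about $\gov^{[\chi_w]}\oplus\gov^{[-\chi_w]}$ nor Baker's theorem is needed: the lower bound $|\chi_w(\bfm)|\gtrsim|\bfm|^{-\kappa'}$ yields an expansion factor of only $1+O(|\bfm|^{-\kappa'})$ along $\gov^{[\chi_w]}$, which contributes nothing toward exponential decay.
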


Here $\mu$ is the unique $\alpha$-invariant absolutely continuous measure (see \S\ref{secFM}). And the H\"older norm $\|\cdot\|_{C^\theta}$ is given by \begin{equation}\label{HolderNorm}\|f\|_{C^\theta}=\|f\|_{L^\infty}+\sup_{x,y\in\bT^N}\frac{|f(x)-f(y)|}{\dist(x,y)^\theta}.\end{equation}

\begin{lemma}\label{distribution} In the setting of Proposition \ref{LinearSmooth}, if in addition $\int f\di\mu=0$, then $$f=\sum_{i=0}^\infty\beta^{-i}\psi\circ\alpha^{i\bfn}$$ in the sense of distributions.\end{lemma}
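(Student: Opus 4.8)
The plan is to iterate the linearized equation \eqref{LinearEq} and kill the resulting remainder by invoking the matrix coefficient decay of Theorem \ref{MatCoef}. Substituting \eqref{LinearEq} into itself $k$ times gives, for every $k\geq 0$, the pointwise identity of $L$-valued H\"older functions
\[
f=\beta^{-(k+1)}\big(f\circ\alpha^{(k+1)\bfn}\big)+\sum_{i=0}^{k}\beta^{-i}\big(\psi\circ\alpha^{i\bfn}\big).
\]
It therefore suffices to prove that the remainder term $\beta^{-(k+1)}\big(f\circ\alpha^{(k+1)\bfn}\big)$ tends to $0$ as $k\to\infty$ in the sense of distributions. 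Concretely, for a $C^\infty$ test function $\phi$ on $M$ we must show that $\int_M\beta^{-(k+1)}f\big(\alpha^{(k+1)\bfn}x\big)\phi(x)\,d\mu(x)\to 0$; here we integrate against the smooth positive measure $\mu$ of \S\ref{secFM}, which is legitimate because its density with respect to the standard volume is $C^\infty$ and positive, so pairing against $C^\infty$ densities and $\mu$-integration against $C^\infty$ weights define the same notion of distributional convergence. Since $\beta^{-(k+1)}$ is a fixed linear operator on $L$, this integral equals $\beta^{-(k+1)}\langle f\circ\alpha^{(k+1)\bfn},\phi\rangle_{L^2(\mu)}$.

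Next I would bound $\langle f\circ\alpha^{(k+1)\bfn},\phi\rangle_{L^2(\mu)}$ using Theorem \ref{MatCoef}. Fix a basis of $L$ and apply the estimate to each scalar coordinate of $f$ paired with $\phi$, with the lattice $\Sigma\cong\bZ^2$ furnished by Lemma \ref{ExpaChoice}(1), on which every non-zero element acts ergodically; since $f$ and $\phi$ are H\"older of some exponent $\theta$ and $(k+1)\bfn\in\Sigma\setminus\{\bf0\}$, this produces $\tau>0$, $C>0$ with
\[
\big\|\langle f\circ\alpha^{(k+1)\bfn},\phi\rangle_{L^2(\mu)}-\big(\textstyle\int f\,d\mu\big)\textstyle\int\phi\,d\mu\big\|\leq C\|f\|_{C^\theta}\|\phi\|_{C^\theta}\,e^{-\tau\theta(k+1)|\bfn|}.
\]
Because $\int f\,d\mu=0$ by hypothesis, the leading term drops out, so $\langle f\circ\alpha^{(k+1)\bfn},\phi\rangle_{L^2(\mu)}\to 0$ exponentially fast; combined with $\sup_{i\geq 0}\|\beta^{-i}\|<\infty$ this gives $\beta^{-(k+1)}\langle f\circ\alpha^{(k+1)\bfn},\phi\rangle_{L^2(\mu)}\to 0$. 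Letting $k\to\infty$ in the iterated identity, we obtain $\langle f,\phi\rangle=\sum_{i=0}^\infty\langle\beta^{-i}(\psi\circ\alpha^{i\bfn}),\phi\rangle$ for every test function $\phi$, which is the asserted distributional identity. Moreover $\int\psi\,d\mu=(I-\beta^{-1})\int f\,d\mu=0$ (using $\alpha$-invariance of $\mu$), so the same decay estimate applied to $\psi$ shows the series on the right-hand side converges absolutely when tested against any $\phi$, confirming that the statement is well posed.

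There is no essential difficulty here; the substance is entirely in Theorem \ref{MatCoef} and the hypothesis $\int f\,d\mu=0$. The only points that need care are bookkeeping: everything is $L$-valued, which is handled by fixing a basis of $L$ and using $\sup_{i\geq 0}\|\beta^{-i}\|<\infty$ to absorb the operators; and the decay estimate of Theorem \ref{MatCoef} is stated with respect to $\mu$, so the pairing defining distributional convergence must be taken against $\mu$ (equivalently, against the standard smooth volume, the two differing by a smooth positive density). The hypothesis $\int f\,d\mu=0$ enters exactly once, to annihilate the leading constant term in the decay estimate, and this is where the ergodicity input encoded in Lemma \ref{ExpaChoice}(1) is used.
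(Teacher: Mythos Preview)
Your proof is correct and follows essentially the same route as the paper: iterate \eqref{LinearEq}, then kill the remainder $\beta^{-(k+1)}(f\circ\alpha^{(k+1)\bfn})$ by pairing against a test function, passing to $L^2(\mu)$ via the smooth positive density, and applying Theorem \ref{MatCoef} together with $\int f\,\di\mu=0$ and the uniform bound on $\|\beta^{-i}\|$. The only cosmetic difference is that the paper first reduces to $\int\phi\,\di\mu=0$ (using $\int\beta^{-i}f\circ\alpha^{i\bfn}\,\di\mu=0$) whereas you keep $\phi$ arbitrary and let $\int f\,\di\mu=0$ absorb the constant term directly; your extra remark on absolute convergence of the series is a nice addition not spelled out in the paper.
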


\begin{proof}Iterating \eqref{LinearEq} gives \begin{equation}f=\sum_{j=0}^i\beta^{-i}\psi\circ\alpha^{i\bfn}+\beta^{-i} f\circ\alpha^{i\bfn}.\end{equation}

It suffices to show that for all $\phi\in C^\infty(\bT^N)$,  $\lim_{i\rightarrow \infty}\langle\beta^{-i}f\circ\alpha^{i\bfn}, \phi\rangle_{L^2(\bT^N)}=0$. As $\mu$ has positive $C^\infty$ density function, one may replace $L^2(\bT^N)$ with $L^2(\mu)$ while proving this. 

Since $\mu$ is $\alpha$-invariant and $h$ has zero average against $\mu$,
\begin{equation}\int_{\bT^N}\beta^{-i} f\circ\alpha^{i\bfn}\di\mu=\beta^{-i}\left(\int f\di\mu\right)=0.\end{equation}
Thus we may assume $\int \phi\di\mu=0$.

Assume $f$ is $\gamma$-H\"older, then by Lemma \ref{MatCoef}, \begin{equation}\label{distributionEq1}\left|\langle\beta^{-i}f\circ\alpha^{i\bfn}, \phi\rangle_{L^2(\mu)}\right|\leq\|\beta^{-i}\|\cdot C\|f\|_{C^\gamma}\|\phi\|_{C^\gamma}e^{-\tau\gamma i|\bfn|}.\end{equation} 
As $\|\beta^{-i}\|$ is uniformly bounded by hypothesis,  \eqref{distributionEq1} decays to $0$ as $i$ grows, which completes the proof.
\end{proof}

Let $(C^\theta)^*(M)$ denote the dual space of the space $C^\theta(M)$ of the $\theta$-H\"older continuous functions on $M$.
\begin{lemma}\label{HolderDist}If $f$, $\psi$ are as in Proposition \ref{LinearSmooth} and $\int f\di\mu=0$, then for all $k\in\bN$ and all $\theta>0$, $\partial_{\cW_\alpha^s}^k f\in (C^\theta)^*$. \end{lemma}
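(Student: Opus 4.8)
The plan is to represent $\partial_{\cW_\alpha^s}^k f$ using the distributional series from Lemma \ref{distribution} and then estimate each term's action on a test function $\phi\in C^\theta(M)$ by a matrix-coefficient bound. Concretely, since $\int f\,\di\mu=0$, Lemma \ref{distribution} gives $f=\sum_{i=0}^\infty\beta^{-i}\psi\circ\alpha^{i\bfn}$ as distributions, so formally $\partial_{\cW_\alpha^s}^k f=\sum_{i=0}^\infty\beta^{-i}\,\partial_{\cW_\alpha^s}^k\big(\psi\circ\alpha^{i\bfn}\big)$. The point is that differentiating $\psi\circ\alpha^{i\bfn}$ along $\cW_\alpha^s$ is well-behaved: $\alpha^{i\bfn}$ preserves $\cW_\alpha^s$, and the chain rule expresses $\partial_{\cW_\alpha^s}^k(\psi\circ\alpha^{i\bfn})$ in terms of $(\partial_{\cW_\alpha^s}^j\psi)\circ\alpha^{i\bfn}$ for $j\le k$ multiplied by derivatives of $\alpha^{i\bfn}$ restricted to $E_\alpha^s$, which are products of terms $D\alpha^{i\bfn}|_{E_\alpha^s}$ and its higher-order analogues. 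Thus for a test function $\phi$, $\langle\partial_{\cW_\alpha^s}^k(\psi\circ\alpha^{i\bfn}),\phi\rangle$ can be rewritten, after integrating by parts along the stable foliation (using the Hölder density $J$ of the disintegration of volume recalled before the definition of $\CHolder$), as an integral against $\phi$ of a function of the form (bounded derivatives of $\alpha^{i\bfn}$ along $E_\alpha^s$) times $(\partial_{\cW_\alpha^s}^j\psi)\circ\alpha^{i\bfn}$.

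The key growth estimate is furnished by the choice of $\bfn$ in Lemma \ref{ExpaChoice}(4): $\log\|D\alpha^\bfn|_{E_\alpha^s}(x)\|\le\eta|\bfn|$ for all $x$, so by submultiplicativity $\|D\alpha^{i\bfn}|_{E_\alpha^s}\|\le e^{\eta i|\bfn|}$, and similarly the higher-order derivatives along the stable direction (which control the $k$-jet of $\alpha^{i\bfn}$ along $\cW_\alpha^s$) grow at most like $e^{Ck\eta i|\bfn|}$ for an appropriate constant. Meanwhile $\|\beta^{-i}\|$ is uniformly bounded by hypothesis. So each summand, tested against a $\theta$-Hölder $\phi$, is controlled by $\|\beta^{-i}\|\cdot e^{Ck\eta i|\bfn|}\cdot\|(\partial_{\cW_\alpha^s}^j\psi)\circ\alpha^{i\bfn}\|_{?}\cdot\|\phi\|_{?}$ — but this is not yet summable, and this is where the matrix coefficient decay of Theorem \ref{MatCoef} must be brought in: the quantity $\langle(\partial_{\cW_\alpha^s}^j\psi)\circ\alpha^{i\bfn},\,g\rangle_{L^2(\mu)}$, where $g$ absorbs $\phi$, $J$, and the bounded stable-derivative factors (all Hölder by the regularity of $\cW_\alpha^s$ recalled from \cite{dlL01} and since $\psi\in\CHolder$), decays like $e^{-\tau\theta' i|\bfn|}$ up to the constant $\int(\partial_{\cW_\alpha^s}^j\psi)\,\di\mu\int g\,\di\mu$. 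Choosing $\eta$ small enough relative to $\tau$ and $k$ (which is legitimate because $\eta$ was a free parameter in Lemma \ref{ExpaChoice} and $k$ is fixed once we fix the order of the derivative), the exponential decay beats the exponential growth $e^{Ck\eta i|\bfn|}$, so the series $\sum_i$ of the oscillating parts converges absolutely. The leftover "constant" parts $\int(\partial_{\cW_\alpha^s}^j\psi)\,\di\mu\int g\,\di\mu$ summed over $i$ form a geometric-type series controlled by $\|\beta^{-i}\|$ times the growth — here one uses that $\int_M(\partial_{\cW_\alpha^s}^j\psi)\,\di\mu$ can be taken to vanish, or is absorbed, for $j\ge1$ by integration by parts along the leaves against the invariant density; in any case this contributes a bounded term and one concludes $|\langle\partial_{\cW_\alpha^s}^k f,\phi\rangle|\le C_{k,\theta}\|\phi\|_{C^\theta}$, i.e. $\partial_{\cW_\alpha^s}^k f\in(C^\theta)^*$.

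The main obstacle I anticipate is the bookkeeping in the chain-rule/integration-by-parts step: making precise the sense in which $\partial_{\cW_\alpha^s}^k$ of the distribution $\sum_i\beta^{-i}\psi\circ\alpha^{i\bfn}$ equals the termwise-differentiated series, and checking that after pairing with $\phi$ and moving derivatives onto the test function one genuinely lands on a pairing of the form $\langle(\partial^j_{\cW_\alpha^s}\psi)\circ\alpha^{i\bfn},g_i\rangle$ with $g_i$ Hölder with norm growing only exponentially in $i$ at a rate controlled by $\eta$ — this requires using that the chart maps $\Gamma$, the density $J$, and their stable-direction derivatives are all Hölder (from \cite{dlL01}) and that the stable holonomy used to trivialize the foliation locally is Hölder. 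One must also verify that the exponent $\theta'$ appearing in the decay can be taken uniform over the finitely many $j\le k$ and that the Hölder exponent of $g_i$ does not degenerate with $i$ (it does not, since conjugation by $\alpha^{i\bfn}$ only rescales the stable metric by the controlled factor $e^{O(\eta i|\bfn|)}$, affecting the Hölder constant but not the exponent). Once these regularity bookkeeping points are settled, the convergence is a matter of choosing $\eta\ll\tau\theta/k$ and summing a geometric series.
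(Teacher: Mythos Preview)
Your overall plan --- represent $f$ by the series from Lemma \ref{distribution}, differentiate termwise, and use the matrix-coefficient decay of Theorem \ref{MatCoef} against the controlled stable growth $e^{O(\eta i|\bfn|)}$ --- is the paper's plan too. But there is a genuine gap in the step where you ``absorb the stable-derivative factors into $g$'' and then apply Theorem \ref{MatCoef}. That theorem bounds $\langle F\circ\alpha^{i\bfn},g\rangle_{L^2(\mu)}$ by $C\|F\|_{C^\theta}\|g\|_{C^\theta}e^{-\tau\theta i|\bfn|}$, so you need the \emph{H\"older} norm of $g_i$. Your $g_i$ contains the matrix of $D\alpha^{i\bfn}|_{E_\alpha^s}$, and while its $L^\infty$ norm is $\leq e^{\eta i|\bfn|}$ by Lemma \ref{ExpaChoice}(4), its H\"older seminorm is \emph{not} controlled by $\eta$: variation of $x\mapsto D\alpha^{i\bfn}|_{E_\alpha^s}(x)$ transversally to $\cW_\alpha^s$ involves the full $\|D\alpha^{i\bfn}\|$ (including the unstable expansion) and the H\"older variation of the bundle $E_\alpha^s$ itself, because $e_b(x)-e_b(y)$ need not lie in $E_\alpha^s$ and so gets hit by the unstable rate of $D\alpha^{i\bfn}$. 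That unstable rate is a fixed positive number once $\bfn$ is chosen, not a parameter you can shrink, and there is no a priori reason $\tau\theta$ beats it. Your justification (``conjugation by $\alpha^{i\bfn}$ only rescales the stable metric by $e^{O(\eta i|\bfn|)}$'') governs the $L^\infty$ size, not the H\"older constant.

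The paper avoids this by a mollification trick rather than a chain-rule rewriting. One smooths the test function, $\phi=\phi_\epsilon+(\phi-\phi_\epsilon)$, with $\|\phi-\phi_\epsilon\|_{L^\infty}\lesssim\epsilon^\theta\|\phi\|_{C^\theta}$ and $\|\phi_\epsilon\|_{C^{k+1}}\lesssim\epsilon^{-N-k-1}\|\phi\|_{L^\infty}$. Against $\phi_\epsilon$ one integrates by parts to move \emph{all} $\partial_{\cW_\alpha^s}^k$ onto $\phi_\epsilon$, so Theorem \ref{MatCoef} is applied to $\langle\psi\circ\alpha^{i\bfn},\partial_{\cW_\alpha^s}^k\phi_\epsilon\rangle$ with both H\"older norms \emph{independent of $i$}; this gives $\lesssim\epsilon^{-N-k-1}e^{-\tau\theta i|\bfn|}$. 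Against $\phi-\phi_\epsilon$ one uses only the pointwise bound $\|\partial_{\cW_\alpha^s}^k(\psi\circ\alpha^{i\bfn})\|_{L^\infty}\lesssim e^{2\eta k i|\bfn|}$ (from \cite{FKS13}*{Lemma 3.6} and Lemma \ref{ExpaChoice}(4)), giving $\lesssim\epsilon^\theta e^{2\eta k i|\bfn|}$. Optimizing $\epsilon$ per $i$ and then choosing $\eta$ small makes the sum geometric. The point is that the mollification decouples the two competing effects: the decay estimate never sees an $i$-dependent H\"older norm, and the growth estimate only needs $L^\infty$ control along $E_\alpha^s$, which \emph{is} governed by $\eta$.
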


\begin{proof} By Lemma \ref{distribution}, we only need to show \begin{equation}\label{HDeq1}\sum_{i=0}^\infty\langle \partial_{\cW_\alpha^s}^k(\beta^{-i}\psi\circ\alpha^{i\bfn}),\phi\rangle\leq C\|\phi\|_{C^\theta}.\end{equation}

Note $\psi$ is $C^\infty$ and thus $\partial_{\cW_\alpha^s}^k(\beta^{-i}\psi\circ\alpha^{i\bfn} )$ is a function, so each term in \eqref{HDeq1} is actually given by integration in terms of the Lebesgue measure. 

Fix a compactly supported positive $C^\infty$ bump function $\delta$ on $\gog$ supported on a neighborhood around $0$. For small values of $\epsilon>0$, define on $G$ a function $$\delta_\epsilon(x)=\frac{\delta(\frac{\log x}{\epsilon})}{I_\epsilon},$$ where $I_\epsilon$ is chosen such that $\int_G\delta_\epsilon(g)\di g=1$. Let $\phi_\epsilon$ be the convolution $\delta_\epsilon\star\phi$.  Like in  \cite[Eq. (11)]{FKS13}, it is easy to check the smoothification $\phi_\epsilon$  is $C^\infty$ and is well-behaved on all levels of regularity: 

\begin{equation}\label{HDeq2}\|\phi-\phi_\epsilon\|_{L^\infty}\leq a_0\epsilon^\theta\|\phi\|_{C^\theta};\end{equation}
\begin{equation}\label{HDeq3}\|\phi_\epsilon\|_{C^k}\leq c_k\epsilon^{-N-k}\|\phi\|_{L^\infty}\end{equation} for some constants $a_0$ and $c_k$ and $N=\dim M$.

By Lemma \ref{MatCoef} and \eqref{HDeq3} and the construction of distributional derivatives, for some constant $a_1>0$,
\begin{equation}\label{HDeq4}\begin{split}
\left|\langle \partial_{\cW_\alpha^s}^k(\psi\circ\alpha^{i\bfn}),\phi_\epsilon\rangle\right|
=&\left|\langle \psi\circ\alpha^{i\bfn},\partial_{\cW_\alpha^s}^k\phi_\epsilon\rangle\right|\\
\leq&a_2\left|\langle \psi\circ\alpha^{i\bfn},\partial_{\cW_\alpha^s}^k\phi_\epsilon\rangle_{L^2(\mu)}\right|\\
\leq&a_1a_2\|\psi\|_{C^\theta}\|\partial_{\cW_\alpha^s}^k\phi_\epsilon\|_{C^\theta}e^{-\tau\theta i|\bfn|}\\
\leq&a_1a_2\|\psi\|_{C^\theta}\|\phi_\epsilon\|_{C^{k+1}}e^{-\tau\theta i|\bfn|}\\
\leq&a_1a_2c_{k+1}\|\psi\|_{C^\theta}\|\phi\|_{L^\infty}\epsilon^{-N-k-1}e^{-\tau\theta i|\bfn|}\\
\leq&C_1\epsilon^{-N-k-1}e^{-\tau\theta i|\bfn|}\|\phi\|_{C^\theta},\end{split}\end{equation}
where $a_2$ is the supremum of the density function of the absolutely continuous measure $\mu$. The constant $C_1=a_1a_2c_k\|\psi\|_{C^\theta}$ depends on $k$, $\theta$, $\bfn$ and $\psi$, but is independent of $\phi$.

On the other hand,  by \eqref{HDeq2}
\begin{equation}\label{HDeq5}\begin{split}
&\left|\langle \partial_{\cW_\alpha^s}^k(\psi\circ\alpha^{i\bfn}),\phi-\phi_\epsilon\rangle\right|\\
\leq &\left\|\partial_{\cW_\alpha^s}^k(\psi\circ\alpha^{i\bfn})\right\|_{L^\infty}\|\phi-\phi_\epsilon\|_{L^\infty}\\
\leq&a_0\left\|\partial_{\cW_\alpha^s}^k\psi\right\|_{L^\infty}\left\|\partial_{\cW_\alpha^s}^k\alpha^{i\bfn}\right\|_{L^\infty}\epsilon^\theta\|\phi\|_{C^\theta}.
\end{split}\end{equation}

By \cite[Lemma 3.6]{FKS13}, \begin{equation}\label{HDeq6}\begin{split}
\left\|\partial_{\cW_\alpha^s}^k\alpha^{i\bfn}\right\|_{L^\infty}\leq& a_3\|D\alpha^\bfn|_{E_\alpha}\|^{ik}i^T\left\|\partial_{\cW_\alpha^s}^k\alpha^\bfn\right\|_{L^\infty}^T\\
\leq& a_3e^{\eta ik|\bfn|}i^T\left\|\partial_{\cW_\alpha^s}^k\alpha^\bfn\right\|_{L^\infty}^T\\
\leq& a_3a_4e^{2\eta ik|\bfn|}\left\|\partial_{\cW_\alpha^s}^k\alpha^\bfn\right\|_{L^\infty}^T,\end{split}\end{equation} 
where constants $a_3$ and $T$ depend only on $k$ and $\dim\cW_\alpha^s$, and $a_4=a_4(\eta,T)$. The second inequality is based on the choice of $\bfn$ from Lemma \ref{ExpaChoice}.
Combining \eqref{HDeq5}, \eqref{HDeq6}, we get
 \begin{equation}\label{HDeq7}
\left|\langle \partial_{\cW_\alpha^s}^k(\psi\circ\alpha^{i\bfn}),\phi-\phi_\epsilon\rangle\right|\leq a_4 C_2\epsilon^\theta e^{2\eta ik|\bfn|}\|\phi\|_{C^\theta},\\
\end{equation} where the constant $C_2=a_0a_3\left\|\partial_{\cW_\alpha^s}^k\psi\right\|_{L^\infty}\left\|\partial_{\cW_\alpha^s}^k\alpha^\bfn\right\|_{L^\infty}^T$ depends only on $k$, $\bfn$, $\alpha$ and $\psi$, and $a_4$ depends on $k$, $\alpha$ and $\eta$. 

Pick $\epsilon=e^{\frac{-\big(\tau+2\eta k\big)\theta i|\bfn|}{N+k+1-\theta}}$, then $\epsilon^{-N-k-1}e^{-\tau\theta i|\bfn|}$ and $\epsilon^\theta e^{-2\eta ik|\bfn|}$ are both equal to $e^{\frac{\big(-\tau\theta+2\eta(N+k+1)k\big)\theta i|\bfn|}{N+k+1+\theta}}$. One may fix a sufficiently small $\eta$, again in a way that depends only on $k$, $\bfn$, the subgroup $\Sigma$ in Lemma \ref{ExpaChoice} and the action $\alpha$, so that $\tau\theta-2\eta(N+k+1)k<0$.  Then $a_4$ is also determined by $k$, $\bfn$ and $\alpha$.

Denote $a_5=\frac{\big(-\tau\theta+2\eta(N+k+1)k\big)\theta |\bfn|}{N+k+1+\theta}$, then $a_5>0$ and is independent of $i$ and $\phi$.  Hence merging \eqref{HDeq4} and \eqref{HDeq7} gives \begin{equation}\label{HDeq8}\left|\langle (\psi\circ\alpha^{i\bfn})^{k,\cW},\phi\rangle\right|\leq (C_1+a_4C_2)e^{-a_5i}\|\phi\|_{C^\theta}, \forall i\in\bN.\end{equation}

By assumption, $\|\beta^{-i}\|$ is uniformly bounded by some constant $a_6$.  Therefore \eqref{HDeq8} implies the right hand side of \eqref{HDeq1} is bounded by $$\sum_{i=0}^\infty a_6(C_1+a_4C_2)e^{-a_5i}\|\phi\|_{C^\theta}=\frac{a_6(C_1+a_4C_2)}{1-e^{-a_5}}\|\phi\|_{C^\theta}$$ and this completes the proof.\end{proof}

\begin{proof}[Proof of Proposition \ref{LinearSmooth}] When $\int f\di\mu=0$, Lemma \ref{HolderDist} applies and we know partial derivatives of all orders of $f$ along $\cW_\alpha^s$ are in $(C^\theta)^*$ for all $\theta>0$. In this case, all these derivatives are actually H\"older continuous functions and hence $f\in\CHolder$. This is an application of Theorem \ref{RauchTaylor}, which is an extension of a theorem of Rauch and Taylor \cite{RT05}.

The general case reduces easily to the situation above. Actually, set $\bar f=\int f\di\mu$ and $f_1=f-\bar f$. As $\bar f\in L$ is a constant, it suffices to prove $f_1\in\CHolder$. $f_1$ has zero average against $\mu$. Moreover, it satisfies the equation $$f_1=\beta^{-1}f_1\circ\alpha+(\psi-\bar f+\beta^{-1}\bar f\circ\alpha^\bfn).$$ 

$\psi-\bar f+\beta^{-1}\bar f\circ\alpha^\bfn$ differs from $\psi$ by a constant and is thus in $\CHolder$. Given the zero average case, $f_1$ is in $\CHolder$.
\end{proof} 

\subsection{Establishing smoothness} We now complete the main task of this section by proving:

\begin{proposition}\label{hVSmooth}$h_V\in\CHolder$.\end{proposition}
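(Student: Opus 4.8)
The plan is to bootstrap from the linearized result (Proposition~\ref{LinearSmooth}) to the full nonlinear Baker--Campbell--Hausdorff equation \eqref{BCH} by an induction on the step of nilpotency of $\gov$, or equivalently on the depth of the lower central series of $V$. Write $\tilde h_V=\log h_V$, which takes values in the nilpotent Lie algebra $\gov$, and recall that $\Psi=\log\big((\rho^{-\bfn}h_u\circ\alpha^\bfn)\cdot Q\big)_V\in\CHolder$ by Corollary~\ref{USmooth}. The key structural observation is that in \eqref{BCH} every term except the first two involves at least one Lie bracket, hence lands in $[\gov,\gov]$; and the $\rho^\bfn$-action respects the descending central series $\gov\supset[\gov,\gov]\supset[\gov,[\gov,\gov]]\supset\cdots$, on each graded piece of which $\rho^{-\bfn}$ has uniformly bounded powers (this is condition (2) of Lemma~\ref{ExpaChoice}, which makes $\rho^\bfn$ expanding on all of $\gov$, so $\rho^{-\bfn}$ is contracting and a fortiori $\|\rho^{-i\bfn}\|$ is bounded — in fact decaying — on each invariant graded subspace).

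First I would set up the induction. Let $\gov_1=\gov$ and $\gov_{j+1}=[\gov,\gov_j]$ be the lower central series, so $\gov_{s+1}=0$ for some $s$. Project \eqref{BCH} onto the abelianization $\gov/\gov_2$: all bracket terms vanish there, leaving exactly the linear equation $\overline{\tilde h_V}=\rho^{-\bfn}\overline{\tilde h_V}\circ\alpha^\bfn+\overline\Psi$ with $\overline{\tilde h_V}:M\to\gov/\gov_2$ and $\overline\Psi\in\CHolder$. Applying Proposition~\ref{LinearSmooth} with $L=\gov/\gov_2$ and $\beta=\rho^\bfn|_{\gov/\gov_2}$ (whose inverse has uniformly bounded — indeed, by condition (2), exponentially small — powers) gives $\overline{\tilde h_V}\in\CHolder$. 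This is the base case. For the inductive step, suppose the image of $\tilde h_V$ in $\gov/\gov_{j}$ is known to lie in $\CHolder$; I want to upgrade to $\gov/\gov_{j+1}$. Project \eqref{BCH} onto $\gov/\gov_{j+1}$. Write $\tilde h_V=L+R$ where $L$ is a (smooth, along $\cW^s_\alpha$, by the inductive hypothesis applied after choosing a linear splitting) lift of the known $\gov/\gov_j$-component and $R$ takes values in $\gov_j/\gov_{j+1}$. Because every bracket term in \eqref{BCH} that is nonzero modulo $\gov_{j+1}$ involves only factors taken modulo $\gov_j$ (a bracket of something in $\gov$ with something in $\gov_{j-1}$ already lies in $\gov_j$, so to see the term mod $\gov_{j+1}$ only the $\gov/\gov_j$-parts of the entries matter, except the outermost linear slot) — more precisely, grouping the BCH terms by how deep their bracket structure reaches, the only place where $R$ itself appears, modulo $\gov_{j+1}$, is linearly in the first term $\rho^{-\bfn}\tilde h_V\circ\alpha^\bfn$. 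Hence modulo $\gov_{j+1}$ the equation for the $\gov_j/\gov_{j+1}$-component reads
\begin{equation*}
R = \rho^{-\bfn}R\circ\alpha^\bfn + \Psi',
\end{equation*}
where $\Psi'$ is an explicit expression built from $\Psi$, $Q$, $h_u$, and the already-smooth components $L\circ\alpha^\bfn$ via finitely many brackets and the fixed linear maps $\rho^{\pm\bfn}$; since $\CHolder$ is a ring closed under composition with the smooth map $\alpha^\bfn$, under the bilinear bracket, and under the bounded linear operators $\rho^{\pm\bfn}$ (all of which preserve H\"older regularity along the smooth-leaved foliation $\cW^s_\alpha$), we get $\Psi'\in\CHolder$. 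Now apply Proposition~\ref{LinearSmooth} with $L=\gov_j/\gov_{j+1}$ and $\beta=\rho^\bfn|_{\gov_j/\gov_{j+1}}$ — again $\|\beta^{-i}\|$ is uniformly bounded by condition (2) of Lemma~\ref{ExpaChoice} — to conclude $R\in\CHolder$, hence the $\gov/\gov_{j+1}$-component of $\tilde h_V$ is in $\CHolder$. After $s$ steps we obtain $\tilde h_V\in\CHolder$, and since $h_V=\exp\tilde h_V$ with $\exp:\gov\to V$ polynomial (nilpotency), Proposition~\ref{hVSmooth} follows.

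The main obstacle I anticipate is the bookkeeping in the inductive step: verifying precisely that, modulo $\gov_{j+1}$, the unknown top-layer component $R$ enters \eqref{BCH} only through the single linear term $\rho^{-\bfn}\tilde h_V\circ\alpha^\bfn$ and nowhere inside a bracket, so that the residual $\Psi'$ genuinely depends only on data already proved smooth. This is a statement about the filtration-degree of each BCH monomial: a bracket monomial of bracket-length $\ell\ge 2$ lies in $\gov_\ell$, so modulo $\gov_{j+1}$ only monomials of length $\le j$ survive, and in such a monomial an occurrence of $R$ (which is itself ``of degree $j$'' in the filtration) inside any bracket would push the whole monomial into $\gov_{j+1}$ — contradiction — unless the monomial has length exactly $1$, i.e. is the first term. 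Making this degree count airtight, and correctly handling the outermost linear slot versus the inner bracket slots, is the delicate point; everything else is an application of Proposition~\ref{LinearSmooth} together with the elementary closure properties of $\CHolder$.
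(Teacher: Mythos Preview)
Your proposal is correct and follows essentially the same route as the paper: induct along the lower central series of $\gov$, observe that in the BCH equation projected to $\gov/\gov_{j+1}$ the top-layer component enters only linearly (because any bracket containing it lands in $\gov_{j+1}$), and then invoke Proposition~\ref{LinearSmooth} on the resulting linear equation. The only cosmetic difference is that the paper applies Proposition~\ref{LinearSmooth} to the cumulative projection $\tilde h_i=\pi_i\tilde h_V$ on $\gov/\gov_i$ at each step, whereas you apply it to the incremental piece $R$ with values in $\gov_j/\gov_{j+1}$; these are equivalent formulations of the same induction.
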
 

This is equivalent to that $\tilde h_V=\log h_V$ is in $\CHolder$. The later claim will be proved by decomposing $\tilde h_V$ into components of different nilpotent steps.

As a Lie subalgebra of $\gog$, $\gov$ is nilpotent. Take the lower central series $$\gov=\gov_1\supset\gov_2\supset\cdots\supset\gov_{l+1}=\{0\}$$ where $[\gov_i,\gov_j]\subset\gov_{i+j}$ for all $i,j$ and each $\gov_i$ is an ideal of $\gov$. The projection $\pi_i$ from $\gov$ into the quotient Lie algebra $\gov/\gov_i$ is a Lie algebra morphism, i.e., $[\pi_iA,\pi_iB]=\pi_i[A,B]$. Write $\tilde h_i=\pi_i\circ\tilde h$, $\psi_i=\pi_i\circ\psi$.

Remark that since $\rho$ acts by automorphisms, all the $\gov_i$'s are $\rho$-invariant. The induced action on $\gov/\gov_i$ will again be called $\rho$.

\begin{lemma}For $i=1,\cdots,l+1$, $\tilde h_i\in\CHolder$.\end{lemma}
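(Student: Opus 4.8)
The plan is to prove the statement by induction on $i$, where the base case $i=1$ is trivial since $\gov_1=\gov$ makes $\pi_1$ the zero map and $\tilde h_1\equiv 0$. For the inductive step, I would assume $\tilde h_i\in\CHolder$ and deduce $\tilde h_{i+1}\in\CHolder$. The idea is that the quotient $\gov/\gov_{i+1}$ is a central extension of $\gov/\gov_i$ by $\gov_i/\gov_{i+1}$, and the latter is a central (hence abelian) piece on which $\rho^{-\bfn}$ acts as a linear isomorphism whose inverse powers are uniformly bounded (because $\gov\subset\gog^s$ means $\rho^{-\bfn}$ is \emph{expanding} on $\gov$ and all its quotients, so $\rho^{\bfn}$ is contracting and $\|(\rho^\bfn)^{-i}\|=\|\rho^{-i\bfn}\|$ is... wait — actually the relevant operator is $\rho^{-\bfn}$ on $\gov_i/\gov_{i+1}$, whose inverse $\rho^{\bfn}$ contracts, so $\|(\rho^{-\bfn})^{-i}\|$ is uniformly bounded). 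This puts us exactly in the setting of Proposition \ref{LinearSmooth}.

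Concretely, I would project the Baker-Campbell-Hausdorff equation \eqref{BCH} via $\pi_{i+1}$. Since $\pi_{i+1}$ is a Lie algebra morphism and all brackets in \eqref{BCH} that survive modulo $\gov_{i+1}$ involve only $\tilde h$-components and $\Psi$-components modulo $\gov_i$ — because any bracket lands in $\gov_2$, iterated brackets land deeper in the lower central series, and a bracket of two things lands in $\gov_{i+1}$ as soon as the factors are read modulo the appropriate lower terms — one obtains an equation of the shape
\begin{equation*}
\tilde h_{i+1}=\rho^{-\bfn}\tilde h_{i+1}\circ\alpha^\bfn+\Psi_{i+1}+R_i,
\end{equation*}
where $R_i$ is a sum of finitely many brackets, each of which is built out of $\tilde h_i=\pi_i\circ\tilde h$, $\Psi_i$, and their compositions with $\alpha^\bfn$ and $\rho^{-\bfn}$. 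Here I am using that a bracket $[\,\cdot\,,\,\cdot\,]$ modulo $\gov_{i+1}$ only depends on its two arguments modulo $\gov_i$ (since $[\gov_i,\gov]\subset\gov_{i+1}$), so every bracket term in the BCH expansion, when projected to $\gov/\gov_{i+1}$, factors through $\pi_i$ of its arguments. By the inductive hypothesis $\tilde h_i\in\CHolder$, and $\Psi\in\CHolder$ was established right before \eqref{BCH}, while $\CHolder$ is an algebra stable under the bracket operation of $\gog$, under composition with the smooth map $\alpha^\bfn$ (which preserves $\cW_\alpha^s$), and under applying the linear map $\rho^{-\bfn}$; hence $R_i\in\CHolder$, and therefore $\Psi_{i+1}+R_i\in\CHolder$ as well.

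Now I would set $L=\gov/\gov_{i+1}$, $\beta=\rho^{\bfn}|_L$, $f=\tilde h_{i+1}$, $\psi=\Psi_{i+1}+R_i$, and apply Proposition \ref{LinearSmooth}. The hypothesis of that proposition requires $\|\beta^{-i}\|$ uniformly bounded for $i\geq 0$, i.e. that $\rho^{-\bfn}$ has uniformly bounded powers on $\gov/\gov_{i+1}$; this holds because condition (2) of Lemma \ref{ExpaChoice} gives $\chi'(\bfn)>0$ for every Lyapunov subspace $\gov^{\chi'}\subset\gov$, so $\rho^{-\bfn}$ is a contraction up to polynomial factors on $\gov$ and on every $\rho$-invariant quotient, and in particular $\|\rho^{-i\bfn}\|$ stays bounded. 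Wait — I need to double-check the direction: Proposition \ref{LinearSmooth} with $\beta=\rho^\bfn$ needs $\|\beta^{-i}\|=\|\rho^{-i\bfn}\|$ bounded, which is exactly expansion of $\rho^\bfn$ on $\gov$, consistent with $V\subset G^s$ and $\bfn\in\cC$ on the far side of $\ker\chi$. Applying the proposition yields $\tilde h_{i+1}\in\CHolder$, completing the induction. The main obstacle I anticipate is the bookkeeping in the BCH projection: verifying carefully that every bracket term, once projected by $\pi_{i+1}$, depends on the unknown $\tilde h$ only through $\tilde h_i$ (so that it is genuinely a ``lower-order'' remainder and not a term involving $\tilde h_{i+1}$ itself in a nonlinear way), and confirming that $\CHolder$ is closed under the finitely many algebraic operations involved. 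Once that structural point is pinned down, the analytic content is entirely supplied by Proposition \ref{LinearSmooth}.
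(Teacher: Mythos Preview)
Your proposal is correct and follows essentially the same approach as the paper: induction on $i$, projecting the Baker--Campbell--Hausdorff equation to the quotient, observing that all bracket terms depend on $\tilde h$ only through the previous level, and then invoking Proposition \ref{LinearSmooth}. The paper organizes the inductive step by choosing a linear complement $\goz$ to $\gov_{i-1}/\gov_i$ inside $\gov/\gov_i$ and noting that bracket terms involving the $\gov_{i-1}/\gov_i$-component vanish; your formulation---that any bracket in $\gov/\gov_{i+1}$ depends on its arguments only modulo $\gov_i$ because $[\gov_i,\gov]\subset\gov_{i+1}$---is an equivalent and arguably cleaner way to say the same thing, avoiding the auxiliary splitting.
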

\begin{proof} The proof is by induction on $i$. When $i=1$, the claim is obvious as $\tilde h_0$ maps to the trivial space $\gov/\gov$.

Assume $i\geq 2$ and the lemma holds for all $j\leq i$. Projecting equation \eqref{BCH} into $\gov/\gov_i$, we get
\begin{equation}\label{BCHi}\tilde h_i=\rho^{-\bfn}\tilde h_i\circ\alpha^\bfn+\Psi_i+\frac12[\rho^{-\bfn}\tilde h_i\circ\alpha^\bfn,\Psi_i]+\cdots\end{equation}

Fix a subspace $\goz\subset\gov/\gov_i$, such that $\goz\oplus(\gov_{i-1}/\gov_i)=\gov$. Split $\tilde h_i$ as $\tilde h_\goz+\tilde h_\goz^\bot$ accordingly.

Observe that $\goz$ is not a Lie subalgebra, but the natural projection from $\goz$ into $\gov/\gov_{i-1}$ is a linear isomorphism. Therefore, by the inductive hypothesis on $\tilde h_{i-1}$, $\tilde h_\goz$ is in $\CHolder$. Since $\rho^\bfn$ is linear and $\alpha^\bfn$ preserves smoothly the foliation $\cW_\alpha^s$, we know $\rho^{-\bfn}\tilde h_\goz\circ\alpha^\bfn\in\CHolder$.  Recall that $\Psi$ is also in $\CHolder$.

Each higher order term in \eqref{BCHi} can be decomposed into a finite sum of repeated Lie bracket monomials of the form $[\square,[\cdots,[\square,\square]]\cdots]$, where each $\square$ is one of $\rho^{-\bfn}\tilde h_\goz\circ\alpha^\bfn$, $\rho^{-\bfn}\tilde h_\goz^\bot\circ\alpha^\bfn$, and $\Psi_i$. Because $\rho^{-\bfn}\tilde h_\goz^\bot\circ\alpha^\bfn$ is from $\gov_{i-1}/\gov_i$, all Lie brackets involving it are in $\gov_i$ and thus vanish modulo $\gov_i$. Therefore, the bracket terms form a polynomial in $\rho^{-\bfn}\tilde h_\goz\circ\alpha^\bfn$ and $\Psi_i$, which is a $\CHolder$ function as both $\rho^{-\bfn}\tilde h_\goz\circ\alpha^\bfn$ and $\Psi_i=\pi_i\circ\Psi$ are.

For this reason, \eqref{BCHi} can be written as a linear equation \begin{equation}\tilde h_i=\rho^{-\bfn}\tilde h_i\circ\alpha^\bfn+\tilde\Psi_i,\end{equation} where $\tilde\Psi_i=\Psi_i+\text{[bracket terms]}$ belongs to $\CHolder$. Because of the choice of $\bfn$ in Lemma \ref{ExpaChoice}, the Lyapunov exponent of $\rho^\bfn$ in $\gov$ is positive and it follows that $\|\rho^{-i\bfn}|_\gov\|$ is uniformly bounded for $i\geq 0$. This boundedness is inherited when $\rho^{-i\bfn}$ is projected to $\gov/\gov_i$. Hence Proposition \ref{LinearSmooth} applies to this case, implying $\tilde h_i\in\CHolder$.\end{proof}

\begin{proof}[Proof of Proposition \ref{hVSmooth}] $\tilde h_V\in\CHolder$ by setting $i=l+1$ in the previous lemma. And $h_V=\exp\tilde h_V$ is in the same class, as $\exp$ is smooth.\end{proof}

\section{Construction of new Anosov elements}\label{secAno}

In this section, we explore the geometric meaning of Proposition \ref{hVSmooth}. The smoothness of $h_V$ along $\cW_\alpha^s$ will guarantee the existence of a continuous subfoliation inside $\cW_\alpha^s$ with smooth leaves, which consists of the preimages under the conjugacy $H$ of the orbits of the Lie subgroup $G^{ss}$. And  in most cases\footnote{In the non-generic case where there is a non-trivial coarse Lyapunov subspace $\gov^{[-\chi]}$ whose Lyapunov exponents are negatively proportional to that of $\gov$, the new stable foliation corresponds to $G^{ss}\cdot V^{[-\chi]}$, also a Lie subgroup, instead. }, this will be the stable foliation of a new Anosov element which lies in the Weyl chamber $\cC$ that neighbors $\cC_0$ along $\ker\chi$.

\subsection{Local description of the strong stable foliation} We want to describe locally the topological submanifold
\begin{equation}\label{ssFoliation}\cW_\alpha^{ss}(x):=H^{-1}(G^{ss}.H(x)).\end{equation} Like in \S\ref{sechuSmooth}, we fix a sufficiently small neighborhood
 $\Omega$ around $x$. By repeating the reasoning in \S\ref{sechuSmooth}, especially \eqref{LocalEq}, we see that a point $y\in\Omega$ belongs to the connected component $\Omega^{ss}_x$ of $\cW_\alpha^{ss}(x)\cap\Omega$, if and only if $H_x(y)\in G^{ss}$, where $H_x$ was defined in \eqref{LocalHEq}. We emphasize that $H_x$ is only locally defined near $x$.
 
As $G^{ss}\subset G^s$, $\cW_\alpha^{ss}(x)\subset\cW_\alpha^s(x)$. For this reason, we restrict to points $y$ from $\Omega^s_x$, the connected neighborhood of $\cW_\alpha^s\cap\Omega$. 

Under this assumption, $H_x(y)\in G^s$. Thus $G^{ss}$ contains $H_x(y)=\big(H_x(y)\big)_{ss}\big(H_x(y)\big)_V$ if and only if $\big(H_x(y)\big)_V$ is identity. Therefore, we obtain the following lemma.

\begin{lemma}\label{SSLocal} Inside a stable leaf $\cW_\alpha^s(x)$, the strong stable leaf $\cW_\alpha^{ss}(x)$ is locally characterized near $x$ by the equation $$\big(H_x(y)\big)_V=e.$$\end{lemma}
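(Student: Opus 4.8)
The plan is to assemble the local descriptions already set up in \S\ref{sechuSmooth} and in the paragraphs preceding the lemma. Fix a sufficiently small neighborhood $\Omega$ of $x$ so that $H(\Omega)$ is a convex metric ball, as in \S\ref{sechuSmooth}, and recall from \eqref{LocalEq}--\eqref{LocalHEq} that for $y\in\Omega$ one has $H(y)=H_x(y)H(x)$ with $H_x(y)=h(y)p(x,y)h(x)^{-1}$; by Remark \ref{LocalHHolder} this element of $G$ is continuous in $y$ and, since $p(x,\cdot)$ and $h$ are continuous and $\Omega$ is small, $H_x(y)$ stays close to the identity.

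First I would observe, using the definition \eqref{ssFoliation} together with $H(y)=H_x(y)H(x)$, that $y\in\Omega^{ss}_x$ if and only if $H(y)\in G^{ss}H(x)$, i.e. if and only if $H_x(y)\in G^{ss}$. Since $\gog^{ss}\subset\gog^s$ we have $G^{ss}\subset G^s$, hence $\cW_\alpha^{ss}(x)\subset\cW_\alpha^s(x)$, and it suffices to characterize $\cW_\alpha^{ss}(x)$ among the points $y$ of $\Omega^s_x$, the connected component of $\cW_\alpha^s(x)\cap\Omega$ containing $x$.

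For such $y$ the correspondence of foliations \eqref{CorreUS} (equivalently Lemma \ref{StableLocal}) gives $H(\Omega^s_x)\subset B(G^s)H(x)$, hence $H_x(y)\in G^s$. Now invoke Corollary \ref{SVU}(2): $G^s$ splits uniquely as $G^{ss}\cdot V$, so write $H_x(y)=\big(H_x(y)\big)_{ss}\big(H_x(y)\big)_V$. Since $\big(H_x(y)\big)_{ss}\in G^{ss}$ automatically, the element $H_x(y)$ lies in $G^{ss}$ precisely when its $V$-component $\big(H_x(y)\big)_V$ equals $e$. Combining this with the previous paragraph yields the claim: within $\cW_\alpha^s(x)$, the strong stable leaf $\cW_\alpha^{ss}(x)$ is locally cut out near $x$ by the equation $\big(H_x(y)\big)_V=e$.

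I do not expect a genuine obstacle here; the content is bookkeeping with the $G^{ss}VG^u$ decomposition of Corollary \ref{SVU}. The only points requiring a little care are that $\Omega$ is chosen small enough for $H_x$ to be well defined and for $H(\Omega^s_x)$ to remain inside the neighborhood $B(G^s)$ where \eqref{CorreUS} can be applied, and that the splitting of $H_x(y)\in G^s$ into its $G^{ss}$- and $V$-components is the continuous one furnished by Corollary \ref{SVU}(2); both are immediate from the smoothness of $p$, the H\"older continuity of $h$, and Remark \ref{LocalHHolder}.
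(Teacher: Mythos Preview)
Your argument is correct and is essentially identical to the paper's: the paper also observes that $y\in\Omega^{ss}_x$ iff $H_x(y)\in G^{ss}$, restricts to $y\in\Omega^s_x$ so that $H_x(y)\in G^s$, and then uses the unique $G^{ss}\cdot V$ splitting of $G^s$ from Corollary \ref{SVU}(2) to conclude that $H_x(y)\in G^{ss}$ exactly when $\big(H_x(y)\big)_V=e$.
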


When $y$ is in the connected neighborhood $\Omega^s_x\subset\cW_\alpha^s(x)$ around $x$, we know from \S\ref{sechuSmooth} that $h_u(y)\big(h(x)p(x,y)^{-1}\big)_u^{-1}=e$, and it follows from \eqref{LocalUEq} that,
\begin{equation}\label{LocalSEq}H_x(y)=h_s(y)\big(h(x)p(x,y)^{-1}\big)_s^{-1}.\end{equation}

\begin{proposition}\label{HVSmooth}\begin{enumerate}\item The map $y\mapsto \big(H_x(y)\big)_V$ is $C^\infty$ in sufficiently small neighborhoods of $x$ in $\cW_\alpha^s(x)$;
\item The partial derivatives $\partial_{\cW_\alpha^s}^k|_{y=x}\big(H_x(y)\big)_V$ are H\"older continuous for all $k$.\end{enumerate}\end{proposition}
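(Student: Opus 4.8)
The plan is to reduce both assertions to the smoothness statement for $h_V$ and $h_u$ already established, using the explicit local formula \eqref{LocalSEq} for $H_x(y)$ restricted to a stable leaf. First I would unpack the $G^{ss}VG^u$ decomposition of the right-hand side of \eqref{LocalSEq}. Writing $h_s(y)=h_{ss}(y)h_V(y)$ and $\big(h(x)p(x,y)^{-1}\big)_s^{-1}=b_{ss}(x,y)b_V(x,y)$, where $b_{ss},b_V$ are the $G^{ss}$- and $V$-parts of that element, one computes
\begin{equation}\label{prop4decomp}
H_x(y)=h_{ss}(y)h_V(y)b_{ss}(x,y)b_V(x,y)=\Big(h_{ss}(y)\,h_V(y)b_{ss}(x,y)h_V(y)^{-1}\Big)\Big(h_V(y)b_V(x,y)\Big),
\end{equation}
so that $\big(H_x(y)\big)_V=h_V(y)\,b_V(x,y)$, using that $V$ normalizes $G^{ss}$ (Lemma \ref{SUsubalg}.(3)) exactly as in the proof of Corollary \ref{Vpart}. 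Since $b_V(x,y)=\Big(h(x)p(x,y)^{-1}\Big)_V^{-1}$ by Corollary \ref{Vpart}.(2) — note $h(x)\in G$ and we would need to be slightly careful, actually applying Corollary \ref{Vpart}.(1) — the $V$-component reduces to a smooth function of $h_V(y)$, $h(x)$ and $p(x,y)$.

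Next I would exploit that, along the fixed stable leaf $\cW_\alpha^s(x)$, the quantity $h(x)$ is constant in $y$, and $p(x,y)$ is $C^\infty$ in $y$ by its defining property (it is $C^\infty$ on $\{\dist(x,y)\le\delta\}$). By Proposition \ref{hVSmooth}, $h_V\in\CHolder$, so its restriction to $\cW_\alpha^s(x)$ is $C^\infty$. Composing these with the smooth group operations of $G$ and the smooth projection maps of Corollary \ref{SVU} shows $y\mapsto\big(H_x(y)\big)_V$ is $C^\infty$ on a small enough neighborhood of $x$ inside $\cW_\alpha^s(x)$, which is assertion (1).

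For assertion (2), I would copy the holonomy argument from the proof of Corollary \ref{USmooth}. Shrinking $\Omega$ so that $H(\Omega_0)=B_\epsilon(G^s)B_\epsilon(G^u).H(x_0)$ for some $x_0$, every $y\in\Omega_0$ projects along $\cW_\alpha^s$ to a point $x=x(y)$ in a fixed unstable leaf, and $y\mapsto x(y)$ is H\"older because $\cW_\alpha^s$ is a H\"older foliation; hence $y\mapsto h(x(y))$ is H\"older. The partial derivatives $\partial_{\cW_\alpha^s}^k\big|_{y=x}\big(H_x(y)\big)_V$ depend, via the smooth formula from Step 1, on $\big(x,h(x),\partial^k_{\cW_\alpha^s}h_V\big|_x\big)$; the last entry is H\"older by Proposition \ref{hVSmooth}, and the others are H\"older in $y$ by the above, so the composition is H\"older in $y$, giving (2).

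The main obstacle I anticipate is purely bookkeeping rather than conceptual: making sure the $G^{ss}VG^u$ (or $G^sG^u$) decomposition is manipulated correctly when $h(x)$ — an element of the full group $G$, not of $G^s$ — appears, so that the clean identity $\big(H_x(y)\big)_V=h_V(y)\big(h_V(y)^{-1}\cdots\big)$ genuinely holds; one must invoke the full Corollary \ref{Vpart}.(1) with the $G^u$-correction term $(a_ub)_V$, and check that on the stable leaf (where $h_u(y)$ is pinned to $\big(h(x)p(x,y)^{-1}\big)_u$ by Lemma \ref{StableLocal}) that correction term is itself a smooth function of the smooth data. Once that algebraic identity is pinned down, both (1) and (2) follow formally from Proposition \ref{hVSmooth}, Corollary \ref{USmooth}, the smoothness of $p$, and the H\"older holonomy of $\cW_\alpha^s$.
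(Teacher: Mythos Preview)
Your proposal is correct and follows essentially the same route as the paper: derive the identity $\big(H_x(y)\big)_V = h_V(y)\cdot\Big(\big(h(x)p(x,y)^{-1}\big)_s^{-1}\Big)_V$ from \eqref{LocalSEq} via Corollary~\ref{Vpart}(\ref{Vpart2}), then read off (1) from Proposition~\ref{hVSmooth} and the smoothness of the decomposition maps, and (2) from the H\"older dependence of the ingredients. Two small remarks: your ``main obstacle'' is a non-issue, since on the stable leaf both factors of \eqref{LocalSEq} already lie in $G^s$, so Corollary~\ref{Vpart}(\ref{Vpart2}) applies directly and no $G^u$-correction from part~(1) ever enters; and for (2) the holonomy projection is superfluous, because here $x$ is simultaneously the base point and the evaluation point, so the derivative at $y=x$ depends on $h(x)$, on $\partial_{\cW_\alpha^s}^j h_V$ at $x$, and on the stable distribution $E_\alpha^s(x)$, all of which are already H\"older in $x$ without invoking any leaf-to-leaf holonomy.
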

\begin{proof}(1) By Corollary \ref{Vpart} (\ref{Vpart2}) and \eqref{LocalSEq}, \begin{equation}\label{LocalVEq}\big(H_x(y)\big)_V=h_V(y)\Big(\big(h(x)p(x,y)^{-1}\big)_s^{-1}\Big)_V.\end{equation} Notice that the function $h(x)p(x,y)^{-1}$ is $C^\infty$ in $y$, hence by Lemma \ref{SVU}, $y\mapsto  \Big(\big(h(x)p(x,y)^{-1}\big)_s^{-1}\Big)_V$ is smooth.  Using \eqref{LocalEq}, Proposition \ref{hVSmooth} immediately implies (1).

(2) By \eqref{LocalVEq}, partial derivatives $\partial_{\cW_\alpha^s}^k\big(H_x(y)\big)_V$ are polynomial combinations of:
\begin{itemize}\item Partial derivatives $\partial_{\cW_\alpha^s}^kh_V(y)$, which are H\"older continuous in $y$ by Proposition \ref{hVSmooth} and independent of $x$; and 
\item Partial derivatives of the $C^\infty$ function $y\mapsto\Big(\big(h(x)p(x,y)^{-1}\big)_s^{-1}\Big)_V$ along $\cW_\alpha^s$, which are smooth in $y$ and depends H\"older continuously on $x$ (following the same argument from the proof of Corollary \ref{USmooth}). \end{itemize}
Therefore, when one sets $y=x$, these derivatives of $\big(H_x(y)\big)_V$ have H\"older continuous dependence on $x$.\end{proof}

Combining Lemma \ref{SSLocal} and Proposition \ref{HVSmooth} yields the following criterion:

\begin{corollary}\label{RegSmooth}$\cW_\alpha^{ss}$ has $C^\infty$ leaf at $x$ if $\big(H_x(y)\big)_V$ is regular in $y$ at $y=x$, that is, the map $D_{\cW_\alpha^s}|_{y=x}\big(H_x(y)\big)_V:E_\alpha^s(x)\mapsto\gov$ has rank equal to $\dim\gov$.\end{corollary}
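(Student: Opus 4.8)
The plan is to deduce the corollary directly from Lemma \ref{SSLocal}, Proposition \ref{HVSmooth}, and the implicit function theorem applied along the (smooth) leaves of $\cW_\alpha^s$. By Proposition \ref{HVSmooth}(1), the map $y\mapsto\big(H_x(y)\big)_V$ is a $C^\infty$ map from a small neighborhood of $x$ inside the smooth manifold $\cW_\alpha^s(x)$ into the vector space $\gov$ (after composing with $\log$ on $V$, which is a diffeomorphism by nilpotency). By Lemma \ref{SSLocal}, the strong stable leaf $\cW_\alpha^{ss}(x)$ is, near $x$, exactly the zero set of this map. The hypothesis that the differential $D_{\cW_\alpha^s}|_{y=x}\big(H_x(y)\big)_V:E_\alpha^s(x)\to\gov$ has rank $\dim\gov$ says precisely that $x$ is a regular point of this $C^\infty$ map, so by the constant-rank/submersion theorem the zero set is, in a neighborhood of $x$, a $C^\infty$ submanifold of $\cW_\alpha^s(x)$ of dimension $\dim\cW_\alpha^s-\dim\gov=\dim\gog^{ss}$, which is the expected dimension of $\cW_\alpha^{ss}$.

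First I would fix the small neighborhood $\Omega$ of $x$ as in \S\ref{sechuSmooth}, work inside the connected component $\Omega^s_x$ of $\cW_\alpha^s(x)\cap\Omega$, and identify $\Omega^s_x$ with an open subset of $E_\alpha^s(x)\cong\bR^{\dim\cW_\alpha^s}$ via the local chart from \cite[Theorem 2]{dlL01}; in this chart $y\mapsto\big(H_x(y)\big)_V$ is a genuine $C^\infty$ map between Euclidean domains by Proposition \ref{HVSmooth}(1). Then I would invoke the rank hypothesis: full rank $\dim\gov$ of the differential at $y=x$ means the map is a submersion at $x$, hence locally (possibly after shrinking $\Omega$) its fiber through $x$, which is $\Omega^{ss}_x$ by Lemma \ref{SSLocal}, is an embedded $C^\infty$ submanifold. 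Since $x$ was arbitrary, this gives that the leaf $\cW_\alpha^{ss}(x)$ is $C^\infty$ at $x$, which is the assertion.

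There is essentially no hard step here — the corollary is a packaging of already-established facts — but the one point requiring a little care is that the submersion theorem is being applied \emph{inside a leaf} of a merely H\"older foliation, so one must make sure the notion of ``$C^\infty$ along $\cW_\alpha^s$'' used throughout (and in particular the chart and the statement of Proposition \ref{HVSmooth}) really does supply a bona fide smooth structure on each leaf in which $y\mapsto\big(H_x(y)\big)_V$ is smooth; this is exactly what the regularity properties of $\cW_\alpha^s$ recorded before the definition of $\CHolder$ guarantee. The H\"older dependence on $x$ furnished by Proposition \ref{HVSmooth}(2) is not needed for the statement of the corollary itself, only later when one wants the resulting subfoliation to be a continuous foliation, so I would not dwell on it here.
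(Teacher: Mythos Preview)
Your proposal is correct and is exactly the approach the paper takes: the corollary is stated immediately after the sentence ``Combining Lemma \ref{SSLocal} and Proposition \ref{HVSmooth} yields the following criterion'' with no further proof, so the submersion-theorem argument you spell out is precisely the intended (and essentially only) route. Your remark that part (2) of Proposition \ref{HVSmooth} is not needed here but only later for continuity of the subfoliation is also accurate.
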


Let $A$ be the set of points $x\in M$ at which $\big(H_x(y)\big)_V$ is singular at $x$, i.e. not regular at $x$.

\begin{lemma}$A$ is closed and invariant under the $\bZ^r$-action $\alpha$.\end{lemma}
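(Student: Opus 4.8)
The claim is that $A$, the set of points where $\big(H_x(y)\big)_V$ fails to be regular at $y=x$, is closed and $\alpha$-invariant. Both assertions should follow by unwinding the definitions together with the regularity established in Proposition \ref{HVSmooth} and the equivariance of $H$ from \eqref{FMconj} and \eqref{CorreUS}.

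For closedness, the plan is to observe that the rank of the linear map $D_{\cW_\alpha^s}|_{y=x}\big(H_x(y)\big)_V \colon E_\alpha^s(x)\to\gov$ depends on $x$ in a way controlled by Proposition \ref{HVSmooth}(2): the entries of this differential, computed in a continuously varying local frame of $E_\alpha^s$, are exactly the first-order partial derivatives $\partial_{\cW_\alpha^s}|_{y=x}\big(H_x(y)\big)_V$, which by part (2) of that proposition depend H\"older continuously (in particular continuously) on $x$. Since $x\in A$ iff this matrix has rank strictly less than $\dim\gov$, i.e. iff all its maximal minors vanish, and since the minors are continuous functions of $x$, the set $A$ is the common zero set of finitely many continuous functions, hence closed. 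One caveat to address: $H_x$ is only defined locally near $x$, but the quantity $D_{\cW_\alpha^s}|_{y=x}\big(H_x(y)\big)_V$ is intrinsic — its rank is the corank of the restriction to $E_\alpha^s(x)$ of the differential of the local leaf-function — so the pointwise rank is well defined and the continuity statement is meaningful on all of $M$.

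For $\alpha$-invariance, the key is that $\alpha^\bfm$ intertwines $H$ with $\rho^\bfm$ and preserves the foliation $\cW_\alpha^s$, while $\rho^\bfm$ permutes the coarse Lyapunov subgroups — in fact $\rho^\bfm$ normalizes $G^{ss}$, $V$ and $G^u$ individually and acts on $V$ as a linear automorphism of $\gov$. Concretely, I would compute $H_{\alpha^\bfm x}(\alpha^\bfm y)$ in terms of $H_x(y)$: from \eqref{FMconj} one gets $H(\alpha^\bfm y) = \rho^\bfm H(y) = \rho^\bfm\big(H_x(y)H(x)\big) = \big(\rho^\bfm H_x(y)\big)\big(\rho^\bfm H(x)\big)$, so that $H_{\alpha^\bfm x}(\alpha^\bfm y) = \rho^\bfm\big(H_x(y)\big)$ (after checking the local base-point bookkeeping, which only amounts to $\rho^\bfm H(x) = H(\alpha^\bfm x)$). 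Applying the $G^{ss}VG^u$ decomposition and using that $\rho^\bfm$ preserves each factor, $\big(H_{\alpha^\bfm x}(\alpha^\bfm y)\big)_V = \rho^\bfm\big(\big(H_x(y)\big)_V\big)$, where on the right $\rho^\bfm$ is the linear isomorphism of $\gov$ (identifying $V$ with $\gov$ via $\exp$). Differentiating along $\cW_\alpha^s$ at $y=x$ and using that $D\alpha^\bfm$ is a linear isomorphism $E_\alpha^s(x)\to E_\alpha^s(\alpha^\bfm x)$, the differential at $\alpha^\bfm x$ is the composition of the differential at $x$ with these two linear isomorphisms, hence has the same rank. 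Therefore $x\in A \iff \alpha^\bfm x\in A$, which is the invariance.

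The main obstacle, such as it is, lies in the invariance argument: one must be careful that $H_x$ is only locally defined, so the identity $H_{\alpha^\bfm x}(\alpha^\bfm y) = \rho^\bfm\big(H_x(y)\big)$ holds only on a small neighborhood of $x$ (whose image under $\alpha^\bfm$ is a neighborhood of $\alpha^\bfm x$, possibly needing the local chart there to be rescaled), and that the decomposition $\rho^\bfm(g_{ss}g_Vg_u) = (\rho^\bfm g_{ss})(\rho^\bfm g_V)(\rho^\bfm g_u)$ is legitimate because $\rho^\bfm$ is a group automorphism preserving $\gog^{ss}$, $\gov$, $\gog^u$ — the last point uses that $\rho^\bfm$ commutes with $\rho^{\bfn_0}$ so it preserves the Lyapunov decomposition, and preserves $\gov=\gov^{[\chi]}$ because it fixes the line $\bR\chi$ (it acts trivially on $(\bR^r)^*$). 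This is routine but worth spelling out. Everything else is a direct consequence of Proposition \ref{HVSmooth} and the continuity of minors.
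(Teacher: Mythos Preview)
Your proposal is correct and follows essentially the same route as the paper: continuity of $x\mapsto D_{\cW_\alpha^s}|_{y=x}\big(H_x(y)\big)_V$ from Proposition \ref{HVSmooth} gives closedness, and the identity $H_{\alpha^\bfm x}(\alpha^\bfm y)=\rho^\bfm H_x(y)$ together with $\rho$-invariance of the $G^{ss}VG^u$ decomposition yields, after differentiating along $\cW_\alpha^s$, that the rank is unchanged under $\alpha^\bfm$. Your discussion of the subtleties (local definition of $H_x$, why $\rho^\bfm$ preserves each factor) is more explicit than the paper's but addresses exactly the same points.
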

\begin{proof} We know that $D_{\cW_\alpha^s}|_{y=x}\big(H_x(y)\big)_V$ depends continuously on $x$. Furthermore, being singular, or equivalently the linear map $D_{\cW_\alpha^s}|_{y=x}\big(H_x(y)\big)_V$ being degenerate, is a closed condition. It follows that $A$ is closed. We now check the $\alpha$-invariance.

Fix $\bfm\in\bZ^r$. Consider two points $x$ and $\alpha^\bfm x$, as well as $y$ and $\alpha^\bfm y$, respectively in sufficiently small neighborhoods of $x$ and $\alpha^\bfm x$ in their $\cW_\alpha^s$-leaves. Using the conjugacy relation $\rho\circ H=H\circ\alpha$, we relate $H_x(y)$ to $H_{\alpha^\bfm x}(\alpha^\bfm y)$. Recall the of definition $H_x(y)$ in (\ref{LocalEq}), $H(y)=H_x(y)H(x)$
\begin{equation}\begin{split}
H_{\alpha^\bfm x}(\alpha^\bfm y)H(\alpha^\bfm x)=&H(\alpha^\bfm y)=\rho^\bfm H(y)\\
=&\rho^\bfm \big(H_x(y)H(x)\big)\\
=&\big(\rho^\bfm H_x(y)\big)\big( \rho^\bfm H(x)\big)\\
=&\big(\rho^\bfm H_x(y)\big)H(\alpha^\bfm x).
\end{split}\end{equation}
Since $\bfm$ is fixed and both $H_x(y)$ and $H_{\alpha^\bfm x}(\alpha^\bfm y)$ are sufficiently close to identity, we see the group elements $H_{\alpha^\bfm x}(\alpha^\bfm y)$, $\rho^\bfm H_x(y)\in G$ are equal, and in particular, as the $G^{ss}VG^u$-decomposition is $\rho$-invariant, \begin{equation}\label{HVConjEq}\big(H_{\alpha^\bfm x}(\alpha^\bfm y)\big)_V=\rho^\bfm \big(H_x(y)\big)_V.\end{equation} 

Since $\cW_\alpha^s$ is $\alpha$-invariant, it follows that \begin{equation}\label{DerAct}\begin{split}&D_{\cW_\alpha^s}|_{y=\alpha^\bfm x}\big(H_{\alpha^\bfm x}(y)\big)_V=\\
&\ \ \ \ \ \ \ \rho^\bfm|_\gov\left(D_{\cW_\alpha^s}|_{y=x}\big(H_x(y)\big)_V\right)\big(D_{\alpha^\bfm x}\alpha^{-\bfm}\big)|_{E_\alpha^s}.\end{split}\end{equation}
Where $\rho^\bfm|_\gov$ is the restriction of the induced action by $\rho$ on $\gog$ to the invariant Lie subalgebra $\gov$. In consequence, if $D_{\cW_\alpha^s}|_{y=x}\big(H_x(y)\big)_V$ is not of full rank, then neither is $D_{\cW_\alpha^s}|_{y=\alpha^\bfm x}\big(H_{\alpha^\bfm x}(y)\big)_V$. This means $A$ is $\alpha$-invariant.\end{proof}

\subsection{Coarse Lyapunov decomposition over an invariant measure}

We hope to prove the set $A$ is empty.

\begin{proposition}\label{ssSmooth}For every $x\in X$, $\big(H_x(y)\big)_V$ is regular at $x$. In consequence, $\cW_\alpha^{ss}$ is a continuous foliation of $M$ with $C^\infty$ leaves.\end{proposition}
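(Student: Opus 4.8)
The plan is to argue by contradiction, assuming $A\neq\emptyset$. Since $A$ is closed and $\alpha$-invariant and $M$ is compact, $\alpha$ restricted to $A$ carries an invariant Borel probability measure, and passing to an ergodic component I fix one such $\nu$, ergodic for $\alpha^{\bfn_0}$ (or, should it be convenient for the ergodic-theoretic input, for the $\bZ^2$-subaction $\Sigma$ of Lemma \ref{ExpaChoice}). Every point of $\operatorname{supp}\nu$ lies in $A$, so $D_{\cW_\alpha^s}|_{y=x}\big(H_x(y)\big)_V$ fails to be surjective for $\nu$-a.e. $x$, i.e. its kernel $K(x)\subset E_\alpha^s(x)$ has dimension $>\dim\gog^{ss}$. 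By the equivariance relation \eqref{DerAct} and the invertibility of $\rho^\bfm|_\gov$, the assignment $x\mapsto K(x)$ is a $D\alpha$-invariant, continuous subbundle of $E_\alpha^s$ over $A$; this is the object on which the dynamics acts.

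Next I would show that this degeneracy forces the Jacobian of $H$ along the stable leaves to vanish $\nu$-almost everywhere, which will turn out to be impossible. The mechanism: reading $H|_{\cW_\alpha^s(x)}$ in the smooth coordinates $g\mapsto(g_{ss},g_V)$ on the leaf $G^sH(x)=G^{ss}\cdot V\cdot H(x)$, its $V$-coordinate is $y\mapsto\big(H_x(y)\big)_V$, which is genuinely $C^\infty$ along $\cW_\alpha^s$ by Propositions \ref{hVSmooth} and \ref{HVSmooth}. Running Pesin theory for $\alpha^{\bfn_0}$ with respect to $\nu$, and invoking the results of Ledrappier--Young \cite{LY85} on the $\nu$-a.e. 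Lipschitz continuity and absolute continuity of the strong stable holonomy inside stable leaves, one obtains that along $\nu$-a.e. stable leaf $H$ is differentiable and absolutely continuous (sending $\nu$-conditionals to the Haar conditionals of $H_*\nu$ on $G^sH(x)$). For such $x$ the leafwise differential of $H$ projects onto $\gov$ through $D_{\cW_\alpha^s}|_{y=x}\big(H_x(y)\big)_V$; when $x\in A$ this projection is not onto, so the full leafwise differential is degenerate and its Jacobian vanishes. But an absolutely continuous homeomorphism of the leaf onto $G^sH(x)$ with $\nu$-a.e. zero Jacobian sends a full-$\nu$-conditional set to a Haar-null set, contradicting surjectivity; one can also see the contradiction at the cocycle level, since the chain rule $\operatorname{Jac}(H|_{\cW_\alpha^s})\circ\alpha^{\bfn_0}\cdot\operatorname{Jac}(\alpha^{\bfn_0}|_{E_\alpha^s})=\det(\rho^{\bfn_0}|_{\gog^s})\cdot\operatorname{Jac}(H|_{\cW_\alpha^s})$ and ergodicity force $\log\operatorname{Jac}(H|_{\cW_\alpha^s})$ to be $\nu$-integrable, hence finite $\nu$-a.e. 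Either way $A=\emptyset$, so $\big(H_x(y)\big)_V$ is regular at every $x$.

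Given $A=\emptyset$, Corollary \ref{RegSmooth} yields that every leaf of $\cW_\alpha^{ss}$ is $C^\infty$; these leaves are exactly the $H$-preimages of the orbits of the $G^{ss}$-action and $H$ is a homeomorphism, while the tangent distribution $E_\alpha^{ss}=\ker D_{\cW_\alpha^s}\big(H_\bullet(\cdot)\big)_V$, now of constant rank $\dim\gog^{ss}$, varies continuously, so $\cW_\alpha^{ss}$ is a continuous foliation. In the non-generic case $\gov^{[-\chi]}\neq0$ the entire argument goes through with $G^{ss}$ replaced by the Lie subgroup $\exp\big(\gog^{ss}\oplus\gov^{[-\chi]}\big)$, which is a subalgebra by the bracket relations of Lemma \ref{LyaBracket}. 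The main obstacle is the middle step: since $\nu$ is in general singular (possibly even atomic), the absolute-continuity arguments that are routine for the smooth measure $\mu$ are unavailable, and one must instead extract differentiability, absolute continuity and a non-vanishing Jacobian of $H$ along $\nu$-typical stable leaves from the Ledrappier--Young description of conditional measures together with Pesin charts; measures of very low complexity require additional care, where the no-rank-one-factor hypothesis (through ergodicity of generic elements) can be brought in.
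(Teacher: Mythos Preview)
Your overall architecture---assume $A\neq\emptyset$, pick an ergodic $\alpha$-invariant $\nu$ on $A$, and reach a contradiction by comparing a ``Jacobian $=0$'' statement with a ``Jacobian $>0$'' statement---matches the paper's. But both halves of your contradiction are set up differently from the paper, and in ways that leave genuine gaps.

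On the ``Jacobian $=0$'' side: you write that the leafwise differential of $H$ projects onto $\gov$ through $D_{\cW_\alpha^s}\big(H_x\big)_V$, so degeneracy of the latter forces degeneracy of the former. But $H$ is only H\"older; only the $V$-component $(H_x)_V$ is known to be smooth along $\cW_\alpha^s$, and the $G^{ss}$-component is not. So there is no ``full leafwise differential of $H$'' to speak of, and your chain-rule cocycle argument for $\log\operatorname{Jac}(H|_{\cW_\alpha^s})$ is circular. The paper instead proves a purely measure-theoretic substitute (Lemma~\ref{Jacobian0}): it uses Ledrappier--Young not for absolute continuity of holonomy, but for the fact that at $\nu$-typical $x$ the Pesin strong-stable lamination is defined at \emph{every} point of a neighborhood in $\cW_\alpha^s(x)$ as Lipschitz graphs over $\bR^{N^V}$. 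The crucial step you are missing is Lemma~\ref{HVConst}: one must show $(H_x)_V$ is \emph{constant} along each such graph, which requires a delicate comparison of the contraction rate $\lambda_+^{ss}$ in charts against the slowest Lyapunov exponent $\chi_-^V$ of $\rho$ on $\gov$, using the freedom in choosing $\bfm$ (inequality \eqref{Bands2}) and the H\"older exponent of $H$. Once this is done, one straightens the lamination and exhibits explicit product-like boxes $B_{k,x}$ whose $H$-images have $V$-projection of volume $o(\delta_k^{N^V})$ precisely because $D_{\cW_\alpha^s}(H_x)_V$ is rank-deficient at $x$.

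On the ``Jacobian $>0$'' side: what you flag as the ``main obstacle''---that $\nu$ may be singular or atomic---is a red herring. The paper's Lemma~\ref{JacobianPos} does not use $\nu$ at all. It uses only that the \emph{smooth} $\alpha$-invariant measure $\mu=H^{-1}_*\mathrm{Leb}$ has positive $C^\infty$ density, so its stable conditionals are $r_x\,\di\Vol_{\cW_\alpha^s(x)}$ with $r_x$ H\"older and bounded away from $0$. Since $H_*\mu=\mathrm{Leb}$, pushing conditionals forward gives $H_*\di\Vol_{\cW_\alpha^s(x)}=J_x\,\di\Vol_{\cW_\rho^s(H(x))}$ with $J_x>0$ continuous, and this holds for \emph{every} $x\in M$, in particular for the $\nu$-typical points in $A''$. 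This immediately contradicts Lemma~\ref{Jacobian0}. No Pesin theory, no ergodicity of $\nu$, and no rank hypothesis enter this half of the argument.
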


Assume, for contradiction, that the proposition fails, then $A$ is a non-empty $\alpha$-invariant closed subset of $M$, and hence supports an ergodic $\alpha$-invariant measure $\nu$ (observe that $\nu$ may be an atomic measure, i.e. supported on a finite orbit).

In this case, Oseledets' multiplicative ergodic theorem can be adapted to the $\bZ^r$-action $\alpha$ (see \cite{KSad06}*{Prop. 2.1} and \cite{KS07}). In consequence, there are finitely many linear functionals $\xi\in(\bR^r)^*$, an $\alpha$-invariant subset $A'\subset A$ with $\nu(A')=1$ and an $\alpha$-invariant measurable splitting \begin{equation}\label{OseledetsDecomp}T_xM=\bigoplus E_\nu^\xi(x)\end{equation} over $A'$ such that, for all $\bfn\in\bZ^r$ and $v\in E_\nu^\xi$, 
\begin{equation}\label{OseledetsEq}\lim_{k\rightarrow\pm\infty}\frac{\log|(D\alpha^{k\bfn})v|}k=\xi(\bfn).\end{equation}

Moreover, by modifying $A'$ if necessary, we have:

\begin{lemma}\label{PesinSS}In the Oseledets decomposition above, it can be required that for all $x\in A'$ and $\bfn\in\bZ^r$, there are unique submanifolds $\cW_{\alpha^\bfn,\nu}^s(x)$ and $\cW_{\alpha^\bfn,\nu}^u(x)$ respectively tangent to the stable and unstable distributions $$E_{\alpha^\bfn,\nu}^s:=\bigoplus_{\xi(\bfn)<0}E_\nu^\xi, E_{\alpha^\bfn,\nu}^u:=\bigoplus_{\xi(\bfn)>0}E_\nu^\xi, $$ In addition, a neighborhood of $x$ in $\cW_{\alpha^\bfn,\nu}^s(x)$ is given by the set of points $y$ satisfying $\dist(x,y)<\epsilon$ for some $\epsilon=\epsilon(x)$, and $$\limsup_{k\rightarrow\infty}\log\dist(\alpha^{k\bfn}x,\alpha^{k\bfn}y)\leq\max_{\{\xi:\xi(\bfn)<0\}}\xi(\bfn).$$\end{lemma}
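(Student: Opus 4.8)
The plan is to derive the statement from the classical Pesin stable/unstable manifold theorem, applied to each single diffeomorphism $\alpha^\bfn$ (a $C^\infty$ map of the compact manifold $M$), and then to intersect the resulting full-$\nu$-measure sets over the countable group $\bZ^r$ so as to obtain one modified $A'$ that works for every $\bfn$ at once. The structural observation needed at the outset is the compatibility of the single-transformation Oseledets data with the $\bZ^r$-decomposition \eqref{OseledetsDecomp}: by \eqref{OseledetsEq} the Lyapunov exponent of the single map $\alpha^\bfn$ on $E_\nu^\xi$ is the number $\xi(\bfn)$, so the Oseledets splitting of $\alpha^\bfn$ relative to $\nu$ is exactly \eqref{OseledetsDecomp} re-grouped according to the value of $\xi(\bfn)$; its stable, central and unstable subbundles are then $\bigoplus_{\xi(\bfn)<0}E_\nu^\xi$, $\bigoplus_{\xi(\bfn)=0}E_\nu^\xi$ and $\bigoplus_{\xi(\bfn)>0}E_\nu^\xi$, the extreme ones being precisely the $E_{\alpha^\bfn,\nu}^s$ and $E_{\alpha^\bfn,\nu}^u$ of the statement.

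Next I would invoke the non-uniform stable manifold theorem for $\alpha^\bfn$. It produces, on an $\alpha^\bfn$-invariant set $A'_\bfn$ of full $\nu$-measure, through each $x\in A'_\bfn$ an immersed submanifold $\cW_{\alpha^\bfn,\nu}^s(x)$ tangent to $E_{\alpha^\bfn,\nu}^s(x)$ (as smooth as $\alpha$), characterized locally: a neighborhood of $x$ in it is the set of $y$ with $\dist(x,y)<\epsilon(x)$ that shrink towards $x$ under the forward iterates of $\alpha^\bfn$ at the rate quantified in the displayed bound, with $\max_{\xi(\bfn)<0}\xi(\bfn)<0$; this characterization also forces uniqueness. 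Applying the same theorem to $\alpha^{-\bfn}$ gives $\cW_{\alpha^\bfn,\nu}^u:=\cW_{\alpha^{-\bfn},\nu}^s$, tangent to $E_{\alpha^\bfn,\nu}^u$. The point worth emphasizing is that Pesin's graph-transform construction needs only a spectral gap at $0$ on the stable side — present here by construction — and is indifferent to whether the central subbundle $\bigoplus_{\xi(\bfn)=0}E_\nu^\xi$ is trivial; hence the stable (resp. unstable) manifolds exist even when $\alpha^\bfn$ is merely partially hyperbolic.

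To globalize over $\bfn$, I would first replace each $A'_\bfn$ by its $\alpha$-saturation $\bigcap_{\bfm\in\bZ^r}\alpha^\bfm(A'_\bfn)$ — still $\alpha$-invariant and of full $\nu$-measure, since $\nu$ is $\alpha$-invariant and $\bZ^r$ is countable — and then take $A'$ to be the intersection of $A$ with all of these over $\bfn\in\bZ^r$. This $A'$ is $\alpha$-invariant, carries full $\nu$-measure, and on it every assertion holds for all $\bfn$ simultaneously; this is the advertised modification of $A'$. The atomic case is easier still: if $\nu$ lives on one finite $\alpha$-orbit of period $m$, then each $\alpha^{m!\bfn}$ fixes the orbit points, and the classical Hadamard--Perron theorem at a (partially) hyperbolic fixed point yields the manifolds with $\epsilon(\cdot)$ constant along the orbit.

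Insofar as there is a hard part, it is bookkeeping rather than analysis: one must verify the compatibility of splittings above, quote a version of the stable manifold theorem that both tolerates a central subbundle and delivers the sharp local-set characterization (from which uniqueness is immediate), and carry out the countable patching while keeping $\alpha$-invariance. For the first two of these the references \cite{KSad06} and \cite{KS07} already package the Oseledets and Pesin theory for $\bZ^r$-actions in exactly the form required.
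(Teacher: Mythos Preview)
Your proposal is correct and takes essentially the same approach as the paper, which simply records that this is Pesin's strong stable manifold theorem and cites \cite{RH07}*{Theorem 3.2} and \cite{R79}. You have unpacked the citation with care --- the compatibility of the single-map Oseledets data with the $\bZ^r$-splitting via \eqref{OseledetsEq}, and the countable intersection over $\bfn\in\bZ^r$ to produce one $\alpha$-invariant full-measure $A'$ --- but the underlying route is identical.
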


\begin{proof}This is Pesin's strong stable manifold theorem. See for instance \cite{RH07}*{Theorem 3.2} and \cite{R79}.\end{proof}

\begin{lemma}\label{DecompCompare}For any fixed $\bfn\in\bZ^r$, if $\nu$ is a hyperbolic measure for $\bfn$, i.e. $\xi(\bfn)\neq 0$ for all Lyapunov functionals $\xi$ in \eqref{OseledetsDecomp}, then for $\square=s,u$, 
\begin{enumerate}\item $\dim E_{\alpha^\bfn,\nu}^\square=\dim E_{\rho^\bfn}^\square=\dim\gog_{\rho^\bfn}^\square$;
\item $H\big(\cW_{\alpha^\bfn,\nu}^\square(x)\big)=\cW_{\rho^\bfn}^\square\big(H(x)\big)=G_{\rho^\bfn}^\square.H(x)$.
\end{enumerate}\end{lemma}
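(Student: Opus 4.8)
The plan is to carry everything through the dynamical characterizations of the manifolds involved, using that $H$ is a \emph{bi-H\"older} conjugacy. Fix $\bfn$ with $\nu$ hyperbolic for it, and let $x\in A'$. Write $S_{\alpha^\bfn}(x)=\{y\in M:\dist(\alpha^{k\bfn}x,\alpha^{k\bfn}y)\to 0\text{ as }k\to+\infty\}$, and define $S_{\rho^\bfn}(z)$ analogously for $\rho^\bfn$. Since $H$ and $H^{-1}$ are uniformly continuous and $H\circ\alpha^\bfn=\rho^\bfn\circ H$, one has $H\big(S_{\alpha^\bfn}(x)\big)=S_{\rho^\bfn}\big(H(x)\big)$. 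The first ingredient is a purely algebraic computation: for the affine automorphism $\rho^\bfn$ of $M$, and \emph{without} assuming hyperbolicity, $S_{\rho^\bfn}(z)=G_{\rho^\bfn}^s z$. Indeed, writing a point near $z$ as $\exp(v)\cdot z$ one has $\rho^{k\bfn}\big(\exp(v)z\big)=\exp(\rho^{k\bfn}v)\cdot\rho^{k\bfn}z$, and $|\rho^{k\bfn}v|\to 0$ precisely when $v\in\gog_{\rho^\bfn}^s$ (a nontrivial neutral component of $v$ produces at least polynomially growing, hence non-vanishing, displacement, and an unstable component blows up); an arbitrary point of $S_{\rho^\bfn}(z)$ is handled by iterating forward until it enters this local regime and using that $G_{\rho^\bfn}^s$ is $\rho^\bfn$-invariant. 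The same applied to $-\bfn$ gives $S_{\rho^{-\bfn}}(z)=G_{\rho^\bfn}^u z$, and from Lemma~\ref{PesinSS} one has $\cW_{\alpha^\bfn,\nu}^s(x)\subseteq S_{\alpha^\bfn}(x)$ and $\cW_{\alpha^\bfn,\nu}^u(x)\subseteq S_{\alpha^{-\bfn}}(x)$.

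For part (1) I would run a dimension count. The set $\cW_{\alpha^\bfn,\nu}^s(x)$ is a $C^\infty$ immersed submanifold of dimension $d_s:=\dim E_{\alpha^\bfn,\nu}^s$, and $H\big(\cW_{\alpha^\bfn,\nu}^s(x)\big)\subseteq S_{\rho^\bfn}(H(x))=G_{\rho^\bfn}^s H(x)$, an immersed $C^\infty$ submanifold of dimension $e_s:=\dim\gog_{\rho^\bfn}^s$. A small closed disk in $\cW_{\alpha^\bfn,\nu}^s(x)$ maps under the homeomorphism $H$ to a topological $d_s$-cell lying in finitely many $C^1$ plaques of $G_{\rho^\bfn}^s H(x)$, so invariance of domain gives $d_s\le e_s$; symmetrically $d_u:=\dim E_{\alpha^\bfn,\nu}^u\le e_u:=\dim\gog_{\rho^\bfn}^u$. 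Since $\nu$ is hyperbolic for $\bfn$, the splitting \eqref{OseledetsDecomp} has no zero-exponent part, so $d_s+d_u=N$, whereas $\gog_{\rho^\bfn}^s\cap\gog_{\rho^\bfn}^u=\{0\}$ forces $e_s+e_u\le N$. Hence $N=d_s+d_u\le e_s+e_u\le N$, which compels $d_s=e_s$, $d_u=e_u$ and $e_s+e_u=N$; in particular $\rho^\bfn$ is itself hyperbolic and $\dim E_{\alpha^\bfn,\nu}^\square=\dim E_{\rho^\bfn}^\square=\dim\gog_{\rho^\bfn}^\square$ for $\square=s,u$, which is (1).

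For part (2) it then remains to promote the inclusion $H(\cW_{\alpha^\bfn,\nu}^s(x))\subseteq G_{\rho^\bfn}^s H(x)$ to an equality, and symmetrically for $u$. I would do this by showing $S_{\alpha^\bfn}(x)=\cW_{\alpha^\bfn,\nu}^s(x)$. Given $y\in S_{\alpha^\bfn}(x)$, we have $H(y)\in S_{\rho^\bfn}(H(x))=G_{\rho^\bfn}^s H(x)$, so $\dist(\rho^{k\bfn}H(x),\rho^{k\bfn}H(y))$ tends to $0$ at the definite exponential rate governed by the action of $\rho^{\bfn}$ on $\gog_{\rho^\bfn}^s$; since $H^{-1}$ is H\"older, $\dist(\alpha^{k\bfn}x,\alpha^{k\bfn}y)$ decays exponentially as well, so for some large $k_1$ it falls below the (tempered) radius of the local Pesin chart at $\alpha^{k_1\bfn}x$, and then Lemma~\ref{PesinSS} places $\alpha^{k_1\bfn}y$ in the local stable manifold of $\alpha^{k_1\bfn}x$, whence $y\in\cW_{\alpha^\bfn,\nu}^s(x)$. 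Consequently $H(\cW_{\alpha^\bfn,\nu}^s(x))=H(S_{\alpha^\bfn}(x))=S_{\rho^\bfn}(H(x))=G_{\rho^\bfn}^s H(x)=\cW_{\rho^\bfn}^s(H(x))$, and the same argument with $-\bfn$ gives the case $\square=u$.

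The step I expect to be the main obstacle is the passage inside (1) from ``$\nu$ is hyperbolic for $\bfn$'' to ``$\rho^\bfn$ is hyperbolic''. There is no a priori numerical relation between the Lyapunov spectrum of $\alpha^\bfn$ on $(M,\nu)$ and that of $\rho^\bfn$ — this mismatch is precisely the phenomenon the rest of the paper must contend with — so hyperbolicity of $\rho^\bfn$ cannot be read off the exponents and must instead be forced topologically, by confronting the two complementary-dimension Pesin manifolds of $\alpha^\bfn$ with their $H$-images inside the at-most-complementary algebraic subgroups $G_{\rho^\bfn}^s$ and $G_{\rho^\bfn}^u$ via invariance of domain.
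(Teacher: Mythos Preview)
Your argument is correct and is essentially a self-contained reproduction of the content that the paper outsources to \cite{RH07}*{Prop.~3.1 \& Cor.~3.3}; the paper's own proof is a one-line citation, so your write-up is in fact more informative. The overall scheme---pass to the topological stable set $S_{\alpha^\bfn}(x)$, identify $S_{\rho^\bfn}$ with the $G_{\rho^\bfn}^s$-orbit without assuming $\rho^\bfn$ hyperbolic, then force $d_s=e_s$, $d_u=e_u$ and $e_s+e_u=N$ by invariance of domain against $d_s+d_u=N$---is exactly the mechanism behind the cited results, and your closing paragraph correctly isolates the one genuinely nontrivial point.

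One imprecision worth tightening: in part~(2) you invoke Lemma~\ref{PesinSS} to place $\alpha^{k_1\bfn}y$ on the local stable manifold, but as stated that lemma characterizes the local leaf by decay at the specific rate $\max_{\xi(\bfn)<0}\xi(\bfn)$, whereas the rate you produce via the H\"older exponent of $H^{-1}$ may well be slower. What you actually need (and what the Pesin theory in \cite{R79} or \cite{RH07} provides) is the alternative characterization: a point within the tempered radius whose forward orbit \emph{remains} within the tempered radii lies on the local stable manifold. Since your distance decays exponentially while the radii $\epsilon(\alpha^{k\bfn}x)$ decay only subexponentially, this holds for all $k\ge k_1$ once $k_1$ is large, and the conclusion follows. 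With that adjustment the proof is complete.
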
 
\begin{proof}Knowing that $H$ is a bi-H\"older conjugacy between the actions $\rho$ and $\alpha$, the statements follow directly from  \cite{RH07}*{Prop. 3.1 \& Cor. 3.3}.\end{proof}

Coarse Lyapunov distributions are defined in a similar way to Definition \ref{CoarseDef}: 
\begin{definition}\label{CoarseDef2}$\displaystyle E_\nu^{[\xi]}=\bigoplus_{\xi'=c\xi,c>0}E_\nu^{\xi'}.$\end{definition}

\begin{lemma}\label{CoarseProportional} The coarse Lyapunov subspaces in Definitions \ref{CoarseDef} and \ref{CoarseDef2} are in one-to-one correspondence to each other. A pair of corresponding coarse Lyapunov subspaces have the same dimension and proportional coarse Lyapunov exponents. \end{lemma}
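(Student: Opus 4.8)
The plan is to realize every coarse Lyapunov subspace — both the linear ones $\gov^{[\chi]}$ and the measurable ones $E_\nu^{[\xi]}$ — as an intersection of the stable distributions (equivalently, stable leaves) of finitely many suitably chosen hyperbolic elements, and then to transport these intersections across the conjugacy $H$ using Lemma~\ref{DecompCompare}(2). Fix $x\in A'$ and let $[\chi]$ be a coarse Lyapunov exponent of $\rho$ occurring in \eqref{CoarseDecomp}. First I would choose finitely many integer vectors $\bfn_1,\dots,\bfn_k\in L^{[\chi],-}$ that are simultaneously hyperbolic for $\rho$ and hyperbolic for $\nu$, and such that the only coarse Lyapunov exponent of $\rho$, as well as the only coarse Lyapunov exponent of $(\alpha,\nu)$ occurring in \eqref{OseledetsDecomp} \emph{if any}, that is negative on all of $\bfn_1,\dots,\bfn_k$ is the one lying on the ray $[\chi]$. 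Such a choice exists: for any coarse exponent $\psi$ (of $\rho$ or of $(\alpha,\nu)$) not positively proportional to $\chi$, Remark~\ref{Symplec} shows $L^{[\chi],-}\cap\{\psi>0\}$ is a nonempty open cone, hence contains lattice points avoiding the finitely many Weyl chamber walls of $\rho$ and of $(\alpha,\nu)$; since there are only finitely many such $\psi$ to exclude, one collects one vector for each.

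With this choice, $\bigcap_{i}\gog_{\rho^{\bfn_i}}^s=\gov^{[\chi]}$, because $\gog_{\rho^{\bfn}}^s$ is the sum of the coarse subspaces whose exponents are negative on $\bfn$; and since $\exp$ is a bijection of $\gog$ onto $G$, $\bigcap_{i}G_{\rho^{\bfn_i}}^s=V^{[\chi]}$, so, these $\rho^{\bfn_i}$-stable leaves being right translates of the $G_{\rho^{\bfn_i}}^s$ by the common element $H(x)$, one gets $\bigcap_{i}\cW_{\rho^{\bfn_i}}^s(H(x))=V^{[\chi]}H(x)$, a manifold of positive dimension $\dim\gov^{[\chi]}$. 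On the measurable side, the same grouping by the sign of the exponents on the $\bfn_i$ shows $\bigcap_{i}E_{\alpha^{\bfn_i},\nu}^s(x)$ is $E_\nu^{[\chi]}(x)$ if $[\chi]$ occurs among the coarse exponents of $(\alpha,\nu)$ and is $\{0\}$ otherwise; and by Pesin theory for the $\bZ^r$-action (Lemma~\ref{PesinSS} and the references given there) the intersection $\cW(x):=\bigcap_{i}\cW_{\alpha^{\bfn_i},\nu}^s(x)$ is, in a neighborhood of $x$, an embedded submanifold tangent to $\bigcap_{i}E_{\alpha^{\bfn_i},\nu}^s(x)$. Applying the homeomorphism $H$ together with Lemma~\ref{DecompCompare}(2) to each factor gives $H(\cW(x))=\bigcap_{i}\cW_{\rho^{\bfn_i}}^s(H(x))=V^{[\chi]}H(x)$; since $H$ is a homeomorphism, $\cW(x)$ has positive dimension $\dim\gov^{[\chi]}$. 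This forces $[\chi]$ to occur among the coarse Lyapunov exponents of $(\alpha,\nu)$, with $\dim E_\nu^{[\chi]}=\dim\gov^{[\chi]}$.

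This yields an injective, dimension-preserving map from the coarse Lyapunov exponents of $\rho$ to those of $(\alpha,\nu)$, sending $[\chi]$ to the exponent on the same ray. Because $\sum\dim\gov^{[\chi]}=N=\dim\gog$ by Lemma~\ref{LyaDecomp} while $\sum\dim E_\nu^{[\xi]}=N$ since \eqref{OseledetsDecomp} decomposes $T_xM$, the image of this injection already accounts for all of $N$, so it is a bijection; this is the asserted correspondence, and corresponding subspaces have equal dimension and proportional (in fact equal) exponent rays. The step I expect to be the main obstacle is the Pesin-theoretic input that the common intersection of the nonuniformly hyperbolic Pesin manifolds $\cW_{\alpha^{\bfn_i},\nu}^s(x)$ is an embedded submanifold tangent to the intersection of the corresponding distributions, consistently over $x\in A'$; once this (standard for higher-rank nonuniformly hyperbolic actions, cf.\ \cite{KS07}, \cite{KSad06}) is granted, the rest is elementary linear algebra and a dimension count.
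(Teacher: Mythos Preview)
Your proposal is correct and follows essentially the same strategy as the paper: both realize the coarse Lyapunov subspaces as intersections of stable distributions for finitely many hyperbolic elements, and then match them up via the conjugacy $H$ using Lemma~\ref{DecompCompare}(2). The paper is terser---it picks one representative from every Weyl chamber of \emph{both} decompositions and asserts that the minimal nontrivial intersections are put in bijection by Lemma~\ref{DecompCompare}---whereas you isolate one $\gov^{[\chi]}$ at a time and then close up with a dimension count; you also make explicit the Pesin-theoretic input (that $\bigcap_i\cW_{\alpha^{\bfn_i},\nu}^s(x)$ is a submanifold tangent to $\bigcap_i E_{\alpha^{\bfn_i},\nu}^s(x)$) that the paper leaves implicit in the phrase ``It follows from Lemma~\ref{DecompCompare}.''
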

\begin{proof}Because $\alpha^{\bfn_0}$ is Anosov, $\xi(\bfn_0)\neq 0$ for all Lyapunov functionals $\xi$ with respect to $\nu$. In particular, the $\xi$'s are all non-trivial. The same is true for all Lyapunov functionals $\chi$ in the linear decomposition \ref{LyaDecompEq}.

One can find a finite set $\Delta$ of $\bfn\in\bZ^r$ which represents all Weyl chambers with respect to the coarse Lyapunov decompositions in both Definitions \ref{CoarseDef} and \ref{CoarseDef2}.

Then the coarse Lyapunov subspaces $E_\nu^{[\xi]}$ and $\gov^{[\chi]}$ are respectively minimal non-trivial intersections between distinct stable subspaces $E_{\alpha^\bfn,\nu}^s$ and $E_{\rho^\bfn}^s$ as $\bfn$ varies within $\Delta$. Note as $\bfn\in\Delta$ are chosen from the interior of Weyl chambers, the measure $\nu$ is hyperbolic with respect to $\alpha^\bfn$. It follows from Lemma \ref{DecompCompare} that $E_\nu^{[\xi]}$ and $\gov^{[\chi]}$ are bijectively associated to each other. 

In particular, $E_\nu^{[\xi]}$ is stable (resp. unstable) with respect to $\alpha^\bfn$ if and only if $\gov^{[\chi]}$ is stable (reps. unstable) with respect to $\rho^\bfn$. In other words, $\xi(\bfn)>0$ if and only if $\chi(\bfn)>0$ for all $\bfn\in\bZ^r$, which forces the functionals $\xi$ and $\chi$ to be positively proportional to each other. The dimensions are equal thanks to Lemma \ref{DecompCompare}.\end{proof}

Hence whenever Lyapunov functionals $\chi'$ and $\xi'$ are respectively from a pair of corresponding coarse Lyapunov subspaces $E_\nu^{[\xi]}$ and $\gov^{[\chi]}$, $\chi'=c\xi'$ for some $c>0$. We can choose $\kappa>1$ such that $\kappa^{-1}<c<\kappa$ for all such pairs. 

Since $\alpha^{\bfn_0}$ is Anosov we get that for $\square=s,u$, $E_{\alpha^{\bfn_0},\nu}^\square=E_\alpha^\square$ which we shall denote also with $E^\square_\nu$.


Given the decomposition $\gog^s=\gog^{ss}\oplus\gov$, we now have a corresponding decomposition \begin{equation}E_\nu^s=E_\nu^{ss}\oplus E_\nu^V\end{equation} at $\nu$-almost every $x$. Here, $E_\nu^V=E_\nu^{[\xi]}$ is the coarse Lyapunov subspace associated to $\gov=\gov^{[\chi]}$ by Lemma \ref{CoarseProportional}. The subspace $E_\nu^{ss}$ is the direct sum of all the coarse Lyapunov subspaces $E_\nu^{[\xi']}$ where $\xi'(\bfn_0)<0$ but $\xi'$ is not proportional to $\xi$.

It follows from the proof of Lemma \ref{SUsubalg} that there is $\bfz\in\bR^r$ that satisfies $\chi(z)=0$ and \eqref{SUsubalgEq}. Therefore, there exists $\lambda>0$ such that for all $\eta>0$, one may slightly perturb $\bfz$ to get $\bfm\in\bR^r$ such that
\begin{equation}\label{ContraChoiceEq}\begin{split}
\chi'(\bfm)\in(-\eta|\bfm|,0)&\text{ if }\gov^{\chi'}\subset\gov;\\
\chi'(\bfm)<-\lambda|\bfm|&\text{ if }\gov^{\chi'}\subset\gog^{ss};\\
\chi'(\bfm)>0&\text{ if }\gov^{\chi'}\subset\gog^u.
\end{split}\end{equation}
As $\bQ^r$ is dense in $\bR^r$, without loss of generality, one may assume $\bfm$ belongs to $\bZ^r$. $\bfm$ is an element in the Weyl chamber $\cC_0$, that makes a very small angle with $\ker\chi$.

As remarked in \S\ref{secSetting}, $\cW_\alpha^s$ and $\cW_\alpha^u$ are invariant under $\alpha^\bfm$. Using \eqref{CorreUS} and \eqref{ContraChoiceEq}, Lemma \ref{Subadd2} implies that $\cW_\alpha^s$ and $\cW_\alpha^u$ are respectively the stable and unstable foliations of $\alpha^\bfm$. So $\alpha^\bfm$ is Anosov.

By Lemma \ref{CoarseProportional}, the Lyapunov exponents of $\alpha^\bfm$ with respect to $E_\nu^u$, $E_\nu^V$ and $E_\nu^{ss}$  are respectively in the intervals $(0,\infty)$, $(-\kappa\eta|\bfm|,0)$, $(-\infty,-\kappa^{-1}\lambda|\bfm|)$. Given the values of $\lambda$ and $\kappa$, by picking a sufficiently small $\eta$, these intervals can be made disjoint from each other and $\frac{-\kappa^{-1}\lambda|\bfm|}{-\kappa\eta|\bfm|}$ can be made arbitrarily large.

For $\square\in\{ss,V,u\}$, set $N^\square=\dim E_\nu^\square$, which are equal to $\dim G^{ss}$, $\dim V$ and $\dim G^{ss}$ respectively and let $\lambda_-^\square$ and $\lambda_+^\square$ denote respectively the smallest and largest Lyapunov exponent of $\alpha^\bfm$ in the $E_\nu^\square$ direction. Furthermore, let $\chi_-^V$ be the smallest Lyapunov exponents of $\rho^\bfm$ with respect to $\gov$. Then for all $\kappa'>1$, by \eqref{ContraChoiceEq}, $\bfm$ can be chosen in such a way that 
\begin{equation}\label{Bands}\lambda_-^{ss}\leq\lambda_+^{ss}<\lambda_-^V\leq\lambda_+^V<0<\lambda_-^u\leq\lambda_+^u,\end{equation}
and \begin{equation}\label{Bands2}\lambda_+^{ss}<\kappa'\chi_-^V<0.\end{equation}

\subsection{Smoothness of strong stable leaves} In this part, we complete the proof of Proposition \ref{ssSmooth}.

Thanks to Pesin theory, at $\nu$-almost every $x$, the stable manifold $\cW_\alpha^s(x)$ can be locally foliated in a way that corresponds to the foliation of $W_\rho^s$ by $G^{ss}$-orbits. We now  describe this foliation, following the formulation of F. Ledrappier and L. S. Young. 

Let $\nu'$ be an arbitrary $\alpha^\bfm$-ergodic component of $\alpha^\bfm$. Set \begin{equation}\label{Bands3}\epsilon_0=\frac{\min\{\frac{\chi_-^V}{\kappa'}-\lambda_+^{ss}, \lambda_-^V-\lambda_+^{ss},-\lambda_+^V,\lambda_-^u\}}{1000}\end{equation}

\begin{lemma}\label{LyaChart}{\rm (Lyapunov charts)} Let $A'$ be as in Lemma \ref{PesinSS} and $\bfm$ be as in \eqref{ContraChoiceEq}. For all $0<\epsilon<\epsilon_0$, one can choose an $\alpha^\bfm$-invariant subset $A''\subset A'$ with $\nu'(A'')=1$ such that the following conditions are satisfied.  For all $0<\epsilon<\epsilon_0$, there exist  a constant $c>0$, a measurable function $l:A'\mapsto(1,\infty)$ and an embedding $\Phi_x: B_{\bR^N}(l(x)^{-1})\mapsto M$ such that:
\begin{enumerate}
\item $\log\dfrac{l(\alpha^\bfm x)}{l(x)}\in(-\epsilon,\epsilon)$;
\item $\Phi_x0=x$, and $D_0\Phi_x$ sends the splitting $\bR^{N^{ss}}\oplus\bR^{N^V}\oplus\bR^{N^u}$ to $E_\nu^{ss}\oplus E_\nu^V\oplus E_\nu^u$;
\item Set $f_x=\Phi_{\alpha^\bfm x}^{-1}\circ\alpha^\bfm\circ\Phi_x$ and $f_x^{-1}=\Phi_{\alpha^{-\bfm} x}^{-1}\circ\alpha^{-\bfm}\circ\Phi_x$ whenever the expression makes sense, then for $\square=u,V,ss$ and all non-zero vector $v\in\bR^{N^\square}$,
$$\log\frac{|(D_0f_x)v|}{|v|}\in(\lambda_-^\square-\epsilon,\lambda_+^\square+\epsilon) ;$$
\item Both $f_x-D_0f_x$ and $f_x^{-1}-D_0f_x^{-1}$ are $\epsilon$-Lipschitz ;
\item For $z,z\in B_{\bR^N}\big(l(x)^{-1}\big)$, $c<\dfrac{|z-z'|}{\dist(\Phi_xz,\Phi_xz')}<l(x)$.
\item All $x\in A''$ are $\nu'$-generic under $\alpha^\bfm$ in terms of Birkhoff ergodic average: $\lim_{T\rightarrow\infty}\frac1T\sum_{k=0}^{T-1}\delta_{\alpha^{k\bfm} x}=\nu'$ in weak$^*$ topology.
\end{enumerate} \end{lemma}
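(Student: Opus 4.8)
The statement is the classical construction of Lyapunov (Pesin) charts, and the plan is simply to carry it out, keeping the three Lyapunov bands $E_\nu^{ss}$, $E_\nu^V$, $E_\nu^u$ separate. We fix an ergodic component $\nu'$ of $\alpha^\bfm$ and work over the full-measure Oseledets set $A'$ from Lemma \ref{PesinSS}, where the splitting $T_xM=\bigoplus_\xi E_\nu^\xi(x)$ and the rates \eqref{OseledetsEq} are available; recall from \eqref{Bands} and \eqref{Bands2} that the exponents of $\alpha^\bfm$ split into three bands with the indicated separations.

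\emph{Step 1: the Lyapunov metric.} At $\nu'$-a.e.\ $x$, for $v,w$ in a single Oseledets subspace $E_\nu^\xi(x)$ put
\[\langle v,w\rangle'_x=\sum_{k\in\bZ}e^{-2k\,\xi(\bfm)-2\epsilon|k|}\big\langle (D\alpha^{k\bfm})v,(D\alpha^{k\bfm})w\big\rangle,\]
which converges by \eqref{OseledetsEq}; declare distinct Oseledets subspaces mutually $\langle\cdot,\cdot\rangle'_x$-orthogonal and extend bilinearly. A one-line computation (shift $k\mapsto k+1$ and compare $|k|$ with $|k-1|$) shows that $D\alpha^\bfm$ carries $\big(E_\nu^\xi(x),\|\cdot\|'_x\big)$ to $\big(E_\nu^\xi(\alpha^\bfm x),\|\cdot\|'_{\alpha^\bfm x}\big)$ with norm and conorm in $\big(e^{\xi(\bfm)-\epsilon},e^{\xi(\bfm)+\epsilon}\big)$, hence on each band $E_\nu^\square$ in $\big(e^{\lambda_-^\square-\epsilon},e^{\lambda_+^\square+\epsilon}\big)$. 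One has $\|v\|\le\|v\|'_x\le l_0(x)\|v\|$ for a measurable $l_0:A'\to(1,\infty)$, and, since the Oseledets data grows subexponentially along orbits, the Tempering Kernel Lemma (Egorov plus subexponential control) produces an equivalent $l_1\ge l_0$ with $\big|\log\!\big(l_1(\alpha^\bfm x)/l_1(x)\big)\big|<\epsilon$; this yields (1).

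\emph{Step 2: the charts and the remaining conditions.} Pick a measurable family of linear isometries $L_x:(\bR^N,\mathrm{eucl.})\to(T_xM,\langle\cdot,\cdot\rangle'_x)$ sending $\bR^{N^{ss}}\oplus\bR^{N^V}\oplus\bR^{N^u}$ onto $E_\nu^{ss}(x)\oplus E_\nu^V(x)\oplus E_\nu^u(x)$, so $L_{\alpha^\bfm x}^{-1}\circ D\alpha^\bfm\circ L_x$ is block diagonal with the band rates of Step 1: this gives (2) and the $D_0f_x$ part of (3). Set $\Phi_x=\exp_x\circ L_x$ on $B_{\bR^N}(l(x)^{-1})$ with $l(x)=C\,l_1(x)$, where $C$ is a large constant depending only on the injectivity radius of $M$ and on $\|D^2\alpha^{\pm\bfm}\|_{L^\infty}$ (finite, as $\alpha^\bfm\in\Diff^\infty(M)$ and $M$ is compact). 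On a ball of this radius the nonlinear remainder $f_x-D_0f_x$ has Lipschitz norm $\lesssim (\text{radius})\cdot\|D^2\alpha^\bfm\|_{L^\infty}\cdot l_1(x)^{O(1)}$, which is $<\epsilon$ once $C$ is chosen appropriately, and likewise for $f_x^{-1}-D_0f_x^{-1}$ using $\alpha^{-\bfm}$: this is (4), and completes (3). Estimate (5), with $c$ uniform and $l(x)$ the measurable upper bound, is immediate from $\exp_x$ being uniformly bi-Lipschitz near $0$ together with $1\le\|\cdot\|'_x/\|\cdot\|\le l_0(x)\le l_1(x)$. Finally intersect the resulting $\alpha^\bfm$-invariant full-$\nu'$-measure set with the (full-$\nu'$-measure, by Birkhoff's theorem applied to a countable dense subset of $C(M)$) set of $\nu'$-generic points for $\alpha^\bfm$ to get $A''$ and (6). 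Throughout we only need $0<\epsilon<\epsilon_0$, which we are free to impose; the exact value \eqref{Bands3} is dictated by the estimates of the next subsection, not by this lemma.

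\emph{The main obstacle.} There is no essentially new difficulty: this is Katok's regular-neighborhood construction (see \cite{RH07}*{\S 3}, \cite{KSad06}, \cite{KS07}, \cite{R79}, \cite{LY85}), the only mild twist being that we retain the finer three-band splitting rather than just stable/unstable, which the construction accommodates automatically since the bands are unions of genuine Oseledets subspaces. The one point calling for care is the simultaneous bookkeeping of the chart radius $l(x)^{-1}$: it must be small enough (in an $l_1(x)$-dependent way) for the $\epsilon$-Lipschitz bound (4), yet $l(x)$ must still vary slowly enough for (1) — both handled, as usual, by feeding $l_1$ through the Tempering Kernel Lemma.
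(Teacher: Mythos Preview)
Your sketch is correct and is precisely the standard Lyapunov chart construction; the paper itself does not reprove this but simply cites \cite{LY85}*{\S8.1} for items (1)--(5) and appends Birkhoff's theorem for (6), so you have spelled out what the citation contains.
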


These charts are given on a $\nu'$-full measure set $A''$ in \cite{LY85}*{\S 8.1}. As $\nu'(A'')=1$, it is always possible to modify $A''$ so that Birkhoff Ergodic Theorem holds for $\alpha^\bfm$ at every $x\in A''$.

\begin{lemma}\label{SSLeaf}\cite{LY85}*{Lemma 8.2.3 \& 8.2.5} There exists $\tau\in(0,\frac12)$ such that, for all $x\in A''$, for all $y\in\cW_\alpha^s(x)\cap \Phi_x\big(B_{\bR^N}(\tau l(x)^{-1})\big)$ there is a map $g_{x,y}:B_{\bR^{N^V}}\big(l(x)^{-1}\big)\mapsto\bR^{N^{ss}}$ such that:
\begin{enumerate}
\item $\Phi_x^{-1}y\in \graph(g_{x,y})$;
\item $\|Dg_{x,y}\|\leq\frac13$;
\item Let $f_x^k=f_{\alpha^{(k-1)\bfm} x}\circ\cdots f_x$ where $f_x$ is defined as in Lemma \ref{LyaChart}. Then for all $z\in \mathrm{graph}(g_{x,y})$, $$\overline\lim_{k\rightarrow\infty}\frac1k\log\big|f_x^kz-f_x^k\Phi_x^{-1}y\big|\leq \lambda_+^{ss}+\epsilon.$$
\end{enumerate}
\end{lemma}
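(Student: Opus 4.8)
This is the Hadamard--Perron construction of a ``fast'' (strong) stable sub-leaf inside $\cW_\alpha^s$, carried out in the Lyapunov charts of Lemma~\ref{LyaChart}; it is in fact \cite{LY85}*{Lemma 8.2.3 \& 8.2.5} once one checks that those charts satisfy the hypotheses used there, the only point being that the contraction bands \eqref{Bands} together with the slack $\epsilon<\epsilon_0$ from \eqref{Bands3} supply the spectral gap $\lambda_+^{ss}<\lambda_-^V$ on which the construction rests. Below I sketch the argument so as to make the matching transparent.

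First I would record the local form of the chart dynamics. By Lemma~\ref{LyaChart}(2) the linear part $D_0f_x$ is block-diagonal for $\bR^{N^{ss}}\oplus\bR^{N^V}\oplus\bR^{N^u}$; by Lemma~\ref{LyaChart}(3) its $\bR^{N^{ss}}$-block has norm $\le e^{\lambda_+^{ss}+\epsilon}$, its $\bR^{N^V}$-block contracts every vector by at least $e^{\lambda_-^V-\epsilon}$, and its $\bR^{N^u}$-block expands; and $f_x-D_0f_x$ is $\epsilon$-Lipschitz by Lemma~\ref{LyaChart}(4). Since $\epsilon<\epsilon_0$ this gives $\lambda_+^{ss}+2\epsilon<\lambda_-^V-2\epsilon<0$, so the cone $\mathcal{K}=\{(u,v,w):|v|+|w|\le\tfrac13|u|\}$ around the $\bR^{N^{ss}}$-factor is mapped strictly into itself by $(D_0f_x)^{-1}$, with the slope shrinking by a factor $\le e^{\lambda_+^{ss}-\lambda_-^V+3\epsilon}<1$; the same holds for $f_x^{-1}$ after absorbing the $\epsilon$-Lipschitz error, the $1/1000$ in \eqref{Bands3} leaving ample room.

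Now fix $x\in A''$ and $y\in\cW_\alpha^s(x)\cap\Phi_x\bigl(B_{\bR^N}(\tau l(x)^{-1})\bigr)$. Since $y$ is on the local stable leaf, the in-chart estimates Lemma~\ref{LyaChart}(3)--(5) make the $\bR^{N^{ss}}\oplus\bR^{N^V}$-component of $f_x^k\Phi_x^{-1}y=\Phi_{\alpha^{k\bfm}x}^{-1}(\alpha^{k\bfm}y)$ decay at rate $\le e^{(\lambda_+^V+2\epsilon)k}$, which by Lemma~\ref{LyaChart}(1) beats the rate $e^{-\epsilon k}$ at which $l(\alpha^{k\bfm}x)^{-1}$ can shrink; hence, for $\tau$ small uniformly in $x$, a standard induction keeps the whole forward orbit of $\Phi_x^{-1}y$ inside the charts. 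I would then run the backward graph transform along this orbit: on the complete space of $\tfrac13$-Lipschitz graphs over (balls in) the $\bR^{N^{ss}}$-factor passing through the reference point, the operation of pulling a time-$(k+1)$ graph back under $f_{\alpha^{k\bfm}x}$ to a time-$k$ graph preserves the class ($\mathcal{K}$ is backward invariant and the pull-back overflows) and, measured relative to the reference orbit, contracts at rate $\le e^{\lambda_+^{ss}-\lambda_-^V+3\epsilon}<1$. Its unique invariant family, read off at time $0$, is $\graph(g_{x,y})$, a graph through $\Phi_x^{-1}y$ with $\|Dg_{x,y}\|\le\tfrac13$, giving (1) and (2); that $\graph(g_{x,y})\subset\cW_\alpha^s(x)$ follows because its points converge forward to $x$. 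For (3), any $z\in\graph(g_{x,y})$ has all forward iterates on the $\tfrac13$-Lipschitz graphs $f_x^k(\graph(g_{x,y}))$, so $|f_x^kz-f_x^k\Phi_x^{-1}y|$ is comparable to the difference of $\bR^{N^{ss}}$-components, which shrinks at the fast rate $e^{\lambda_+^{ss}+\epsilon}$ per step (the chart bounds degrading only by $e^{\epsilon k}$), whence $\overline\lim_{k\to\infty}\tfrac1k\log|f_x^kz-f_x^k\Phi_x^{-1}y|\le\lambda_+^{ss}+\epsilon$.

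The graph transform is routine; the main obstacle is the bookkeeping needed to keep the entire forward orbit of $\Phi_x^{-1}y$ inside the successively shrinking chart domains, uniformly in $x\in A''$, and --- if one prefers to work inside the (nonlinear, $x$-dependent) stable leaves rather than the full chart --- to control those leaves as uniformly $C^1$-small graphs. Both are precisely what is assembled in \cite{LY85}*{\S8.1--8.2}, so the cleanest write-up simply verifies that Lemma~\ref{LyaChart} together with \eqref{Bands}, \eqref{Bands2} and \eqref{Bands3} reproduces the standing hypotheses of \cite{LY85}*{\S8} and quotes their Lemmas 8.2.3 and 8.2.5.
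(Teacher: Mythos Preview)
The paper does not prove this lemma: it is stated with a bare citation to \cite{LY85}*{Lemma 8.2.3 \& 8.2.5} and nothing more. Your proposal is therefore not competing with an argument in the paper but rather supplying a sketch of the cited Ledrappier--Young construction, and as such it is essentially correct and follows the same route (Hadamard--Perron graph transform in Lyapunov charts, using the spectral gap $\lambda_+^{ss}+2\epsilon<\lambda_-^V-2\epsilon$ guaranteed by \eqref{Bands} and \eqref{Bands3}).

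One small bookkeeping point: the statement in the paper has $g_{x,y}:B_{\bR^{N^V}}(l(x)^{-1})\to\bR^{N^{ss}}$, whereas your sketch (correctly, for a strong stable leaf tangent to $E_\nu^{ss}$) builds graphs over the $\bR^{N^{ss}}$-factor. The geometric content of the subsequent straightening lemma (the graphs become hyperplanes parallel to $\bR^{N^{ss}}$) confirms that your reading is the intended one; the domain/codomain in the displayed statement appear to be transposed. This does not affect your argument.
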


The lemma describes the strong stable subfoliation inside the stable manifolds. It should be emphasized that this is very different from the strong stable foliation that one gets from Pesin's strong stable manifold theorem, which is defined on a set of generic points. The foliation of $\bR^{N^s}$, which is identified with the $\cW_\alpha^s$-leaf at a generic point $x$, by graphs of $g_{x,y}$, defines strong stable manifolds at every nearby point $y$ inside $\cW_\alpha^s(x)$, while $y$ itself may not be generic in the Oseledets' sense or in the support of $\nu$. For this reason the next Lemma is not a straightforward consequence interplay of the conjugacy and coarse Lyapunov directions.

\begin{lemma}\label{HVConst}The function $(H_x)_V\circ\Phi_x$ is constant along the graph of $g_{x,y}$ for all $y\in B_{\bR^{N^s}}\big(l(x)^{-1}\big)$.\end{lemma}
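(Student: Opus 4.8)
The plan is to fix $x\in A''$ and a point $y$ for which $g_{x,y}$ is defined (so $\Phi_x^{-1}y\in\graph(g_{x,y})$ by Lemma \ref{SSLeaf}(1)), and to prove that $\big(H_x(\Phi_x z)\big)_V=\big(H_x(y)\big)_V$ for every $z\in\graph(g_{x,y})$; since $(H_x)_V\circ\Phi_x$ takes the value $\big(H_x(y)\big)_V$ at the base point $\Phi_x^{-1}y$, this is precisely the assertion. Write $p=\Phi_x z$. Since $\alpha^\bfm$ is Anosov with stable foliation $\cW_\alpha^s$, after shrinking the domain of the chart $\Phi_x$ if necessary the orbits $\{\alpha^{j\bfm}x\}$, $\{\alpha^{j\bfm}p\}$, $\{\alpha^{j\bfm}y\}$ stay so close together for all $j\ge0$ that $H_{\alpha^{j\bfm}x}(\cdot)$ is defined at $\alpha^{j\bfm}p$ and $\alpha^{j\bfm}y$ and is close to $e$; iterating the conjugacy identity \eqref{HVConjEq} then yields, for all $k\ge0$,
\begin{equation*}
\big(H_{\alpha^{k\bfm}x}(\alpha^{k\bfm}p)\big)_V=\rho^{k\bfm}\big(H_x(p)\big)_V,\qquad \big(H_{\alpha^{k\bfm}x}(\alpha^{k\bfm}y)\big)_V=\rho^{k\bfm}\big(H_x(y)\big)_V.
\end{equation*}
Everything then reduces to two estimates: an exponential upper bound on $\dist\!\big(\rho^{k\bfm}(H_x(p))_V,\rho^{k\bfm}(H_x(y))_V\big)$ in $G$, and a matching lower bound on $|\rho^{k\bfm}w|$ where $w=\log(H_x(p))_V-\log(H_x(y))_V\in\gov$; together they will force $w=0$.

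For the upper bound, $f_x^k\Phi_x^{-1}(p)$ and $f_x^k\Phi_x^{-1}(y)$ are the $\Phi_{\alpha^{k\bfm}x}$-coordinates of $\alpha^{k\bfm}p$ and $\alpha^{k\bfm}y$; since $z$ and $\Phi_x^{-1}y$ lie on the same graph, Lemma \ref{SSLeaf}(3) bounds the decay rate of their difference by $\lambda_+^{ss}+\epsilon$, and the chart comparison of Lemma \ref{LyaChart}(5) (with the slow growth of $l$ from \ref{LyaChart}(1)) gives $\dist(\alpha^{k\bfm}p,\alpha^{k\bfm}y)\le C e^{(\lambda_+^{ss}+2\epsilon)k}$. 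Applying Remark \ref{LocalHHolder} with the first argument frozen at $\alpha^{k\bfm}x$ (the second variable of $H_{(\cdot)}(\cdot)$ being H\"older of some exponent $\gamma\in(0,1)$) gives $\dist\big(H_{\alpha^{k\bfm}x}(\alpha^{k\bfm}p),H_{\alpha^{k\bfm}x}(\alpha^{k\bfm}y)\big)\le C'e^{\gamma(\lambda_+^{ss}+2\epsilon)k}$; since both elements tend to $e$ and $g\mapsto g_V$ is $C^\infty$, hence locally Lipschitz near $e$ by Corollary \ref{SVU}, the same bound holds for their $V$-components. Feeding in the two displayed identities,
\begin{equation*}
\dist\big(\rho^{k\bfm}(H_x(p))_V,\ \rho^{k\bfm}(H_x(y))_V\big)\le C''\,e^{\gamma(\lambda_+^{ss}+2\epsilon)k}.
\end{equation*}

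For the lower bound, every Lyapunov exponent of $\rho$ on $\gov$ is negative on $\bfm$ by \eqref{ContraChoiceEq}, so $\rho^{k\bfm}$ contracts $\gov$ and both $\rho^{k\bfm}(H_x(p))_V$ and $\rho^{k\bfm}(H_x(y))_V$ converge to $e$; as $\exp$ is bi-Lipschitz near $0$ this gives $|\rho^{k\bfm}w|\le C''' e^{\gamma(\lambda_+^{ss}+2\epsilon)k}$. If $w\ne 0$, decompose it along the common generalized eigenspaces of $\rho^{\bfe_1},\dots,\rho^{\bfe_r}$ inside $\gov$ and project onto the eigenspace carrying the slowest-decaying nonzero component; since the accompanying unipotent factors grow only polynomially in $k$, one obtains $|\rho^{k\bfm}w|\ge c\,k^{-T}e^{\chi_-^V k}$ for large $k$, where $\chi_-^V$ is the smallest Lyapunov exponent of $\rho^\bfm$ on $\gov$. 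These two bounds are incompatible once $\gamma(\lambda_+^{ss}+2\epsilon)<\chi_-^V$. This inequality holds with room to spare: in the construction of $\bfm$ we may take the free parameter $\kappa'$ of \eqref{Bands2} to be $\kappa'=2/\gamma$ (legitimate, as $\gamma<1$ forces $2/\gamma>1$), whence $\gamma\lambda_+^{ss}<2\chi_-^V<\chi_-^V$ with a fixed positive gap $\chi_-^V-\gamma\lambda_+^{ss}$; and $\epsilon$ may be chosen in $(0,\epsilon_0)$ arbitrarily small, in particular small enough that $2\gamma\epsilon$ stays below that gap. Hence $w=0$, i.e. $(H_x(p))_V=(H_x(y))_V$, which is the lemma.

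The only real work is this rate bookkeeping; the underlying mechanism is transparent. The strong-stable graph collapses onto $y$ at the fast rate $\lambda_+^{ss}$ (Lemma \ref{SSLeaf}(3)); the H\"older conjugacy degrades this only to $\gamma\lambda_+^{ss}$; and the spectral separation $\lambda_+^{ss}<\kappa'\chi_-^V$ deliberately built into the choice of $\bfm$ — used with $\kappa'$ as large as $2/\gamma$ — guarantees that $\gamma\lambda_+^{ss}$ is still faster than the slowest rate $\chi_-^V$ at which $\rho^{k\bfm}$ can contract $\gov$, and two contractions of such disparate speeds can coexist only if $w$ vanishes. The main obstacle, then, is not conceptual but the careful transfer of the chart-level decay of Lemma \ref{SSLeaf}(3) through Lemma \ref{LyaChart}, Remark \ref{LocalHHolder}, Corollary \ref{SVU} and the $\exp/\log$ comparison on $\gov$, keeping the exponential rates aligned at every step.
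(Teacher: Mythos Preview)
Your proof is correct and follows the same overall strategy as the paper's: compare the exponential rate at which $\rho^{k\bfm}$ can contract a nonzero vector in $\gov$ (no faster than $\chi_-^V$) against the strictly faster rate at which the $V$-component of $H_{\alpha^{k\bfm}x}$ collapses the strong-stable graph (governed by $\lambda_+^{ss}$ times the H\"older exponent), the gap being arranged via \eqref{Bands2} with a suitably large $\kappa'$.

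The one substantive difference is how the chart-level decay in Lemma~\ref{SSLeaf}(3) is converted into a decay of $\dist(\alpha^{k\bfm}p,\alpha^{k\bfm}y)$ on $M$. The paper uses item~(6) of Lemma~\ref{LyaChart} (Birkhoff genericity of $x$) to produce an infinite sequence $\cS$ of times along which $l(\alpha^{k\bfm}x)$ stays below a fixed level $l_0$, and runs the rate comparison only along $\cS$; you instead work with all $k$ and absorb any chart-related growth via the tempered estimate in item~(1) of Lemma~\ref{LyaChart}, paying an extra $\epsilon$ in the exponent. Both routes are valid: yours is a bit more direct since it sidesteps the subsequence extraction, while the paper's makes the uniform chart control explicit. (In fact item~(5) already gives a uniform Lipschitz constant $1/c$ for $\Phi_{\alpha^{k\bfm}x}$, so the extra $\epsilon$ you insert is harmless overkill.)

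One cosmetic remark: your lower bound $|\rho^{k\bfm}w|\ge c\,k^{-T}e^{\chi_-^V k}$ is slightly understated---the polynomial correction arising from Jordan blocks is a growth factor, not a decay, and the dominant exponent of $w$ may well exceed $\chi_-^V$---but this only makes the contradiction with the upper bound stronger, so the argument is unaffected.
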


\begin{proof} Endow the group $V$ with the Euclidean metric of $\gov$, which we identify with $V$ by exponential map. We deduce from \eqref{HVConjEq} that
\begin{equation}\label{HVConstEq}\begin{split}
&\dist\Big(\rho^{k\bfm}\big(H_x(\Phi_xz)\big)_V,\rho^{k\bfm}\big(H_x(y)\big)_V\Big)\\
=&\dist\Big(\big(H_{\alpha^{k\bfm}x}(\alpha^{k\bfm}\Phi_x z)\big)_V,\big(H_{\alpha^{k\bfm}x}(\alpha^{k\bfm}y\big)_V\Big)\\
=&\dist\Big(\big(H_{\alpha^{k\bfm}x}(\Phi_{\alpha^{k\bfm}x}f_x^k z)\big)_V, \big(H_{\alpha^{k\bfm}x}(\Phi_{\alpha^{k\bfm}x}f_x^k\Phi_x^{-1}y)\big)_V\Big).\end{split}\end{equation} 

Choose $l_0$ such that $\nu'(\{x\in A'': l(x)<l_0\})>0$. Thanks to part (6) of Lemma \ref{LyaChart}, there is an infinite sequence $\cS$ of positive integers $k$ such that $l(\alpha^{k\bfm}x)<l_0$. 

When $k$ is from $\cS$ and tends to $\infty$, $\Phi_{\alpha^{k\bfm} x}$ is uniformly Lipschitz. The distance between $\Phi_{\alpha^{k\bfm}x}f_x^k z$ and $
\Phi_{\alpha^{k\bfm}x}f_x^k\Phi_x^{-1}y$ decays exponentially at rate $e^{\lambda_+^{ss}+\epsilon}$ or faster by Lemma \ref{SSLeaf}. 

By Remark \ref{LocalHHolder}, $(H_x)_V$ is H\"older continuous, where the H\"older exponent and coefficient are uniform in $x$. We can choose $\bfm$ from the beginning such that the coefficient $\kappa'$ in \eqref{Bands2} and \eqref{Bands3} is greater than the H\"older exponent of $(H_x)_V$ for all $x$. Then as $k\rightarrow\infty$ in $\cS$, the distance between $\big(H_{\alpha^{k\bfm}x}(\Phi_{\alpha^{k\bfm}x}f_x^k z)\big)_V$ and $
\big(H_{\alpha^{k\bfm}x}(\Phi_{\alpha^{k\bfm}x}f_x^k\Phi_x^{-1}y)\big)_V$ decays at an exponential rate of $e^{\frac{\lambda_+^{ss}+\epsilon}{\kappa'}}$ or faster, which is in particular faster than $e^{\chi_-^V-\epsilon}$ by the choice of $\epsilon_0$ in \eqref{Bands3}.

Recall $(H_x)_V$ takes value in $V$ and $\chi_-^V$ is the smallest Lyapunov exponent of $\rho^\bfm$ along $\gov$. Hence if $\big(H_x(\Phi_xz)\big)_V$ and $\big(H_x(y)\big)_V$ are not equal then the left-hand side in \eqref{HVConstEq} decays to $0$ at a rate that is slower than $e^{\chi_-^V-\epsilon}$. This yields a contradiction, and shows that  $\big(H_x(\Phi_xz)\big)_V$ assumes the constant value $\big(H_x(y)\big)_V$ along the graph of $g_{x,y}$.\end{proof}

The chart maps $\Phi_x$ may be further optimized to straighten the graphs of $g_{x,y}$. 
\begin{lemma}\cite{LY85}*{\S 8.3} For all $x\in A''$, there exists a bi-Lipschitz homeomorphism $\pi_x$ between $\cW_\alpha^s(x)\cap \Phi_x\big(B_{\bR^N}(\tau l(x)^{-1})\big)$ and a subset of $\bR^{N^s}$, such that the following are true:\begin{enumerate}
\item $\pi_x(0)=0$;
\item for all $y$, the image of $\graph(g_{x,y})$ is the intersection of $\mathrm{Image}(\pi_x)$ with a hyperplane parallel to $\bR^{N^{ss}}$.
\end{enumerate}
\end{lemma}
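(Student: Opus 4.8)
The plan is to realise $\pi_x$ as a holonomy straightening of the subfoliation $\{\graph(g_{x,y})\}$ inside the Lyapunov chart, and then to verify the bi-Lipschitz property by importing the uniform estimates on that subfoliation from \cite{LY85}*{\S 8.2}. Throughout I work in the chart $\Phi_x$ of Lemma \ref{LyaChart}; composing $\Phi_x$, if necessary, with a smooth flattening of the $C^\infty$ leaf $\cW_\alpha^s(x)$, I identify $\cW_\alpha^s(x)\cap\Phi_x\big(B_{\bR^N}(\tau l(x)^{-1})\big)$ with an open neighbourhood $U$ of $0$ in $\bR^{N^s}=\bR^{N^{ss}}\oplus\bR^{N^V}$, and write a point of $U$ as $(u,w)$ with $u\in\bR^{N^{ss}}$, $w\in\bR^{N^V}$. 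By Lemma \ref{SSLeaf} the strong stable subleaf through $z=\Phi_x(u_z,w_z)$ is the graph of $g_{x,z}$, a Lipschitz graph of the $w$-variable over the $u$-variable with $\|Dg_{x,z}\|\leq\frac13$; moreover these graphs form a partition of $U$ — this is part of the Ledrappier--Young construction and depends only on the contraction rates, not on any regularity of $H$.

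First I fix the linear transversal $T=\big(\{0\}\times\bR^{N^V}\big)\cap U$. Because the domain of each $g_{x,z}$ contains $0$ and its slope is at most $\frac13$, the graph of $g_{x,z}$ meets $T$ transversally in the single point $\big(0,g_{x,z}(0)\big)$. I then set
\begin{equation*}\pi_x\big(\Phi_x(u_z,w_z)\big)=\big(u_z,\,g_{x,z}(0)\big)\in\bR^{N^{ss}}\times\bR^{N^V}=\bR^{N^s},\end{equation*}
i.e. I keep the $E_\nu^{ss}$-coordinate of $z$ and replace its $E_\nu^V$-coordinate by the height at which the subleaf of $z$ crosses $T$. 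Since $\Phi_x^{-1}x=0\in\graph(g_{x,x})$ we have $g_{x,x}(0)=0$, hence $\pi_x(0)=0$, which is (1). If $z$ and $z'$ lie on a common subleaf then $g_{x,z}=g_{x,z'}$, so $\pi_x$ carries that subleaf onto $\big\{(u,g_{x,z}(0)):(u,g_{x,z}(u))\in U\big\}$, which is exactly the intersection of $\mathrm{Image}(\pi_x)$ with the affine subspace $\bR^{N^{ss}}\times\{g_{x,z}(0)\}$ parallel to $\bR^{N^{ss}}$; this is (2).

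The remaining and \textbf{main} point is that $\pi_x$ is bi-Lipschitz. As $\Phi_x$ is itself bi-Lipschitz with constants controlled by $l(x)$ (Lemma \ref{LyaChart}), it is enough to control, in chart coordinates, the map $(u_z,w_z)\mapsto\big(u_z,q(z)\big)$ with $q(z):=g_{x,z}(0)$. Its first component is a linear projection, so the whole issue is the transverse regularity of $z\mapsto q(z)$: nearby points must have nearby subleaf--transversal crossings, with a Lipschitz constant uniform over $A''$. This is precisely \cite{LY85}*{Lemma 8.2.3 \& 8.2.5 and \S 8.3}: inside a single Lyapunov block the holonomy of the $g_{x,y}$-foliation distorts distances by a bounded factor, so $\|g_{x,z}-g_{x,z'}\|_{C^0}$ is bounded by a uniform multiple of $\dist(z,z')$, and evaluation at $0$ gives the Lipschitz bound on $q$. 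For the inverse one recovers $z$ from $(u_0,w_0)\in\mathrm{Image}(\pi_x)$ as the point of $E_\nu^{ss}$-coordinate $u_0$ on the unique subleaf crossing $T$ at height $w_0$; Lipschitz control of this reconstruction follows from the same uniform dependence together with the $\frac13$-slope bound. Thus the only genuine work is to transcribe these Ledrappier--Young estimates into the present notation while tracking the block constants fixed in \eqref{Bands}--\eqref{Bands3}; no dynamical input beyond Lemmas \ref{LyaChart} and \ref{SSLeaf} is required.
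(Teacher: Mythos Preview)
Your proposal is correct and is essentially the Ledrappier--Young construction that the paper merely cites without proof: the map $\pi_x$ you build by keeping the $E_\nu^{ss}$-coordinate and replacing the $E_\nu^V$-coordinate by the height at which the strong-stable leaf meets the fixed transversal $\{0\}\times\bR^{N^V}$ is exactly the straightening map of \cite{LY85}*{\S8.3}, and your reduction of the bi-Lipschitz property to the uniform Lipschitz holonomy of the $g_{x,y}$-subfoliation together with the $\frac13$-slope bound is precisely how that section argues.

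One small bookkeeping remark: as written, Lemma~\ref{SSLeaf} declares $g_{x,y}:B_{\bR^{N^V}}\to\bR^{N^{ss}}$, which would make the graphs $N^V$-dimensional rather than $N^{ss}$-dimensional; you have (correctly) worked with the opposite convention $g_{x,y}:\bR^{N^{ss}}\to\bR^{N^V}$, which is both the dimensionally sensible choice and the one consistent with Corollary~\ref{HVConst2} and with \cite{LY85}. This is a typo in the paper, not a gap in your argument.
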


\begin{corollary}\label{HVConst2}For all $x\in A''$, there exists an open neighborhood $B_x$ of $x$ in $\cW_\alpha^s(x)$ and a bi-Lipschitz homeomorphism $P_x$ from an open neighborhood of  $0\in\bR^{N^s}$ to $B_x$, that sends $0$ to $x$ and satisfies that $(H_x)_V\circ P_x$ is constant along hyperplanes parallel to $\bR^{N^{ss}}$.\end{corollary}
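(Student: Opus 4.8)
\textbf{Proof proposal for Corollary \ref{HVConst2}.}

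The plan is to simply splice the previous lemma with Lemma \ref{HVConst}, being careful that the constancy statement in Lemma \ref{HVConst} is phrased in the $\Phi_x$-chart and must be transported through $\pi_x$. First I would set $P_x = \Phi_x \circ \pi_x^{-1}$, defined on the subset $\mathrm{Image}(\pi_x)\subset\bR^{N^s}$ which, by the last lemma, is an open neighborhood of $0$; shrinking it if necessary we may assume it is a round ball, and take $B_x = P_x(\text{that ball})$, an open neighborhood of $x$ in $\cW_\alpha^s(x)$ since $\Phi_x$ is an embedding sending $0$ to $x$ and $\pi_x$ is a bi-Lipschitz homeomorphism onto its image with $\pi_x(0)=0$. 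Then $P_x$ is bi-Lipschitz as a composition of bi-Lipschitz maps, $P_x(0)=\Phi_x(\pi_x^{-1}(0))=\Phi_x(0)=x$, as required.

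Next I would check the constancy property. A hyperplane parallel to $\bR^{N^{ss}}$ inside $\mathrm{Image}(\pi_x)$ is, by part (2) of the last lemma, exactly the $\pi_x$-image of some $\graph(g_{x,y})$ (for $y$ with $\pi_x$-image on that hyperplane; note $y$ ranges over $\cW_\alpha^s(x)\cap\Phi_x(B_{\bR^N}(\tau l(x)^{-1}))$, which after the shrinking contains all relevant points). Hence for $z$ on such a hyperplane, $\pi_x^{-1}(z)\in\graph(g_{x,y})$, so by Lemma \ref{HVConst} the quantity $(H_x)_V\big(\Phi_x(\pi_x^{-1}(z))\big) = (H_x)_V\big(P_x(z)\big)$ is constant as $z$ varies over that hyperplane. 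Since every hyperplane parallel to $\bR^{N^{ss}}$ meeting the domain arises this way, $(H_x)_V\circ P_x$ is constant along all such hyperplanes, which is the assertion.

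The only point requiring a little care — and the closest thing to an obstacle — is the matching of domains: Lemma \ref{HVConst} is stated for $y\in B_{\bR^{N^s}}(l(x)^{-1})$ identified via $\Phi_x$ with a neighborhood in $\cW_\alpha^s(x)$, whereas the straightening lemma works on the smaller set $\Phi_x\big(B_{\bR^N}(\tau l(x)^{-1})\big)\cap\cW_\alpha^s(x)$ with $\tau<\tfrac12$; one just observes that $\tau l(x)^{-1} < l(x)^{-1}$, so every graph $\graph(g_{x,y})$ appearing after the restriction is among those for which Lemma \ref{HVConst} already gives constancy, and no compatibility issue arises. Everything else is bookkeeping about compositions of bi-Lipschitz maps, so I would keep the write-up to a few lines.
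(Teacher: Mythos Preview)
Your proposal is correct and follows exactly the paper's approach: the paper also sets $B_x=\cW_\alpha^s(x)\cap \Phi_x\big(B_{\bR^N}(\tau l(x)^{-1})\big)$ and $P_x=\Phi_x\circ\pi_x^{-1}$, then simply invokes Lemma~\ref{HVConst}. Your write-up merely unpacks the bookkeeping (bi-Lipschitz composition, domain matching, how hyperplanes correspond to graphs via $\pi_x$) that the paper leaves implicit in a two-line proof.
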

\begin{proof}It suffices to take $B_x=\cW_\alpha^s(x)\cap \Phi_x\big(B_{\bR^N}(\tau l(x)^{-1})\big)$, and $P_x=\Phi_x\circ\pi_x^{-1}$. The claim follows from Lemma \ref{HVConst}.\end{proof}

We now show two statements that contradict each other.

\begin{lemma}\label{Jacobian0}Assume $A$ is non-empty and let $\nu$, $A''$ be as above. For every $x\in A''$, there exists a decreasing sequence of bounded open neighborhoods $B_{k,x}\subset \cW_\alpha^s(x)$ of $x$ such that $$\lim_{k\rightarrow\infty}\frac{\Vol_{\cW_\rho^s(x)}\big(H(B_{k,x})\big)}{\Vol_{\cW_\alpha^s(x)}(B_{k,x})}=0,$$
where the numerator and denominator are respectively volume forms of the induced Riemmannian metrics on $\cW_\rho^s(x)$ and $\cW_\alpha^s(x)$.\end{lemma}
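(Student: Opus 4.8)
The plan is to evaluate the volume ratio through the product structure $G^s\cong G^{ss}\times V$ of $\cW_\rho^s\big(H(x)\big)=G^sH(x)$ (equivalently, since $\mu=H^{-1}_*(\mathrm{Leb})$ has conditionals along the $\cW_\alpha^s$-leaves equal to $\big(H|_{\cW_\alpha^s}\big)^{-1}_*$ of the leaf volume of $G^s$, to show that the density of that conditional measure tends to $0$ at $x$ along a suitable sequence of neighbourhoods), and to show that the numerator shrinks by a strictly larger power of the scale than the denominator, the surplus power being exactly the rank defect of $y\mapsto\big(H_x(y)\big)_V$ at a point of $A$.

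\textbf{Set-up.} Fix $x\in A''$ and put $q=\rank\big(D_{\cW_\alpha^s}|_{y=x}\big(H_x(y)\big)_V\big)$; by the definition of $A$ one has $q<N^V=\dim V$. Use the bi-Lipschitz straightening chart $P_x$ of Corollary \ref{HVConst2}: it identifies a neighbourhood $B_x$ of $x$ in $\cW_\alpha^s(x)$ with a neighbourhood of $0$ in $\bR^{N^{ss}}\times\bR^{N^V}$, carries each strong stable piece $\cW_\alpha^{ss}(\,\cdot\,)\cap B_x$ to a slice $\bR^{N^{ss}}\times\{w\}$, and satisfies $\big(H_x(P_x(u,w))\big)_V=\Theta(w)$ with $\Theta$ Lipschitz. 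Moreover $\Theta$ is injective near $0$: if $\big(H_x(y)\big)_V=\big(H_x(y')\big)_V$ then $H(y')\in G^{ss}H(y)$, i.e. $y'\in\cW_\alpha^{ss}(y)$, and $P_x\big(\{0\}\times\bR^{N^V}\big)$ meets each strong stable piece exactly once. Take $B_{k,x}=P_x\big([-\delta_k,\delta_k]^{N^{ss}}\times[-\delta_k,\delta_k]^{N^V}\big)$ with $\delta_k\searrow 0$; these are nested bounded open neighbourhoods of $x$ with $\Vol_{\cW_\alpha^s(x)}(B_{k,x})\asymp\delta_k^{N^s}$.

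\textbf{The two estimates.} First, the $V$-extent: by Proposition \ref{HVSmooth}(1) the map $y\mapsto\big(H_x(y)\big)_V$ is $C^\infty$ along $\cW_\alpha^s(x)$ near $x$, so Taylor's theorem at $x$ places its image on $B_{k,x}$ inside an $O(\delta_k^2)$-neighbourhood of a $q$-dimensional disk of radius $O(\delta_k)$ in $V$; hence $\mathrm{Leb}_V\big(\{\big(H_x(y)\big)_V:y\in B_{k,x}\}\big)\lesssim\delta_k^{q+2(N^V-q)}=\delta_k^{2N^V-q}$, and $2N^V-q\ge N^V+1$. Second, the $G^{ss}$-slices: by Corollary \ref{SVU} the coordinates $g\mapsto(g_{ss},g_V)$ give a $C^\infty$ diffeomorphism $G^s\cong G^{ss}\times V$, so Fubini gives $\Vol_{\cW_\rho^s}\big(H(B_{k,x})\big)\asymp\int_V\Vol_{G^{ss}}(\mathrm{slice}_v)\,d\mathrm{Leb}_V(v)$, where $\mathrm{slice}_v$ is nonempty only for $v$ in the image just estimated, and, by injectivity of $\Theta$, for such $v$ the level set $\{y\in B_{k,x}:\big(H_x(y)\big)_V=v\}$ is a single $\cW_\alpha^{ss}$-leaf-piece $L_v$ of $\cW_\alpha^s$-diameter $O(\delta_k)$, so that $H(L_v)$ lies in one $G^{ss}$-coset and $\mathrm{slice}_v$ is the set of its $G^{ss}$-coordinates. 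The key claim is $\Vol_{G^{ss}}(\mathrm{slice}_v)\lesssim\delta_k^{N^{ss}}$ uniformly in $v$ and $k$; granting it, $\Vol_{\cW_\rho^s}\big(H(B_{k,x})\big)\lesssim\delta_k^{2N^V-q+N^{ss}}$, and hence the ratio in the lemma is $\lesssim\delta_k^{2N^V-q+N^{ss}-N^s}=\delta_k^{N^V-q}\to 0$, which with $\delta_k\searrow 0$ proves the lemma.

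\textbf{Main obstacle.} The hard point is the slice bound $\Vol_{G^{ss}}(\mathrm{slice}_v)\lesssim\delta_k^{N^{ss}}$, that is, that $H$ restricted to a strong stable leaf carries a piece of $\cW_\alpha^s$-diameter $\delta$ into a subset of a $G^{ss}$-orbit of $(N^{ss})$-volume $O(\delta^{N^{ss}})$. Bi-Hölder continuity of $H$ alone only yields $O(\delta^{\gamma N^{ss}})$ with $\gamma<1$, which is too weak once $\dim G^{ss}$ is large, so the dynamics must be used: one iterates $H\circ\alpha^{\bfm}=\rho^{\bfm}\circ H$ with $\bfm$ as in \eqref{ContraChoiceEq}, along which a strong stable leaf-piece contracts under $\alpha^{\bfm}$ at the fast rate $\lambda_+^{ss}$ (Lemma \ref{SSLeaf}(3)) while the ambient $G^{ss}$-orbit is expanded by $\rho^{-\bfm}$ at the rate of the smallest $\chi'(\bfm)$ with $\gov^{\chi'}\subset\gog^{ss}$, and combines this with the near-linear Lyapunov charts of Lemma \ref{LyaChart} and the absolute-continuity and bounded-distortion properties of the strong stable subfoliation inside stable leaves from \cite{LY85} (and the band inequalities \eqref{Bands}--\eqref{Bands3}) to trade the Hölder loss against the spectral gap and recover the exponent $N^{ss}$. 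This is the same device — comparing $\big(H_x(\Phi_x\,\cdot\,)\big)_V$ with $\big(H_{\alpha^{k\bfm}x}(\Phi_{\alpha^{k\bfm}x}f_x^k\,\cdot\,)\big)_V$ through \eqref{HVConjEq} — already exploited in the proof of Lemma \ref{HVConst}.
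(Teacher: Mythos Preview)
Your approach has a genuine gap, and the gap is exactly the point you flag as the ``main obstacle'': the slice bound $\Vol_{G^{ss}}(\mathrm{slice}_v)\lesssim\delta_k^{N^{ss}}$. You do not prove it; you only sketch a dynamical scheme involving iteration by $\alpha^{\bfm}$, Lyapunov charts, and the Ledrappier--Young machinery, without indicating how the comparison of rates actually yields the Lipschitz-type exponent $N^{ss}$ rather than a H\"older exponent $\gamma N^{ss}$. Nothing in the preceding sections provides this, and it is not clear that your outline can be completed: the relation \eqref{HVConjEq} controls the $V$-component, not the $G^{ss}$-component, so the device from Lemma~\ref{HVConst} does not transfer directly.

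The paper sidesteps this obstacle entirely by choosing the neighbourhoods differently. Instead of shrinking both factors, it takes
\[
B_{k,x}=P_x\Big(B_{\bR^{N^{ss}}}(\delta_0)\times B_{\bR^{N^V}}(\delta_k)\Big),
\]
with the $G^{ss}$-direction held at a \emph{fixed} scale $\delta_0$ and only the $V$-direction shrinking. Then the denominator is $\asymp\delta_k^{N^V}$, and for the numerator one uses the product $\di\Vol_{G^s}=\di\Vol_{G^{ss}}\cdot\di\Vol_V$ (unimodularity of the normalization of $G^{ss}$ by $V$). The $G^{ss}$-projection of $H_x(B_{k,x})$ sits in a fixed bounded set independent of $k$, so no small-scale estimate on $H|_{\cW_\alpha^{ss}}$ is needed at all. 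By Corollary~\ref{HVConst2} the $V$-projection of $H_x(B_{k,x})$ equals that of the much smaller set $P_x\big(B_{\bR^{N^{ss}}}(\delta_k)\times B_{\bR^{N^V}}(\delta_k)\big)$, hence of a ball of radius $O(\delta_k)$ in $\cW_\alpha^s(x)$, and the rank defect at $x\in A$ gives $\Vol_V$ of that image $=o(\delta_k^{N^V})$. This is the whole argument; your slice bound is simply not required.
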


\begin{proof} Fix $x$ and $\delta_0>0$ such that $B_{\bR^{N^{ss}}}(\delta_0)\times B_{\bR^{N^V}}(\delta_0)\subset P_x^{-1}B_x$. Choose $B_{k,x}\in B_x$ by $$B_{k,x}=P_x\Big(B_{\bR^{N^{ss}}}(\delta_0)\times B_{\bR^{N^V}}(\delta_k)\Big)$$ where $\delta_k<\delta_0$ and decays towards $0$ as $k\rightarrow\infty$.

As $P_x$ is bi-Lipschitz, it suffices to show that 
$$\lim_{k\rightarrow\infty}\frac{\Vol_{\cW_\rho^s\big(H(x)\big)}\Big((H\circ P_x).\big(B_{\bR^{N^{ss}}}(\delta_0)\times B_{\bR^{N^v}}(\delta_k)\big)\Big)}{\Vol_{\bR^{N^s}}\Big(B_{\bR^{N^{ss}}}(\delta_0)\times B_{\bR^{N^V}}(\delta_k)\Big)}=0,$$

The denominator is of order $O(\delta_k^{N^V})$. Identify the stable leaf $\cW_\rho^s\big(H(x)\big)$ with $G^s$ by the correspondence $H(y)\mapsto H_x(y)$, which is a local diffeomorphism. Hence our task is reduced to proving:
\begin{equation}\label{ChartBoxEq1}\Vol_{G^s}\Big((H_x\circ P_x).\big(B_{\bR^{N^{ss}}}(\delta_0)\times B_{\bR^{N^v}}(\delta_k)\big)\Big)=o(\delta_k^{N^V})\text{ as }k\rightarrow\infty,\end{equation} with $\Vol_{G^s}$ being the Haar measure on the nilpotent Lie group $G^s$.

Use Corollary \ref{SVU} to decompose $G^s$ as $G^{ss}\cdot V$. Recall that $V$ normalizes $G^{ss}$ by Lemma \ref{SUsubalg}; moreover, since it happens in a nilpotent Lie group, this normalization is unimodular, and thus $$\di\Vol_{G^s}=\di\Vol_{G^{ss}}\cdot \di\Vol_V.$$

As $\delta_k<\delta_0$, the $G^{ss}$-projection of the image set in \eqref{ChartBoxEq1} is contained in a fixed bounded subset of $G^{ss}$. So it suffices to show that 
\begin{equation}\label{ChartBoxEq2}\Vol_V\Big(\big((H_x)_V\circ P_x\big).\big(B_{\bR^{N^{ss}}}(\delta_0)\times B_{\bR^{N^v}}(\delta_k)\big)\Big)=o(\delta_k^{N^V}),\end{equation}

By Corollary \ref{HVConst}, $B_{\bR^{N^{ss}}}(\delta_0)\times B_{\bR^{N^v}}(\delta_k)$ and $B_{\bR^{N^{ss}}}(\delta_k)\times B_{\bR^{N^v}}(\delta_k)$ have the same image under $(H_x)_V\circ P_x$, since the value depends only on the second coordinate. Furthermore, because $P_x$ is Lipschitz continuous, $P_x\big(B_{\bR^{N^{ss}}}(\delta_k)\times B_{\bR^{N^v}}(\delta_k)\big)$ is contained in the ball $B_{\cW_\alpha^s(x)}(x,C\delta_k)$ for some constant $C=C(x)$. 

Therefore, it is now enough to verify
\begin{equation}\label{ChartBoxEq3}\Vol_V\Big((H_x)_V\big(B_{\cW_\alpha^s(x)}(x,\delta)\big)\Big)=o(\delta^{N^V})\text{ as }\delta\rightarrow 0,\end{equation} where $B_{\cW_\alpha^s(x)}(x,\delta)$ is the ball of radius $\delta$ surrounding $x$ in its $\cW_\alpha^s$ leaf. This follows from our hypothesis that at $x\in A''\subset A$, the differential $D_{\cW_\alpha^s}|_{y=x}\big(H_x(y)\big)_V:E_\alpha^s(x)\mapsto\gov$ has rank less than $N^V=\dim\gov$.
\end{proof}

\begin{lemma}\label{JacobianPos}For every $x\in M$, there exists a positive continuous function $J_x$ such that 
$$H_*\di\Vol_{\cW_\alpha^s(x)}=J_x\di\Vol_{\cW_\rho^s(H(x))}.$$
\end{lemma}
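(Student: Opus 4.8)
The plan is to establish the claimed identity first for $m$-almost every $x$, using absolute continuity of the stable and unstable foliations of the $C^\infty$ Anosov diffeomorphism $\alpha^{\bfn_0}$, and then to promote it to every $x$ by showing that the set $G$ of points at which it holds is both dense and open. Throughout, write $m$ for the Lebesgue measure on $M$, so that $\mu = H^{-1}_*m$ in the notation of \S\ref{secFM}; recall that $\mu$ is $\alpha$-invariant with a positive $C^\infty$ density with respect to $m$, hence is the SRB measure of both $\alpha^{\bfn_0}$ and $\alpha^{-\bfn_0}$.

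First I would disintegrate. In a foliation box for $\cW_\alpha^s$, absolute continuity of the stable foliation of $\alpha^{\bfn_0}$ gives that the conditional measures of $\mu$ on the stable plaques are absolutely continuous with respect to the induced Riemannian volume on the plaques, with strictly positive \emph{continuous} densities; indeed these are the Gibbs-$u$ densities of $\alpha^{-\bfn_0}$ along its unstable leaves $\cW_\alpha^s$, of the form $z\mapsto\prod_{k\geq 1}\operatorname{Jac}(D\alpha^{-\bfn_0}|_{E_\alpha^s})(\alpha^{k\bfn_0}x)/\operatorname{Jac}(D\alpha^{-\bfn_0}|_{E_\alpha^s})(\alpha^{k\bfn_0}z)$, a uniformly convergent product. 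Likewise, since the foliation $\cW_\rho^s$ is smooth, the conditionals of $m$ on its leaves are absolutely continuous with respect to $\Vol_{\cW_\rho^s}$ with positive $C^\infty$ densities. As $H_*\mu = m$ and $H$ carries stable plaques of $\alpha^{\bfn_0}$ onto stable plaques of $\rho^{\bfn_0}$ and transversals onto transversals, uniqueness of disintegration shows that, for $m$-a.e.\ $x$, $H$ pushes the conditional of $\mu$ on $\cW_\alpha^s(x)$ onto the conditional of $m$ on $\cW_\rho^s(Hx)$. Dividing out the two positive continuous conditional densities, this says precisely that $H_*\Vol_{\cW_\alpha^s(x)} = J_x\,\Vol_{\cW_\rho^s(Hx)}$ with $J_x$ positive and continuous on $\cW_\rho^s(Hx)$. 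Hence $G$ has full $m$-measure, so (as $m$ has full support) $G$ is dense.

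Next I would propagate this along the unstable direction, the decisive input being absolute continuity of the unstable foliation of $\alpha^{\bfn_0}$: the holonomy maps of $\cW_\alpha^u$ between nearby leaves of $\cW_\alpha^s$ are absolutely continuous with positive continuous Jacobians with respect to the leaf volumes. The corresponding holonomies of $\cW_\rho^u$ are $C^\infty$, since the $\rho$-foliations are smooth, and by \eqref{CorreUS} the conjugacy $H$ intertwines the unstable holonomies of $\alpha^{\bfn_0}$ with those of $\rho^{\bfn_0}$. A direct computation then shows: if $x_1\in G$ and $x_2$ is a point whose stable leaf is joined to $\cW_\alpha^s(x_1)$ by an unstable holonomy inside a single foliation box, then $H_*\Vol_{\cW_\alpha^s(x_2)}$ is again a positive continuous density times $\Vol_{\cW_\rho^s(Hx_2)}$ — the density being an explicit product of $J_{x_1}$ with compositions of the (positive, continuous) holonomy Jacobians of $\cW_\alpha^u$ and $\cW_\rho^u$ and of continuous maps. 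So $x_2\in G$, whence $G$ is open. Being open and dense, $G$ meets every foliation box; given an arbitrary $x$, choose $z\in G$ inside a rectangle $\Omega$ around $x$, and note that the local product structure puts $\cW_\alpha^s(x)\cap\Omega$ in the unstable-holonomy image of $\cW_\alpha^s(z)\cap\Omega$, so the previous step gives $x\in G$. Therefore $G = M$, which is the lemma.

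The main obstacle is exactly the passage from ``$m$-a.e.\ $x$'' to ``every $x$''. The almost-everywhere statement is the standard absolute-continuity argument, but the $\alpha$-invariant measure $\nu$ supported on $A$ that enters the proof of Proposition \ref{ssSmooth} (via Lemma \ref{Jacobian0}) may well be singular, so one genuinely needs the identity at \emph{every} point — which is what forces the holonomy-propagation step together with the observation that $G$ is simultaneously open (by that step) and dense (by full measure). The only real care needed is to verify at each stage that the Jacobian factors stay positive and continuous rather than merely measurable, which holds because they are assembled from the everywhere-defined, positive, continuous holonomy Jacobians of the Anosov foliations and from smooth data on the $\rho$ side.
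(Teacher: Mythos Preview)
Your argument is correct, and the first step (disintegration of $\mu$ along $\cW_\alpha^s$, identifying the conditional densities with the Gibbs product $r_x(y)=\prod_{k\geq 0}J^s\alpha^{\bfn_0}(\alpha^{k\bfn_0}y)/J^s\alpha^{\bfn_0}(\alpha^{k\bfn_0}x)$, and using $H_*\mu=m$ to obtain the identity for almost every $x$) coincides with the paper's. The difference is in how you pass from ``almost every $x$'' to ``every $x$''. You propagate the good set $G$ through the absolutely continuous unstable holonomy: since the holonomy Jacobians of $\cW_\alpha^u$ between nearby stable plaques are everywhere positive and continuous, and $H$ intertwines them with the smooth $\cW_\rho^u$-holonomies, $G$ is invariant under local unstable holonomy; combined with density of $G$ and local product structure this gives $G=M$. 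The paper instead observes that the Gibbs density $r_x$ depends \emph{continuously} on the basepoint $x$ (the product converges uniformly), so $x\mapsto\bar\mu_x^s:=r_x\di\Vol_{\cW_\alpha^s(x)}$ is a continuous assignment of measures; since $H_*$ is continuous on measures and the right-hand side $x\mapsto\di\Vol_{\cW_\rho^s(Hx)}$ is also continuous, the a.e.\ equality $H_*\bar\mu_x^s=\di\Vol_{\cW_\rho^s(Hx)}$ upgrades to an identity for all $x$ directly, with $J_x=1/(r_x\circ H^{-1})$. The paper's route is shorter and yields an explicit formula for $J_x$, while yours avoids appealing to continuity of $x\mapsto\mu_x^s$ and instead leans on the classical absolute-continuity-of-holonomy machinery; both are valid.
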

\begin{proof}
Since the measure $\mu$ is absolutely continuos, we have that for almost every $x$, the Radon-Nykodim derivative of its conditional measure along the stable foliation is:

$$
\frac{\di\mu^s_x}{\di\Vol _{\cW_\alpha^s(x)}}(y)=r_x(y):=\prod_{k\geq 0}\frac{J^s\alpha^\bfm(\alpha^{k\bfm}(y))}{J^s\alpha^\bfm(\alpha^{k\bfm}(x))}
$$
for $y\in \cW_\alpha^s(x)$, (remember that the conditional measure is only well determined up to multiplication by a constant depending on $x$) see for instance \cite{PS83}. Here $J^s\alpha^\bfm(y)$ is the Jacobian of $\alpha^\bfm$ along the  stable bundle. 

Let us take for simplicity some small neighborhood chart $B$ in such a way that the stable foliation locally trivializes in this neighborhood and restrict all measures to this neighborhood chart so that conditional measures become finite measures.

Observe that $x\to r_x$ is a H\"older continuous function, moreover there is a constant $c>0$ such that $r_x\geq c$ for every $y$. This follows from the fact that the stable Jacobian is a H\"older continuous function plus uniform contraction along the stable manifold, from which we get that the product is uniformly convergent, and that $r_x(y)$ is equal to $\frac 1{r_y(x)}$ and hence bounded away from zero and infinity. Therefore we get that the measurable mapping $x\to\mu^s_x$ coincides with a continuous mapping $$x\to\bar\mu^s_x:=r_x\di\Vol_{\cW_\alpha^s(x)}.$$

Since $H$ sends $\mu$ to Lebesgue measure $Leb$, it sends $\mu_x^s$ to $Leb_{H(x)}^s$ for almost every $x$, i.e. \begin{eqnarray}\label{equalityconditionals}
H_*\mu_x^s=Leb_{H(x)}^s=\di\Vol_{\cW_\rho^s(H(x))}.\end{eqnarray}

The right hand side of equation (\ref{equalityconditionals}) is a continuous mapping $x\to \di\Vol_{\cW_\rho^s(H(x))}$ from $B$ to finite measures on $H(B)$. The left hand side is a priory only measurable, but since $H_*$ is a continuous operator on finite measures and $\mu_x^s$ coincides with $\bar\mu_x^s$ a.e. we get that $$H_*\bar\mu_x^s=\di\Vol_{\cW_\rho^s(H(x))}.$$

Hence defining $$J_x(z)= \frac{1}{r_x(H^{-1}(z))}$$ we get that $$H_*\di\Vol_{\cW_\alpha^s(x)}= H_*\left(\frac{1}{r_x}\bar\mu^s_x\right)=J_xH_*\bar\mu_x^s=J_x\di\Vol_{\cW_\rho^s(H(x))},$$ for every $x$. This is because both sides are continuous and coincide on a full measure subset. The proof is hence completed.\end{proof}

\begin{proof}[Proof of Proposition \ref{ssSmooth}]Lemma \ref{JacobianPos} is obviously incompatible with Lemma \ref{Jacobian0}. So the only possibility is  that the main hypothesis we made, that $A$ is non-empty, fails and Lemma \ref{Jacobian0} does not apply. We immediately deduce the Proposition \ref{ssSmooth} from this and Corollary \ref{RegSmooth}.\end{proof}

\subsection{Crossing the Weyl chamber wall}

We now construct new Anosov elements in the adjacent Weyl chamber. As always, we are assuming $\alpha^{\bfn_0}$ is Anosov and $\cC_0$ is the Weyl chamber containing $\bfn_0$.

\begin{proposition}\label{NewAnosov} For any Weyl chamber $\cC$ adjacent to $\cC_0$, for all elements $\bfn\in\cC$, $\alpha^\bfn$ is Anosov.\end{proposition}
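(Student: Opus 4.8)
\emph{Proof proposal.} The plan is not to build a hyperbolic splitting for $\alpha^\bfn$ by hand, but to verify the two hypotheses of Ma\~n\'e's criterion \cite{M77}: that the non-wandering set of $\alpha^\bfn$ is all of $M$, and that no non-zero tangent vector has a two-sided bounded orbit under $D\alpha^\bfn$ (quasi-Anosov); Ma\~n\'e's theorem then forces $\alpha^\bfn$ to be Anosov.

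First I would assemble the relevant foliations. By Proposition \ref{ssSmooth}, $\cW_\alpha^{ss}=H^{-1}(G^{ss}H(\cdot))$ is a continuous foliation of $M$ with $C^\infty$ leaves contained in $\cW_\alpha^s$, and its leaves correspond under $H$ to $G^{ss}$-orbits, which $\rho^\bfn$ contracts for every $\bfn\in\cC$ (by Remark \ref{Symplec} each exponent $\chi'$ with $\gov^{[\chi']}\subset\gog^{ss}$ is not negatively proportional to $\chi$, so $\chi'(\bfn)$ has the same sign as $\chi'(\bfn_0)<0$). Running the construction of Sections \ref{secSmooth}--\ref{secAno} with $(\bfn_0,\cC_0,\cC)$ replaced by $(-\bfn_0,-\cC_0,-\cC)$ — legitimate since $\alpha^{-\bfn_0}$ is Anosov, $-\cC$ is adjacent to $-\cC_0$ across $\ker\chi$, and the coarse Lyapunov subspace in $\gog^u=\gog^s_{\rho^{-\bfn_0}}$ carried by that wall is $\gov^{[-\chi]}$ — produces a second continuous foliation $\cW_\alpha^{uu}=H^{-1}(G^{ss,-}H(\cdot))$ with $C^\infty$ leaves contained in $\cW_\alpha^u$, where $\gog^{ss,-}$ is the complement of $\gov^{[-\chi]}$ in $\gog^u$, whose leaves are expanded by $\alpha^\bfn$ for $\bfn\in\cC$ (in the generic case $\gov^{[-\chi]}=0$ and $\cW_\alpha^{uu}=\cW_\alpha^u$). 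Finally, applying Lemma \ref{Subadd2} to $H$ and a suitable positive multiple $m\bfn$ of $\bfn$ upgrades the contraction of $\cW_\alpha^{ss}$ and the expansion of $\cW_\alpha^{uu}$ to uniform exponential estimates for $\alpha^{m\bfn}$; since $\alpha^{m\bfn}$ Anosov implies $\alpha^\bfn$ Anosov by a standard argument, passing to $m\bfn$ is harmless, and I will do so tacitly.

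The non-wandering condition is immediate: the measure $\mu=H^{-1}_*(\mathrm{Leb})$ of \S\ref{secFM} is $\alpha$-invariant with positive $C^\infty$ density, hence of full support, so every point of $M$ is non-wandering for $\alpha^\bfn$. The core is the quasi-Anosov condition. Fix $0\neq v\in T_xM$ and decompose $v=v^s+v^u$ along the $\alpha$-invariant splitting $T\cW_\alpha^s\oplus T\cW_\alpha^u$; since $D\alpha^{k\bfn}$ preserves this splitting and the two projections are uniformly bounded, $\|D\alpha^{k\bfn}v\|$ is comparable to $\max(\|D\alpha^{k\bfn}v^s\|,\|D\alpha^{k\bfn}v^u\|)$, so it is enough to track $v^s$ and $v^u$ separately. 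If $v^s\notin T\cW_\alpha^{ss}$, its image in the quotient bundle $T\cW_\alpha^s/T\cW_\alpha^{ss}$, which corresponds under $H$ to $\rho^\bfn$ acting on $\gog^s/\gog^{ss}\cong\gov$ — an expansion, as $\chi(\bfn)>0$ — has norm tending to $\infty$ (by the subadditivity argument underlying Lemma \ref{Subadd2}, applied to quotients rather than foliations), so $\|D\alpha^{k\bfn}v^s\|\to\infty$ and hence $\|D\alpha^{k\bfn}v\|\to\infty$. If instead $0\neq v^s\in T\cW_\alpha^{ss}$, then, $\cW_\alpha^{ss}$ being uniformly contracted by $\alpha^\bfn$ and hence uniformly expanded by $\alpha^{-\bfn}$, $\|D\alpha^{-k\bfn}v^s\|\to\infty$, so $\|D\alpha^{-k\bfn}v\|\to\infty$. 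In the remaining case $v^s=0$, i.e.\ $v=v^u\in T\cW_\alpha^u\setminus\{0\}$: if $v^u\notin T\cW_\alpha^{uu}$, its image in $T\cW_\alpha^u/T\cW_\alpha^{uu}\cong\gov^{[-\chi]}$ is contracted by $\rho^\bfn$ and hence expanded by $\rho^{-\bfn}$, giving $\|D\alpha^{-k\bfn}v\|\to\infty$; and if $v^u\in T\cW_\alpha^{uu}$ then $v^u\neq0$ is uniformly expanded by $\alpha^\bfn$, giving $\|D\alpha^{k\bfn}v\|\to\infty$. In every case the $D\alpha^\bfn$-orbit of $v$ is two-sided unbounded, so $\alpha^\bfn$ is quasi-Anosov, and Ma\~n\'e's criterion yields that $\alpha^\bfn$ is Anosov. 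Since the argument uses only $\bfn\in\cC$ and the chamber-dependent data $\cW_\alpha^{ss},\cW_\alpha^{uu},\gov,\gov^{[-\chi]}$, it runs identically for all $\bfn\in\cC$, establishing Proposition \ref{NewAnosov}.

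The step I expect to demand the most care is this quasi-Anosov analysis: making the expansion/contraction estimates for the transverse quotient bundles precise and uniform — the $\rho$-side is linear and trivial, but transferring along the merely H\"older conjugacy $H$ requires the subadditive machinery of Lemma \ref{Subadd2} recast for quotient cocycles — together with keeping the projection norms controlled, and routing the non-generic case $\gov^{[-\chi]}\neq0$ through the mirrored application of Proposition \ref{ssSmooth} to $-\bfn_0$. None of this is conceptually new, but it is where the bookkeeping concentrates.
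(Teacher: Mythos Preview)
Your overall architecture matches the paper's: verify Ma\~n\'e's criterion by building $\cW_\alpha^{ss}$ from Proposition~\ref{ssSmooth} and its mirror $\cW_\alpha^{uu}$ from the $(-\bfn_0,-\cC_0)$ run, then do a case split on $v^s$ and $v^u$. The version of Ma\~n\'e you quote (quasi-Anosov plus $\Omega(\alpha^\bfn)=M$) differs from the paper's (quasi-Anosov plus constant stable index at periodic points, read off from the conjugacy to $\rho^\bfn$), but both are valid routes to the same conclusion.

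The substantive divergence is your handling of the case $0\neq v^s\notin T\cW_\alpha^{ss}$. You want to argue via the quotient cocycle on $T\cW_\alpha^s/T\cW_\alpha^{ss}$ and transport the expansion of $\rho^\bfn|_\gov$ across the H\"older conjugacy by ``the subadditive machinery of Lemma~\ref{Subadd2} recast for quotient cocycles''. This is not bookkeeping; it is a genuine gap. Lemma~\ref{Subadd2} works because a H\"older conjugacy controls \emph{distances between points in a leaf}, which in turn bound leafwise derivatives. For a quotient cocycle the relevant geometric quantity would be a transverse distance between $\cW_\alpha^{ss}$-leaves, and tying that to the quotient norm at a \emph{single} tangent vector $[v^s]$ requires the transverse parametrization to be differentiable---exactly what a merely H\"older $H$ does not give you. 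Without that, you only know $(H_x(\gamma(t)))_V\neq e$ for small $t$ along a curve tangent to $v^s$, with no lower bound on the rate; the H\"older estimate runs the wrong way.

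The paper closes this gap not by any cocycle argument but by cashing in the full output of Section~\ref{secSmooth}. Propositions~\ref{HVSmooth} and~\ref{ssSmooth} give more than smooth leaves for $\cW_\alpha^{ss}$: they say $y\mapsto(H_x(y))_V$ is $C^\infty$ along $\cW_\alpha^s$, its differential $D_{\cW_\alpha^s}(H_x)_V:E_\alpha^s\to\gov$ is continuous in $x$ (hence uniformly bounded on $M$) and has full rank everywhere, so its kernel is exactly $E_\alpha^{ss}$. Equation~\eqref{DerAct} then makes this differential a continuous bundle map intertwining $D\alpha^\bfn|_{E_\alpha^s}$ with the constant linear cocycle $\rho^\bfn|_\gov$. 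Now the case is one line: $w:=D(H_x)_V\,v^s\neq 0$, and $\rho^{i\bfn}w=D(H_{\alpha^{i\bfn}x})_V\big(D\alpha^{i\bfn}v^s\big)$, so $\|D\alpha^{i\bfn}v^s\|\geq \|\rho^{i\bfn}w\|/\sup_z\|D(H_z)_V\|\to\infty$. No subadditivity, no quotient bundles---the smooth intertwiner you built in Section~\ref{secSmooth} is the whole point, and you should use it here rather than reaching for a Lemma~\ref{Subadd2} analogue that does not exist off the shelf.
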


We verify the Anosov property using the following criterion by R. Ma\~n\'e.

\begin{theorem}\label{Mane}\cite{M77} A diffeomorphism $f$ of a smooth manifold $M$ is Anosov if and only if the dimensions of stable manifolds at all periodic points are the same and $f$ is quasi-Anosov, that is, $\{|Df^iv|\}_{i=-\infty}^\infty$ is unbounded for all non-zero $v\in TM$.\end{theorem}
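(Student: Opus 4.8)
The forward implication is routine and I would dispatch it first. If $f$ is Anosov with hyperbolic splitting $TM=E_f^s\oplus E_f^u$, the ranks of $E_f^s$ and $E_f^u$ are constant on each connected component, so every periodic point has stable manifold of dimension $\rank E_f^s$. For quasi-Anosovness, given $0\neq v\in T_xM$ I would write $v=v^s+v^u$ with $v^s\in E_f^s(x)$, $v^u\in E_f^u(x)$: if $v^u\neq 0$ then $|Df^nv|\geq|Df^nv^u|-|Df^nv^s|\to\infty$ as $n\to+\infty$, while if $v^s\neq 0$ then $|Df^{-n}v|\to\infty$ as $n\to+\infty$, so in either case $\{|Df^iv|\}_{i\in\bZ}$ is unbounded.

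For the substantive converse I would assume $f$ quasi-Anosov with all periodic stable manifolds of the same dimension $s$, and introduce the candidate invariant subbundles
$$E(x)=\bigl\{v\in T_xM:\ \sup_{n\geq 0}|Df^n v|<\infty\bigr\},\qquad F(x)=\bigl\{v\in T_xM:\ \sup_{n\geq 0}|Df^{-n} v|<\infty\bigr\}.$$
These are $Df$-invariant linear subspaces, and quasi-Anosovness says precisely that $E(x)\cap F(x)=\{0\}$ for every $x$. The first step is a compactness argument showing that $v\in E(x)\setminus\{0\}$ forces $|Df^nv|\to 0$ as $n\to+\infty$: picking $n_k\to\infty$ with $f^{n_k}x\to y$ and $Df^{n_k}v/|Df^{n_k}v|\to w$, one checks that if $|Df^{n_k}v|$ does not tend to $0$ then $w$ is a unit vector lying in $E(y)\cap F(y)$, a contradiction; symmetrically $F(x)$-vectors contract in backward time. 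In particular $Df^{\mathrm{per}(p)}_p$ has no eigenvalue of modulus one at any periodic point $p$ (else there would be a nonzero vector with bounded two-sided $f$-orbit), so $E(p)\oplus F(p)=T_pM$, and the dimension hypothesis gives $\dim E(p)=s$, $\dim F(p)=N-s=:u$ at every periodic $p$.

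The heart of the argument, which I expect to be the main obstacle, is the structural theorem of Ma\~n\'e that quasi-Anosovness already forces $f$ to be Axiom~A and to satisfy the no-cycle condition, so that $\Omega(f)$ is hyperbolic. Its proof amounts to running the limiting argument above over the nonwandering set, where recurrence lets one conclude that $E(x)\oplus F(x)=T_xM$ and that $x\mapsto(E(x),F(x))$ is continuous there (hence defines a dominated splitting), after which the pointwise contraction and expansion just established upgrade to uniform hyperbolicity by compactness of $\Omega(f)$; the no-cycle condition follows because a cycle would again manufacture a nonzero vector with bounded two-sided orbit. Granting this, the spectral decomposition gives $\Omega(f)=\Lambda_1\sqcup\cdots\sqcup\Lambda_k$ with basic sets $\Lambda_j$, periodic points are dense in each $\Lambda_j$, so by the previous paragraph every $\Lambda_j$ has stable index $s$; thus $TM|_{\Omega(f)}=E^s\oplus E^u$ is a hyperbolic splitting of constant ranks $s$ and $u$, with $s+u=N$.

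Finally I would propagate the splitting off $\Omega(f)$. If $x\notin\Omega(f)$, the no-cycle condition and the spectral decomposition put $x$ on the local stable manifold $\cW^s_{\mathrm{loc}}(w)$ of some $w$ in a basic set and on the local unstable manifold $\cW^u_{\mathrm{loc}}(z)$ of some $z$ in a basic set; then $T_x\cW^s_{\mathrm{loc}}(w)\subset E(x)$ and $T_x\cW^u_{\mathrm{loc}}(z)\subset F(x)$ have dimensions $s$ and $u$, intersect trivially because $E(x)\cap F(x)=\{0\}$, and therefore span $T_xM$ since $s+u=N$. Hence $E(x)=T_x\cW^s_{\mathrm{loc}}(w)$ and $F(x)=T_x\cW^u_{\mathrm{loc}}(z)$, with uniform contraction (resp.\ expansion) inherited from the hyperbolicity of the relevant basic set and with continuous dependence on $x$ coming from continuity of the stable and unstable laminations. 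Combining this with the hyperbolic splitting over $\Omega(f)$ produces a continuous $Df$-invariant splitting $TM=E\oplus F$ with uniform contraction on $E$ and uniform expansion on $F$; that is, $f$ is Anosov (so a posteriori $\Omega(f)=M$). Apart from the Axiom~A/no-cycle step, everything reduces to this dimension count together with the compactness arguments for $E$ and $F$.
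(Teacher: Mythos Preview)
The paper does not give its own proof of this statement: Theorem~\ref{Mane} is quoted with attribution to Ma\~n\'e \cite{M77} and then invoked as a black box in the proof of Proposition~\ref{NewAnosov}. So there is nothing in the paper to compare your attempt against.

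That said, your outline tracks the architecture of Ma\~n\'e's original argument and is broadly sound as a sketch. The forward direction is fine, and you correctly isolate the hard step---quasi-Anosov $\Rightarrow$ Axiom~A with no cycles---as the substantive content, which you treat as a black box rather than prove. Two points in the propagation step would need tightening in a complete write-up. First, a point $x\notin\Omega(f)$ lies in $W^s(\Lambda_i)\cap W^u(\Lambda_j)$ for some basic sets, but not necessarily on a \emph{local} stable or unstable leaf of fixed size; one must iterate forward (resp.\ backward) until the orbit enters a uniform neighborhood of $\Lambda_i$ (resp.\ $\Lambda_j$), and then pull the tangent spaces back by $Df$ to produce $s$- and $u$-dimensional subspaces of $E(x)$ and $F(x)$. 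Second, the continuity of the extended bundles and the uniformity of contraction/expansion over all of $M$ do not follow automatically from continuity of the laminations of each basic set separately; one needs either a compactness argument showing that $x\mapsto E(x)$ is upper-semicontinuous once $\dim E(x)$ is known to be constant, or a direct appeal to the fact that an Axiom~A diffeomorphism with no cycles and constant stable index is Anosov. With those two points filled in, your sketch is a faithful summary of \cite{M77}.
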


\begin{proof}[Proof of Proposition \ref{NewAnosov}] Let $\chi$ be a Lyapunov exponent such that the wall between $\cC_0$ and $\cC$ is a cone in $\ker\chi$. Since $\alpha^\bfn$ is Anosov if and only if $\alpha^{-\bfn}$ is, as before we may assume $\chi(\bfn_0)<0$ without loss of generality. Let $\gov=\gov^{[\chi]}$. There may be Lyapunov exponents negatively proportional to $\chi$ in the Lyapunov decomposition, in which case there is a non-trivial coarse Lyapunov subspace $\gov^{[-\chi]}$. We write always $\gov'=V^{[-\chi]}$ even when it is trivial. The corresponding coarse Lyapunov subgroups are $V\subset G^s$ and $V'\subset G^u$.

We have been working with the $G^{ss}\cdot V\cdot G^u$ introduced in \S\ref{secSVU}. Symmetrically, by taking $\alpha^{-\bfn_0}$ instead of $\alpha^{\bfn_0}$, there is a strong unstable-weak unstable splitting of $\gog^u$ into $\gog^{uu}\oplus\gov'$. And $G$ can be decomposed as $G^{uu}\cdot V'\cdot G^s$. For $a\in G$, let $a_V$ be its $V$ component in the $G^{ss}\cdot V\cdot G^u$ decomposition, and $a_{V'}$ be its $V'$ component in the $G^{uu}\cdot V'\cdot G^s$ decomposition. 

By Proposition \ref{ssSmooth}, there are continuous strong stable and strong unstable foliations $\cW_\alpha^{ss}\subset \cW_\alpha^s$ and $\cW_\alpha^{uu}\subset\cW_\alpha^u$ of $\cM$ into smooth leaves. (When $\gov'$ is trivial, $\cW_\alpha^{uu}$ coincide with $\cW_\alpha^u$.)

We now check $\alpha^\bfn$ is quasi-Anosov. For any $x\in M$ and any non-zero $v\in T_xM$. $v$ splits as $v=v_s+v_u$ in $T_xM=E_\alpha^s(x)\oplus E_\alpha^u(x)$. Since $\alpha^\bfn$ preserves the splitting $E_\alpha^s\oplus E_\alpha^u$, it suffices to show that one of $v_s$ and $v_u$ become unbounded under $\{D\alpha^{i\bfn}\}_{i=-\infty}^\infty$. Without loss of generality, we assume $v_s\neq 0$. The $v^u\neq 0$ case can be solved in a symmetric manner.

\noindent{\bf Case 1.} Suppose first $v_s$ is in $E_\alpha^{ss}$, the distribution tangent to the subfoliation $\cW_\alpha^{ss}$. Recall by definition \eqref{ssFoliation}, $H$ sends $\cW_\alpha^{ss}$ into the strong stable foliation $\cW_\rho^{ss}$ of $M$ with respect to the affine action $\rho$, which consists of orbits of $G^{ss}$. 

For any Lyapunov subspace $\gov^{\chi'}\subset\gog^{ss}$, $\ker\chi'$ is different from $\ker\chi$. Since $\ker\chi$ is the only Weyl chamber wall separating $\cC$ and $\cC_0$, $\chi'(\bfn)<0$. As the tangent space $E_\rho^{ss}$ of $\cW_\rho^{ss}$ is identified with $\gog^{ss}$, we see that all Lyapunov exponents of the algebraic action $\rho:\bZ^r\curvearrowright G$ along $\cW_\rho^{ss}$ are negative.

Lemma \ref{Subadd2}, applied to the conjugacy $H$ between $\alpha^\bfn$ and $\rho^\bfn$, implies that there exists $k\in\bN$ such that $\|D\alpha^{k\bfn_1}|_{E_\alpha^{ss}}\|<1$, thus  $\|(D\alpha^{-ik\bfn})v_s\|\rightarrow\infty$ as $i\rightarrow\infty$.

\noindent{\bf Case 2.} Suppose instead  $v_s\notin E_\alpha^{ss}$. By Lemma \ref{SSLocal} and the everywhere regularity of $D_{\cW_\alpha^s}|_{y=x}\big(H_x(y)\big)_V$, this means the vector $w=\Big(D_{\cW_\alpha^s}|_{y=x}\big(H_x(y)\big)_V\Big)v_s$ is non-zero. By \eqref{DerAct},
\begin{equation}\label{NewAnosovEq1}\rho^{-i\bfn}|_\gov\left(D_{\cW_\alpha^s}|_{y=\alpha^{i\bfn}x}\big(H_{\alpha^{i\bfn}x}(y)\big)_V\right)\big((D_x\alpha^{i\bfn})|_{E_\alpha^s}\big)v_s=w.\end{equation}

Because $\ker\chi$ separates $\cC$ from $\cC_0$ and $\chi(\bfn_0)<0$, $\chi(\bfn)$ is positive. Thus $\|\rho^{-i\bfn}|_\gov\|$ decays exponentially fast as $i\rightarrow\infty$. On the other hand,  $D_{\cW_\alpha^s}|_{y=\alpha^{i\bfn}x}\big(H_{\alpha^{i\bfn}x}(y)\big)_V$ has bounded norm as $D_{\cW_\alpha^s}|_{y=x}\big(H_x(y)\big)_V$ is continuous on the compact manifold $M$ by Proposition \ref{HVSmooth}. Therefore, in order for \eqref{NewAnosovEq1} to hold, the size of the vector $(D_x\alpha^{i\bfn})v_s=\big((D_x\alpha^{i\bfn})|_{E_\alpha^s}\big)v_s$ must grow exponentially as $i\rightarrow\infty$.

Therefore in both cases, $(D_x\alpha^{i\bfn})v_s$ is unbounded as $i$ ranges over $\bZ$.  This shows $\alpha^\bfn$ is quasi-Anosov.

On the other hand, because $\alpha^\bfn$ is continuously conjugate to the algebraic action $\rho^\bfn$, stable manifolds of $\alpha^\bfn$ at all points must have the same dimension as $\gog^s$. By Theorem \ref{Mane}, $\alpha^\bfn$ is Anosov.
\end{proof}
 
\section{Conclusion of the proof}\label{Conclusion}\label{secConc}

\begin{proof}[Proof of Theorem \ref{Main}]By lifting $\alpha$, $\rho$ and $H$ from an infranilmanifold to a finite cover if necessary \cite{M74}*{Remark 2}, it can be assumed that $M$ is a nilmanifold. 

\noindent{\bf Standard nilmanifolds.} Suppose $M$ is a compact infranilmanifold equipped with the standard smooth structure. Given the fact that at least one $\alpha^{\bfn_0}$ is Anosov, Proposition \ref{NewAnosov} allows us to start from the initial Weyl chamber $\cC_0$ containing $\bfn_0$, cross Weyl chamber walls and finally reach all Weyl chambers. So we know that for any element $\bfn$ in the interior of any Weyl chamber, $\alpha^\bfn$ is Anosov.

Applying Fisher-Kalinin-Spatzier's theorem \cite{FKS13}*{Theorem 1.3}, one concludes that the conjugacy $H$ is a diffeomorphism. We emphasize that the proof actually works for all possible H\"older conjugacies, from the homotopy class of identity, between $\alpha$ and its linearization $\rho$, rather than a particular Franks-Manning conjugacy. 

\noindent{\bf Exotic nilmanifolds.} Now we treat the case where $M$ has an exotic differential structure $\omega$.

In this case there is a Franks-Manning conjugacy $H:M\mapsto M$ that interwines $\alpha$ with its linearization $\rho$. $\alpha$ is smooth with respect to $\omega$, while $\rho$, consisting of affine automorphisms, is smooth with respect to the standard differential structure $\omega_0$. We wish to show $H: (M,\omega)\mapsto (M,\omega_0)$ is $C^\infty$, which in particular implies $(M,\omega)$ is a standard structured nilmanifold, and the exotic case actually does not happen.

When $\dim M\geq 5$. It was proved by Davis in the appendix of \cite{FKS13} that $M$ is finitely covered by a nilmanifold $M_1$ with standard differential structure. By Lemma \ref{NilLift} below, we can reduce to the standard case above by lifting to a finite cover $M_2$ of $M_1$,

It remains to treat the case of $\dim M\leq 5$. This actually cannot happen in dimensions 2 and 3 \cites{R25,M52}.

Assume $M$ is 4-dimensional\footnote{In fact, this forces $M$ to be a torus.}. Observe the diagonal $\bZ^r$-actions $\alpha\times\alpha$ and $\rho\times\rho$ on  $M\times M$ are topologically conjugate by $H\times H$. It is easy to verify: $\alpha^{\bfn_0}\times\alpha^{\bfn_0}$ is Anosov, $\rho\times\rho$ has no rank-one factor, and $H\times H$ is a H\"older conjugacy homotopic to identity.
 
Because $M\times M$ is an infranilmanifold of dimension $8\geq 5$, $H\times H$ is a diffeomorphism, which can happen only if $H$ is a diffeomorphism itself. The proof is complete.\end{proof}

\begin{lemma}\label{NilLift} If $M_1\mapsto M$ is a finite covering map, then there is a compact nilmanifold $M_2$ that finitely covers $M_1$ and hence $M$ such that any homeomorphism $f:M\mapsto M$ lifts to a homeomorphism $\tilde f: M_2\mapsto M_2$.\end{lemma}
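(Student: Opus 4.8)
The plan is to obtain $M_2$ as the covering space of $M$ associated to a carefully chosen finite-index subgroup of $\Gamma:=\pi_1(M)$ that is \emph{characteristic}; the point of characteristicity is that then one and the same $M_2$ will accommodate every homeomorphism of $M$ at once. First I would recall the algebraic picture. Since $M$ is a closed infranilmanifold, $\Gamma$ is finitely generated and torsion free, its universal cover is a simply connected nilpotent Lie group $G$, and by the standard structure theory of infranilmanifolds one may realize $\Gamma$ inside $\Aff(G)=G\rtimes\Aut(G)$ so that the ``translational part'' $\Delta:=\Gamma\cap G$ is a normal, finite-index subgroup of $\Gamma$ which is itself a finitely generated torsion-free nilpotent group lying inside $G$. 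Write $\Gamma_1:=\pi_1(M_1)\le\Gamma$, a finite-index subgroup, and set $m:=[\Gamma:\Delta\cap\Gamma_1]$.

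The key step is to exploit the classical fact that a finitely generated group has only finitely many subgroups of any given finite index. Hence $\Gamma_2:=\bigcap\{\,H\le\Gamma:[\Gamma:H]\le m\,\}$ is a finite-index subgroup of $\Gamma$. Every automorphism of $\Gamma$ permutes the finite set of subgroups of index at most $m$, and therefore fixes their intersection; thus $\Gamma_2$ is characteristic in $\Gamma$. Furthermore $\Gamma_2\subseteq\Delta\cap\Gamma_1$, because $\Delta\cap\Gamma_1$ has index $m$ and so occurs among the subgroups being intersected. Consequently $\Gamma_2$ is torsion free, finitely generated nilpotent, contained in $G$, and contained in $\Gamma_1$. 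Let $M_2:=G/\Gamma_2$ be the corresponding covering of $M$. Since $\Gamma_2\le\Gamma_1\le\Gamma$ all have finite index, the covering $M_2\mapsto M$ factors through $M_1$ and $M_2$ finitely covers $M_1$ and hence $M$; and since $\Gamma_2$ is a discrete cocompact subgroup of the simply connected nilpotent Lie group $G$, $M_2$ is a compact nilmanifold (equivalently, its infranilmanifold holonomy, the image of $\Gamma_2$ in $\Aut(G)$, is trivial).

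It remains to lift homeomorphisms. A homeomorphism $f:M\mapsto M$ induces, after choosing a basepoint and a path joining it to its image, an automorphism $\phi$ of $\Gamma$, well defined up to an inner automorphism. As $\Gamma_2$ is characteristic, hence in particular normal, we have $\phi(\Gamma_2)=\Gamma_2$ regardless of these choices. Applying the lifting criterion of covering space theory to the composite $f\circ p_2:M_2\mapsto M$ and the covering $p_2:M_2\mapsto M$ (whose $\pi_1$-image is $\Gamma_2$), we conclude that $f$ lifts to a continuous map $\tilde f:M_2\mapsto M_2$. The same applies to $f^{-1}$; the composition of a lift of $f$ with a lift of $f^{-1}$ covers $\mathrm{id}_M$, hence is a deck transformation of $M_2\mapsto M$ and in particular a homeomorphism, which forces $\tilde f$ to be a homeomorphism as well.

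The only part of this argument that is not entirely routine is the production of the characteristic finite-index nilpotent subgroup $\Gamma_2$ sitting inside $\Gamma_1$, via the ``finitely many subgroups of bounded index'' device; the remaining points — the identification of $M_2$ with the genuine nilmanifold $G/\Gamma_2$ rather than merely an aspherical manifold with nilpotent fundamental group, and the bookkeeping with basepoints in the lifting criterion, which is absorbed by the normality of $\Gamma_2$ — are standard.
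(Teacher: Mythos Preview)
Your proof is correct and follows essentially the same route as the paper: both arguments hinge on the fact that a finitely generated group has only finitely many subgroups of a given finite index, and both take $\Gamma_2$ to be a finite intersection of such subgroups so as to obtain invariance under the automorphisms induced by homeomorphisms of $M$. The paper intersects the images $f_*(\Gamma_1)$ over all homeomorphisms $f$, whereas you intersect \emph{all} subgroups of index at most $m$ to get a characteristic subgroup; your version is slightly cleaner (characteristicity absorbs basepoint ambiguities automatically) and is more careful about the infranilmanifold case, explicitly arranging $\Gamma_2\subseteq\Delta=\Gamma\cap G$ so that $M_2=G/\Gamma_2$ is a genuine nilmanifold rather than merely an infranilmanifold --- a point the paper's proof glosses over by tacitly treating $M$ as already a nilmanifold.
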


\begin{proof}$M$ and $M_1$ are quotients of the same simply connected nilpotent Lie group $G$, respectively by lattices $\Gamma=\pi_1(M)$ and $\Gamma_1=\pi_1(M_1)\subset\pi_1(M)$.  $f$ induces an automorphism $f_*$ on the fundamental group $\Gamma$. As $M_1\mapsto M$ is a finite cover, the index $I$ of $\Gamma_1$ in $\Gamma$ is finite. Then $f_*(\Gamma)$ has index $I$ in $\Gamma$. As  $\Gamma$ is finitely generated,  it has only finitely many subgroups of given index $I$. Hence $\Gamma_2=\bigcap_f f_*(\Gamma_1)$, where the summation is taken over all homeomorphisms, has finite index in $\Gamma$ and is invariant under all the $f_*$'s. It follows that any $f$ can be lifted to the compact nilmanifold $M_2=G/\Gamma_2$.
\end{proof}

\appendix
\section{A regularity theorem along H\"older foliations}\label{secSobolev}

Let $M$ be a compact Riemannian manifold with a continuous foliation $\cF$ with smooth leaves. Write points in $\bR^{\dim M}$ as $(x,y)\in\bR^{\dim M-\dim\cF}\times\bR^{\dim\cF}$. Suppose there are H\"older continuous local chart maps $\Gamma$  from open sets $O\subset\bR^{\dim M}$ to $M$, such that $\Gamma$ sends subspaces parallel to the $y$-hyperplane to leaves of $\cF$ by $C^\infty$ local immersions, and pushes the Lebesgue measure on $\bR^{\dim M}$ to an absolutely continuous measure $J\di\Vol$ on $\Gamma(O)$. Moreover, assume that $J$ is also $C^\infty$ along the $\cF$ leaves, and that all partial derivatives of $\Gamma$ and $J$ of arbitrary orders along the $y$ direction are H\"older continuous on $O$.  

\begin{theorem}\label{RauchTaylor}Suppose $M$ and $\cF$ are as above. If a H\"older continuous function $\phi$ on $M$ satisfies that for all $\theta>0$, all partial derivatives of $\phi$ along $\cF$ of all orders belong to $(C^\theta)^*$, then all these partial derivatives are H\"older continuous.\end{theorem}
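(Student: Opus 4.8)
The plan is to reduce to a local statement in a foliation chart and then run a mollification / Littlewood--Paley argument whose only non-routine ingredient is the observation that the hypothesis bounds the leafwise derivatives of all mollifications by a single power of the mollification parameter, \emph{independently of the order of differentiation}. Fix a chart $\Gamma\colon O\subset\bR^{\dim M-\dim\cF}\times\bR^{\dim\cF}\to M$ with leaves $\{x=\mathrm{const}\}$, set $\tilde\phi=\phi\circ\Gamma$ and $n=\dim M$. Since the charts are bi-H\"older, $\tilde\phi$ is H\"older on $O$ of some exponent $\gamma\in(0,1)$, and conversely H\"older regularity of a function on $O$ transfers back to $M$; hence it suffices to show that for each order of $y$-differentiation the distribution $\partial_y^k\tilde\phi$ is represented by a H\"older function on an interior subchart. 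Absorbing the density $J$ (H\"older, bounded away from $0$ and $\infty$) and using that $\varphi\mapsto\varphi\circ\Gamma$ preserves H\"older regularity, the hypothesis translates into: $\partial_y^k\tilde\phi\in\bigl(C^\theta(O)\bigr)^*$ for all $\theta>0$ and all $k$.

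Let $\delta_\epsilon$ be the $\epsilon$-rescaling of a fixed mollifier on $\bR^n$, put $g_\epsilon=\tilde\phi*\delta_\epsilon$ on an interior subchart and $u_\epsilon=g_\epsilon-g_{\epsilon/2}$. H\"older continuity of $\tilde\phi$ gives $\|u_\epsilon\|_\infty\lesssim\epsilon^\gamma$, and since $\tilde\phi\in L^\infty$ one has the crude bound $\|\partial^{(a,b)}u_\epsilon\|_\infty\le\|\tilde\phi\|_\infty\bigl\|\partial^{(a,b)}(\delta_\epsilon-\delta_{\epsilon/2})\bigr\|_{L^1}\lesssim_{a,b}\epsilon^{-a-b}$ for any $a$ derivatives in $x$ and $b$ in $y$. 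The key estimate is obtained by pairing: fixing any $\theta\in(0,1)$ and using $\|\delta_\epsilon\|_{C^\theta}\lesssim\epsilon^{-n-\theta}$,
\[\|\partial_y^m g_\epsilon\|_\infty=\sup_z\bigl|\langle\partial_y^m\tilde\phi,\ \delta_\epsilon(z-\cdot)\rangle\bigr|\le\|\partial_y^m\tilde\phi\|_{(C^\theta)^*}\,\|\delta_\epsilon\|_{C^\theta}\lesssim_m\epsilon^{-n-\theta},\]
so that $\|\partial_y^m u_\epsilon\|_\infty\lesssim_m\epsilon^{-n-\theta}$ for \emph{every} $m$; crucially the exponent $-n-\theta$ is independent of $m$. (Only the $L^\infty$ bound on $\tilde\phi$ would give $\epsilon^{-m}$, which is useless for what follows.)

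Now fix $k$. Applying the Landau--Kolmogorov inequality in the $y$-variables (along lines parallel to a $y$-axis, uniformly in $x$) between $\|u_\epsilon\|_\infty\lesssim\epsilon^\gamma$ and $\|\partial_y^m u_\epsilon\|_\infty\lesssim\epsilon^{-n-\theta}$ gives $\|\partial_y^k u_\epsilon\|_\infty\lesssim\|u_\epsilon\|_\infty^{1-k/m}\|\partial_y^m u_\epsilon\|_\infty^{k/m}\lesssim\epsilon^{\gamma-(\gamma+n+\theta)k/m}$; choosing $m=m(k)$ large makes the exponent a positive number $\gamma_k$. Telescoping, $g_{2^{-N}}=g_1+\sum_{j=1}^N u_{2^{-j}}\to\tilde\phi$ uniformly, so $\partial_y^k g_{2^{-N}}=\partial_y^k g_1+\sum_{j=1}^N\partial_y^k u_{2^{-j}}$ converges uniformly (since $\sum_j 2^{-j\gamma_k}<\infty$) to a continuous function, which by comparison with the distributional limit must be $\partial_y^k\tilde\phi$. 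Hence $\partial_y^k\tilde\phi=\partial_y^k g_1+\sum_j v_j$ with $v_j=\partial_y^k u_{2^{-j}}$, $\|v_j\|_\infty\lesssim 2^{-j\gamma_k}$ and (by the crude bound) $\|\nabla v_j\|_\infty\lesssim_k 2^{j(k+1)}$. The standard fact that a function of the form $\sum_j v_j$ with $\|v_j\|_\infty\lesssim 2^{-js}$, $\|\nabla v_j\|_\infty\lesssim 2^{jt}$, $s,t>0$, belongs to $C^{s/(s+t)}$ (split the sum at $2^{-J}\approx|z-z'|$) then yields $\partial_y^k\tilde\phi\in C^{\gamma_k/(\gamma_k+k+1)}$, and transporting back through $\Gamma$ finishes the proof.

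The main obstacle is largely conceptual: one must spot that the dual-space hypothesis produces the \emph{order-uniform} bound $\|\partial_y^m g_\epsilon\|_\infty\lesssim_m\epsilon^{-n-\theta}$, after which the Littlewood--Paley / interpolation scheme is automatic (with H\"older exponents that degrade as the order grows, which the theorem allows). The step requiring genuine care is the localization --- checking that the leafwise derivatives on $M$ really correspond, after absorbing the H\"older density $J$ and reparametrizing by the bi-H\"older chart, to honest distributional $y$-derivatives on $O$, and keeping track of the fact that the Landau--Kolmogorov constants, which blow up with $m$, do no harm because $m$ is fixed once $k$ is.
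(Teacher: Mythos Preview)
Your argument is correct and takes a genuinely different route from the paper's. The paper reduces to the model $B\times\bT^l$, expands in Fourier series $\phi(x,y)=\sum_{\bfn}\phi_{\bfn}(x)e^{2\pi i\bfn\cdot y}$, extracts from the hypothesis polynomial decay of $\|\phi_{\bfn}\|_{(C^\gamma)^*}$ in $|\bfn|$, and then uses Sobolev embeddings $(C^\gamma)^*\hookrightarrow H^{-s,p^*}$ and $C^\alpha\cap L^{p^*}\hookrightarrow H^{\alpha/2,p^*}$ together with complex interpolation to upgrade this to polynomial decay of $\|\phi_{\bfn}\|_{C^\beta}$; the Fourier series for $\partial_y^{\bfr}\phi$ then converges absolutely in $C^\beta$. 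Your mollification/Littlewood--Paley scheme reaches the same conclusion without periodizing and without Sobolev machinery, with Landau--Kolmogorov playing the role of the interpolation step; the key observation --- that the $(C^\theta)^*$ hypothesis bounds $\|\partial_y^m g_\epsilon\|_\infty$ by a power of $\epsilon$ \emph{independent of $m$} --- is the exact analogue of the paper's Lemma~\ref{weakdecay}. The paper's Fourier approach cleanly separates the leafwise direction (where the hypothesis lives) from the transverse one (where one interpolates), while your approach is more self-contained, using only $L^\infty$ estimates. Two minor points: the Landau--Kolmogorov inequality on a bounded interval carries an extra lower-order term $\|u_\epsilon\|_\infty$, but this is $\lesssim\epsilon^\gamma$ and harmless; and your telescoping identity has a sign/index slip (with $u_\epsilon=g_\epsilon-g_{\epsilon/2}$ one gets $\sum_{j=0}^{N-1}u_{2^{-j}}=g_1-g_{2^{-N}}$), which is purely cosmetic.
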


This theorem is motivated by \cite{RT05}*{Theorem 1.1}.

Let $B$ be the closed unit ball in $\bR^m$. Given $\alpha\in [0,1)$ let us denote with $C_0^{\alpha}(B\times\bT^l)$ the closure of the $C^\infty$ functions of compact support inside the interior of the unit ball times $\bT^l$ w.r.t. the $C^\alpha$-norm $\|\phi\|_{C^\alpha}$ given in \eqref{HolderNorm}. When the space $B\times\bT^l$ is understood we shall denote it directly with $C_0^{\alpha}$. Denote points in $B\times\bT^l$ by $(x,y)$. Take $\alpha\in [0,1)$. Given $\phi\in C_0^\alpha$, $k\in \bN$ and $\gamma\in [0,1)$, we say that $\phi$ has $k$-th derivative in the $y$ direction belonging to $(C^{\gamma})^*$ if for any multiindex ${\bf r}=(r_1,\dots r_l)$ with $|{\bf r}|=k$ we have that  $$\partial^{\bf r}_y \phi\in (C^{\gamma})^*$$ in the sense that, there is a constant $C=C(\gamma,{\bf r})$ such that for any $u\in C^{\infty}_c(\bR^m\times \bT^l)$, we have that $$\left|\int_{\bR^m\times\bT^l}\phi\partial^{\bf r}_y u\di x\di y\right|\leq C\|u\|_{C^{\gamma}}.$$ Here $|\bfr|=r_1+r_2+\cdots+r_l$, and $\partial_y^\bfr=\partial_{y_1}^{r_1}\cdots\partial_{y_l}^{r_l}$.

\begin{proposition}\label{RauchTaylor2}
Let $\alpha\in(0,1)$ and let $\phi\in C_0^\alpha$.  Let us assume that  there is $\gamma\in[0,1)$ such that partial derivatives of all orders in the $y$ direction of $\phi$ belong to $(C^{\gamma})^*$. Then for some $\beta\in (0,\alpha)$ and for any multiindex ${\bf r}=(r_1,\dots r_l)$ we have that  $$\partial^{\bf r}_y \phi\in C_0^{\beta}.$$
\end{proposition}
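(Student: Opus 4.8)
The plan is to analyze $\phi$ via its Fourier expansion in the torus variable $y\in\bT^l$, treating $x\in B$ as a parameter. For $\xi\in\bZ^l$ set $\hat\phi(x,\xi)=\int_{\bT^l}\phi(x,y)\,e^{-2\pi i\xi\cdot y}\,dy$, so that $\partial_y^{\mathbf r}\phi=\sum_{\xi\in\bZ^l}(2\pi i\xi)^{\mathbf r}\,\hat\phi(\cdot,\xi)\,e^{2\pi i\xi\cdot y}$ in the sense of distributions, where $\xi^{\mathbf r}=\xi_1^{r_1}\cdots\xi_l^{r_l}$. Before starting I would multiply $\phi$ by a cutoff $\chi\in C_c^\infty$ equal to $1$ on a large compact subset of the interior of $B$; this keeps $\phi$ in $C_0^\alpha$ and, since $C^\gamma$ is a module over $C^\infty$, keeps all the distributional hypotheses, so we may assume $\phi$ is supported in a fixed compact $K$ in the interior of $B$ times $\bT^l$. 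The goal is then to prove the two coefficient estimates, valid for every $N$ and every fixed $\beta\in(0,\alpha)$:
\begin{equation*}
\|\hat\phi(\cdot,\xi)\|_{L^\infty_x}\le C_N(1+|\xi|)^{-N},\qquad [\hat\phi(\cdot,\xi)]_{C^\beta_x}\le C_N(1+|\xi|)^{-N}.
\end{equation*}
Granting them, the $\xi$-term of $\partial_y^{\mathbf r}\phi$ has joint $C^\beta(B\times\bT^l)$-norm at most $C|\xi|^{|\mathbf r|}\big(\|\hat\phi(\cdot,\xi)\|_{L^\infty_x}(1+|\xi|)^\beta+[\hat\phi(\cdot,\xi)]_{C^\beta_x}\big)\le C_N|\xi|^{|\mathbf r|}(1+|\xi|)^{\beta-N}$; taking $N>|\mathbf r|+\beta+l$ makes the series absolutely convergent in the Banach space $C^\beta$, its sum being the distribution $\partial_y^{\mathbf r}\phi$. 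Hence $\partial_y^{\mathbf r}\phi$ is of class $C^\beta$ for every $\mathbf r$ and every $\beta<\alpha$; as the partial sums are compactly supported in the interior and the limit is of class $C^{\beta'}$ for any $\beta<\beta'<\alpha$, it lies in $C_0^\beta$, which is the assertion (with $\beta$ any fixed number in $(0,\alpha)$).

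The easy inputs are uniform in $\xi$: $\|\hat\phi(\cdot,\xi)\|_{L^\infty_x}\le\|\phi\|_{L^\infty}$ and $[\hat\phi(\cdot,\xi)]_{C^\alpha_x}\le[\phi]_{C^\alpha}$, directly from $|\hat\phi(x,\xi)-\hat\phi(x',\xi)|\le\int_{\bT^l}|\phi(x,y)-\phi(x',y)|\,dy$. The decay comes from the hypothesis: testing the distribution $\partial_y^{\mathbf s}\phi\in(C^\gamma)^*$ against functions of product form $v(x)\,e^{2\pi i\xi\cdot y}$ with $v\in C_c^\infty$ of the interior of $B$, and using $\|v(x)\,e^{2\pi i\xi\cdot y}\|_{C^\gamma}\le C\|v\|_{C^\gamma}(1+|\xi|)^\gamma$, one gets $|(2\pi\xi)^{\mathbf s}|\cdot\big|\int v(x)\hat\phi(x,-\xi)\,dx\big|\le C_{\mathbf s}\|v\|_{C^\gamma}(1+|\xi|)^\gamma$; choosing $\mathbf s$ to be the multiindex with $N$ in a coordinate $j_0$ realizing $\max_j|\xi_j|$, so that $|(2\pi\xi)^{\mathbf s}|\ge c\,|\xi|^{N}$, and taking the supremum over $\|v\|_{C^\gamma}\le1$, yields $\|\hat\phi(\cdot,\xi)\|_{(C^\gamma_x)^*}\le C_N(1+|\xi|)^{\gamma-N}$ for every $N$. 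This is exactly the place where the hypothesis on \emph{all} orders $\mathbf s$ is used.

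The main obstacle is converting this weak, distributional decay into the pointwise-in-$x$ bound on $\|\hat\phi(\cdot,\xi)\|_{L^\infty_x}$. The key is an elementary interpolation inequality: for $g$ of class $C^\alpha$ supported in a fixed ball of $\bR^m$ and all small $\delta>0$,
\begin{equation*}
\|g\|_{L^\infty}\le C\big(\|g\|_{C^\alpha}\,\delta^\alpha+\|g\|_{(C^\gamma)^*}\,\delta^{-m-\gamma}\big),
\end{equation*}
obtained by comparing $g$ with its scale-$\delta$ mollification $g*\rho_\delta$, since $\|g-g*\rho_\delta\|_{L^\infty}\le C\|g\|_{C^\alpha}\delta^\alpha$ while $|(g*\rho_\delta)(x)|=|\langle g,\rho_\delta(x-\cdot)\rangle|\le\|g\|_{(C^\gamma)^*}\|\rho_\delta\|_{C^\gamma}\le C\|g\|_{(C^\gamma)^*}\delta^{-m-\gamma}$; optimizing over $\delta$ gives $\|g\|_{L^\infty}\le C\|g\|_{C^\alpha}^{1-\theta}\|g\|_{(C^\gamma)^*}^{\theta}$ with $\theta=\alpha/(\alpha+m+\gamma)>0$. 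Applied to $g=\hat\phi(\cdot,\xi)$, the two bounds of the previous paragraph give $\|\hat\phi(\cdot,\xi)\|_{L^\infty_x}\le C_N(1+|\xi|)^{(\gamma-N)\theta}$, which is the first desired estimate since $\theta$ is fixed and $N$ arbitrary. Interpolating this against the uniform bound $[\hat\phi(\cdot,\xi)]_{C^\alpha_x}\le[\phi]_{C^\alpha}$, via $[g]_{C^\beta}\le C\|g\|_{L^\infty}^{1-\beta/\alpha}[g]_{C^\alpha}^{\beta/\alpha}$, gives the second estimate for any $\beta<\alpha$. Apart from this interpolation step the remaining work — keeping the constants uniform in $\xi$, verifying $C_0$-membership, and summing the absolutely convergent series — is routine.
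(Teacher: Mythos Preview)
Your proof is correct and shares the paper's overall architecture: Fourier expand in $y$, use the hypothesis to obtain arbitrary polynomial decay of the coefficients $\hat\phi(\cdot,\xi)$ in the dual norm $(C^\gamma_x)^*$, upgrade this weak decay to decay in a H\"older norm by interpolating against the uniform $C^\alpha_x$ bound, and sum the absolutely convergent series. The genuine difference lies in the interpolation step. The paper routes through Sobolev spaces: it embeds $(C^\gamma)^*\hookrightarrow H^{-s,p^*}$ and $C^\alpha\cap L^{p^*}\hookrightarrow H^{\alpha/2,p^*}$, interpolates in the Sobolev scale to land in $H^{\alpha/4,p^*}$, and then uses a Morrey-type embedding back into $C^\beta$, which requires the full package of Lemma~\ref{SobolevEmbedding}. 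Your mollification argument --- $\|g\|_{L^\infty}\le C(\|g\|_{C^\alpha}\delta^\alpha+\|g\|_{(C^\gamma)^*}\delta^{-m-\gamma})$, optimized in $\delta$ and then interpolated once more against $[g]_{C^\alpha}$ to reach $C^\beta$ --- achieves the same conclusion by entirely elementary means, with no appeal to Sobolev machinery. Both approaches yield a $\beta$ strictly smaller than $\alpha$ (in fact your argument gives every $\beta<\alpha$), and both use the hypothesis on \emph{all} orders in exactly the same way, so neither has an edge in generality; yours is simply shorter and more self-contained. One minor remark: your preliminary cutoff in $x$ is harmless but unnecessary, since $\phi\in C_0^\alpha(B\times\bT^l)$ already forces each $\hat\phi(\cdot,\xi)$ to be supported in $\overline{B}$, and the test functions in the definition of $(C^\gamma)^*$ range over $C_c^\infty(\bR^m\times\bT^l)$, so $\rho_\delta(x-\cdot)$ is admissible without truncation.
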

That is, if $\phi$ is H\"older and has weak derivatives of all orders in the $y$ direction, then it has H\"older continuous derivatives in the $y$ direction.

Theorem \ref{RauchTaylor} is an almost immediate corollary to Proposition \ref{RauchTaylor2}.

\begin{proof}[Proof of implication Proposition \ref{RauchTaylor2}$\Rightarrow$Theorem \ref{RauchTaylor}] By hypothesis on the foliation $\cF$ and making partition of unity, we may assume $M=\bR^m\times\bR^l$, $\cF$ is the foliation into $\bR^l$ hyperplanes in $y$ direction, and $\phi$ is compactly supported. We can even assume $\phi$ is supported on $B\times(-\frac14,\frac14)^l$. This new case follows from Proposition \ref{RauchTaylor2}. by identify $(-\frac14,\frac14)^l$ with an open subset of $\bT^l$. 
\end{proof}

We now aim to prove Proposition \ref{RauchTaylor2}. Consider $\phi$ as in Proposition \ref{RauchTaylor2}. Let us write $$\phi(x,y)=\sum_{{\bf n}\in\bZ^l}\phi_{\bf n}(x)e^{2\pi i {\bf n}\cdot y}$$
where $$\phi_{\bf n}(x)=\int_{\bT^l}\phi(x,y)e^{-2\pi i {\bf n}\cdot y}\di y.$$

We have that for any ${\bf n}\in\bZ^l$, \begin{eqnarray}\label{holder}\|\phi_{\bf n}\|_{C^\alpha(\bR^m)}\leq \|\phi\|_{C^\alpha(\bR^m\times \bT^l)}.\end{eqnarray}

On the other hand, since $\phi$ has partial derivatives of all orders in the $y$ direction in the weak $C^{\gamma}$-sense,  for any $\bfr$ there is a constant $C_0(\bfr)$ such that for any  $u\in C^{\infty}_c(B\times \bT^l)$, \begin{equation}\label{DerHolderStar}\left|\int_{B\times\bT^l}\phi\partial^\bfr_yu\di x\di y\right|\leq C_0(\bfr)\|u\|_{C^{\gamma}_0(B\times \bT^l)}.\end{equation}

In the sequel we shall use several standard embeddings of Sobolev's spaces and interpolation. Given $s\in\bR$ and $1\leq p<\infty$ let  $H^{s,p}=H^{s,p}(\bR^m)$ be the Sobolev space of order $s$ over $L^p$, i.e. $H^{s,p}(\bR^m)$ is the closure of  $C_c^\infty(\bR^m)$  w.r.t. the norm $$\|u\|_{s,p}=\left\|\left((1+|\xi|^2)^{s/2}|\hat u|\right)^{\vee}\right\|_{L^p}.$$
Here $\hat u$ stands for the Fourier transform and $u^\vee$ is the inverse transform. 
The following can be found in any book on Sobolev spaces (see for instance \cite{RS96, T78}).
\begin{lemma}\label{SobolevEmbedding} Let $s,t\in\bR$, for $1<p<\infty$, let $p^*$ be such that $1=\frac{1}{p}+\frac{1}{p^*}$. 
\begin{enumerate}
\item If $s<t$ then $$H^{s,p}\subset H^{t,p}$$ and the embedding is continuous.
\item $$\left(H^{s,p}\right)^*=H^{-s,p^*}.$$
\item If $\frac{1}{q}=\frac{1}{p}-\frac{s-t}{m}$ then $$H^{s,p}(\bR^m)\subset H^{t,q}(\bR^m)$$ where the embedding is continuous.
\item If $\alpha\in(0,1)$, $r\in\bN$ and $\frac{s-r-\alpha}{m} = \frac{1}{p}$ then $$H^{s,p}(\bR^m)\subset C^{r,\alpha}(\bR^m)$$ and the embedding is continuous.
\item\label{HolderIntoSobo} For any $\alpha\in (0,1)$ and for any $p\geq 1$, and for any $s<\alpha$, $$C^{\alpha}(\bR^m)\cap L^p(\bR^m)\subset H^{s,p}(\bR^m)$$ and the embedding is continuous. 
\item\label{interpolation} If $s_1,s_2\in\bR$, $1< p_1,p_2<\infty $, $\theta\in[0,1]$ and $t=\theta s_1+(1-\theta)s_2$, $\frac{1}{p}=\frac{\theta}{p_1}+\frac{1-\theta}{p_2}$ then there is a constant $C=C(s_1,s_2,\theta,p_1,p_2)$ such that $$|u|_{t,p}\leq C |u|_{s_1,p_1}^{\theta}|u|_{s_2,p_2}^{1-\theta}.$$
\end{enumerate}
\end{lemma}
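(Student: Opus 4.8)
The lemma is a compendium of classical facts about the Bessel-potential spaces $H^{s,p}(\bR^m)$; the plan is to route every item through the Bessel potential operator $J_\sigma:=\big((1+|\xi|^2)^{\sigma/2}\,\widehat{\;\cdot\;}\big)^{\vee}$. Two observations do the work: by definition of the norm, $J_s\colon H^{s,p}\to L^p$ is an isometric isomorphism with $J_s^{-1}=J_{-s}$; and for $\sigma>0$ the operator $J_{-\sigma}$ is convolution with the Bessel kernel $G_\sigma$, a positive function that is integrable, decays exponentially, and near the origin is comparable to $|x|^{\sigma-m}$ when $0<\sigma<m$. I would cite \cite{RS96, T78} for the harmonic-analytic background and exhibit only the mechanism in each item.

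Items (1) and (2) are formal. For (1), writing $J_t=J_{t-s}\circ J_s$ reduces the inclusion to boundedness of $J_{t-s}$ on $L^p$; when that order is nonpositive it is convolution with an $L^1$ kernel, so Young's inequality applies. For (2), $J_s$ has a real even multiplier and is therefore self-adjoint, so composing a functional $\ell\in(H^{s,p})^*$ with $J_s^{-1}$ produces an element of $(L^p)^*=L^{p^*}$, represented by $g\in L^{p^*}$; then $\ell(u)=\langle J_su,g\rangle=\langle u,J_sg\rangle$ with $J_sg\in J_s(L^{p^*})=H^{-s,p^*}$, and the norms match up to absolute constants. For (3), write $J_tu=G_{s-t}*(J_su)$ with $J_su\in L^p$: for $0<s-t<m$ the kernel $G_{s-t}$ lies in the weak space $L^{r,\infty}$ with $\tfrac1r=1-\tfrac{s-t}m$, and the weak-type Young (i.e.\ Hardy--Littlewood--Sobolev) inequality yields exactly $\tfrac1q=\tfrac1p-\tfrac{s-t}m$. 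For (4), differentiating and invoking (1) reduces to the case $r=0$, after which one needs $H^{\alpha+m/p,\,p}\subset C^{0,\alpha}$; writing $u=G_{\alpha+m/p}*f$ with $f\in L^p$, one has $|u(x)-u(y)|\le\big\|G_{\alpha+m/p}(\cdot-z)-G_{\alpha+m/p}\big\|_{L^{p^*}}\|f\|_{L^p}$ with $z=x-y$, and splitting the kernel difference over $\{|\cdot|\le 2|z|\}$ (size bound) and $\{|\cdot|>2|z|\}$ (gradient bound) gives $\lesssim|z|^\alpha$ — this is Morrey's inequality.

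For (5) I would use the difference (Besov) characterisation $C^\alpha=B^\alpha_{\infty,\infty}$ together with the chain $B^\alpha_{\infty,\infty}\cap L^p\hookrightarrow B^s_{p,1}\hookrightarrow H^{s,p}$, valid for $s<\alpha$: the right inclusion is the standard embedding into the Triebel--Lizorkin scale, $B^s_{p,1}\subset B^s_{p,\min(p,2)}\subset F^s_{p,2}=H^{s,p}$, and the left one comes from a Littlewood--Paley decomposition $u=\sum_j\Delta_ju$, $\Delta_ju=\psi_j*u$ with $\widehat{\psi_j}$ supported in $|\xi|\sim 2^j$ — the low-frequency piece is bounded in every $H^{s,p}$-norm by $\|u\|_{L^p}$ via Bernstein, while for $j\ge1$ the cancellation $\widehat{\psi_j}(0)=0$ turns the Hölder bound into $\|\Delta_ju\|_{L^p}\lesssim 2^{-j\alpha}$, whence $\sum_j 2^{js}\|\Delta_ju\|_{L^p}<\infty$. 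Finally (6) is the complex-interpolation estimate: the Calderón functor satisfies $[H^{s_2,p_2},H^{s_1,p_1}]_\theta=H^{t,p}$ (or apply Stein interpolation to the analytic family $z\mapsto J_{(1-z)s_2+zs_1-t}$), and the asserted inequality is exactly its defining three-lines bound.

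The one delicate point is (5): the estimate $\|\Delta_ju\|_{L^p}\lesssim 2^{-j\alpha}$ uses that the $L^p$ modulus of continuity of $u$ is $O(t^\alpha)$, which on all of $\bR^m$ holds once $u$ is compactly supported — exactly the situation in the application, where $\phi$ (and hence each of its Fourier coefficients in the $y$ variable) is compactly supported; granting this, the estimate is immediate from $\Delta_ju(x)=\int\psi_j(y)\big(u(x-y)-u(x)\big)\,dy$ and $|u(x-y)-u(x)|\lesssim|y|^\alpha$. Everything else is routine bookkeeping with the Bessel kernel and standard convolution and Fourier-multiplier estimates.
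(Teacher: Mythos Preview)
The paper does not prove this lemma at all: it simply records ``The following can be found in any book on Sobolev spaces (see for instance \cite{RS96, T78}).'' Your proposal goes well beyond this by sketching concrete arguments for each item through the Bessel-potential isometry $J_s\colon H^{s,p}\to L^p$, and the sketches are correct and standard.

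Two small remarks. In (1) you correctly note that the argument needs the order $t-s$ to be nonpositive; this means the inclusion as printed (with $s<t$) is a typo --- it should read $s>t$ --- and your proof is for the correct version. In (5) you rightly flag that the bound $\|\Delta_j u\|_{L^p}\lesssim 2^{-j\alpha}$ requires compact support (or some substitute) to pass from the pointwise H\"older estimate to an $L^p$ one, and you correctly observe this is harmless in the application since the $\phi_{\bfn}$ are supported in the unit ball $B$. Neither point affects the validity of your argument or its downstream use in Lemma~\ref{strongdecay}.
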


\begin{lemma}\label{weakdecay}
For all multiindex $\bfr$, there is a constant $C(\bfr)$ such that,  for any $\bfn\in\bZ^l$, with $|\bfn^{\bfr}|\neq 0$,

\begin{equation}\label{weakholder}\|\phi_{\bfn}\|_{(C^\gamma)^*}\leq \frac{C(\bfr)|\bfn|}{|\bfn^{\bfr}|}.\end{equation}

\end{lemma}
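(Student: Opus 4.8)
The plan is to bound the pairing of $\phi_\bfn$ against an arbitrary test function $v\in C^\infty_c(B)$ and then pass to the supremum over $\|v\|_{C^\gamma}\le 1$, using that $C^\infty_c(B)$ is dense in the relevant $C^\gamma$ space and that $\phi_\bfn$ is supported in the interior of $B$ (so only the values of $v$ there matter). Unfolding the definition of $\phi_\bfn$ gives
$$\int_B\phi_\bfn(x)\,v(x)\,\di x=\int_{B\times\bT^l}\phi(x,y)\,\bigl(v(x)e^{-2\pi i\bfn\cdot y}\bigr)\,\di x\,\di y,$$
so the goal is to exhibit the integrand $v(x)e^{-2\pi i\bfn\cdot y}$ as $\partial_y^\bfr u$ for an admissible test function $u$ and then invoke the weak-derivative hypothesis \eqref{DerHolderStar}.

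Since $|\bfn^\bfr|\neq 0$ one has $\partial_y^\bfr e^{-2\pi i\bfn\cdot y}=(-2\pi i)^{|\bfr|}\bfn^\bfr\,e^{-2\pi i\bfn\cdot y}$, so the natural choice is
$$u(x,y)=\frac{v(x)\,e^{-2\pi i\bfn\cdot y}}{(-2\pi i)^{|\bfr|}\bfn^\bfr},$$
which lies in $C^\infty_c(B\times\bT^l)$ (splitting into real and imaginary parts keeps us within the real-valued framework of \eqref{DerHolderStar}, since $v$ is supported in the interior of $B$ and $e^{-2\pi i\bfn\cdot y}$ is smooth on all of $\bT^l$). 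The remaining step is to estimate $\|u\|_{C^\gamma}$. Writing $u$ as a scalar of modulus $(2\pi)^{-|\bfr|}|\bfn^\bfr|^{-1}$ times $v(x)e^{-2\pi i\bfn\cdot y}$, submultiplicativity of the Hölder norm under products (up to a universal constant) reduces this to bounding $\|e^{-2\pi i\bfn\cdot y}\|_{C^\gamma}$; a direct computation gives $\|e^{-2\pi i\bfn\cdot y}\|_{C^\gamma}\le C'|\bfn|$ for $|\bfn|\ge 1$, because the $\gamma$-Hölder quotient of $e^{-2\pi i\bfn\cdot y}$ is at most $2\pi|\bfn||y-y'|^{1-\gamma}$ when $|y-y'|\le 1$ and at most $2|y-y'|^{-\gamma}$ otherwise, both controlled on the compact torus. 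Hence $\|u\|_{C^\gamma}\le C''\dfrac{|\bfn|}{(2\pi)^{|\bfr|}|\bfn^\bfr|}\|v\|_{C^\gamma}$, and feeding this $u$ into \eqref{DerHolderStar} gives $\bigl|\int_B\phi_\bfn v\,\di x\bigr|\le C(\bfr)\dfrac{|\bfn|}{|\bfn^\bfr|}\|v\|_{C^\gamma}$ with $C(\bfr)=(2\pi)^{-|\bfr|}C''C_0(\bfr)$; taking the supremum over $v$ yields \eqref{weakholder}.

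I do not expect a genuine obstacle here. The single load-bearing idea is the choice of test function $u\propto v(x)e^{-2\pi i\bfn\cdot y}$, which converts the weak $(C^\gamma)^*$-control of the $y$-derivatives of $\phi$ into a quantitative decay estimate for its Fourier coefficients $\phi_\bfn$; the loss of a factor $|\bfn|$ in the numerator is exactly the Hölder norm of the exponential, and dividing by $|\bfn^\bfr|$ comes from differentiating it $\bfr$ times. The only points requiring a word of care—remaining inside the admissible test-function class, and reducing the dual norm to a supremum over $C^\infty_c(B)$ by density—are routine.
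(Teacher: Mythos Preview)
Your proof is correct and follows essentially the same route as the paper: express $\int_B\phi_\bfn v\,\di x$ as $\int_{B\times\bT^l}\phi(x,y)\,v(x)e^{-2\pi i\bfn\cdot y}\,\di x\,\di y$, recognize the integrand as $\partial_y^\bfr u$ for $u$ proportional to $v(x)e^{-2\pi i\bfn\cdot y}$, apply the weak-derivative bound \eqref{DerHolderStar}, and control $\|u\|_{C^\gamma}$ via submultiplicativity and the estimate $\|e^{-2\pi i\bfn\cdot y}\|_{C^\gamma}\lesssim|\bfn|$. The paper's version is slightly terser (it bounds the exponential's $C^\gamma$ norm by its $C^1$ norm directly), but the argument is the same.
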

Here and below, $|\bfz|=\sum_{i=1}^l|z_i|$ and $\bfz^\bfr=z_1^{r_1}z_2^{r_2}\cdots z_l^{r_l}$ for all $\bfz\in\bC^l$.

\begin{proof} Take $w\in C_c^\infty(B)$.
\begin{equation*}\label{weakdecayEq1}\begin{split}\displaystyle&\left| \int_B\phi_\bfn(x) w(x)\di x\right|\\
=&\left|\int_{B\times\bT^l}\phi(x,y)\cdot e^{2\pi i\bfn\cdot y}w(x)\di x\di y\right|\\
=&\left|(2\pi i\bfn)^{-\bfr}\int_{B\times\bT^l}\phi(x,y)\partial_y^\bfr \left(e^{2\pi i\bfn\cdot y}w(x)\right)\di x\di y\right|
\end{split}\end{equation*}

By \eqref{DerHolderStar},
\begin{equation*}\begin{split}\displaystyle\left| \int_B\phi_\bfn(x) w(x)\di x\right|\leq &\frac1{(2\pi)^{|\bfr|}|\bfn^\bfr|}\cdot\left\|e^{2\pi i\bfn\cdot y}w(x)\right\|_{C^\gamma}\\
\leq &\frac{C_0(\bfr)}{(2\pi)^{|\bfr|}|\bfn^\bfr|}\cdot\left\|e^{2\pi i\bfn\cdot y}\right\|_{C^\gamma}\|w\|_{C^\gamma}\\
\leq &\frac{C_0(\bfr)}{(2\pi)^{|\bfr|}|\bfn^\bfr|}\cdot\left\|e^{2\pi i\bfn\cdot y}\right\|_{C^1}\|w\|_{C^\gamma}\\
=&\frac{2\pi |\bfn|C_0(\bfr)}{(2\pi)^{|\bfr|}|\bfn^\bfr|}\cdot \|w\|_{C^\gamma},
\end{split}\end{equation*}
which is the lemma.
\end{proof}

\begin{lemma}\label{strongdecay}
There exist $\beta>0$ and $\theta>0$ such that for any multiindex $\bfr$, there is a constant $C(\bfr,\beta)$ such that for any $\bfn\in\bZ^l$, with $|\bfn^{\bfr}|\neq 0$,
\begin{eqnarray}\label{strongholder}\|\phi_{\bfn}\|_{C^\beta}\leq \frac{C(\bfr,\beta)|\bfn|^\theta}{|\bfn^{\bfr}|^{\theta}}.\end{eqnarray}
\end{lemma}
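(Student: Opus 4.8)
The plan is to interpolate between the two estimates already in hand. On one side, \eqref{holder} says $\phi_\bfn\in C^\alpha(\bR^m)$ with norm at most $\|\phi\|_{C^\alpha}$, uniformly in $\bfn$; on the other, Lemma~\ref{weakdecay} says $\phi_\bfn\in(C^\gamma)^*$ with norm $O\!\big(|\bfn|/|\bfn^\bfr|\big)$. I would convert both facts into statements about Bessel-potential spaces $H^{s,p}(\bR^m)$ using the embeddings of Lemma~\ref{SobolevEmbedding}, interpolate between a negative-order space (where the norm is small) and a positive-order space (where it is bounded), and finally embed the resulting intermediate space into a H\"older space $C^\beta$. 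Because the interpolated norm is a geometric mean of the two inputs with some weight $\theta\in(0,1)$, this produces exactly an estimate of the form $\|\phi_\bfn\|_{C^\beta}\le C(\bfr)\big(|\bfn|/|\bfn^\bfr|\big)^{\theta}$, which is \eqref{strongholder}.

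\textbf{The two Sobolev bounds.} We may assume $\gamma\in(0,1)$, replacing $\gamma$ by a smaller positive number if $\gamma=0$ (this only strengthens \eqref{weakholder}). Fix $p\in(1,\infty)$, say $p=2$, and set $s=\gamma+\frac mp$, so that Lemma~\ref{SobolevEmbedding}(4) (the Morrey-type embedding into H\"older spaces, with $r=0$) gives a continuous inclusion $H^{s,p}(\bR^m)\hookrightarrow C^\gamma(\bR^m)$. Testing the defining inequality \eqref{DerHolderStar} (equivalently \eqref{weakholder}) against $w\in C_c^\infty(\bR^m)$ and estimating $\|w\|_{C^\gamma}$ by $\|w\|_{H^{s,p}}$ via this embedding yields $|\langle\phi_\bfn,w\rangle|\le C(\bfr)\,\frac{|\bfn|}{|\bfn^\bfr|}\,\|w\|_{H^{s,p}}$; by density of $C_c^\infty$ in $H^{s,p}$ and the duality $(H^{s,p})^*=H^{-s,p^*}$ from Lemma~\ref{SobolevEmbedding}(2),
\[\|\phi_\bfn\|_{H^{-s,p^*}(\bR^m)}\le C(\bfr)\,\frac{|\bfn|}{|\bfn^\bfr|},\qquad \tfrac1{p^*}=1-\tfrac1p.\]
For the other bound, $\phi_\bfn$ is supported in the fixed compact set $B$ and $\|\phi_\bfn\|_{C^\alpha}\le\|\phi\|_{C^\alpha}$ by \eqref{holder}, so $\|\phi_\bfn\|_{L^q}\le C\|\phi\|_{C^\alpha}$ for any $q$, and Lemma~\ref{SobolevEmbedding}, part~(\ref{HolderIntoSobo}), gives, for every fixed $t\in(0,\alpha)$ and $q\in(1,\infty)$,
\[\|\phi_\bfn\|_{H^{t,q}(\bR^m)}\le C(t,q)\,\|\phi\|_{C^\alpha},\]
uniformly in $\bfn$.

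\textbf{Interpolation and conclusion.} Apply the interpolation inequality of Lemma~\ref{SobolevEmbedding}, part~(\ref{interpolation}), with $(s_1,p_1)=(-s,p^*)$, $(s_2,p_2)=(t,q)$ and a weight $\theta\in(0,1)$: writing $\sigma=-\theta s+(1-\theta)t$ and $\frac1\rho=\frac\theta{p^*}+\frac{1-\theta}q$,
\[\|\phi_\bfn\|_{H^{\sigma,\rho}}\le C\,\|\phi_\bfn\|_{H^{-s,p^*}}^{\theta}\,\|\phi_\bfn\|_{H^{t,q}}^{1-\theta}\le C(\bfr)\Big(\frac{|\bfn|}{|\bfn^\bfr|}\Big)^{\theta}.\]
Put $\beta=\sigma-\frac m\rho$. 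If $\beta\in(0,1)$ then the Morrey embedding of Lemma~\ref{SobolevEmbedding}(4) gives $H^{\sigma,\rho}\hookrightarrow C^\beta$, which yields \eqref{strongholder}. It remains to choose the parameters so that $\beta>0$; since
\[\beta=(1-\theta)\Big(t-\frac mq\Big)-\theta\Big(s+\frac m{p^*}\Big),\]
I would first fix $t\in(0,\alpha)$, then $\theta\in\big(0,\tfrac{t}{t+s+m/p^*}\big)$ (which forces $\tfrac\theta{1-\theta}\big(s+\tfrac m{p^*}\big)<t$), and finally $q$ large enough that $\frac mq<t-\frac\theta{1-\theta}\big(s+\tfrac m{p^*}\big)$; then $\beta>0$, and automatically $\beta<(1-\theta)t<\alpha<1$. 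Note that $s,t,\theta,q,\beta$ are all determined by $m,\alpha,\gamma$ alone, while the dependence on $\bfr$ sits only in the constant $C_0(\bfr)$ from \eqref{DerHolderStar}; this is exactly the uniformity in $\bfr$ demanded by the statement.

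\textbf{Main obstacle.} There is no deep difficulty here; the only real work is the exponent bookkeeping in the last step. One must keep $\theta$ strictly positive, so that a genuine power of $|\bfn^\bfr|^{-1}$ survives the interpolation, while simultaneously keeping the interpolated smoothness order $\sigma$ strictly above the Sobolev threshold $\tfrac m\rho$ so that the embedding into $C^\beta$ stays available; the slack making this possible is precisely the positive H\"older regularity $\alpha>0$ available in the ``bounded'' factor. The remaining points — passing from a bound against test functions to a genuine $H^{-s,p^*}(\bR^m)$ bound by density, and checking $p^*\in(1,\infty)$ so that parts~(2) and~(\ref{interpolation}) of Lemma~\ref{SobolevEmbedding} apply — are routine.
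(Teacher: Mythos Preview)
Your proposal is correct and follows essentially the same route as the paper: convert the $(C^\gamma)^*$ bound from Lemma~\ref{weakdecay} into a negative-order Sobolev bound via duality and the Morrey embedding, convert the uniform $C^\alpha$ bound \eqref{holder} into a positive-order Sobolev bound via Lemma~\ref{SobolevEmbedding}\eqref{HolderIntoSobo}, interpolate, and embed the intermediate space into $C^\beta$. The only cosmetic difference is in the parameter bookkeeping: the paper fixes a single large $p^*$ from the outset (with $\tfrac m{p^*}<\tfrac\alpha4$) and interpolates between $H^{-s,p^*}$ and $H^{\alpha/2,p^*}$ with the \emph{same} integrability exponent, landing in $H^{\alpha/4,p^*}\hookrightarrow C^\beta$; you instead allow two different exponents $p^*,q$ and carry the extra term $m/q$ through the final inequality. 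Both organizations work and yield the same conclusion.
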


\begin{proof} Fix a sufficiently large $p^*$ such that $\frac m{p^*}<\frac\alpha 4$.

Take $s$ and $p$, such that $\frac{s-\gamma}m=\frac1p$ and $1=\frac1p+\frac1{p^*}$. By Lemma \ref{SobolevEmbedding}, we have a continuous embeddings $(C^\gamma)^*\subset(H^{s,p})^*=H^{-s,p^*}$. Therefore by Lemma \ref{weakdecay}, there is $C_1(\bfr)$ such that
\begin{equation}\label{strongdecayEq1}\|\phi_\bfn\|_{H^{-s,p^*}}\leq\frac{C_1(\bfr)|\bfn|}{|\bfn^\bfr|}.\end{equation}

On the other hand, $\phi_\bfn$ is clearly bounded by $\|\phi\|_{L^\infty}\leq\|\phi\|_{C^\alpha}$. Hence $\|\phi_\bfn\|_{L^{p^*}}\leq a_1\|\phi\|_{C^\alpha}$ where $a_1$ depends only on the volume of the unit ball $B$. Combining this with \eqref{holder}, we know by Lemma \ref{strongdecay}.\eqref{HolderIntoSobo} that there is a constant $C_2$ that depends only on the dimension, such that
\begin{equation}\label{strongdecayEq2}\|\phi_\bfn\|_{H^{\frac{\alpha}{2},p^*}}\leq C_2 \|\phi\|_{C^\alpha}.\end{equation}

Choose $\theta\in(0,1)$ such that $\theta\cdot (-s)+(1-\theta)\cdot\frac\alpha 2=\frac\alpha 4$. The interpolation formula Lemma \ref{SobolevEmbedding}.\eqref{interpolation} allows to merge \eqref{strongdecayEq1} and \eqref{strongdecayEq2} into:
\begin{equation}\label{strongdecayEq3}\|\phi_\bfn\|_{H^{\frac\alpha 4,p^*}}\leq C_1(\bfr)^\theta C_2^{1-\theta}\|\phi\|_{C^\alpha}^{1-\theta}\left(\frac{|\bfn|}{|\bfn^\bfr|}\right)^\theta.\end{equation}
Take  $\beta>0$ such that $\frac{\frac\alpha 4-\beta}m=\frac{1}{p^*}$, which is possible thanks to the choice of $p^*$. \eqref{strongdecayEq3} establishes the lemma since $H^{\frac\alpha 4,p^*}$ continuously embeds into $C^\beta$.\end{proof}

\begin{corollary}\label{strongholderdecay}
For any $T>0$ there is $C(T)>0$ such that $$\|\phi_{\bfn}\|_{C^\beta}\leq \frac{C(T)}{|\bfn|^T}$$ for any $0\neq \bfn\in\bZ^l$.
\end{corollary}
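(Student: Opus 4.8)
The plan is to obtain this as an immediate consequence of Lemma \ref{strongdecay}, by choosing for each nonzero $\bfn$ a multiindex $\bfr$ tailored to $\bfn$ so that $|\bfn^\bfr|$ is comparable to a large power of $|\bfn|$.

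First, given $0\neq\bfn\in\bZ^l$, I would let $j=j(\bfn)\in\{1,\dots,l\}$ be an index at which $|n_j|=\max_{1\le i\le l}|n_i|$. Since $|\bfn|=\sum_i|n_i|\le l|n_j|$ one has $|n_j|\ge|\bfn|/l$, and moreover $|n_j|\ge 1$ because $n_j$ is a nonzero integer (in particular $|\bfn|\ge 1$ as well). For a parameter $k\in\bN$ to be fixed at the end, take the multiindex $\bfr=k\bfe_j$, i.e.\ $r_j=k$ and $r_i=0$ for $i\neq j$; then $\bfn^\bfr=n_j^k$, so $|\bfn^\bfr|=|n_j|^k\ge(|\bfn|/l)^k>0$. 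Hence Lemma \ref{strongdecay} applies, with the $\theta>0$ and $\beta>0$ it provides, and yields
\[
\|\phi_\bfn\|_{C^\beta}\le\frac{C(k\bfe_j,\beta)\,|\bfn|^\theta}{|n_j|^{k\theta}}\le C(k\bfe_j,\beta)\,l^{k\theta}\,|\bfn|^{\theta(1-k)}.
\]

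Next, since $j$ ranges over the finite set $\{1,\dots,l\}$, I would absorb the finitely many constants into $C_k:=l^{k\theta}\max_{1\le j\le l}C(k\bfe_j,\beta)$, so that $\|\phi_\bfn\|_{C^\beta}\le C_k\,|\bfn|^{-\theta(k-1)}$ for every nonzero $\bfn\in\bZ^l$. Finally, given $T>0$, choose $k\in\bN$ with $\theta(k-1)\ge T$; because $|\bfn|\ge 1$ for every nonzero lattice point this gives $|\bfn|^{-\theta(k-1)}\le|\bfn|^{-T}$, and the corollary follows with $C(T)=C_k$.

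There is no genuine analytic obstacle here beyond Lemma \ref{strongdecay} itself; the only point requiring care is the bookkeeping — selecting the multiindex adapted to the largest coordinate of $\bfn$ so that $|\bfn^\bfr|$ does not degenerate, and then maximizing over the $l$ possible coordinates to obtain a constant independent of $\bfn$.
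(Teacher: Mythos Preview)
Your argument is correct and is essentially the same as the paper's: both pick the coordinate $j$ with $|n_j|\ge |\bfn|/l$, set $\bfr=k\bfe_j$ for a suitably large $k$, and read off the decay from Lemma \ref{strongdecay}. Your version is in fact slightly more explicit about the bookkeeping (maximizing over the $l$ possible coordinate indices to obtain a constant independent of $\bfn$), which the paper leaves implicit.
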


\begin{proof} Given $\bfn\neq 0$, there is $1\leq i\leq l$ such that  $|n_i|\geq \frac{|\bfn|}l$. Choose $\bfr$ by letting $r_i=\lceil \frac{T+1}\theta\rceil$ and $r_j=0$ if $j\neq i$. Then the corollary directly follows from Lemma \ref{strongdecay}.
\end{proof}

\begin{proof}[Proof of Proposition \ref{RauchTaylor2}]

Let $A>0$ and $\bfr$ be a multiindex with $|\bfr|=r$, define $$\phi_{\bfr,A}(x,y)=\sum_{\bfn\in\bZ^l, |\bfn|\leq A}(2\pi i)^r\bfn^\bfr\phi_\bfn(x)e^{2\pi i\bfn\cdot y}.$$ Then there are constants $C_1(\bfr), C_2(r)$ such that  
\begin{equation*}\begin{split} 
&|\phi_{\bfr,A}(x,y)-\phi_{\bfr,A}(z,y)|\\
\leq &C_1(\bfr)\sum_{\bfn\in\bZ^l, |\bfn|\leq A}|\bfn|^r|\phi_\bfn(x)-\phi_\bfn(z)|\\
\leq &C_1(\bfr)\sum_{\bfn\in\bZ^l, |\bfn|\leq A}|\bfn|^r\|\phi_\bfn\|_{C^{\beta}_0}\dist(x,z)^\beta\\
\leq &C_1(\bfr)C(r+2l+1)\left(\sum_{\bfn\in\bZ^l, |\bfn|\leq A}\frac{1}{|\bfn|^{2l+1}}\right)\dist(x,z)^\beta\\
\leq &C_2(r)\dist(x,z)^\beta,
\end{split}\end{equation*}
where $C(r+2l+1)$ is that from Corollary \ref{strongholderdecay}.

This same computation gives that $$\lim_{A\rightarrow\infty}\phi_{\bfr,A}=\partial^\bfr_y\phi$$ in $C^{\beta}$-topology. 
Which implies that $\partial^\bfr_y\phi$ is $C^{\beta}$. 
\end{proof}

\begin{bibdiv}
\begin{biblist}

\bib{dlL01}{article}{
   author={de la Llave, Rafael},
   title={Remarks on Sobolev regularity in Anosov systems},
   journal={Ergodic Theory Dynam. Systems},
   volume={21},
   date={2001},
   number={4},
   pages={1139--1180},
}

\bib{EF07}{article}{
   author={Einsiedler, Manfred},
   author={Fisher, Travis},
   title={Differentiable rigidity for hyperbolic toral actions},
   journal={Israel J. Math.},
   volume={157},
   date={2007},
   pages={347--377},
}

\bib{FJ78}{article}{
   author={Farrell, F. T.},
   author={Jones, L. E.},
   title={Anosov diffeomorphisms constructed from $\pi _{1}\,{\rm
   Diff}\,(S^{n})$},
   journal={Topology},
   volume={17},
   date={1978},
   number={3},
   pages={273--282},
}

\bib{FKS11}{article}{
   author={Fisher, David},
   author={Kalinin, Boris},
   author={Spatzier, Ralf},
   title={Totally nonsymplectic Anosov actions on tori and nilmanifolds},
   journal={Geom. Topol.},
   volume={15},
   date={2011},
   number={1},
   pages={191--216},
}
		
\bib{FKS13}{article}{
   author={Fisher, David},
   author={Kalinin, Boris},
   author={Spatzier, Ralf},
   title={Global rigidity of higher rank Anosov actions on tori and
   nilmanifolds,  \rm{with an appendix by J. Davis}},
   journal={J. Amer. Math. Soc.},
   volume={26},
   date={2013},
   number={1},
   pages={167--198}
}

\bib{F69}{article}{
   author={Franks, John},
   title={Anosov diffeomorphisms on tori},
   journal={Trans. Amer. Math. Soc.},
   volume={145},
   date={1969},
   pages={117--124},
}

\bib{G10}{article}{
   author={Gogolev, Andrey},
   title={Diffeomorphisms H\"older conjugate to Anosov diffeomorphisms},
   journal={Ergodic Theory Dynam. Systems},
   volume={30},
   date={2010},
   number={2},
   pages={441--456},
}

\bib{G07}{article}{
   author={Gorodnik, Alexander},
   title={Open problems in dynamics and related fields},
   journal={J. Mod. Dyn.},
   volume={1},
   date={2007},
   number={1},
   pages={1--35},
}

\bib{GS}{article}{
   author={Gorodnik, A},
   author={Spatzier, R},
   title={Mixing properties of $\bZ^k$-actions on nilmanifolds},
   journal={preprint},
}		

\bib{KK99}{article}{
   author={Kalinin, Boris},
   author={Katok, Anatole},
   title={Invariant measures for actions of higher rank abelian groups},
   conference={
      title={Smooth ergodic theory and its applications},
      address={Seattle, WA},
      date={1999},
   },
   book={
      series={Proc. Sympos. Pure Math.},
      volume={69},
      publisher={Amer. Math. Soc.},
      place={Providence, RI},
   },
   date={2001},
   pages={593--637}
}

\bib{KSad06}{article}{
   author={Kalinin, Boris},
   author={Sadovskaya, Victoria},
   title={Global rigidity for totally nonsymplectic Anosov $\Bbb Z^k$
   actions},
   journal={Geom. Topol.},
   volume={10},
   date={2006},
   pages={929--954 (electronic)},
}

\bib{KSad07}{article}{
   author={Kalinin, Boris},
   author={Sadovskaya, Victoria},
   title={On the classification of resonance-free Anosov $\Bbb Z^k$
   actions},
   journal={Michigan Math. J.},
   volume={55},
   date={2007},
   number={3},
   pages={651--670},
}

\bib{KS07}{article}{
   author={Kalinin, Boris},
   author={Spatzier, Ralf},
   title={On the classification of Cartan actions},
   journal={Geom. Funct. Anal.},
   volume={17},
   date={2007},
   number={2},
   pages={468--490},
}

\bib{KKS02}{article}{
   author={Katok, Anatole},
   author={Katok, Svetlana},
   author={Schmidt, Klaus},
   title={Rigidity of measurable structure for ${\Bbb Z}^d$-actions by
   automorphisms of a torus},
   journal={Comment. Math. Helv.},
   volume={77},
   date={2002},
   number={4},
   pages={718--745}
}

\bib{KL91}{article}{
   author={Katok, A.},
   author={Lewis, J.},
   title={Local rigidity for certain groups of toral automorphisms},
   journal={Israel J. Math.},
   volume={75},
   date={1991},
   number={2-3},
   pages={203--241},
}

\bib{KS94}{article}{
   author={Katok, Anatole},
   author={Spatzier, Ralf J.},
   title={First cohomology of Anosov actions of higher rank abelian groups
   and applications to rigidity},
   journal={Inst. Hautes \'Etudes Sci. Publ. Math.},
   number={79},
   date={1994},
   pages={131--156},
   issn={0073-8301},
   review={\MR{1307298 (96c:58132)}},
}

\bib{KS97}{article}{
   author={Katok, A.},
   author={Spatzier, R. J.},
   title={Differential rigidity of Anosov actions of higher rank abelian
   groups and algebraic lattice actions},
   journal={Tr. Mat. Inst. Steklova},
   volume={216},
   date={1997},
   number={Din. Sist. i Smezhnye Vopr.},
   pages={292--319},
   issn={0371-9685},
   translation={
      journal={Proc. Steklov Inst. Math.},
      date={1997},
      number={1 (216)},
      pages={287--314},
      issn={0081-5438},
   },
}

\bib{LY85}{article}{
author={Ledrappier, F.},
   author={Young, L.-S.},
   title={The metric entropy of diffeomorphisms. II. Relations between entropy, exponents and dimension},
   journal={Ann. of Math. (2)},
   volume={122},
   date={1985},
   number={3},
   pages={540--574},
}

\bib{M51}{article}{
   author={Malcev, A. I.},
   title={On a class of homogeneous spaces},
   journal={Amer. Math. Soc. Translation},
   volume={1951},
   date={1951},
   number={39},
   pages={33},
}

\bib{M78}{article}{
   author={Malyshev, F. M.},
   title={Decompositions of nilpotent Lie algebras},
   journal={Math. Notes Acad. Sci. USSR},
   volume={23},
   date={1978},
   number={1},
   pages={17--18},
}

\bib{M77}{article}{
   author={Ma{\~n}{\'e}, Ricardo},
   title={Quasi-Anosov diffeomorphisms and hyperbolic manifolds},
   journal={Trans. Amer. Math. Soc.},
   volume={229},
   date={1977},
   pages={351--370},
}

\bib{M74}{article}{
   author={Manning, Anthony},
   title={There are no new Anosov diffeomorphisms on tori},
   journal={Amer. J. Math.},
   volume={96},
   date={1974},
   pages={422--429},
}

\bib{MQ01}{article}{
   author={Margulis, Gregory A.},
   author={Qian, Nantian},
   title={Rigidity of weakly hyperbolic actions of higher real rank
   semisimple Lie groups and their lattices},
   journal={Ergodic Theory Dynam. Systems},
   volume={21},
   date={2001},
   number={1},
   pages={121--164},
}

\bib{M52}{article}{
   author={Moise, Edwin E.},
   title={Affine structures in $3$-manifolds. V. The triangulation theorem
   and Hauptvermutung},
   journal={Ann. of Math. (2)},
   volume={56},
   date={1952},
   pages={96--114},
}

\bib{P69}{article}{
   author={Parry, William},
   title={Ergodic properties of affine transformations and flows on
   nilmanifolds. },
   journal={Amer. J. Math.},
   volume={91},
   date={1969},
   pages={757--771},
}

\bib{PS83}{article}{
    AUTHOR = {Pesin, Ya. B.} 
       AUTHOR ={Sina{\u\i}, Ya. G.},
     TITLE = {Gibbs measures for partially hyperbolic attractors},
   JOURNAL = {Ergodic Theory Dynam. Systems},
  FJOURNAL = {Ergodic Theory and Dynamical Systems},
    VOLUME = {2},
      YEAR = {1982},
    NUMBER = {3-4},
     PAGES = {417--438}
}

\bib{R25}{article}{
   author={Rad\'o, T.},
   title={\"Uber den Begriff der Riemannschen Fl\"ache},
   journal={Acta Litt. Sci. Szeged},
   volume={2},
   date={1925},
   pages={101-121},
}

\bib{RT05}{article}{
AUTHOR = {Rauch, Jeffrey}
author={Taylor, Michael},
     TITLE = {Regularity of functions smooth along foliations, and elliptic
              regularity},
   JOURNAL = {J. Funct. Anal.},
  FJOURNAL = {Journal of Functional Analysis},
    VOLUME = {225},
      YEAR = {2005},
    NUMBER = {1},
     PAGES = {74--93},
      ISSN = {0022-1236},
     CODEN = {JFUAAW},
}
		
\bib{RH07}{article}{
   author={Rodriguez Hertz, Federico},
   title={Global rigidity of certain abelian actions by toral automorphisms},
   journal={J. Mod. Dyn.},
   volume={1},
   date={2007},
   number={3},
   pages={425--442},
}

\bib{R79}{article}{
   author={Ruelle, David},
   title={Ergodic theory of differentiable dynamical systems},
   journal={Inst. Hautes \'Etudes Sci. Publ. Math.},
   number={50},
   date={1979},
   pages={27--58},
}
		
\bib{RS96}{book}{
AUTHOR = {Runst, Thomas}
author={Sickel, Winfried},
     TITLE = {Sobolev spaces of fractional order, {N}emytskij operators, and
              nonlinear partial differential equations},
    SERIES = {de Gruyter Series in Nonlinear Analysis and Applications},
    VOLUME = {3},
 PUBLISHER = {Walter de Gruyter \& Co.},
   ADDRESS = {Berlin},
      YEAR = {1996},
     PAGES = {x+547},      
}
		
\bib{S99}{article}{
   author={Starkov, A. N.},
   title={The first cohomology group, mixing, and minimal sets of the
   commutative group of algebraic actions on a torus},
   note={Dynamical systems. 7},
   journal={J. Math. Sci. (New York)},
   volume={95},
   date={1999},
   number={5},
   pages={2576--2582},
}

\bib{T78}{book}{
AUTHOR = {Triebel, Hans},
     TITLE = {Interpolation theory, function spaces, differential operators},
    SERIES = {North-Holland Mathematical Library},
    VOLUME = {18},
 PUBLISHER = {North-Holland Publishing Co.},
   ADDRESS = {Amsterdam},
      YEAR = {1978},
     PAGES = {528},
      ISBN = {0-7204-0710-9},
}

\bib{W70}{article}{
   author={Walters, Peter},
   title={Conjugacy properties of affine transformations of nilmanifolds},
   journal={Math. Systems Theory},
   volume={4},
   date={1970},
   pages={327--333},
}

\end{biblist}
\end{bibdiv}

\end{document}